\renewcommand{\mathscr}{\mathcal}
\newcommand{\Mod}{\mathop{\mathrm{Mod}}\nolimits}
\newcommand{\im}{\mathop{\rm im}\nolimits}
\newcommand{\Sp}{\mathrm{Sp}}
\newcommand{\SL}{\mathrm{SL}}
\newcommand{\GL}{\mathrm{GL}}
\newcommand{\IA}{\mathrm{IA}}
\newcommand{\IO}{\mathrm{IO}}
\newcommand{\Aut}{\mathop{\mathrm{Aut}}\nolimits}
\newcommand{\Out}{\mathop{\mathrm{Out}}\nolimits}
\newcommand{\Inn}{\mathop{\mathrm{Inn}}\nolimits}
\newcommand{\ab}{\mathrm{ab}}
\newcommand{\Hom}{\mathop{\mathrm{Hom}}\nolimits}
\newcommand{\I}{\mathcal{I}}
\newcommand{\K}{\mathcal{K}}
\newcommand{\Stab}{\mathop{\mathrm{Stab}}\nolimits}
\newcommand{\Z}{\mathbb{Z}}
\newcommand{\Q}{\mathbb{Q}}
\newcommand{\bC}{\mathbb{C}}
\newcommand{\bN}{\mathbb{N}}
\newcommand{\be}{\mathbf{e}}
\newcommand{\bk}{\mathbf{k}}
\newcommand{\bm}{\mathbf{m}}
\newcommand{\bn}{\mathbf{n}}
\newcommand{\bq}{\mathbf{q}}
\newcommand{\bnul}{\mathbf{0}}
\newcommand{\bz}{\mathbf{z}}
\newcommand{\bfy}{\mathbf{y}}
\newcommand{\CG}{\mathcal{G}}
\newcommand{\CR}{\mathcal{R}}
\newcommand{\CQ}{\mathcal{Q}}
\newcommand{\CT}{\mathcal{T}}
\newcommand{\tkappa}{\widetilde{\kappa}}
\newcommand{\tlambda}{\widetilde{\lambda}}
\newcommand{\tmu}{\widetilde{\mu}}
\newcommand{\rk}{\mathop{\mathrm{rank}}\nolimits}
\newcommand{\exter}{\wedge}
\newcommand{\oS}{\overline{S}}
\newcommand{\tV}{\widetilde{V}}
\newcommand{\fX}{\mathfrak{X}}
\newtheorem{thm}{Theorem}
\newtheorem*{addthm}{Addendum to Theorem~\ref{thm_nilpotent}}
\newtheorem{theorem}{Theorem}[section]
\newtheorem{propos}[theorem] {Proposition}
\newtheorem{cor}[theorem] {Corollary}
\newtheorem{lem}[theorem]{Lemma}
\theoremstyle{definition}
\newtheorem{defin}[theorem]{Definition}
\newtheorem{remark}[theorem]{Remark}
\newtheorem{quest}[theorem]{Question}
\newtheorem{problem}[theorem]{Problem}
\numberwithin{equation}{section}
\author{Alexander A. Gaifullin}
\thanks{This work was performed at the Steklov International Mathematical Center and supported by the Ministry of Science and Higher Education of the Russian Federation (agreement no. 075-15-2025-303) and Theoretical Physics and Mathematics Advancement Foundation \mbox{``BASIS''} (grant 22-7-2-10-1).}
\title[Finite generation of abelianizations]{Finite generation of abelianizations of the genus~3 Johnson kernel  and the commutator subgroup of the Torelli group for~$\Out(F_3)$}
\date{}
\address{Steklov Mathematical Institute of Russian Academy of Sciences, Moscow, Russia}
\address{Lomonosov Moscow State University, Russia}
\subjclass{20F28; 20F34 (Primary); 11H56; 20E05; 20F18; 57M07 (Secondary)}
\begin{document}

\begin{abstract}
 Let $\Sigma_g^b$ be a compact oriented surface of genus~$g$ with $b$ boundary components, where $b\in\{0,1\}$. The \textit{Johnson kernel}~$\K_g^b$ is the subgroup of the mapping class group~$\Mod(\Sigma_g^b)$ generated by Dehn twists about separating simple closed curves. Let $F_n$ be a free group with $n$ generators. The \textit{Torelli group} for~$\Out(F_n)$ is the subgroup~$\IO_n\subset\Out(F_n)$ consisting of all outer automorphisms that act trivially on the abelianization of~$F_n$. Long standing questions are whether the groups~$\K_g^b$ and~$[\IO_n,\IO_n]$ or their abelianizations~$(\K_g^b)^{\ab}$ and~$[\IO_n,\IO_n]^{\ab}$ are finitely generated for $g\ge3$ (respectively, $n\ge3$). During the last $15$ years, these questions were answered positively for $g\ge4$ and~$n\ge4$, respectively. Nevertheless, the cases of~$g=3$ and~$n=3$ remained completely unsettled. In this paper, we prove that the abelianizations~$(\K_3^b)^{\ab}$ and $[\IO_3,\IO_3]^{\ab}$ are finitely generated. Our approach is  based on a new general sufficient condition for a module over a Laurent polynomial ring to be finitely generated as an abelian group.
\end{abstract}

\maketitle

\section{Introduction}

We denote by~$G^{\ab}$ the abelianization of a group~$G$ and always use additive notation for~$G^{\ab}$. For an element $x\in G$, we denote by~$[x]$ its image in~$G^{\ab}$. Our convention for the group commutator is
$$
(x,y)=xyx^{-1}y^{-1}.
$$

\subsection{Johnson kernel}
Consider an oriented surface~$\Sigma_g^b$ of genus~$g$ with  $b$ boundary components, where $b\in\{0,1\}$, and let $H=H_1(\Sigma_g^b,\Z)$. (If $b=0$, we will usually omit the superscript~$b$ in all notations.) The \textit{mapping class group}~$\Mod(\Sigma_g^b)$ is the group consisting of the isotopy classes of orientation-preserving homeomorphisms of~$\Sigma_g^b$. If $b\ne 0$, then the homeomorphisms are assumed to fix the boundary pointwise. The \textit{Torelli subgroup}~$\I_g^b$ is defined to be the kernel of the action of~$\Mod(\Sigma_g^b)$ on~$H$:
\begin{equation*}
 1\longrightarrow \I_g^b\longrightarrow\Mod(\Sigma_g^b)\longrightarrow \Sp(H)\longrightarrow 1.
\end{equation*}
Here $\Sp(H)\cong\Sp_{2g}(\Z)$ is the group of all automorphisms of~$H$ that preserve the intersection form. The importance of Torelli groups stems from their fundamental connections to algebraic geometry, arithmetic geometry, three-dimensional topology, and automorphisms of free groups. For more details on Torelli groups, their connections, and associated open problems, see the survey papers~\cite{Mor99}, \cite{Far06}, \cite{Hai06}, \cite{Mar19}, \cite{Hai20}.

Starting from a remarkable series of papers~\cite{Joh83}--\cite{Joh85b} by Johnson, a key role in the study of the Torelli group~$\I_g^b$ is played by its subgroup~$\K_g^b$ generated by Dehn twists about separating simple closed curves. This subgroup is usually called the \textit{Johnson kernel}, since Johnson~\cite{Joh85a} proved that it coincides with the kernel of the \textit{Johnson homomorphism}~$\tau$, which arises from the action of~$\I_g^b$ on the nilpotent group $\pi/[\pi,[\pi,\pi]]$, where $\pi$ is the fundamental group of~$\Sigma_g^b$, see~\cite{Joh80b}. Namely, we have
\begin{equation*}
 1\longrightarrow \K_g^b\longrightarrow \I_g^b\stackrel{\tau}{\longrightarrow} U_g^b\longrightarrow 1,
\end{equation*}
where
\begin{align*}
U_g^1&=\exter^3H\cong\Z^{\binom{2g}3},\\
U_g^0&=(\exter^3H)/(\Omega\wedge H)\cong\Z^{\binom{2g}3-2g}.
\end{align*}
Here $\Omega\in\exter^2H$ is the dual of the intersection form. The significance of the groups~$\K_g^b$ is connected with Morita's seminal work~\cite{Mor89}, which revealed their fundamental role in the theory of the Casson invariant for three-dimensional homology spheres.

By a classical result of Johnson~\cite{Joh83} the groups~$\I_g^b$ are finitely generated, provided that $g\ge 3$.
The groups $\K_2^b=\I_2^b$ are known to be not finitely generated (McCullough and Miller~\cite{MCM86}) and free (Mess~\cite{Mes92}). A long-standing question, see~\cite{MCM86}, \cite[Question~10]{Mor94}, \cite[Problem~2.2]{Mor99}, is whether the groups~$\K_g^b$ are finitely generated for $g\ge 3$ and whether their abelianizations $(\K_g^b)^{\ab}=H_1(\K_g^b,\Z)$ are finitely generated for $g\ge 3$. For many years, it was expected that the answers to both questions would be negative. However, in the groundbreaking work~\cite{DiPa13} by Dimca and Papadima, it was shown that $H_1(\K_g,\Q)$ is finite-dimensional for $g\ge 4$. Later Ershov and He~\cite{ErHe18} esteblished that $\K_g^b$ is finitely generated for $g\ge 12$, and then Church, Ershov, and Putman~\cite{CEP22} extended this result to all $g\ge 4$.

The case $g=3$ has remained completely unsettled. The first main result of the present paper is as follows.

\begin{thm}\label{thm_main_K}
Suppose that $g\ge 3$ and~$b\in\{0,1\}$. Then $(\K_g^b)^{\ab}$ is a finitely generated abelian group.
\end{thm}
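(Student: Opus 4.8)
The plan is to derive the theorem from a purely module-theoretic finiteness statement, after two reductions. Since $\K_g^b$ is itself finitely generated for $g\ge4$ by Church--Ershov--Putman~\cite{CEP22}, its abelianization is finitely generated there, and it suffices to treat $g=3$. Moreover, capping the boundary of $\Sigma_3^1$ with a disc induces a surjection $\Mod(\Sigma_3^1)\to\Mod(\Sigma_3)$ that carries $\K_3^1$ onto $\K_3$ (Dehn twists about separating curves map to Dehn twists about separating curves, and the boundary twist lies in the kernel), so $(\K_3^1)^{\ab}$ surjects onto $(\K_3)^{\ab}$ and it is enough to handle $b=1$. Set $\I=\I_3^1$, $\K=\K_3^1$, and $U=U_3^1=\exter^3H\cong\Z^{20}$, so that $1\to\K\to\I\to U\to1$.

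Next I would set up the module structure. Because $\K$ acts trivially on $\K^{\ab}$, the conjugation action of $\I$ on $\K$ makes $M:=\K^{\ab}=H_1(\K,\Z)$ a module over the group ring $R:=\Z[\I/\K]=\Z[U]$, a Laurent polynomial ring in $20$ variables; conjugation by all of $\Mod(\Sigma_3^1)$ further makes $M$ a module over $\Z[\Mod(\Sigma_3^1)/\K]$, in particular endowing it with an action of $\Sp_6(\Z)=\Mod(\Sigma_3^1)/\I$ that covers the standard action of $\Sp_6(\Z)$ on $U=\exter^3H$. The first thing to prove is that $M$ is finitely generated \emph{as an $R$-module}. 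This follows from Johnson's theorem~\cite{Joh83} that $\I$ is finitely generated, together with the elementary fact that a normal subgroup $N$ of a finitely generated group $G$ with $G/N$ finitely presented is the normal closure of a finite subset of $G$: applying this with $G=\I$, $N=\K$, and $G/N=U$ free abelian, we see that $\K$ is normally generated in $\I$ by finitely many elements, whence their classes generate $M$ over $R=\Z[\I/\K]$. In fact $M$ is generated over $R$ (together with the $\Sp_6(\Z)$-action) by the classes $[T_\delta]$ of Dehn twists about separating curves, where $\delta$ ranges over a finite list of topological types.

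The theorem then reduces to the new general criterion announced in the abstract: a finitely generated module over a Laurent polynomial ring $\Z[t_1^{\pm1},\dots,t_N^{\pm1}]$ is finitely generated as an abelian group once it satisfies a suitable condition. The decisive feature of that condition is that it is \emph{local} and \emph{verifiable}: morally it requires that the action of each variable $t_i$ on the module be ``integral'' in a uniform sense and that the characters of $\Z^N$ occurring in the module — rationally and modulo each prime — form a small, controlled set. Crucially, one does \emph{not} assume a priori that $M\otimes\Q$ is finite-dimensional; for $g=3$ this is exactly the statement that was out of reach by the method of Dimca--Papadima~\cite{DiPa13}, and here it should emerge as part of the conclusion. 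So the strategy is to establish this criterion in general and then check its hypotheses for $M=\K^{\ab}$.

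I expect the verification of those hypotheses to be the main obstacle, and the place where the genus-$3$ geometry must be used in earnest. The tools I would bring to bear are: the description of $M$ by the finitely many classes $[T_\delta]$ above; relations among twists about separating curves and bounding-pair maps inherited from $\Mod(\Sigma_3^1)$ — lantern and chain relations — which, translated into $M$, control how each $t_i\in U$ moves the generators and should force the action of $t_i-1$ to be nilpotent up to bounded torsion; the $\Sp_6(\Z)$-equivariance, which forces the support of $M$ to be an $\Sp_6(\Z)$-invariant set, hence (being small) essentially the trivial character together with finitely many $2$-torsion characters; and the Birman--Craggs--Johnson homomorphism, together with the Arf-invariant machinery it is built from, to pin down the torsion and the remaining $2$-torsion characters. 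Once the criterion applies to $M=\K_3^1$ we conclude that $(\K_3^1)^{\ab}$ is finitely generated over $\Z$, and the two reductions then give the theorem for all $g\ge3$ and $b\in\{0,1\}$.
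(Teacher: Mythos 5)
Your proposal gets the architecture right — make $M=(\K_g^b)^{\ab}$ a finitely generated module over the Laurent polynomial ring $\Z U_g^b$, invoke a general finiteness criterion, and use the $\Sp_6(\Z)$-symmetry — but the two steps that actually constitute the proof are missing. First, the criterion itself is never stated, let alone proved; you describe it only "morally" (integrality of each $t_i$, control of supporting characters mod primes). The paper's criterion (Theorems~\ref{thm_main_gen} and~\ref{thm_more_gen}) is quite different and very specific: it suffices that for every subgroup $B\subset A$ of non-maximal rank there exist $a_1,\dots,a_k\notin B$ with $(e_{a_1}-1)\cdots(e_{a_k}-1)M=0$. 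This is arranged by combining a single annihilation relation $(e_{a_1}-1)\cdots(e_{a_k}-1)M=0$ with $a_i\in\fX$ (the ``$S_{\fX}$-torsion'' condition) with the \emph{subgroup displacement property} of the $\Sp_{2g}(\Z)$-action, which is proved by Zariski density together with the decomposition of $U_g^b\otimes\bC$ into irreducibles. Without a precise statement and proof of some such criterion, nothing has been reduced to anything.

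Second, and more importantly, your plan for verifying the hypothesis points in the wrong direction. The verification in the paper is a one-line geometric observation: every separating simple closed curve $\gamma$ is disjoint from some bounding pair $\{\gamma_1,\gamma_2\}$ of genus $g'<g$, so $T_\gamma$ commutes with the BP map $T_{\gamma_1}T_{\gamma_2}^{-1}$, whence $(e_a-1)\cdot[T_\gamma]=0$ for $a=\tau(T_{\gamma_1}T_{\gamma_2}^{-1})$; one then only has to check that $a$ lies in the good set $\fX$ (for $b=1$, that $a$ avoids both $\ker\pi$ and $\ker C$, which is Johnson's formula for $\tau$ of a BP map). By contrast, you propose to show that \emph{each} $t_i-1$ acts nilpotently up to bounded torsion — but nilpotency of $(\K_g^b)^{\ab}$ over $\Z U_g^b$ is exactly the question the paper leaves open for $3\le g\le 11$ (only a weakened version over a finite-index subgroup is proved, by a much longer argument), and it is not needed for Theorem~\ref{thm_main_K}. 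The Birman--Craggs--Johnson machinery and the analysis of characters mod primes play no role. A minor additional caveat: your reduction to $b=1$ is logically valid but lands you in the harder of the two cases, since $U_3^1\otimes\bC$ is a \emph{reducible} $\Sp_6(\bC)$-representation, so the displacement property fails for arbitrary nonzero vectors and the choice of $\fX$ becomes delicate; the paper treats $b=0$ and $b=1$ in parallel rather than reducing one to the other.
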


\begin{remark}
 We formulate this theorem for all $g\ge 3$, since we are going  to give a proof of it that works uniformly for all $g\ge 3$. Certainly, for $g\ge 4$, this result is not new, as it follows directly from the finite generation of~$\K_g^b$. Nevertheless, our proof of the finite generation of~$(\K_g^b)^{\ab}$ for $g\ge 4$ seems to be easier, and definitely less involved, than all previously known proofs of this fact. The same applies to Theorem~\ref{thm_main_OA} below.
\end{remark}

\begin{remark}\label{remark_filtration}
 The groups $\I_g^b(1)=\I_g^b$ and $\I_g^b(2)=\K_g^b$ are the first two terms of the \textit{Johnson filtration} $$\I_g^b(1)\supset \I_g^b(2)\supset \I_g^b(3)\supset\cdots$$ Here the group $\I_g^b(k)$ is defined to be the kernel of the natural action of~$\I_g^b$ on $\pi/\gamma_{k+1}\pi$, where $\pi=\pi_1(\Sigma_g^b)$ and $\{\gamma_k\pi\}_{k\in\bN}$ is the lower central series of~$\pi$. In~\cite{ErHe18} and~\cite{CEP22} results of finite generation were obtained not only for $\K_g^b$ with $g\ge 4$ but also for some higher terms of the Johnson filtration, and hence for their abelianizations. The author does not know whether the methods of the present paper can be applied to improve these results.
\end{remark}

\subsection{Automorphisms of free groups}

Let $F_n$ be a free group on $n$ generators. A deep analogy exists between the automorphism groups~$\Aut(F_n)$ and the outer automorphism groups~$\Out(F_n)=\Aut(F_n)/\Inn(F_n)$ on one hand, and the mapping class groups~$\Mod(\Sigma_g^b)$ on the other. This analogy has proved remarkably fruitful for understanding both types of groups. The \textit{Torelli subgroup}~$\IA_n$ (repectively, $\IO_n$) is defined to be the kernel of the action of~$\Aut(F_n)$ (respectively, $\Out(F_n)$) on the abelianization $F_n^{\ab}=\Z^n$:
\begin{equation*}
\begin{tikzcd}
 1\arrow[r]
 & \IA_n \arrow[d, two heads] \arrow [r] & \Aut(F_n) \arrow[d, two heads] \arrow[r]
 & \GL_n(\Z)\arrow[r] \arrow [d, equal] & 1\\
 1\arrow[r]
 & \IO_n \arrow [r] & \Out(F_n)  \arrow[r]
 & \GL_n(\Z)\arrow[r] & 1
\end{tikzcd}
\end{equation*}
A classical result of Nielsen~\cite{Nie18} states that $\IA_2=\Inn(F_2)\cong F_2$ and hence the group~$\IO_2$ is trivial. Another classical result due to Nielsen~\cite{Nie24} for $n=3$ and to Magnus~\cite{Mag35} for $n>3$ (cf.~\cite{MKS}) says that the groups~$\IA_n$, and hence the groups~$\IO_n$, are finitely generated. The following commutative diagram with exact rows was obtained by Formanek~\cite{For90}:
\begin{equation}\label{eq_exact_IA}
 \begin{tikzcd}
 1\arrow[r]
 & \left[\IA_n,\IA_n\right] \arrow[d, two heads] \arrow [r] & \IA_n \arrow[d, two heads] \arrow[r,"\tau"]
 & H^*\otimes\exter^2H \arrow [d, two heads] \arrow[r]  & 1\\
 1\arrow[r]
 & \left[\IO_n,\IO_n\right] \arrow [r] & \IO_n  \arrow[r,"\tau"]
 & (H^*\otimes\exter^2H)/\iota(H) \arrow[r] & 1
\end{tikzcd}
\end{equation}
Here $H=F_n^{\ab}=\Z^n$, $H^*=\Hom(H,\Z)$ and the embedding $\iota\colon H\hookrightarrow H^*\otimes\exter^2H$ is induced by the canonical embedding $\Z\hookrightarrow H^*\otimes H=\Hom(H,H)$ that takes~$1$ to the identity operator. By analogy with the mapping  class group case, the surjective homomorphisms~$\tau$ in~\eqref{eq_exact_IA} are called \textit{Johnson's homomorphisms}. (We conveniently denote all kinds of Johnson's homomorphisms by~$\tau$; this does not lead to a confusion.)

Natural analogues of the questions about~$\K_g^b$ discussed above are the questions of whether the group~$[\IO_n,\IO_n]$ and its abelianization~$[\IO_n,\IO_n]^{\ab}$ are finitely generated for $n\ge3$, and similar questions for~$[\IA_n,\IA_n]$. Papadima and Suciu~\cite{PaSu12} proved that $H_1([\IA_n,\IA_n],\Q)$  for $n\ge 5$ and $H_1([\IO_n,\IO_n],\Q)$ for $n\ge 4$ are finite-dimensional. Then Ershov and He~\cite{ErHe18} showed that the groups~$[\IA_n,\IA_n]$, and hence the groups~$[\IO_n,\IO_n]$, are finitely generated for all $n\ge 4$. The natural expectation was that the group~$[\IA_3,\IA_3]$ would be not finitely generated, and moreover, would have an infinite-dimensional first rational homology. (See, e.\,g.,~\cite[Section~9.3]{ErHe18}, which provides several arguments  towards this, and also Remark~\ref{remark_Chein} below.) However, our second main result shows that at least for the question concerning the abelianization of~$[\IO_3,\IO_3]$ the answer is positive.

\begin{thm}\label{thm_main_OA}
If $n\ge 3$, then $[\IO_n,\IO_n]^{\ab}$ is a finitely generated abelian group.
\end{thm}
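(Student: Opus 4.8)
The plan is to reduce Theorem~\ref{thm_main_OA} for $n=3$ to Theorem~\ref{thm_main_K} for $g=3$ by exploiting the classical relationship between the mapping class group $\Mod(\Sigma_3^1)$ and $\Aut(F_3)$, and in particular between the Johnson kernel and the commutator subgroup of the Torelli group. Recall that $\Sigma_g^1$ has free fundamental group $F_{2g}$, and capping the boundary induces a surjection $\Mod(\Sigma_g^1)\to\Mod(\Sigma_g)$ whose kernel is generated by the boundary twist; more importantly, for the free-group side there is a point-pushing picture identifying a copy of $F_n$ inside the picture and a homomorphism relating $\Mod(\Sigma_n^1)$ to $\Aut(F_n)$. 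The key structural input is the diagram~\eqref{eq_exact_IA}: the Johnson homomorphism $\tau\colon\IO_n\to(H^*\otimes\exter^2H)/\iota(H)$ has kernel $[\IO_n,\IO_n]$, so $[\IO_n,\IO_n]^{\ab}=H_1(\ker\tau)$ is exactly the analogue for $\Out(F_n)$ of $H_1(\K_g^b)=(\K_g^b)^{\ab}$, which by Johnson's theorem is $H_1$ of the kernel of the surface Johnson homomorphism. Since $n\ge 4$ is already known (Ershov--He gives finite generation of $[\IO_n,\IO_n]$ itself, hence of its abelianization), the entire content is the case $n=3$.

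The first step is therefore to make the comparison map precise: I would use the well-known homomorphism $\Mod(\Sigma_g^1)\to\Aut(F_{2g})$ (action on $\pi_1$) and, more to the point, realize $\Aut(F_n)$ and $\Out(F_n)$ through a genus-$n$ surface with one boundary component in such a way that the Torelli group $\I_n^1$ maps onto (a finite-index subgroup related to) $\IA_n$ and the Johnson kernel $\K_n^1$ maps onto $[\IA_n,\IA_n]$, compatibly with the two Johnson homomorphisms $\tau$. Concretely, one exploits that $\exter^3 H_{2g}$ (the surface $U_g^1$) surjects onto $H^*\otimes\exter^2H_n$ after the appropriate identification coming from a symplectic-to-general-linear degeneration, and that Johnson's homomorphism for the surface is compatible with Formanek's for $\Aut(F_n)$ under this map. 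The upshot I want is a surjection (or a map with controlled, finitely generated kernel/cokernel) from $H_1(\K_n^1)$ onto $[\IA_n,\IA_n]^{\ab}$, and then a further surjection $[\IA_n,\IA_n]^{\ab}\twoheadrightarrow[\IO_n,\IO_n]^{\ab}$ coming from the bottom row of~\eqref{eq_exact_IA} (the map $\IA_n\to\IO_n$ is surjective with kernel $\Inn(F_n)$, and one checks the induced map on commutator subgroups is onto with finitely generated — indeed abelian, essentially $H$ — kernel, using that $\iota(H)$ is the relevant correction term).

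The second step is bookkeeping with the relevant $\GL_n(\Z)$- and $\Sp_{2g}(\Z)$-module structures: $(\K_g^b)^{\ab}$ and $[\IA_n,\IA_n]^{\ab}$ carry actions of the respective arithmetic quotients, and the comparison map is equivariant for a homomorphism $\Sp_{2g}(\Z)\to\GL_n(\Z)$ (or its relevant parabolic). Because Theorem~\ref{thm_main_K} asserts $(\K_3^1)^{\ab}$ is finitely generated \emph{as an abelian group} (not merely as a module), the image of any equivariant map out of it is automatically a finitely generated abelian group, and a quotient of a finitely generated abelian group is finitely generated. So once the surjectivity in Step~1 is established, $[\IA_3,\IA_3]^{\ab}$ and hence $[\IO_3,\IO_3]^{\ab}$ are finitely generated with no further work. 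Finally, I would assemble: $n\ge 4$ by Ershov--He, $n=3$ by the above reduction to Theorem~\ref{thm_main_K} with $g=3,b=1$.

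The main obstacle is Step~1: proving that the natural map from the surface side genuinely \emph{surjects} onto $[\IA_n,\IA_n]^{\ab}$ (and that the discrepancy between $\IA$ and $\IO$, i.e.\ the $\Inn(F_n)=F_n$ correction, contributes only a finitely generated — in fact, essentially $H\cong\Z^n$ — piece on the level of abelianizations). In practice the cleanest route may be to bypass the surface comparison entirely and instead run the paper's own general criterion — the ``new general sufficient condition for a module over a Laurent polynomial ring to be finitely generated as an abelian group'' advertised in the abstract — directly on $[\IO_3,\IO_3]^{\ab}$, using a presentation of $\IO_3$ together with the structure of $(H^*\otimes\exter^2H)/\iota(H)$ as the Johnson image, so that the same module-theoretic input that proves Theorem~\ref{thm_main_K} applies here with only notational changes. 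I expect the actual proof in the paper to follow that uniform strategy, with the free-group case handled in close parallel to the surface case rather than formally deduced from it.
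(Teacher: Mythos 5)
Your primary route has a genuine gap, and it sits exactly where you flag the ``main obstacle'': the comparison map you need does not exist. The standard homomorphism $\Mod(\Sigma_g^1)\to\Aut\bigl(\pi_1(\Sigma_g^1)\bigr)=\Aut(F_{2g})$ relates genus $g$ to rank $2g$ (not rank $g$) and is an \emph{injection}, not a surjection; there is no known map carrying $\K_n^1$ onto $[\IA_n,\IA_n]$, nor one inducing a surjection $(\K_n^1)^{\ab}\twoheadrightarrow[\IA_n,\IA_n]^{\ab}$ with controlled cokernel. The proposed identification of $\exter^3H_{2g}$ with $H^*\otimes\exter^2H_n$ via a ``symplectic-to-general-linear degeneration'' is not a construction that exists in the required equivariant form. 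A decisive sanity check: if your Step~1 worked, it would prove that $[\IA_3,\IA_3]^{\ab}$ is finitely generated, but the paper explicitly states that this remains open and that its method provably breaks there (the classes $\tau(K_{ijk})$ lie in $\ker C$, so one cannot find a single set $\fX$ for which both the subgroup displacement property and the torsion property hold for $\IA_3$). Any reduction that delivers $[\IA_3,\IA_3]^{\ab}$ for free should be treated as suspect.

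Your fallback guess about the paper's actual strategy is correct in outline --- the paper applies Corollary~\ref{cor_main_gen} to the chain $[\IO_n,\IO_n]\triangleleft\IO_n\triangleleft\Out^+(F_n)$, in parallel with the surface case rather than by reduction to it --- but the proposal supplies none of the content that makes this work. Condition~(1), the subgroup displacement property for $\SL_n(\Z)$ acting on $W_n$, is cheap (irreducibility of $W_n\otimes\bC$ plus Zariski density). The real work is condition~(2): showing $[\IO_n,\IO_n]^{\ab}$ is an $S_{W_n\setminus\{0\}}$-torsion $\Z W_n$-module. Unlike the surface case, where a single commuting BP map kills each generator $[T_\gamma]$ (Lemma~\ref{lem_tors_comm1}), here one must work through the commutators of all pairs of Magnus generators $K_{ij}$, $K_{ijk}$, using the rewriting identity of Lemma~\ref{lem_tors_identity} and a specific relation of Chein in $\langle K_{12},K_{13},K_{123}\rangle$ (Proposition~\ref{propos_K_aux2}); the argument succeeds for $\IO_n$ precisely because $\pi(\tau(K_{ijk}))\neq0$ in $W_n$ (Corollary~\ref{cor_IO_tau}), which is the step that fails for $\IA_3$. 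Without this verification the criterion cannot be invoked, so the proposal does not constitute a proof.
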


\begin{remark}
 Though our approach does not work for $[\IA_3,\IA_3]^{\ab}$, it still allows to prove that  $[\IA_n,\IA_n]^{\ab}$ is finitely generated for $n\ge 4$, which is, however, not a new result; see Remark~\ref{remark_IA} for more detail.
\end{remark}

\begin{remark}
 For automorphism groups, there exist analogues
 \begin{align*}
 \IA_n(1)&\supset\IA_n (2)\supset \IA_n(3)\supset \cdots\\
 \IO_n(1)&\supset\IO_n (2)\supset \IO_n(3)\supset \cdots
 \end{align*}
 of the Johnson filtration (see Remark~\ref{remark_filtration}), which in this case were introduced by Andreadakis~\cite{And65} even before Johnson's papers~\cite{Joh85a}, \cite{Joh85b} and are usually referred to as the \textit{Andreadakis filtrations} (or \textit{Andreadakis--Johnson filtrations}). These filtrations start from the groups~$\IA_n(1)=\IA_n$ and $\IO_n(1)=\IO_n$. It is also well known that $\IA_n(2)=[\IA_n,\IA_n]$ (see~\cite{And65} for $n\le 3$ and~\cite[Lemma~5]{Bac66} for any~$n$) and hence $\IO_n(2)=[\IO_n,\IO_n]$. Some results about the finite generation of higher terms of the Andreadakis filtrations were also obtained in~\cite{ErHe18} and~\cite{CEP22}. As in the mapping class group case, the author does not know whether the methods of the present paper can be applied to improve these results.
\end{remark}

\subsection{Method and a general result}
The approach employed in~\cite{DiPa13} and~\cite{PaSu12}, as well as in subsequent papers~\cite{DHP14}, \cite{ErHe18}, \cite{CEP22}, was based on the systematic use of the arithmetic symmetry group~$\Sp_{2g}(\Z)$ (respectively, $\SL_n(\Z)$) and on the study of certain invariants of the Torelli group~$\I_g$ (respectively, $\IA_n$ or~$\IO_n$) possessing this symmetry and related to the finiteness properties of~$\K_g$ (respectively, $[\IA_n,\IA_n]$ or~$[\IO_n,\IO_n]$). Among such invariants were the resonance varieties~$\CR^i_k$, the characteristic varieties ~$\mathcal{V}^i_k$, and the Bieri--Neumann--Strebel--Renz invariants~$\Sigma^q$.
The case $g=3$ is exceptional from an arithmetic perspective. This is primarily due to the fact that the resonance variety $\CR_1^1(\I_g)$ consists solely of the point~$0$ for $g\ge 4$, while coinciding with the entire cohomology group~$H^1(\I_3,\bC)$ for $g=3$, see~\cite[Theorem~4.4]{DiPa13}. (Recall that, by definition, $\CR_1^1(\I_g)$ is the subset of~$H^1(\I_g,\bC)$ consisting of all classes~$a$ such that $H^1\bigl(H^{\bullet}(\I_g,\bC),\mu_a\bigr)\ne 0$, where $\mu_a$ denotes the left multiplication by~$a$.) For the groups~$\IA_n$, the case $n=3$ is also exceptional. For instance, there exists a nontrivial one-dimensional complex representation~$V$ of~$\IA_3^{\ab}$ such that $H^1(\IA_3,V)\ne 0$, see~\cite[Proposition~9.5]{ErHe18}, while for $n\ge 4$, $H^1(\IA_n,V)=0$ for any nontrivial irreducible representation~$V$ of~$\IA_n^{\ab}$ over an arbitrary field, see~\cite[Theorem~1.4]{ErHe18}.

Our approach is different. We also use the symmetry group~$\Sp_{2g}(\Z)$ (or~$\SL_n(\Z)$), but its arithmetic nature is less important for us. The only thing we require from this group is a rather weak property, which we call the \textit{subgroup displacement property} (see Definition~\ref{defin_SDP} below). In fact, we will prove a very general sufficient condition for a finitely generated module~$M$ over the group ring~$\Z A$ (where $A$ is a finitely generated free abelian group) to itself be finitely generated as an abelian group, see Theorem~\ref{thm_main_gen} below. From this sufficient condition, Theorem~\ref{thm_main_K} will follow almost immediately, while Theorem~\ref{thm_main_OA} will be deduced relatively easily. The key aspect of our approach is that we focus on the \textit{$S_{\fX}$-torsion condition}  with respect to a very special multiplicatively closed subset~$S_{\fX}$ of~$\Z A$, which we call a \textit{difference multiplicatively closed subset}. Both the definiton of the subgroup displacement property and the construction of the difference multiplicatively closed subset~$S_{\fX}$ will depend on the choice of a subset $\fX\subset A\setminus\{0\}$. The main option for~$\fX$, which the reader should keep in mind, is the set of all nonzero elements of~$A$. This choice will be sufficient in the cases of the groups~$\K_g$ and~$[\IO_n,\IO_n]$. In the case of~$\K_g^1$, a slightly more intricate set~$\fX$ will be required.

\begin{remark}
 We will always use additive notation for abelian groups. To avoid confusion, we will denote the basis element of the group ring~$\Z A$ corresponding to an element $a\in A$ by~$e_a$; then $e_ae_b=e_{a+b}$. On the other hand, for a non-abelian group in multiplicative notation, we will usually write~$g$ instead of~$e_g$.
\end{remark}

Let $A$ be a free abelian group of finite rank~$r$ and $\fX$ a nonempty subset of~$A\setminus\{0\}$.

\begin{defin}\label{defin_SDP}
 Suppose that a group~$D$ acts on~$A$ via a homomorphism $D\to\GL(A)$. We say that this action possesses the \textit{subgroup displacement property}  with respect to~$\fX$ if, for each subgroup $B\subset A$ with $\rk(B)<\rk(A)$ (i.\,e., with $|A:B|=\infty$) and each finite subset $X\subset \fX$, there exists an element $g\in D$ such that the subgroup~$g(B)$ is disjoint from~$X$.
\end{defin}

\begin{defin}
The \textit{difference multiplicatively closed subset}~$S_{\fX}$ is the multiplicatively closed subset of~$\Z A$ generated by all elements $e_a-1$, where $a\in \fX$. In other words, $S_{\fX}$ consists of all elements of the form
$$
(e_{a_1}-1)\cdots (e_{a_k}-1),
$$
where $k\ge 0$ and $a_1,\ldots,a_k\in \fX$.
\end{defin}
Recall that an element $x$ of a $\Z A$-module~$M$ is called an \textit{$S_{\fX}$-torsion element} if there exists an element $s\in S_{\fX}$ such that $sx=0$ (equivalently, if $x$ goes to zero after the localization with respect to~$S_{\fX}$). A module is called an \textit{$S_{\fX}$-torsion module} if it consists of $S_{\fX}$-torsion elements.

Our general sufficient condition for a $\Z A$-module to be a finitely generated abelian group is as follows.

\begin{thm}\label{thm_main_gen}
 Let
 \begin{equation}\label{eq_ses}
 1\longrightarrow A\longrightarrow Q\longrightarrow D\longrightarrow 1
 \end{equation}
 be a short exact sequence of groups, where $A$ is a finitely generated free abelian group, and let $\fX$ be a nonempty subset of~$A\setminus\{0\}$. Assume that the action of~$D$ on~$A$ associated with the short exact sequence~\eqref{eq_ses} possesses the subgroup displacement property with respect to~$\fX$. Suppose that $M$ is a $\Z Q$-module such that, being considered as a $\Z A$-module, it is a finitely generated $S_{\fX}$-torsion module. Then $M$ is finitely generated as an abelian group.
\end{thm}

In fact, by eliminating the roles of the group~$D$ and the subset~$\fX$, we can give an even more general sufficient condition for a $\Z A$-module to be finitely generated as an abelian group. However, the condition imposed on~$M$ becomes more complicated. Though we need this result for only free abelian groups, we will conveniently formulate and prove it for an arbitrary finitely generated abelian group.

\begin{thm}\label{thm_more_gen}
 Suppose that $A$ is a finitely generated abelian group and $M$ is a finitely generated $\Z A$-module. Assume that the following condition holds:
 \begin{itemize}
  \item[$(*)$] For each subgroup $B\subset A$ with $|A:B|=\infty$, there exists a positive integer~$k$ and elements $a_1,\ldots,a_k\in A\setminus B$ such that the product
  \begin{equation*}
   (e_{a_1} -1)\cdots (e_{a_k} -1)
  \end{equation*}
  annihilates~$M$.
 \end{itemize}
 Then $M$ is finitely generated as an abelian group.
\end{thm}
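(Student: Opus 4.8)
The plan is to induct on the rank $r$ of $A$, using condition $(*)$ to peel off one free factor at a time. The base case $r=0$ is trivial, since then $\Z A = \Z$ and a finitely generated $\Z$-module is a finitely generated abelian group. For the inductive step, suppose the claim holds for all free abelian groups of rank less than $r$, and let $A$ have rank $r\ge 1$ with $M$ a finitely generated $\Z A$-module satisfying $(*)$. Pick any rank-$(r-1)$ subgroup $B\subset A$ (equivalently, a primitive corank-one subgroup); by $(*)$ there is a product $s = (e_{a_1}-1)\cdots(e_{a_k}-1)$ with all $a_i\in A\setminus B$ that annihilates $M$.

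The first key step is a structural observation about such a product. Each $a_i$ has a nonzero "$B$-coordinate": choosing a splitting $A = B\oplus \Z c$ for a suitable $c$, write $a_i = b_i + n_i c$ with $n_i\ne 0$. Then in the Laurent polynomial ring $\Z A = (\Z B)[e_c^{\pm 1}]$, the factor $e_{a_i}-1 = e_{b_i}e_c^{n_i} - 1$ is, up to multiplication by the unit $e_{a_i}$ (or $1$), a polynomial in $e_c$ of positive degree with unit leading and constant coefficients. Hence $s$ is — up to a unit of $\Z A$ — a \emph{monic} polynomial in $e_c^{\pm 1}$ over $\Z B$, say of the form $e_c^{-N}\bigl(e_c^{d} + \text{lower-order terms with coefficients in }\Z B\bigr)$ after clearing, with the top coefficient a unit. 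The upshot is that $s\cdot M = 0$ forces a monic recursion on $M$: there are integers $p\le q$ with $q > p$ such that $e_c^{q}m$ lies in the $\Z B$-span of $e_c^{p}m, e_c^{p+1}m,\dots,e_c^{q-1}m$ for every $m\in M$, and likewise $e_c^{p-1}m$ lies in the $\Z B$-span of $e_c^{p}m,\dots,e_c^{q}m$ (using that the constant coefficient is also a unit, so one can run the recursion downward as well). Consequently, if $m_1,\dots,m_t$ generate $M$ over $\Z A$, then $M$ is generated \emph{over $\Z B$} by the finite set $\{\,e_c^{j}m_i : p\le j\le q-1,\ 1\le i\le t\,\}$; that is, $M$ is a finitely generated $\Z B$-module.

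The second key step is to check that $M$, now viewed as a finitely generated $\Z B$-module, still satisfies condition $(*)$ relative to $B$, so that the inductive hypothesis applies (note $\rk B = r-1$). This is the delicate point, and I expect it to be the main obstacle: condition $(*)$ for $B$ asks, for every corank-one subgroup $B'\subset B$, for a product of factors $e_{a'}-1$ with $a'\in B\setminus B'$ annihilating $M$, whereas what we are handed by $(*)$ for $A$ are products with factors indexed by elements of $A\setminus B''$ for various $B''\subset A$. The fix is to choose the decomposition $A = B\oplus\Z c$ at the outset \emph{compatibly} with a given $B'$: apply $(*)$ for $A$ to the rank-$(r-1)$ subgroup $B'' := B' \oplus \Z c'$ for an appropriate $c'$ — more robustly, observe that $(*)$ applied to $A$ with the subgroup $B' \subset A$ (which has corank $2$, hence is contained in some corank-one subgroup, but that is not directly enough). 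The clean route is this: given $B'\subset B$ of corank one in $B$, choose $c\in A$ with $A = B\oplus \Z c$; then $B' \oplus \Z c$ has corank one in $A$, so $(*)$ gives a product $s'=(e_{a'_1}-1)\cdots(e_{a'_\ell}-1)$ annihilating $M$ with each $a'_j\in A\setminus(B'\oplus\Z c)$, i.e. each $a'_j = b'_j + 0\cdot c$ with $b'_j \in B\setminus B'$ — wait, the $c$-component need not vanish, so instead one runs the degree argument in $e_c$ once more: after reducing $M$ to a finitely generated $\Z B$-module as above, multiply $s'$ by $e_c^{-(\deg)}$ and extract, for each fixed monomial $e_c^{j}$ acting on the generators, a $\Z B$-linear annihilation relation whose factors are the $B$-projections $b'_j$ of the $a'_j$; these projections lie in $B\setminus B'$ because $a'_j\notin B'\oplus\Z c$. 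This produces the required product over $\Z B$ annihilating $M$, establishing $(*)$ for $B$. The inductive hypothesis then gives that $M$ is finitely generated as an abelian group, completing the induction and the proof.

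(Alternatively, one can avoid the compatibility juggling entirely by deducing Theorem~\ref{thm_more_gen} from Theorem~\ref{thm_main_gen}: take $D$ to be the trivial group and $\fX = A\setminus\{0\}$, but then the subgroup displacement property fails, so this does not work directly; the genuinely new content of Theorem~\ref{thm_more_gen} over Theorem~\ref{thm_main_gen} is precisely that no symmetry group is needed, and the induction on rank above is the natural way to see it.)
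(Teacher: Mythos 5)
Your overall strategy --- induction on the rank of $A$, peeling off one corank-one subgroup $B$ at a time --- is genuinely different from the paper's, which keeps the ambient group $A$ fixed throughout and instead performs induction on a lexicographically well-ordered invariant $\bk(x)\in\bN^r$ attached to each element of $M$ via ``minimal annihilating systems.'' Your first step is correct: since each $a_i\in A\setminus B$ has nonzero $c$-component in a splitting $A=B\oplus\Z c$, the annihilating product is, up to a unit of $\Z A$, a polynomial in $e_c$ over $\Z B$ with unit leading and constant coefficients, so $M$ is finitely generated over $\Z B$. (This is close in spirit to the last step of the paper's proof, where the finite-index subgroup spanned by a minimal annihilating system reduces everything to finitely many coset translates of the generators.)

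The gap is in your second step, the descent of condition $(*)$ from $A$ to $B$, which you correctly flag as the delicate point but do not actually close. Applying $(*)$ to $B'\oplus\Z c$ does give $s'=\prod_j\bigl(e_{b'_j}e_c^{n'_j}-1\bigr)$ annihilating $M$ with all $b'_j\in B\setminus B'$, but your proposal to ``extract, for each fixed monomial $e_c^j$, a $\Z B$-linear annihilation relation whose factors are the $B$-projections $b'_j$'' fails for two reasons. First, $M$ carries no grading by powers of $e_c$, so from $s'\cdot m=0$ one cannot conclude that the individual $e_c$-homogeneous components of $s'$ annihilate $m$. Second, even as an identity in $\Z B[e_c^{\pm1}]$, the coefficient of $e_c^N$ in $s'$ is a signed sum of products of the units $e_{b'_j}$ over subsets $S$ of indices with $\sum_{j\in S}n'_j=N$; it is not a product of elements $e_\beta-1$ with $\beta\in B\setminus B'$, so even if it did annihilate $M$ it would not verify $(*)$ for $B$. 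Without this descent the rank induction cannot be completed. I do not see a counterexample to the descent itself, but it is not proved here and no easy repair is apparent; this obstruction is presumably why the paper avoids the rank induction altogether in favor of the $\bk$-invariant argument.
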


\begin{remark}
 If $A$ is free abelian of rank~$r$, then $\Z A\cong \Z\bigl[t_1^{\pm1},\ldots,t_r^{\pm1}\bigr]$ is a Laurent polynomial ring and the elements~$e_a$ are monomials $t_1^{n_1}\cdots t_r^{n_r}$, so Theorem~\ref{thm_more_gen} provides a sufficient condition for a module over a Laurent polynomial ring to be finitely generated as an abelian group.
\end{remark}

Theorem~\ref{thm_main_gen} is a direct consequence of Theorem~\ref{thm_more_gen}. Indeed, suppose that $M$ is a module that satisfies the conditions of Theorem~\ref{thm_main_gen}. Since $M$ is a finitely generated $S_{\fX}$-torsion $\Z A$-module, we see that $M$ is annihilated by an element of the form
\begin{equation*}
   (e_{a_1} -1)\cdots (e_{a_k} -1),
\end{equation*}
where $a_1,\ldots,a_k\in\fX$. Then, for each~$g\in D$, the element
\begin{equation*}
   (e_{g(a_1)} -1)\cdots (e_{g(a_k)} -1)
\end{equation*}
also annihilates~$M$. Condition~$(*)$ now follows from the subgroup displacement property.

We will apply Theorem~\ref{thm_main_gen} in the following situation.

\begin{cor}\label{cor_main_gen}
 Suppose that $K\triangleleft T\triangleleft \Gamma$ is a chain of normal subgroups such that $K$ is normal in~$\Gamma$, $T$ is finitely generated, and the group $A=T/K$ is free abelian. Assume that there is a subset $\fX\subset A\setminus\{0\}$ such that
 \begin{enumerate}
  \item the induced action of the group~$D=\Gamma/T$ on~$A$ has the subgroup displacement property with respect to~$\fX$,
  \item $K^{\ab}$ is an $S_{\fX}$-torsion $\Z A$-module (where the structure of a $\Z A$-module is provided by the action of~$T$ on~$K$ by conjugations).
 \end{enumerate}
 Then $K^{\ab}$ is a finitely generated abelian group.
\end{cor}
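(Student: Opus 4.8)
The plan is to deduce the corollary directly from Theorem~\ref{thm_main_gen} by exhibiting $M = K^{\ab}$ as a module over $\Z(A\rtimes D)$ satisfying the hypotheses there, together with the standard fact that a module over a Noetherian ring that is finitely generated as a module over that ring remains finitely generated when we extend scalars appropriately. First I would set up the module structures carefully. The conjugation action of $T$ on $K$ descends to an action of $T$ on $K^{\ab}$, and since $K$ acts trivially on its own abelianization (inner automorphisms act trivially on $G^{\ab}$ for any group $G$), this factors through an action of $A = T/K$ on $K^{\ab}$, making $K^{\ab}$ a $\Z A$-module; this is exactly the structure referred to in hypothesis~(2). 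Next, because $K$ is normal in $\Gamma$, the group $\Gamma$ acts on $K$ by conjugation, hence on $K^{\ab}$, and $T$ acts through $A$ as just described; one checks that the $\Z A$-module structure and the $\Gamma$-action are compatible in the sense that together they make $K^{\ab}$ into a module over the group ring $\Z(A\rtimes D)$, where the semidirect product structure records how $D = \Gamma/T$ acts on $A$ (this is the action appearing in hypothesis~(1)). More precisely, choosing a set-theoretic section of $\Gamma\to D$, the $\Gamma$-action on $K^{\ab}$ together with the relation $g\cdot(a\cdot x) = (g(a))\cdot(g\cdot x)$ for $g\in D$, $a\in A$, $x\in K^{\ab}$, assembles into the desired $\Z(A\rtimes D)$-module structure; the compatibility is a routine verification using that $T\triangleleft\Gamma$.

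The remaining point is that $K^{\ab}$ is \emph{finitely generated} as a $\Z A$-module. This is where finite generation of $T$ enters. I would argue as follows: since $T$ is finitely generated and $T/K = A$ is abelian (in particular $K \supseteq [T,T]$, so $T/K$ is a quotient of $T^{\ab}$, hence finitely generated, which we are given anyway), the Reidemeister--Schreier type argument — or, cleanly, the fact that a finitely generated group $T$ with $T/K$ polycyclic has $K/[K,K]$ finitely generated as a $\Z[T/K]$-module — applies. Concretely: $K$ is the kernel of $T \twoheadrightarrow A$, $A$ is a finitely generated abelian group hence polycyclic, and a standard result (going back to P.~Hall) states that for a finitely generated group $T$ and a polycyclic quotient $A = T/K$, the abelianization $K^{\ab}$ is a Noetherian $\Z A$-module, in particular finitely generated over $\Z A$. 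Alternatively one can see this directly: lift a finite generating set of $T$ to elements $t_1,\dots,t_m$; since $A$ is a finitely generated free abelian group, finitely many of the $t_i$ together with finitely many explicit commutators generate $K$ as a normal subgroup of $T$, and then the images of these finitely many elements generate $K^{\ab}$ as a $\Z A$-module.

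Once these structures are in place, the proof concludes quickly. By hypothesis~(2), $K^{\ab}$ is an $S_{\fX}$-torsion $\Z A$-module, and by the previous paragraph it is finitely generated over $\Z A$; by hypothesis~(1), the action of $D$ on $A$ has the subgroup displacement property with respect to $\fX$. Thus $M = K^{\ab}$ satisfies all the hypotheses of Theorem~\ref{thm_main_gen}, and we conclude that $K^{\ab}$ is finitely generated as an abelian group. I expect the main obstacle — the only nontrivial point — to be the verification that $K^{\ab}$ is finitely generated as a $\Z A$-module; the module-structure bookkeeping is straightforward but must be done carefully to make sure the $\Z(A\rtimes D)$-structure is the one whose restriction to $\Z A$ is the conjugation structure from~(2), so that Theorem~\ref{thm_main_gen} applies verbatim. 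Everything else is formal.
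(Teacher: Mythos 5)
Your proposal is correct and follows essentially the same route as the paper: the paper likewise deduces the corollary directly from Theorem~\ref{thm_main_gen}, noting (in the remark following the statement) that the only point requiring verification is that $K^{\ab}$ is finitely generated as a $\Z A$-module, which it attributes to the same standard Hall-type fact for a normal subgroup of a finitely generated group with nilpotent quotient (citing \cite[Theorem~8.3(a)]{ErHe18}). Your additional bookkeeping of the $\Z(A\rtimes D)$-structure is left implicit in the paper, and the mild imprecision there (the extension $\Gamma/K$ of $D$ by $A$ need not split, but all that is actually used is that conjugation by a lift of $g\in D$ carries an annihilator $(e_{a_1}-1)\cdots(e_{a_k}-1)$ of $K^{\ab}$ to $(e_{g(a_1)}-1)\cdots(e_{g(a_k)}-1)$) is shared with the paper and harmless.
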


\begin{remark}
 The only thing we need to check to deduce Corollary~\ref{cor_main_gen} from Theorem~\ref{thm_main_gen} is that $K^{\ab}$ is finitely generated as a $\Z A$-module (equivalently, as a $\Z T$-module). This fact for a normal subgroup $K$ of a finitely generated group~$T$ with an abelian, or even nilpotent, quotient~$T/K$ is well known, see, for instance, \cite[Theorem~8.3(a)]{ErHe18}.
\end{remark}

To deduce Theorems~\ref{thm_main_K} and~\ref{thm_main_OA} from Corollary~\ref{cor_main_gen} we need to check conditions~(1) and~(2) for the chains of subgroups
\begin{align*}
 \K_g^b\triangleleft\I_g^b\triangleleft\Mod(\Sigma_g^b)&,& g&\ge 3,\ b\in\{0,1\},\\
 [\IO_n,\IO_n]\triangleleft\IO_n\triangleleft\Out^+(F_n)&,& n&\ge 3,
\end{align*}
respectively. Here $\Out^+(F_n)$ is the index two subgroup of~$\Out(F_n)$ consisting of all automorphisms that act on~$F_n^{\ab}$ by an element of~$\SL_n(\Z)$.
In the cases of~$\K_g$ and~$[\IO_n,\IO_n]$ the subgroup displacement property holds for $\fX=A\setminus\{0\}$ and follows rather easily from the fact that in these cases the action of $D=\Sp_{2g}(\Z)$ (respectively, $D=\SL_n(\Z)$) on~$A$ extends to an irreducible rational representation of~$\Sp_{2g}(\bC)$ (respectively, $\SL_n(\bC)$) in~$A\otimes\bC$, see Section~\ref{section_SDP}. In the case of~$\K_g^1$, the $\Sp_{2g}(\bC)$-representation $A\otimes\bC$ is not irreducible, so the subgroup displacement property with respect to~$A\setminus\{0\}$ does not hold. Nevertheless, in Section~\ref{section_SDP} we will prove the subgroup displacement property with respect to an appropriate subset $\fX\subset A\setminus\{0\}$.

Next, we need to check the $S_{\fX}$-torsion condition. This condition for the groups~$(\K_g^b)^{\ab}$ and~$[\IO_n,\IO_n]^{\ab}$ will be provided by Propositions~\ref{propos_K_torsion} and~\ref{propos_IA_torsion}, respectively. The case of the groups~$\K_g^b$ is easy: it will  immediately follow from the fact that every twist about a separating simple closed curve commutes with a genus $1$ BP map. The case of~$[\IO_n,\IO_n]^{\ab}$ will require more substantial work.

\begin{remark}
 Our proof seems not to generalize to the group~$[\IA_3,\IA_3]^{\ab}$. Here, as for~$\K_g^1$, the $\SL_3(\bC)$-representation~$A\otimes \bC$ is reducible. Nevertheless, the subgroup displacement property can still be esteblished for an appropriately chosen subset~$\fX_1$. Moreover, the $S_{\fX_2}$-torsion property of~$[\IA_3,\IA_3]^{\ab}$ can also be proved for some reasonable subset~$\fX_2$. Unfortunately, these subsets~$\fX_1$ and~$\fX_2$ are distinct. The author has not managed to find any single subset~$\fX$ for which both the subgroup displacement property and the $S_{\fX}$-torsion property of~$[\IA_3,\IA_3]^{\ab}$ can be proved simultaneously.
\end{remark}

\subsection{Nilpotency property of~$\K_g^{\ab}$}\label{subs_nilpotence}

Let $G$ be a group, $R$ a commutative ring, and $I_G$ the augmentation ideal in~$RG$. A (left) $RG$-module~$M$ is called \textit{nilpotent} if $I_G^q\cdot M=0$ for some positive~$q$. The smallest~$q$ with this property is called the \textit{nilpotency index} of~$M$.

For $(T,K)$ being $(\I_g^b,\K_g^b)$ (with $g\ge3$), $(\IA_n,[\IA_n,\IA_n])$, or~$(\IO_n,[\IO_n,\IO_n])$ (with $n\ge3$), one can study the question of whether $K^{\ab}$ is a nilpotent $\Z A$-module, where $A=T/K$.
Note that nilpotency is a much more restrictive condition than being an $S_{\fX}$-torsion module. Indeed, being an $S_{\fX}$-torsion module only requires the existence of a relation of the form
\begin{equation*}
 (e_{a_1}-1)\cdots(e_{a_k}-1)M=0
\end{equation*}
with $a_i\in \fX$, while nilpotency implies that this relation holds for all~$a_i$'s, provided that $k$ is large enough. Thus, although Theorem~\ref{thm_main_gen} enables us to prove the finite generation of the group~$K^{\ab}$, it does not in any way imply the nilpotency of~$K^{\ab}$.

For $g\ge 4$, Dimca, Hain, and Papadima~\cite{DHP14} proved that the $\Q U_g$-module $H_1(\K_g,\Q)$ is nilpotent. Moreover, for $g\ge 6$, \cite[Theorem~C]{DHP14} and an explicit computation from~\cite{MSS20} yield that the nilpotency index of~$H_1(\K_g,\Q)$ is equal to~$2$. Ershov and He~\cite{ErHe18} esteblished the nilpotency of the $\Z U_g^b$-modules $(\K_g^b)^{\ab}$ for $g\ge12$ and $b\in\{0,1\}$ and the $\Z\bigl(\IA_n^{\ab}\bigr)$-modules $[\IA_n,\IA_n]^{\ab}$ for $n\ge 4$. Note, however, that the proof from~\cite{CEP22} of the fact that $\K_g^b$ is finitely generated for all $g\ge 4$ uses another method, which does not give the nilpotency of~$(\K_g^b)^{\ab}$. So the question of whether $(\K_g^b)^{\ab}$ is a nilpotent $\Z U_g^b$-module is still open for $3\le g\le 11$.

Our next result esteblishes a certain form of nilpotency property of~$\K_g^{\ab}$ for all $g\ge 3$. However, we will prove a weaker statement: nilpotency will be proved not over the ring~$\Z U_g$ itself but over its subring~$\Z V$, where $V$ is a certain finite index subgroup of~$U_g$. This result seems to be new not only for $g=3$ but also for all $g<12$, so we formulate it for an arbitrary genus. We set $$V_g=2(g-1)(g-2)U_g.$$ Then $V_g$ is a subgroup of~$U_g$ of index
$
\bigl(2(g-1)(g-2)\bigr)^{\binom{2g}{3}-2g}.
$

\begin{thm}\label{thm_nilpotent}
 Suppose that $g\ge 3$. Then $\K_g^{\ab}$ is a nilpotent $\Z V_g$-module of nilpotency index no greater than $32g^2-104g+65$.
\end{thm}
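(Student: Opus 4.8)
The plan is to upgrade the $S_{\fX}$-torsion statement that underlies Theorem~\ref{thm_main_K} (with $\fX=U_g\setminus\{0\}$, as indicated in the text) into a genuine nilpotency statement over the subring $\Z V_g$, by being explicit about which products of factors $e_a-1$ annihilate $\K_g^{\ab}$ and then averaging over the symmetry group. Concretely, I would first pin down, in the spirit of Proposition~\ref{propos_K_torsion}, a short explicit product $(e_{a_1}-1)\cdots(e_{a_\ell}-1)$ killing $\K_g^{\ab}$: since every Dehn twist about a separating simple closed curve commutes with a genus~$1$ BP map $T_\gamma T_\delta^{-1}$, and the Johnson image $\tau(T_\gamma T_\delta^{-1})\in U_g$ of such a BP map is a specific primitive vector, each such BP map contributes one factor $e_{a}-1$ (with $a$ the corresponding element of $U_g$) that annihilates the class in $\K_g^{\ab}$ of any separating twist lying in its centralizer. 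Covering all separating twists by finitely many genus~$1$ BP maps — here I expect to need on the order of two such maps per handle, giving the announced bound — yields a product of some small length $\ell=\ell(g)$ (linear in $g$) of factors $e_{a}-1$ that annihilates $\K_g^{\ab}$.

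The second step is to convert this single annihilating product into an inclusion $I_{V_g}^{\,q}\cdot\K_g^{\ab}=0$. The key observation is that if $(e_{a_1}-1)\cdots(e_{a_\ell}-1)$ annihilates $\K_g^{\ab}$, then so does $(e_{h(a_1)}-1)\cdots(e_{h(a_\ell)}-1)$ for every $h\in\Sp_{2g}(\Z)$, by $\Sp_{2g}(\Z)$-equivariance of the Johnson homomorphism and of the conjugation action. Working over $\Z V_g$ rather than $\Z U_g$ lets me use the following elementary fact: for any $v\in V_g=2(g-1)(g-2)U_g$, the element $e_v-1$ lies in the $\Z$-span of the elements $e_{h(a_i)}-1$ modulo $I_{U_g}^2$ — indeed $e_v-1\equiv v \pmod{I_{U_g}^2}$ under the standard identification $I_{U_g}/I_{U_g}^2\cong U_g$, and the $\Sp_{2g}(\Z)$-orbit of each primitive $a_i$ together with the divisibility built into the definition of $V_g$ suffices to express every element of $V_g$ as a $\Z$-linear combination of the $h(a_i)$'s. (This is exactly the role of the factor $2(g-1)(g-2)$: it is the index needed so that the relevant orbit sums generate the sublattice.) Feeding finitely many translates of the annihilating product into one big product and expanding, one gets that a suitable power $I_{V_g}^{\,q}$ annihilates $\K_g^{\ab}$, with $q$ controlled by $\ell$ and by how many orbit elements are needed to span; tracking the constants gives $q\le 32g^2-104g+65$.

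The main obstacle, and where the bulk of the work lies, is the bookkeeping in these two steps: producing an \emph{explicit} and economical covering of all separating twists by genus~$1$ BP maps whose Johnson images are controlled (Step~1), and then carrying out the linear algebra over the lattice $U_g$ — expressing the generators of $V_g$ as integer combinations of $\Sp_{2g}(\Z)$-translates of a fixed primitive vector, and bounding the number of factors needed — carefully enough to land on the stated quadratic-in-$g$ nilpotency index (Step~2). The conceptual input is light (commuting BP maps plus equivariance plus the standard filtration $I_{U_g}/I_{U_g}^2\cong U_g$), so the theorem is essentially a quantitative refinement of the machinery already set up; the delicate part is purely the optimization of constants, and in particular making sure the index $2(g-1)(g-2)$ is exactly what is forced by the orbit-spanning argument.
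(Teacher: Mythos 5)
There is a genuine gap, and it is precisely the one the paper warns about in Subsection~\ref{subs_nilpotence}: being an $S_{\fX}$-torsion module only means that \emph{some} product $(e_{a_1}-1)\cdots(e_{a_\ell}-1)$ with specifically chosen $a_i$ annihilates the module, whereas nilpotency requires $I_{V_g}^{\,q}\cdot\K_g^{\ab}=0$, i.e.\ that \emph{every} length-$q$ product of augmentation-ideal elements annihilates it. Your Step~1 (commuting genus~1 BP maps, equivariance under $\Sp_{2g}(\Z)$) reproduces exactly the torsion statement of Proposition~\ref{propos_K_torsion}, and your Step~2 cannot bridge the gap: knowing that $(e_{a_1}-1)\cdots(e_{a_\ell}-1)$ and all its $\Sp_{2g}(\Z)$-translates kill $M$ gives no control over $(e_v-1)^q$ for a general $v\in V_g$. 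The congruence $e_v-1\equiv\sum n_i\bigl(e_{h_i(a_i)}-1\bigr)\pmod{I^2}$ is only linear information; expanding $(e_v-1)^q$ produces cross-terms such as $(e_{h(a_1)}-1)(e_{h'(a_1)}-1)\cdots$ involving repeated or mismatched factors, and none of these are known annihilators. A toy counterexample: $M=\Z A/\bigl((e_a-1)(e_b-1)\bigr)$ is killed by $(e_a-1)(e_b-1)$ but by no power of $(e_a-1)$ alone, so no amount of orbit-averaging of the single relation yields nilpotency.

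The missing ingredient is the paper's Theorem~\ref{thm_explicit}: the relation $(e_{u_L}+1)(e_{u_L}-1)^5\cdot[T_\gamma]=0$, which gives a bounded \emph{power of a single factor} annihilating $[T_\gamma]$, for every rank-$3$ isotropic summand $L$ compatible with the splitting $H=P_1\oplus P_2$ determined by $\gamma$. This is the hard part of the argument (Sections~\ref{section_equations}--\ref{section_proof_explicit}, via lantern relations and a quasipolynomiality analysis of the functions $F_i$), and it cannot be extracted from the commutation of $T_\gamma$ with disjoint BP maps. Given Theorem~\ref{thm_explicit}, the actual proof runs: apply it to a basis $w_1,\ldots,w_r$ of a subgroup $W_\gamma\subset U_g$ with $U_\gamma\cap W_\gamma=\{0\}$, pass to $v_i=2w_i$ so that $(e_{v_i}-1)^5\cdot[T_\gamma]=0$, note $(e_u-1)\cdot[T_\gamma]=0$ for $u\in U_\gamma=\tau(\I_\gamma)$, verify the lattice inclusion $V_g\subset U_\gamma\oplus V_\gamma$ (this, not an orbit-spanning argument, is where the factor $2(g-1)(g-2)$ comes from: $2mU_g\subset U_\gamma\oplus V_\gamma$ with $m=\mathrm{lcm}(g_1,g_2)$), and finally invoke Johnson's result that $\K_g$ is generated by separating twists of genus~$1$ and~$2$. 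The quadratic bound $32g^2-104g+65$ is $4r+1$ with $r=\rk V_\gamma$ quadratic in $g$, not a product of a linear covering length with an orbit count.
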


In the most important to us case $g=3$, we can replace the subgroup~$V_3=4U_3$ with a larger subgroup~$\tV_3$ of~$U_3$. To define this subgroup, consider the contraction homomorphism $C_{\mathrm{mod}\,2}\colon U_3\twoheadrightarrow H\otimes (\Z/2\Z)$ given by
$$
C_{\mathrm{mod}\,2}(x\wedge y\wedge z) = (x\cdot y)z+(y\cdot z)x+(z\cdot x)y\mod 2
$$
This homomorphism is well defined on~$U_3$. (For an arbitrary~$g$, the contraction homomorphism on~$U_g$ is well defined modulo~$g-1$, see~\cite[Lemma~7B]{Joh80b}.)
We set $\tV_3=2\ker C_{\mathrm{mod}\,2}$. Since $\rk(H)=6$ and $\rk(U_3)=14$, we have $|U_3:\ker C_{\mathrm{mod}\,2}|=2^6$, $|\ker C_{\mathrm{mod}\,2}:\tV_3|=2^{14}$, and thus $|U_3: \tV_3|=2^{20}$.

\begin{addthm}
 $\K_3^{\ab}$ is a nilpotent $\Z \tV_3$-module of nilpotency index no greater than~$41$.
\end{addthm}

\begin{remark}
 Theorem~\ref{thm_nilpotent} allows to give explicit, though enormous, bounds on the number of generators of~$\K_3^{\ab}$ and on the dimension of~$H_1(\K_3,\Q)$, see Theorem~\ref{thm_quant}.
\end{remark}

\begin{remark}
 Surprisingly, our proof of Theorem~\ref{thm_nilpotent} apparently cannot be extended to the case of the group~$\K_g^1$, see Remark~\ref{remark_not_Kg1}.
\end{remark}

The proof of Theorem~\ref{thm_nilpotent} does not rely on Theorems~\ref{thm_main_K} or~\ref{thm_main_gen} in any way. On the contrary, Theorem~\ref{thm_nilpotent} provides an alternative proof of Theorem~\ref{thm_main_K} in the case $b=0$, since it is well-known that nilpotency of a finitely generated $\Z A$-module implies its finite generation as an abelian group.

The proof of Theorem~\ref{thm_nilpotent} relies on a specific relation in~$(\K_g^b)^{\ab}$, which seems to be interesting in its own right. Suppose that $L\subset H$ is a subgroup of rank~$3$. Then we denote by~$u_L$ the image of the generator of the top exterior power~$\exter^3L$ under the map
$
\exter^3L\to U_g^b
$
induced by the inclusion $L\subset H$.
We have
$
u_L=a_1\wedge a_2\wedge a_3,
$
where $a_1,a_2,a_3$ is a basis of~$L$. The element~$u_L$ is defined up to a sign, which will usually be unimportant to us.

We denote by~$T_{\gamma}$ the Dehn twist about a simple closed curve~$\gamma$.

\begin{thm}\label{thm_explicit}
Let $\gamma$ be a  separating simple closed curve on~$\Sigma_g^b$, where $g\ge 3$ and $b\in\{0,1\}$, such that $\gamma$ is not homotopic to either a point or a boundary component, and let $H=P_1\oplus P_2$ be the corresponding decomposition in homology. Suppose that $L$ is a subgroup of~$H$ such that
\begin{itemize}
 \item $\rk L=3$,
 \item $L$ is an isotropic direct summand of~$H$,
 \item $L=(L\cap P_1)\oplus(L\cap P_2)$ and both subgroups~$L\cap P_1$ and~$L\cap P_2$ are nontrivial.
\end{itemize}
Then the following identity holds in~$(\K_g^b)^{\ab}$:
 \begin{equation}\label{eq_main_nilpot}
  (e_{u_L}+1)(e_{u_L}-1)^5\cdot [T_{\gamma}]=0.
 \end{equation}
\end{thm}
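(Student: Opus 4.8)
The plan is to realize the relation \eqref{eq_main_nilpot} as the image in $(\K_g^b)^{\ab}$ of a suitable \emph{lantern-type relation} (or, more likely, a chain of commutator relations) among Dehn twists lying in the Johnson kernel. The key geometric input is that the hypotheses on $L$ force the existence of a configuration of separating curves whose associated twists are, on one hand, conjugates of $T_\gamma$ by mapping classes lying in the Torelli group $\I_g^b$, and, on the other hand, satisfy a single multiplicative relation in the Johnson kernel. The three conditions on $L$ — rank $3$, isotropic direct summand, and $L=(L\cap P_1)\oplus(L\cap P_2)$ with both pieces nontrivial — are exactly what is needed so that: (i) $u_L$ is a primitive element of $U_g^b$ on which the Johnson homomorphism can be evaluated; and (ii) one can find a genus-$1$ BP (bounding pair) map, or a product of two twists about separating curves, that simultaneously interacts with $\gamma$ and whose $\tau$-image (equivalently, whose action on $\K_g^{\ab}$ as an element $e_{u_L}\in\Z U_g$) is governed by $u_L$.

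First I would set up the $\Z U_g^b$-module structure on $(\K_g^b)^{\ab}$ coming from the conjugation action of $\I_g^b$, so that for $\varphi\in\I_g^b$ with $\tau(\varphi)=v$ and $\psi\in\K_g^b$ one has $[\varphi\psi\varphi^{-1}] = e_v\cdot[\psi]$ in $(\K_g^b)^{\ab}$. Then I would pick, using the decomposition $L=(L\cap P_1)\oplus(L\cap P_2)$, explicit simple closed curves realizing a symplectic basis adapted to $P_1\oplus P_2$ and to $L$, and construct a separating curve $\delta$ (or a pair of curves) positioned relative to $\gamma$ so that the twist $T_\delta\in\K_g^b$ and an element $\varphi\in\I_g^b$ with $\tau(\varphi)=\pm u_L$ are available, with $\varphi$ built from twists supported near $\gamma$. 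The heart of the argument is then a purely mapping-class-group identity: I expect a relation of the form $(\varphi\, T_\gamma\, \varphi^{-1})\cdot(\text{lower-order terms}) = (\text{something trivial in }(\K_g^b)^{\ab})$, which upon abelianizing and repeatedly applying the operator $e_{u_L}$ collapses to a polynomial identity $p(e_{u_L})\cdot[T_\gamma]=0$. One then identifies the polynomial $p$ — the claim is $p(t)=(t+1)(t-1)^5$, degree $6$ — by bookkeeping of how many times each elementary move contributes.

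The main obstacle, and where the real work lies, is producing the exact degree-$6$ polynomial $(e_{u_L}+1)(e_{u_L}-1)^5$ rather than some cruder annihilator. A single commutation $[T_\gamma, T_\delta]=1$ only gives that $(e_v - 1)[T_\gamma]=0$ for the specific $v=\tau$ of a genus-$1$ BP map commuting with $T_\gamma$; to get a statement purely in terms of $e_{u_L}$ one must chase how $u_L$ decomposes as a sum of such BP-classes relative to both summands $P_1$ and $P_2$, and combine several such relations. I anticipate the bound $5$ (and the extra factor $e_{u_L}+1$) emerges from a Jacobi-type or Witt-Hall-type identity in the free nilpotent quotient $\pi/\gamma_3\pi$, i.e.\ from the structure of the target $U_g^b=\exter^3 H$ (or its quotient) together with the relation $u_L\wedge H$-type degeneracies when $b=0$. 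A careful choice of curves — presumably the one from the hypotheses forcing $L\cap P_i\neq 0$ for both $i$ — is what keeps the degree as low as $5$; relaxing that hypothesis would presumably raise it. Concretely I would: (1) reduce, by the change-of-coordinates principle and the $\Sp$-equivariance of $\tau$, to a single model configuration depending only on $(\rk(L\cap P_1),\rk(L\cap P_2))\in\{(1,2),(2,1)\}$; (2) in that model, write down explicit twists and verify the MCG relation; (3) abelianize and extract $p$. Step (2) is the crux; steps (1) and (3) are routine given the module structure already recorded in the paper.
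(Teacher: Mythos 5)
Your proposal is a plan rather than a proof: the step you yourself identify as ``the crux'' --- writing down the explicit configuration and verifying the mapping-class-group relation that produces the degree-$6$ annihilator --- is exactly the content of the theorem, and it is left entirely open. Moreover, your guess at where the exponent $5$ comes from (a Jacobi or Witt--Hall identity in $\pi/\gamma_3\pi$, or a decomposition of $u_L$ into $\tau$-classes of BP maps commuting with $T_\gamma$) does not match the actual mechanism and, in the second form, cannot work. Since $L$ is isotropic, $u_L$ is the $\tau$-image of a \emph{commutator of a simply intersecting pair} (Proposition~\ref{propos_csip}), not of a BP map; and the BP maps (or any Torelli elements) commuting with $T_\gamma$ have $\tau$-images lying in $\exter^3P_1\oplus\exter^3P_2$, whereas the hypothesis that both $L\cap P_1$ and $L\cap P_2$ are nontrivial forces $u_L$ to be a ``mixed'' monomial outside that subgroup. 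So no combination of straight commutation relations of the type $(e_v-1)[T_\gamma]=0$ can reach $e_{u_L}$; this obstruction is precisely why the low-degree annihilator is nontrivial to produce.

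The paper's route is quite different from what you sketch. It fixes a sphere with six holes containing $\gamma$ and curves $\xi_i,\eta_i,\zeta_i$, uses three lantern relations to write $T_\gamma$ as $z_3^{-1}y_2^{-1}x_3^{-1}x_2^{-1}$ times central twists (Proposition~\ref{propos_delta}), and sets $c=(z_1,y_3)$ with $\tau(c)=u_L$. It then introduces elements $s_i(t,\bm,\bn)\in\K_g^b$ built from conjugates of commutators $(y_j^{n_j}y_k^{n_k},z_i)$ by $c^t\bz^{\bm}$, and regards their classes $F_i$ as functions $\Z^7\to(\K_g^b)^{\ab}$. Commutativity of the disjoint twists translates into functional equations whose analysis (Sections~\ref{section_equations}--\ref{section_quasi}) shows each $F_i$ is a quasipolynomial of degree $\le3$ in $t$, i.e.\ is killed by $(e_{u_L}+1)(e_{u_L}-1)^4$. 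A final explicit relation gives $(e_{u_L}-1)\cdot[T_\gamma]$ as a combination of values of $F_2$, whence the product $(e_{u_L}+1)(e_{u_L}-1)^5$. None of this functional-equation machinery, nor the specific curve configuration, nor the role of the simply-intersecting-pair commutator, appears in your proposal, so the gap is not a matter of routine verification but of the central idea.
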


To prove Theorem~\ref{thm_explicit}, we will study functions $F\colon\Z\to(\K_g^b)^{\ab}$ of the form
$$
F(t)=\left[c^tzc^{-t}\right],
$$
where $c\in\I_g^b$ and~$z\in\K_g^b$ are certain specific fixed elements, and some more complicated similar functions of several integer variables. Then we employ simple geometric relations in the mapping class group: commutativity of twists about disjoint curves and lantern relations, to derive functional equations for such functions. Solving these equations, we arrive at relation~\eqref{eq_main_nilpot}.

\subsection{Application to the second homology}

Alongside with the abelianizations of the groups~$\K_g^b$, $[\IO_n,\IO_n]$, and $[\IA_n,\IA_n]$, one can study their higher homology groups. Very little is known about the homology groups $H_k(\K_g,\Z)$ for $g\ge3$ and~$k\ge 2$. Bestvina, Bux, and Margalit~\cite{BBM10} showed that the cohomological dimension of~$\K_g$ is equal to~$2g-3$, and then the author~\cite{Gai22} proved that the top homology group $H_{2g-3}(\K_g,\Z)$ is not finitely generated, see also the paper~\cite{Spi24} of Spiridonov, who computed explicitly an infinite rank subgroup of~$H_{2g-3}(\K_g,\Z)$ generated by abelian cycles. For intermediate dimensions $1<k<2g-3$, nothing is known about whether the groups $H_k(\K_g,\Z)$ are finitely generated. Apparently, even less is known about the homology groups $H_k([\IO_n,\IO_n],\Z)$ and $H_k([\IA_n,\IA_n],\Z)$ for $k\ge 2$.  A classical result by Krsti\'c and McCool~\cite{KrMC97} states that $\IA_3$ is not finitely presented. Further, Bestvina, Bux, and  Margalit~\cite{BBM07} proved  that the cohomological dimension of~$\IO_n$ is equal to~$2n-4$ and the top homology group $H_{2n-4}(\IO_n,\Z)$ is not finitely generated, and moreover, contains a free abelian subgroup of infinite rank. Nevertheless, the author is unaware of any results concerning the finite or infinite generation of the $k$th homology groups (with $k>1$) of the commutator subgroups of~$\IA_n$ or~$\IO_n$.

Theorem~\ref{thm_main_OA} yields the following corollary.

\begin{cor}\label{cor_IO_H2}
 The group $H_2([\IO_3,\IO_3],\Z)$ contains a free abelian subgroup of infinite rank. Moreover, the group of coinvariants $H_2([\IO_3,\IO_3],\Z)_{\IO_3}$ also contains a free abelian subgroup of infinite rank.
\end{cor}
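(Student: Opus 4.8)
\emph{Proof proposal for Corollary~\ref{cor_IO_H2}.}
The plan is to deduce the corollary from Theorem~\ref{thm_main_OA} by means of the Lyndon--Hochschild--Serre spectral sequence of the group extension
\begin{equation*}
 1\longrightarrow K\longrightarrow G\longrightarrow Q\longrightarrow 1,
\end{equation*}
where $G=\IO_3$, $K=[\IO_3,\IO_3]$, and $Q=\IO_3^{\ab}=G/K$; here $Q$ is a finitely generated abelian group, since $\IO_3$ is finitely generated~\cite{Nie24}. The spectral sequence is
\begin{equation*}
 E^2_{p,q}=H_p\bigl(Q,H_q(K,\Z)\bigr)\ \Longrightarrow\ H_{p+q}(G,\Z),
\end{equation*}
and the second ingredient is the theorem of Bestvina, Bux, and Margalit~\cite{BBM07}, which asserts that $H_2(\IO_3,\Z)$ contains a free abelian subgroup of infinite rank, so in particular has infinite rank.

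The first step is to observe that the two entries $E^2_{2,0}=H_2(Q,\Z)$ and $E^2_{1,1}=H_1(Q,K^{\ab})$ are finitely generated abelian groups: a finitely generated abelian group is of type $FP_\infty$, so $H_p(Q,M)$ is finitely generated for every finitely generated $\Z Q$-module $M$, and $K^{\ab}$ is finitely generated by Theorem~\ref{thm_main_OA}. Consequently the subquotients $E^\infty_{2,0}$ and $E^\infty_{1,1}$ of $H_2(G,\Z)$ have finite rank. Since the associated graded of the filtration of $H_2(G,\Z)=H_2(\IO_3,\Z)$ is $E^\infty_{2,0}\oplus E^\infty_{1,1}\oplus E^\infty_{0,2}$ and $H_2(\IO_3,\Z)$ has infinite rank, the remaining piece $E^\infty_{0,2}$ must have infinite rank. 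As $E^\infty_{0,2}$ is a subquotient of $E^2_{0,2}=H_0\bigl(Q,H_2(K,\Z)\bigr)=H_2(K,\Z)_Q$, the group $H_2(K,\Z)_Q$ has infinite rank; and since $H_2(K,\Z)_Q$ is a quotient of $H_2(K,\Z)$, the latter has infinite rank as well.

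It then remains to identify $H_2(K,\Z)_Q$ with the group of coinvariants appearing in the statement and to upgrade ``infinite rank'' to ``contains a free abelian subgroup of infinite rank''. The conjugation action of $G$ on $H_2(K,\Z)$ factors through $Q=G/K$, because $K$ acts on itself by inner automorphisms and these act trivially on homology; hence $H_2([\IO_3,\IO_3],\Z)_{\IO_3}=H_2(K,\Z)_Q$, which we have just shown has infinite rank. Finally, any abelian group $A$ of infinite rank contains a free abelian subgroup of infinite rank: choose countably many elements of $A$ whose images in $A\otimes_{\Z}\Q$ are linearly independent, and note that a nontrivial $\Z$-linear relation among these elements would descend to a nontrivial $\Q$-linear relation among their images. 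Applying this to $H_2([\IO_3,\IO_3],\Z)$ and to $H_2([\IO_3,\IO_3],\Z)_{\IO_3}$ yields both assertions of the corollary.

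I do not expect a serious obstacle here: granted Theorem~\ref{thm_main_OA}, the argument is purely formal, and its entire content is that the finite generation of $[\IO_3,\IO_3]^{\ab}$ forces all the infinite rank of $H_2(\IO_3,\Z)$ to concentrate in the single spectral-sequence entry $H_2(K,\Z)_Q$. The only points that need a little care are the bookkeeping showing that $E^\infty_{2,0}$ and $E^\infty_{1,1}$ really do have finite rank (for which one uses that finitely generated abelian groups are of type $FP_\infty$), the identification of the $G$-action on $H_2(K,\Z)$ with one factoring through $Q$, and the last, standard passage from ``infinite rank'' to the existence of an honest free abelian subgroup of infinite rank.
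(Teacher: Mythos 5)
Your argument is correct and is essentially the paper's own proof: the same Lyndon--Hochschild--Serre spectral sequence for $1\to[\IO_3,\IO_3]\to\IO_3\to\IO_3^{\ab}\to1$ (the paper identifies $\IO_3^{\ab}$ with $W_3\cong\Z^6$ via Formanek's sequence~\eqref{eq_exact_IA}), with Theorem~\ref{thm_main_OA} killing $E^\infty_{1,1}$ and $E^\infty_{2,0}$ and the Bestvina--Bux--Margalit result forcing the infinite rank into $E^\infty_{0,2}$, a quotient of $H_2([\IO_3,\IO_3],\Z)_{\IO_3}$. The only (harmless) cosmetic differences are that you invoke $FP_\infty$ instead of computing $H_2(W_3,\Z)\cong\Z^{15}$, and that you track ranks and upgrade to a free abelian subgroup at the end rather than propagating the free abelian subgroup directly.
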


\begin{proof}
For $n=3$, the lower row of~\eqref{eq_exact_IA} is the short exact sequence
$$
1\longrightarrow [\IO_3,\IO_3]\longrightarrow \IO_3\longrightarrow W_3\longrightarrow 1,
$$
where $W_3=(H^*\otimes\exter^2H)/\iota(H)\cong \Z^6$. Let
$$
E^2_{p,q}=H_p(W_3,H_q([\IO_3,\IO_3],\Z)) \Longrightarrow H_{p+q}(\IO_3,\Z)
$$
be the homological Lyndon--Hochschild--Serre spectral sequence corresponding to this short exact sequence. By Theorem~\ref{thm_main_OA} we have that the group $H_1([\IO_3,\IO_3],\Z)$ is finitely generated. Therefore, the group $E^2_{1,1}$, and hence the group~$E^{\infty}_{1,1}$, is finitely generated. Moreover,  $E_{2,0}^2=H_2(W_3,\Z)\cong \Z^{15}$. Consequently, the group $E^{\infty}_{2,0}$ is finitely generated. The groups~$E^{\infty}_{0,2}$, $E^{\infty}_{1,1}$, and~$E^{\infty}_{2,0}$ are associated factors for a filtration in the group~$H_2(\IO_3,\Z)$, which contains a free abelian subgroup of infinite rank by the above mentioned result from~\cite{BBM07}. Thus, the group $E^{\infty}_{0,2}$ contains a free abelian subgroup of infinite rank. The corollary follows, since  $E^{\infty}_{0,2}$ is a quotient of $E^2_{0,2}=H_2([\IO_3,\IO_3],\Z)_{\IO_3}$.
\end{proof}

It is a long-standing question whether the groups~$H_2(\I_g^b,\Z)$ for $g\ge 3$ and $b\in\{0,1\}$ are finitely generated. Recently Minahan~\cite{Min23} proved that $H_2(\I_g^b,\Q)$ is finite-dimensional for $g\ge 51$, and then Minahan and Putman~\cite{MiPu25} extended this result to all $g\ge 5$.  In genus~$3$, the question of whether the groups~$H_2(\I_3^b,\Z)$ are finitely generated is completely open, though the author~\cite{Gai21} has provided some arguments towards a possible negative answer. So we can only obtain the following conditional analogue of Corollary~\ref{cor_IO_H2}; its proof is literally the same.

\begin{cor}
 Suppose that $b\in\{0,1\}$. Then the group $H_2(\I_3^b,\Z)$ is finitely generated if and only if the group $H_2(\K_3^b,\Z)_{\I_3^b}$ is finitely generated.
\end{cor}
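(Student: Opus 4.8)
The plan is to run, for the short exact sequence
$$1 \longrightarrow \K_3^b \longrightarrow \I_3^b \longrightarrow U_3^b \longrightarrow 1,$$
the homological Lyndon--Hochschild--Serre spectral sequence
$$E^2_{p,q} = H_p\bigl(U_3^b, H_q(\K_3^b,\Z)\bigr) \Longrightarrow H_{p+q}(\I_3^b,\Z),$$
exactly as in the proof of Corollary~\ref{cor_IO_H2}, but now tracking both implications. Here $U_3^b$ is a finitely generated free abelian group (of rank $20$ for $b=1$ and $14$ for $b=0$), and $H_2(\I_3^b,\Z)$ is a successive extension of the three groups $E^\infty_{0,2}$, $E^\infty_{1,1}$, $E^\infty_{2,0}$, so the whole question reduces to controlling these three terms.

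First I would check that the two flanking terms are finitely generated unconditionally. We have $E^2_{p,0}=H_p(U_3^b,\Z)\cong\exter^p U_3^b$, which is finitely generated for every $p$. Next, by Theorem~\ref{thm_main_K} the coefficient group $H_1(\K_3^b,\Z)=(\K_3^b)^{\ab}$ is a finitely generated abelian group; since the homology of a finitely generated free abelian group with coefficients in any finitely generated abelian group is finitely generated (compute it via the Koszul complex, whose terms are finite direct sums of copies of the coefficient group), it follows that $E^2_{p,1}=H_p\bigl(U_3^b,H_1(\K_3^b,\Z)\bigr)$ is finitely generated for every $p$. Hence $E^\infty_{2,0}$ and $E^\infty_{1,1}$, being subquotients of $E^2_{2,0}$ and $E^2_{1,1}$, are finitely generated, and consequently $H_2(\I_3^b,\Z)$ is finitely generated if and only if $E^\infty_{0,2}$ is finitely generated.

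It then remains to compare $E^\infty_{0,2}$ with $E^2_{0,2}=H_0\bigl(U_3^b,H_2(\K_3^b,\Z)\bigr)=H_2(\K_3^b,\Z)_{\I_3^b}$, where the last identification uses that $\K_3^b$ acts trivially on its own homology, so the $\I_3^b$-coinvariants agree with the $U_3^b$-coinvariants. The only possibly nonzero differentials touching the corner $(0,2)$ are $d_2\colon E^2_{2,1}\to E^2_{0,2}$ and $d_3\colon E^3_{3,0}\to E^3_{0,2}$, and their sources are finitely generated by the previous paragraph ($E^3_{3,0}$ being a subquotient of $E^2_{3,0}\cong\exter^3U_3^b$). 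Thus $E^\infty_{0,2}$ is obtained from $E^2_{0,2}$ by passing to a quotient by a finitely generated subgroup, so $E^\infty_{0,2}$ is finitely generated if and only if $E^2_{0,2}=H_2(\K_3^b,\Z)_{\I_3^b}$ is. Chaining the two equivalences yields the corollary.

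I expect no essential obstacle here: the entire content is Theorem~\ref{thm_main_K}, which is already in hand, and the rest is routine spectral-sequence bookkeeping. The only point that needs to be stated with any care is the elementary fact that the group homology of a finitely generated free abelian group with finitely generated coefficients is again finitely generated — this is exactly what guarantees that the flanking terms $E^\infty_{1,1}$, $E^\infty_{2,0}$ and the discrepancy $E^2_{0,2}/E^\infty_{0,2}$ are all finitely generated, and hence that $H_2(\I_3^b,\Z)$ and $H_2(\K_3^b,\Z)_{\I_3^b}$ are finitely generated simultaneously.
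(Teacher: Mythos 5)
Your proof is correct and follows exactly the route the paper intends: the paper states that this corollary is proved "literally the same" way as Corollary~\ref{cor_IO_H2}, i.e.\ via the Lyndon--Hochschild--Serre spectral sequence of $1\to\K_3^b\to\I_3^b\to U_3^b\to 1$, using Theorem~\ref{thm_main_K} to control $E^2_{1,1}$ and the finite generation of $H_*(U_3^b,\Z)$ to control the remaining flanking terms and differentials. Your explicit tracking of both implications and the Koszul-complex justification for the finite generation of $H_p(U_3^b,M)$ are exactly the routine bookkeeping the paper leaves implicit.
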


\subsection{Organization of the paper}

In Section~\ref{section_gen} we prove Theorem~\ref{thm_more_gen}, which provides a general sufficient condition for a finitely generated $\Z A$-module, where $A$ is a finitely generated abelian group, to be itself finitely generated as an abelian group. Section~\ref{section_Johnson} contains necessary preliminary information on the Johnson homomorphisms, both for mapping class groups and for automorphisms of free groups. In Section~\ref{section_SDP} we esteblish the subgroup displacement property for the groups under consideration, that is, for the actions of~$\Sp_{2g}(\Z)$ on~$U_g^1$ and~$U_g$ and for the actions of~$\SL_n(\Z)$ on~$\IA^{\ab}_n$ and~$\IO^{\ab}_n$.

Now, to prove Theorems~\ref{thm_main_K} and~\ref{thm_main_OA} we suffice to show that $(\K_g^b)^{\ab}$ for $g\ge3$ and $[\IO_n,\IO_n]^{\ab}$ for $n\ge3$ are $S_{\fX}$-torsion modules for appropriate subsets~$\fX$. This will be done in Sections~\ref{section_tors_prelim}--\ref{section_proof_IO}. Namely, in Section~\ref{section_tors_prelim} we prove several general lemmas allowing us to show that some elements of modules are $S_{\fX}$-torsion. In Section~\ref{section_proof_K} we esteblish the property of being $S_{\fX}$-torsion for the module $(\K_g^b)^{\ab}$ and then prove Theorem~\ref{thm_main_K}. Similarly, in Section~\ref{section_proof_IO} we esteblish the property of being $S_{\fX}$-torsion for the modules $[\IO_n,\IO_n]^{\ab}$ and $[\IA_n,\IA_n]^{\ab}$ and then prove Theorem~\ref{thm_main_OA}.

In Sections~\ref{section_equations}--\ref{section_proof_explicit} we prove  Theorem~\ref{thm_explicit}. Section~\ref{section_equations} is auxiliary. In it we consider functions $f\colon A\to B$, where $A$ is a free abelian group of finite rank and $B$ is an arbitrary abelian group, define an important for us class of quasipolynomial functions $\Z\to B$, and then study and  partially solve four specific functional equations. In Section~\ref{section_config} we construct a special configuration of simple closed curves on~$\Sigma_g^b$ that will play a key role in our proof of Theorem~\ref{thm_explicit}. In Section~\ref{section_sF} we introduce certain special elements $s_i(t,\bm,\bn)\in\K_g^b$, where $i\in\{1,2,3\}$, $t\in \Z$, $\bm,\bn\in\Z^3$; then we define functions $F_i\colon\Z^7\to(\K_g^b)^{\ab}$ by $F_i(t,\bm,\bn)=[s_i(t,\bm,\bn)]$ and use geometric relations in the mapping class group to derive functional equations on~$F_i$. Further, in Section~\ref{section_quasi} we study these functional equations using the results from Section~\ref{section_equations} and esteblish the quasipolynomial property for~$F_i$. Finally, in Section~\ref{section_proof_explicit}, we prove Theorem~\ref{thm_explicit}.

In Sections~\ref{section_subgroups} and~\ref{section_proof_nilpotent} we deduce Theorem~\ref{thm_nilpotent} from Theorem~\ref{thm_explicit} and also obtain explicit, albeit enormous, bounds on the number of generators of~$\K_3^{\ab}$ and on the dimension of~$H_1(\K_3,\Q)$. Section~\ref{section_open} contains some remarks and open questions.

\smallskip

The author is grateful to Ilya Bogdanov, Alexander Efimov, and Anton Fonarev for fruitful discussions.

\section{Proof of general sufficient conditon for finite generation}\label{section_gen}

In this section we prove Theorem~\ref{thm_more_gen}.
Obiviously, we suffice to prove this theorem in the case of a $\Z A$-module~$M$ generated by a single element~$x$, i.\,e., to prove the following proposition.

\begin{propos}
 Let $A$ be a finitely generated abelian group and $M$ a $\Z A$-module generated by a single element~$x$. Assume that the following condition holds:
 \begin{itemize}
  \item[$(*)$] For each subgroup $B\subset A$ with $|A: B|=\infty$, there exists a positive integer~$k$ and elements $a_1,\ldots,a_k\in A\setminus B$ such that
  \begin{equation*}
   (e_{a_1} -1)\cdots (e_{a_k} -1)x=0.
  \end{equation*}
 \end{itemize}
 Then $M$ is a finitely generated abelian group.
\end{propos}

\begin{proof}
 Let us prove the statement of the proposition by the induction on $r=\rk (A)$. (Under the \textit{rank} of a finitely generated abelian group~$A$ we always mean the rank of its torsion-free component, i.\,e., the dimension of the vector space~$A\otimes\Q$.) If $r=0$, then the group~$A$ is finite, so all finitely generated $\Z A$-modules are finitely generated abelian groups, hence the base case of induction.

 Suppose that $r>0$. Let $A_{\mathrm{tors}}$ be the torsion subgroup of~$A$. Property~$(*)$ for $B=A_{\mathrm{tors}}$ says that there exists a~$k$ and elements $a_1,\ldots,a_k\in A$ of infinite order such that
 \begin{equation}\label{eq_rel_dtl}
   (e_{a_1} -1)\cdots (e_{a_k} -1)x=0.
 \end{equation}
 The smallest length~$k$ of such a relation will be referred to as the \textit{difference torsion length} of~$x$. For each fixed~$r$, we will prove the statement of the proposition by the induction on the difference torsion index~$k$ of~$x$. The base of induction for $k=0$ is trivial.

 Suppose that $k>0$. Consider the relation~\eqref{eq_rel_dtl}. Let $N$ be the $\Z A$-submodule of~$M$ generated by the element $y=(e_{a_1}-1)x$. The difference torsion length of~$y$ is equal to~$k-1$. So by the induction hypothesis we have that $N$ is a finitely generated abelian group.

 Now, consider the quotient module $M/N$ and the quotient group $A_1=A/\langle a_1\rangle$, where $\langle a_1\rangle$ denotes the subgroup of~$A$ generated by~$a_1$. Since $e_{a_1}$ acts trivially on~$M/N$, we see that $M/N$ is a $\Z A_1$-module. Denote by~$x_1$ the image of~$x$ in~$M/N$; then $x_1$ generates the $\Z A_1$-module~$M/N$. Property~$(*)$ for the pair~$(A_1,x_1)$ follows immediately from property~$(*)$ for the pair~$(A,x)$. Moreover, we have $\rk (A_1)=r-1$, since $a_1$ is an element of infinite order of~$A$. So by the induction hypothesis we have that $M/N$ is a finitely generated abelian group.

Thus, $M$ is also a finitely generated abelian group.
\end{proof}

\section{Preliminaries on Johnson homomorphisms}\label{section_Johnson}

In this section we recall several standard facts about Johnson homomorphisms both for the Torelli subgroups~$\I_g^b$ of the mapping class groups~$\Mod(\Sigma_g^b)$ and the Torelli subgroups~$\IA_n$ and~$\IO_n$ of the automorphism groups~$\Aut(F_n)$ and~$\Out(F_n)$, respectively.

\subsection{Mapping class groups}\label{subs_MCG}
Throughout the paper, we denote by~$T_{\gamma}$ the left Dehn twist about a simple closed curve~$\gamma$. For an oriented simple closed curve~$\gamma$, we denote by~$[\gamma]$ the integral homology class of~$\gamma$. All necessary basic facts about curves on surfaces and mapping class groups can be found in the book~\cite{FaMa12}.

We always assume that $g\ge 3$ and $b\in\{0,1\}$.
As in the Introduction, let $H=H_1\bigl(\Sigma_g^b,\Z\bigr)$. Then $H\cong\Z^{2g}$ is a tautological $\Sp_{2g}(\Z)$-module. Let $H_{\bC}=H\otimes\bC$. We have the following commutative diagram (see~\cite{Joh80b}) in which the horizontal arrows~$\tau$ are $\Mod(\Sigma_g^b)$-equivariant and are called \textit{Johnson homomorphisms}:
\begin{equation*}
 \begin{tikzcd}
  \I_g^1 \arrow[r, two heads, "\tau"] \arrow[d, two heads] &
  \exter^3H \arrow[r, equal] \arrow [d, two heads, "\pi"] & \!\!\!\colon U_g^1\\
  \I_g \arrow[r, two heads, "\tau"] &
  \exter^3H/(\Omega\wedge H) \arrow[r, equal] & \!\!\!\colon U_g
 \end{tikzcd}
\end{equation*}
The complexification $U_g^1\otimes\bC=\exter^3H_{\bC}$ is a rational representation of~$\Sp_{2g}(\bC)$. However, this representation is not irreducible. Namely, we have that $U_g^1\otimes\bC\cong[1]\oplus[1^3]$, see~\cite[Chapter~17]{FuHa91}. Hereafter, $[\lambda_1\ldots\lambda_m]$ denotes the representation corresponding to the Young diagram $[\lambda_1\ldots\lambda_m]$. In particular, $[1^3]$ corresponds to the Young diagram consisting of $3$ boxes in a single column. Note that the summand~$[1]$ of~$U_g^1\otimes\bC$ is exactly the kernel of the projection~$\pi_{\bC}=\pi\otimes\bC$. The other summand~$[1^3]$ is the kernel of the complexification of the contraction homomorphism $C\colon\exter^3H\to H$ given by
\begin{equation*}
C(x\wedge y\wedge z) = (x\cdot y)z+(y\cdot z)x+(z\cdot x)y,
\end{equation*}
see~\cite{Joh80b}. Thus, we arrive at the following proposition.

\begin{propos}\label{propos_U_decompose}
We have
\begin{equation*}
 U_g^1\otimes\bC=\ker\pi_{\bC}\oplus\ker C_{\bC}.
\end{equation*}
Both summands are irreducible rational $\Sp_{2g}(\bC)$-representations.
\end{propos}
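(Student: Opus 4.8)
The statement to be proved is Proposition~\ref{propos_U_decompose}: that $U_g^1\otimes\bC=\exter^3H_{\bC}$ decomposes as $\ker\pi_{\bC}\oplus\ker C_{\bC}$, with both summands irreducible rational $\Sp_{2g}(\bC)$-representations. The plan is to identify the two summands explicitly, verify they are complementary, and cite the representation theory of $\Sp_{2g}(\bC)$ for irreducibility.

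First I would recall that $H_{\bC}$ is the standard $2g$-dimensional symplectic representation, and that the decomposition of $\exter^k H_{\bC}$ into irreducibles is classical (see \cite[Chapter~17]{FuHa91}): contracting with the symplectic form $\omega$ gives an $\Sp_{2g}(\bC)$-equivariant map $\exter^k H_{\bC}\to\exter^{k-2}H_{\bC}$, and $\exter^k H_{\bC}$ splits as the kernel of this contraction (the irreducible $[1^k]$) plus a copy of $\exter^{k-2}H_{\bC}$ embedded via wedging with $\Omega$. For $k=3$ this reads $\exter^3 H_{\bC}\cong [1^3]\oplus [1]$, where $[1]\cong H_{\bC}$ sits inside as $\Omega\wedge H_{\bC}$ and $[1^3]$ is the kernel of the contraction $C_{\bC}$. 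Next I would match these summands with the maps in the statement: by definition $\pi$ is the projection $\exter^3H\to\exter^3H/(\Omega\wedge H)$, so $\ker\pi_{\bC}=\Omega\wedge H_{\bC}$, which is the copy of $[1]$; and $\ker C_{\bC}$ is by construction the $[1^3]$ summand. It remains to check that these two subspaces are genuinely complementary in $\exter^3H_{\bC}$, i.e.\ that $\Omega\wedge H_{\bC}$ is mapped isomorphically onto its image under $C_{\bC}$, equivalently that $C_{\bC}(\Omega\wedge v)$ is a nonzero multiple of $v$ for $v\neq 0$; this is a short direct computation using the formula for $C$ and the fact that $\Omega=\sum a_i\wedge b_i$ for a symplectic basis, and it is exactly the content recalled from \cite{Joh80b}. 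Since $[1]\not\cong[1^3]$ and both appear with multiplicity one, the direct sum decomposition is the unique isotypic decomposition, so in particular $\ker\pi_{\bC}\cap\ker C_{\bC}=0$ and the two spaces span.

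The only genuinely representation-theoretic input is the irreducibility of the two summands, and this is standard: $[1^3]$ is the fundamental representation attached to the third node (for $g\ge 3$, so that the fundamental weight $\varpi_3$ is in range), and $[1]\cong H_{\bC}$ is the defining representation; both are well known to be irreducible over $\bC$, see \cite[Chapter~17]{FuHa91}. I do not expect a serious obstacle here — the proof is essentially an assembly of the contraction/expansion description of $\exter^\bullet H_{\bC}$ with the definitions of $\pi$ and $C$. The one point requiring a line of care is confirming that the composite $\Omega\wedge H_{\bC}\hookrightarrow\exter^3H_{\bC}\xrightarrow{C_{\bC}}H_{\bC}$ is nonzero (hence an isomorphism onto $H_{\bC}$ by Schur), since this is what guarantees the sum is direct rather than merely that the two summands are the correct isomorphism types; everything else is bookkeeping.
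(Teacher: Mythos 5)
Your proposal is correct and follows essentially the same route as the paper, which likewise cites \cite[Chapter~17]{FuHa91} for the decomposition $\exter^3H_{\bC}\cong[1]\oplus[1^3]$ and identifies $[1]=\Omega\wedge H_{\bC}=\ker\pi_{\bC}$ and $[1^3]=\ker C_{\bC}$. Your extra check that $C_{\bC}(\Omega\wedge v)=(g-1)v\neq 0$ (so the sum is genuinely direct) is a worthwhile point of care that the paper leaves implicit.
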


\begin{cor}\label{cor_U_irr}
 The $\Sp_{2g}(\bC)$-representation $U_g\otimes\bC$ is irreducible. Namely, $U_g\otimes\bC\cong[1^3]$.
\end{cor}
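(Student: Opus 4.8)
The plan is to deduce the corollary directly from Proposition~\ref{propos_U_decompose} together with the definition of~$U_g$. First I would recall that $U_g^1\otimes\bC=\exter^3H_{\bC}$ and that the projection $\pi_{\bC}\colon U_g^1\otimes\bC\to U_g\otimes\bC$ is exactly the complexified quotient map $\exter^3H_{\bC}\to\exter^3H_{\bC}/(\Omega\wedge H_{\bC})$. Hence $U_g\otimes\bC$ is $\Sp_{2g}(\bC)$-equivariantly isomorphic to $(U_g^1\otimes\bC)/\ker\pi_{\bC}$.

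Next I would invoke Proposition~\ref{propos_U_decompose}, which provides the $\Sp_{2g}(\bC)$-equivariant direct sum decomposition $U_g^1\otimes\bC=\ker\pi_{\bC}\oplus\ker C_{\bC}$ into two irreducible summands. Quotienting a direct sum of two submodules by one of the summands returns the other summand up to canonical isomorphism (via the projection), so $U_g\otimes\bC\cong\ker C_{\bC}$. Since $\ker C_{\bC}\cong[1^3]$ was already identified in the discussion preceding Proposition~\ref{propos_U_decompose}, and $\ker C_{\bC}$ is irreducible, this establishes both assertions of the corollary.

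As a sanity check, I would verify that the dimensions match: $\dim_{\bC}(U_g\otimes\bC)=\binom{2g}{3}-2g$, which coincides with $\dim_{\bC}[1^3]$, while $\ker\pi_{\bC}=\Omega\wedge H_{\bC}\cong[1]$ has dimension $2g$, so the count is consistent. There is essentially no obstacle here: all the content is already contained in Proposition~\ref{propos_U_decompose}, and the corollary merely records what happens upon passing from $U_g^1$ to $U_g$ by killing the $[1]$-summand $\Omega\wedge H_{\bC}$.
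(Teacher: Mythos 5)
Your argument is correct and is exactly how the paper (implicitly) derives the corollary: $U_g\otimes\bC$ is the quotient of $U_g^1\otimes\bC=\ker\pi_{\bC}\oplus\ker C_{\bC}$ by the summand $\ker\pi_{\bC}=\Omega\wedge H_{\bC}$, hence is isomorphic to the irreducible summand $\ker C_{\bC}\cong[1^3]$. No issues.
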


Next, we need two formulas for the values of~$\tau$ on certain specific elements of~$\I_g^b$. First, recall that a \textit{bounding pair} is a pair~$\{\gamma_1,\gamma_2\}$ of simple closed curves on~$\Sigma_g^b$ such that
\begin{itemize}
 \item $\gamma_1$ and $\gamma_2$ are disjoint and not homotopic to each other,
 \item each of the curves~$\gamma_i$ does not separate~$\Sigma_g^b$,
 \item the union~$\gamma_1\cup\gamma_2$ divides~$\Sigma_g^b$ into two parts.
\end{itemize}
The mapping class~$T_{\gamma_1}T_{\gamma_2}^{-1}$ is called a \textit{bounding pair map} (or simply a \textit{BP map}); it belongs to the Torelli group~$\I_g^b$. The following result is due to Johnson~\cite[Lemma~4B]{Joh80b}.

\begin{propos}
 Suppose that $\{\gamma_1,\gamma_2\}$ is a bounding pair in~$\Sigma_g^1$. Let $S$ be the connected component of\/~$\Sigma_g^1\setminus(\gamma_1\cup\gamma_2)$ that does not contain the boundary~$\partial \Sigma_g^1$ and let $g'$ be the genus of~$S$. Orient the curve~$\gamma_1$ so that $S$ lies to its right, and denote  by~$c$ the homology class of~$\gamma_1$, see Fig.~\ref{fig_BP}. Then
 \begin{gather}\label{eq_Joh_BP}
  \tau\left(T_{\gamma_1}T_{\gamma_2}^{-1}\right)=\left(\sum_{i=1}^{g'}a_i\wedge b_i\right)\wedge c,\\
  C\left(\tau\left(T_{\gamma_1}T_{\gamma_2}^{-1}\right)\right)=g'c.\nonumber
 \end{gather}
 where $a_1,b_1,\ldots,a_{g'},b_{g'}$ is a symplectic basis of an arbitrary nondegenerate symplectic submodule $P\subset H_1(S,\Z)$ of rank~$2g'$.
\end{propos}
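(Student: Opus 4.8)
The plan is to argue by induction on the genus~$g'$ of~$S$, using that the Johnson homomorphism $\tau\colon\I_g^1\to\exter^3H$ is a homomorphism of groups (it is the ``translation part'' of the affine action of~$\I_g^1$ on $\pi/[\pi,[\pi,\pi]]$, where $\pi=\pi_1(\Sigma_g^1)$, the ``linear part'' of this action being trivial on the Torelli group). The geometric input for the inductive step is that when $g'\ge2$ one can choose a simple closed curve~$\gamma_0$ in the interior of~$S$ cutting~$S$ into a genus-$1$ piece~$S_1$ with $\partial S_1=\gamma_1\cup\gamma_0$ and a genus-$(g'-1)$ piece~$S_0$ with $\partial S_0=\gamma_0\cup\gamma_2$. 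Then $\{\gamma_1,\gamma_0\}$ and $\{\gamma_0,\gamma_2\}$ are bounding pairs; orienting~$\gamma_0$ so that~$S_0$ lies to its right one checks $[\gamma_0]=c$; and, since
\begin{equation*}
 T_{\gamma_1}T_{\gamma_2}^{-1}=\bigl(T_{\gamma_1}T_{\gamma_0}^{-1}\bigr)\bigl(T_{\gamma_0}T_{\gamma_2}^{-1}\bigr),
\end{equation*}
additivity of~$\tau$ gives $\tau(T_{\gamma_1}T_{\gamma_2}^{-1})=\tau(T_{\gamma_1}T_{\gamma_0}^{-1})+\tau(T_{\gamma_0}T_{\gamma_2}^{-1})$. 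The first summand is evaluated by the base case below (applied to the genus-$1$ piece~$S_1$), and the second by the inductive hypothesis (applied to~$S_0$, which is precisely the component of $\Sigma_g^1\setminus(\gamma_0\cup\gamma_2)$ not meeting~$\partial\Sigma_g^1$). Representatives of classes in~$H_1(S_1)$ and in~$H_1(S_0)$ can be made disjoint, so a symplectic basis~$\{a_1,b_1\}$ of the handle of~$S_1$ together with one of a nondegenerate symplectic submodule $P_0\subset H_1(S_0)$ of rank~$2(g'-1)$ spans a nondegenerate symplectic submodule $P\subset H_1(S)$ of rank~$2g'$, and the two contributions add up to $\bigl(\sum_{i=1}^{g'}a_i\wedge b_i\bigr)\wedge c$, as required.

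For the base case~$g'=1$ I would compute~$\tau$ directly from the definition. Recall that for $\phi\in\I_g^1$ and $x\in\pi$ one has $\phi(x)x^{-1}\in[\pi,\pi]$, and its image in $[\pi,\pi]/[\pi,[\pi,\pi]]\cong\exter^2H$ depends only on $[x]\in H$; the resulting linear map $H\to\exter^2H$, viewed in $H\otimes\exter^2H$ (after the usual identification $H^*\cong H$ via the intersection form), lies in~$\exter^3H$ and is~$\tau(\phi)$. Choose a free generating set $x_1,y_1,\ldots,x_g,y_g$ of~$\pi$ adapted to the configuration, so that $x_1,y_1$ run over the handle of the genus-$1$ subsurface~$S$ with $[x_1]=a$, $[y_1]=b$ (ordered so that $a\cdot b=1$), while~$\gamma_1$ and~$\gamma_2$ are isotopic to fixed loops of homology class~$c$ lying on the two sides of this handle. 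Using the standard formula for the action of a Dehn twist on~$\pi$ --- each transverse intersection of a generator with the twisting curve inserts a conjugate of that curve, with sign dictated by the orientation of the crossing --- one writes $\phi=T_{\gamma_1}T_{\gamma_2}^{-1}$ on each generator, reads off the classes of $\phi(x_i)x_i^{-1}$ and $\phi(y_i)y_i^{-1}$ in~$\exter^2H$, and assembles them into an element of~$\exter^3H$; the answer is $a\wedge b\wedge c$. (Alternatively, one may quote the known value of the Johnson homomorphism on a genus-$1$ bounding-pair map and transport it into~$\Sigma_g^1$ by naturality of~$\tau$ under inclusions of subsurfaces.)

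Two points remain. First, the independence of the formula from the choice of~$P$: on a genus-$g'$ surface with two boundary components the intersection form on $H_1(S,\Z)$ has rank~$2g'$ with one-dimensional radical, spanned by the boundary class~$c$; hence any two rank-$2g'$ nondegenerate symplectic submodules~$P$, $P'$ agree with each other modulo the radical after tensoring with~$\Q$, so their canonical bivectors $\sum a_i\wedge b_i$ (each independent of the chosen symplectic basis) differ by an element of $c\wedge(H_1(S,\Z)\otimes\Q)$, and therefore have equal wedge products with~$c$ in the torsion-free group~$\exter^3H$. Second, the identity $C(\tau(T_{\gamma_1}T_{\gamma_2}^{-1}))=g'c$ follows by applying~$C$ to $\bigl(\sum_{i=1}^{g'}a_i\wedge b_i\bigr)\wedge c$: using $a_i\cdot b_i=1$ and $c\cdot a_i=c\cdot b_i=0$ (since~$c$ lies in the radical of the intersection form on~$H_1(S)$), each term $C(a_i\wedge b_i\wedge c)$ equals~$c$, giving $g'c$ in total.

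The step I expect to be the main obstacle is the base-case computation together with the thread of orientation bookkeeping running through the whole argument: fixing the left-twist convention, orienting~$\gamma_1$ so that~$S$ lies to its right, tracking precisely the conjugation-and-sign factors produced by~$T_{\gamma_1}$ and~$T_{\gamma_2}^{-1}$ on~$\pi$, and confirming that the auxiliary curve~$\gamma_0$ of the induction step carries class~$c$ with~$S_0$ on its right. Once the genus-$1$ identity and these conventions are pinned down, the rest of the argument is formal.
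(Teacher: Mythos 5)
This proposition is not proved in the paper at all: it is quoted verbatim from Johnson, with the proof delegated to \cite[Lemma~4B]{Joh80b}. So your proposal is not ``the same approach'' or ``a different approach'' to the paper's proof --- it is an attempted reconstruction where the paper offers only a citation. As a reconstruction it is sound in structure. The inductive step is correct: $\{\gamma_1,\gamma_0\}$ and $\{\gamma_0,\gamma_2\}$ are indeed bounding pairs, $S_1$ and $S_0$ are the components not meeting $\partial\Sigma_g^1$, the orientation check $[\gamma_0]=c$ (with $S_0$ on the right of $\gamma_0$) is the right bookkeeping and does come out as you claim, and additivity of $\tau$ on the product $T_{\gamma_1}T_{\gamma_2}^{-1}=\bigl(T_{\gamma_1}T_{\gamma_0}^{-1}\bigr)\bigl(T_{\gamma_0}T_{\gamma_2}^{-1}\bigr)$ is legitimate since both factors lie in $\I_g^1$. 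The independence-of-$P$ argument is also correct: the radical of the intersection form on $H_1(S,\Z)$ is spanned by $c$, the canonical bivector of a nondegenerate rank-$2g'$ symplectic submodule is basis-independent, two such submodules have bivectors differing by an element of $c\wedge H_1(S,\Q)$, and torsion-freeness of $\exter^3H$ upgrades the rational equality to an integral one. The contraction computation $C\bigl(a_i\wedge b_i\wedge c\bigr)=c$ is immediate. The one place where the argument is not actually carried out is the genus-$1$ base case: you describe the method (track $\phi(x)x^{-1}$ modulo $[\pi,[\pi,\pi]]$ for a generating set adapted to the handle) but simply assert that the answer is $a\wedge b\wedge c$ with the stated sign conventions; this is exactly the nontrivial computation in Johnson's lemma, and as written it is taken on faith. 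Since your stated fallback --- quoting the known genus-$1$ value and transporting it by naturality --- is precisely what the paper does for the whole statement, I would not call this a fatal gap, but a self-contained proof would have to finish that computation and verify the sign against the left-twist and ``$S$ on the right'' conventions.
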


\begin{figure}
\begin{center}
\begin{tikzpicture}[scale=.5,
        mid arrow/.style={
            postaction={
                decorate,
                decoration={
                    markings,
                    mark=at position 0.55 with {
                        \arrow[#1]{stealth}
                    }
                }
            }
        },
        mid arrow/.default=black,
    ]

\small
\definecolor{myblue}{rgb}{0, 0, 0.7}
\definecolor{mygreen}{rgb}{0, 0.4, 0}
\tikzset{every path/.append style={line width=.2mm}}
\tikzset{my dash/.style={dash pattern=on 2pt off 1.5pt}}

\fill [color=black!4] (6,-3.03) .. controls (5.7,-2) .. (6,-.97) arc (270:90:.97) .. controls (5.7,2) .. (6,3.03) -- (-4,3.03) arc(90:270:3.03) -- (6,-3.03);

\draw[color=myblue,my dash] (-7.03,0) .. controls (-6,0.3) .. (-4.97,0); 
\draw[color=myblue,my dash] (0,.97) .. controls (0.3,2) .. (0,3.03); 
\draw[color=red, thick, my dash] (6,3.03) .. controls (6.3,2) .. (6,.97); 

\draw[color=red, thick, my dash] (6,-3.03) .. controls (6.3,-2) .. (6,-.97); 


\draw [very thick,fill=white] (0,0) circle (1);
\draw [very thick,fill=white] (-4,0) circle (1);
\fill (2.4,0) circle (.1);
\fill (3,0) circle (.1);
\fill (3.6,0) circle (.1);
\draw [very thick] (6,0) circle (1);
\fill (8.4,0) circle (.1);
\fill (9,0) circle (.1);
\fill (9.6,0) circle (.1);
\draw [very thick]  (-4,3)--(11.5,3);
\draw [very thick]  (-4,-3)--(11.5,-3);
\draw [very thick] (-4,3) arc (90:270:3); 
\draw [very thick] (11.5,3) .. controls (11.7,3) and (12.1,2) .. (12.1,0)
.. controls (12.1,-2) and (11.7,-3) .. (11.5,-3)
.. controls (11.3,-3) and (10.9,-2) .. (10.9,0)
.. controls (10.9,2) and (11.3,3) .. (11.5,3);


\draw [color=mygreen,-stealth] (-2.7,0) arc (0:-452:1.3)node [pos=.2, below=-1pt] {$b_1$} ;

\draw [color=mygreen,-stealth] (1.3,0) arc (0:-452:1.3) node [pos=.2, below=-1pt] {$b_2$} ;

\draw[color=myblue,mid arrow=myblue] (-7.03,0) .. controls (-6,-0.3) .. (-4.97,0)  node [pos=0.6,below=-1pt] {$a_1$};
\draw[color=myblue,mid arrow=myblue] (0,3.03) .. controls (-0.3,2) .. (0,.97) node [pos=0.6,left=-1pt] {$a_2$}; 

\draw[color=red, thick, mid arrow=red] (6,3.03) .. controls (5.7,2) .. (6,.97) node [pos=0.6,left=-1pt] {$\gamma_1$}; 

\draw[color=red, thick, mid arrow=red] (6,-3.03) .. controls (5.7,-2) .. (6,-.97) node [pos=0.4,left=-1pt] {$\gamma_2$}; 




\draw(-5.2,2.2) node {\normalsize$S$};

\end{tikzpicture}
\end{center}
 \caption{Bounding pair map}\label{fig_BP}
\end{figure}

\begin{cor}\label{cor_I1_tau}
If $g'<g$, then $\tau\left(T_{\gamma_1}T_{\gamma_2}^{-1}\right)$ belongs neither to~$\ker\pi$ nor to~$\ker C$.
\end{cor}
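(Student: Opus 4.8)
The plan is to combine the Johnson formula~\eqref{eq_Joh_BP} with the decomposition of Proposition~\ref{propos_U_decompose}. Write $w=\tau\bigl(T_{\gamma_1}T_{\gamma_2}^{-1}\bigr)\in U_g^1=\exter^3H$ and $\Omega_S=\sum_{i=1}^{g'}a_i\wedge b_i$, so that $w=\Omega_S\wedge c$ and $C(w)=g'c$. Inside $U_g^1\otimes\bC$ we have $\ker\pi\subset\ker\pi_{\bC}$ and $\ker C\subset\ker C_{\bC}$, and these two complex subspaces are complementary by Proposition~\ref{propos_U_decompose}; hence it suffices to show that $w$ has nonzero component in each of them, i.e. that $C(w)\ne 0$ and that $w$ is not killed by the projection onto $\ker C_{\bC}$.

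The first condition is immediate: $C(w)=g'c$, and this is nonzero because $g'\ge 1$ (the component $S$ is not an annulus, as $\gamma_1$ and $\gamma_2$ are non-homotopic) and $c=[\gamma_1]\ne 0$ (the curve $\gamma_1$ does not separate $\Sigma_g^1$, so its homology class is in fact primitive). In particular $w\notin\ker C$. For the second condition, recall that the $\Sp_{2g}(\bC)$-equivariant projection onto $\ker\pi_{\bC}=\Omega\wedge H_{\bC}$ is $u\mapsto\frac{1}{g-1}\,\Omega\wedge C(u)$: it restricts to the identity on $\Omega\wedge H_{\bC}$, since the identity $C(x\wedge y\wedge z)=(x\cdot y)z+(y\cdot z)x+(z\cdot x)y$ together with $\Omega=\sum_j x_j\wedge y_j$ gives $C(\Omega\wedge v)=(g-1)v$ by a one-line computation, and it vanishes on $\ker C_{\bC}$. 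Thus the $\ker C_{\bC}$-component of $w$ is $w-\frac{g'}{g-1}\,\Omega\wedge c$, and it remains to show this is nonzero.

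Suppose not, that is, $w=\Omega\wedge v$ with $v=\frac{g'}{g-1}c\in H$. Since $c$ is primitive, integrality of $v$ forces $g-1\mid g'$, and as $1\le g'<g$ this leaves only $g'=g-1$, $v=c$. To conclude I would extend a symplectic basis $a_1,b_1,\dots,a_{g'},b_{g'}$ of the nondegenerate symplectic summand $\langle a_1,\dots,b_{g'}\rangle\subset H$ (the image of $P$) to a symplectic basis of all of $H$, and observe that $c$ is orthogonal to $\langle a_1,\dots,b_{g'}\rangle$, because the $a_i,b_i$ are carried by $S$ and hence may be pushed off $\gamma_1=\partial S$; so $c$ lies in the span of the complementary basis vectors. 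Writing $\Omega=\Omega_S+\Omega_S'$ accordingly and substituting into $\Omega_S\wedge c=w=\frac{g'}{g-1}\,\Omega\wedge c$ yields $(g-1-g')\,\Omega_S\wedge c=g'\,\Omega_S'\wedge c$; since $\Omega_S\wedge c\ne 0$ and the two sides are supported on disjoint families of basis $3$-vectors of $\exter^3H$, the equality is impossible whenever $g-1-g'\ne 0$, which is the desired contradiction.

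The main obstacle is this last step. Everything reduces to bookkeeping in $\exter^3H$ except for the borderline configuration $g'=g-1$ — the case in which the side $S'$ containing $\partial\Sigma_g^1$ degenerates to a pair of pants; this is the only point where the genus restriction is genuinely needed, and I expect it to require either an additional geometric observation about that configuration or a sharpening of the stated bound to $g'\le g-2$.
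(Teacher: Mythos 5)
Your strategy is the intended one: the paper offers no argument for this corollary beyond formula~\eqref{eq_Joh_BP} and the decomposition of Proposition~\ref{propos_U_decompose}, and the two halves you do complete are correct. Indeed $C(w)=g'c\ne 0$ because $g'\ge 1$ and $c\ne 0$, so $w\notin\ker C$; and your reduction of the $\ker\pi$ statement to the single case $g'=g-1$ is sound. (The one unstated point is why $v$ must lie in $H$; but for the corollary you only need to exclude $w=\Omega\wedge v$ with $v\in H$, so integrality of $v$ is part of the hypothesis being contradicted, and in any case $\Omega\wedge H$ is a primitive sublattice of $\exter^3H$.)

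The obstacle you flag at $g'=g-1$ is not a defect of your argument: the statement is actually false there, and your suggested repair ($g'\le g-2$) is the right one. In the basis you construct, $H=P\oplus P^{\perp}$ with $P=\langle a_1,\dots,b_{g-1}\rangle$ and $P^{\perp}=\langle c,d\rangle$ of rank~$2$, so
\begin{equation*}
\Omega\wedge c=\Bigl(\sum_{i=1}^{g-1}a_i\wedge b_i+c\wedge d\Bigr)\wedge c=\sum_{i=1}^{g-1}a_i\wedge b_i\wedge c=\tau\bigl(T_{\gamma_1}T_{\gamma_2}^{-1}\bigr),
\end{equation*}
that is, the element lies in $\Omega\wedge H=\ker\pi$. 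Genus-$(g-1)$ bounding pairs do exist on~$\Sigma_g^1$ (the complementary component is then a pair of pants containing~$\partial\Sigma_g^1$), so this is a genuine counterexample rather than a borderline case needing an extra idea; geometrically, capping the boundary makes $\gamma_1$ and~$\gamma_2$ isotopic, so the BP map dies in~$\I_g$, consistent with $\pi(w)=0$. The Remark following the corollary, which places the exceptional case at $g'=g$, is off by one: an Euler characteristic count gives $g'\le g-1$ on~$\Sigma_g^1$, and the exceptional case is $g'=g-1$. This does not damage the downstream argument, since Proposition~\ref{propos_K_torsion} only requires a single value of~$g'$, and $g'=1\le g-2$ works for all $g\ge 3$.
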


\begin{remark}
 If $g'=g$, then $\tau\left(T_{\gamma_1}T_{\gamma_2}^{-1}\right)$ belongs to~$\ker\pi$.
\end{remark}

\begin{cor}\label{cor_I0_tau}
For a closed surface~$\Sigma_g$, the same formula~\eqref{eq_Joh_BP} holds for either connected component~$S$ of\/~$\Sigma_g\setminus(\gamma_1\cup\gamma_2)$, where the expression on the right-hand side is considered as an element of~$U_g$. In this case, we always have $g'<g$, so the value $\tau\left(T_{\gamma_1}T_{\gamma_2}^{-1}\right)$ is nonzero in~$U_g$.
\end{cor}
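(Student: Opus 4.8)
The plan is to deduce the closed-surface statement from the already established formula~\eqref{eq_Joh_BP} for~$\Sigma_g^1$ by capping off a disk, and then to extract the nonvanishing in~$U_g$ from the contraction map~$C$. Write~$S$ and~$S'$ for the two components of~$\Sigma_g\setminus(\gamma_1\cup\gamma_2)$, of genera~$g'$ and~$g''$; I will first establish the formula for the component~$S$.

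First I would choose an embedded open disk~$D$ in the interior of~$S'$ and set $\Sigma_g^1=\Sigma_g\setminus D$. Then $\{\gamma_1,\gamma_2\}$ is a bounding pair in~$\Sigma_g^1$ whose component disjoint from~$\partial\Sigma_g^1$ is exactly~$S$. Choosing a rank-$2g'$ nondegenerate symplectic submodule $P\subset H_1(S,\Z)$ with symplectic basis $a_1,b_1,\dots,a_{g'},b_{g'}$, and writing $\Omega_P=\sum_{i=1}^{g'}a_i\wedge b_i$ and $c=[\gamma_1]$ (oriented so that $S$ lies to its right), Johnson's formula~\eqref{eq_Joh_BP} for~$\Sigma_g^1$ reads
\[
\tau\bigl(T_{\gamma_1}T_{\gamma_2}^{-1}\bigr)=\Omega_P\wedge c\quad\text{in }U_g^1=\exter^3H.
\]
The homomorphism $\Mod(\Sigma_g^1)\to\Mod(\Sigma_g)$ induced by capping~$D$ sends each~$T_{\gamma_i}$ to~$T_{\gamma_i}$ and restricts to the surjection $\I_g^1\twoheadrightarrow\I_g$ of the quoted commutative diagram relating the two Johnson homomorphisms via~$\pi\colon\exter^3H\to U_g$. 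Applying that diagram to $T_{\gamma_1}T_{\gamma_2}^{-1}$ turns the displayed identity into $\tau(T_{\gamma_1}T_{\gamma_2}^{-1})=\pi(\Omega_P\wedge c)$ in~$U_g$, which is~\eqref{eq_Joh_BP} read inside~$U_g$ for~$S$. Running the same argument with~$D$ placed inside~$S$ instead yields~\eqref{eq_Joh_BP} in~$U_g$ for~$S'$ as well, with~$\gamma_1$ reoriented so that its class becomes~$-c$.

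These two formulas must agree, as their common left-hand side is fixed; this amounts to the identity $\Omega\wedge c\in\ker\pi$, which one sees by decomposing $H\otimes\Q$ orthogonally as $P\oplus P'\oplus\langle c,d\rangle$ with $c\cdot d=1$, whence $\Omega=\Omega_P+\Omega_{P'}+c\wedge d$ and $\Omega\wedge c=(\Omega_P+\Omega_{P'})\wedge c$. Next, the bound $g'<g$ is a routine Euler characteristic computation: cutting~$\Sigma_g$ along the bounding pair produces two subsurfaces with two boundary circles apiece, of genera~$g'$ and~$g''$ with $g'+g''=g-1$, and neither genus can be~$0$, since a genus-$0$ surface with two boundary circles is an annulus, which would force $\gamma_1\simeq\gamma_2$; hence $1\le g'\le g-2$. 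Finally, $\tau(T_{\gamma_1}T_{\gamma_2}^{-1})\ne0$ in~$U_g$, i.e.\ $\Omega_P\wedge c\notin\ker\pi=\Omega\wedge H$: indeed a direct computation gives $C(\Omega_P\wedge c)=g'c$ (using $a_i\cdot b_i=1$ and $c\perp P$), whereas $C(\Omega\wedge v)=(g-1)v$ for all $v\in H$ (an elementary computation, the reason why $C$ descends to~$U_g$ only modulo~$g-1$; cf.~\cite{Joh80b}), so an equality $\Omega_P\wedge c=\Omega\wedge v$ would give $g'c=(g-1)v$ upon applying~$C$, forcing $(g-1)\mid g'$ since $c$ is primitive and contradicting $0<g'<g-1$.

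I do not expect a real obstacle here. The one genuinely nontrivial ingredient is the compatibility of the Johnson homomorphisms of~$\Sigma_g^1$ and~$\Sigma_g$ under capping a boundary disk, and this is precisely the commutative diagram taken from~\cite{Joh80b}; everything else is bookkeeping with symplectic bases, together with the elementary identity $C\circ(\Omega\wedge-)=(g-1)\,\id_H$ and the standard fact that a nonseparating simple closed curve on a closed orientable surface carries a primitive homology class.
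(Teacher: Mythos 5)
Your proposal is correct and follows the route the paper leaves implicit: cap off a disk, transport Johnson's formula for $\Sigma_g^1$ through the commutative square $\tau\circ(\I_g^1\twoheadrightarrow\I_g)=\pi\circ\tau$, and detect nonvanishing modulo $\Omega\wedge H$ with the contraction $C$ via $C(\Omega\wedge v)=(g-1)v$ and $C(\Omega_P\wedge c)=g'c$. Your explicit observation that both complementary components have positive genus, so that $1\le g'\le g-2$ and the divisibility $(g-1)\mid g'$ is impossible, is exactly the point that makes the argument close (and is slightly more careful than a bare appeal to the preceding corollary, whose hypothesis $g'<g$ alone would not suffice).
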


Second, recall that a \textit{simply intersecting pair} is a pair~$\{\alpha_1,\alpha_2\}$ of simple closed curves on~$\Sigma_g^b$ whose geometric intersection number is~$2$ and whose algebraic intersection number is~$0$. Then $(T_{\alpha_1},T_{\alpha_2})\in\I_g^b$. These elements of Torelli groups are called \textit{commutators of simply intersecting pairs}, see~\cite{Put07}. Choose orientations for the curves~$\alpha_1$ and~$\alpha_2$ arbitrarily, and let $a_1$ and~$a_2$ be the homology classes of the obtained oriented curves, respectively. Denote the intersection points of~$\alpha_1$ and~$\alpha_2$ by~$p_+$ and~$p_-$ so that the intersection number of~$\alpha_1$ and~$\alpha_2$ at~$p_+$ is~$+1$ and the intersection number of~$\alpha_1$ and~$\alpha_2$ at~$p_-$ is~$-1$. Let $\alpha_3$ be the oriented simple closed curve that passes from~$p_-$ to~$p_+$ along the positively oriented arc of~$\alpha_1$ and then returns to~$p_-$ along the positively oriented arc of~$\alpha_2$, see Fig.~\ref{fig_SIP}, and let $a_3$ be the homology class of~$\alpha_3$. The following formula for the value of~$\tau$ on the commutator of a simply intersecting pair was obtained independently by Childers~\cite[Proposition~3.4]{Chi12}, Church~\cite[Proposition~6.4]{Chu14}, and Putman~\cite{Put18}\footnote{This result is contained in the preliminary preprint version of~\cite{Put18}. Nevertheless, it did not enter the final journal version of that paper.}.

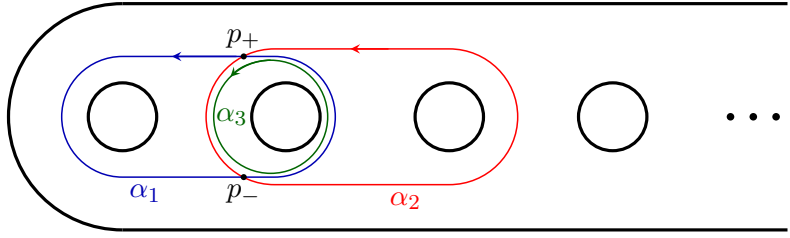
\begin{figure}
\begin{center}
\begin{tikzpicture}[scale=.5]
\small
\definecolor{myblue}{rgb}{0, 0, 0.7}
\definecolor{mygreen}{rgb}{0, 0.4, 0}
\tikzset{every path/.append style={line width=.2mm}}
\tikzset{my dash/.style={dash pattern=on 2pt off 1.5pt}}



\draw [very thick] (0.3,0) circle (.9);
\draw [very thick] (4.6,0) circle (.9);
\draw [very thick] (8.9,0) circle (.9);
\fill (12,0) circle (.1);
\fill (12.6,0) circle (.1);
\fill (13.2,0) circle (.1);
\draw [very thick] (-4,0) circle (.9);
\draw [very thick]  (-4,3)--(13.5,3);
\draw [very thick]  (-4,-3)--(13.5,-3);
\draw [very thick] (-4,3) arc (90:270:3); 

\draw [mygreen] (-.1,0) circle (1.5);
\draw [mygreen,-stealth] (-.1,1.5) arc (90:135:1.5);



\draw [color=red] (0,1.8) arc (90:270:1.8) -- (4.6,-1.8) node [pos=.75,below=-1pt] {$\alpha_2$} arc (-90:90:1.8) -- (0,1.8) ;
\draw [color=myblue] (-4,1.6) arc (90:270:1.6) -- (0,-1.6) node [pos=.15,below=-1pt] {$\alpha_1$} arc (-90:90:1.6) -- (-4,1.6) ;
\draw [color=red,-stealth] (3,1.8)--(2,1.8);
\draw [color=myblue,-stealth] (-1,1.6)--(-2.7,1.6);

\fill (-.81,1.6) circle (.08) node [above=-1pt] {$p_+$};
\fill (-.81,-1.6) circle (.08) node [below=-1pt] {$p_-$};

\draw(-1.6,0) node [right=-3pt, mygreen] {$\alpha_3$};




\end{tikzpicture}
\end{center}
 \caption{Commutator of a simply intersecting pair}\label{fig_SIP}
\end{figure}

\begin{propos}\label{propos_csip}
We have $\tau\bigl((T_{\alpha_1},T_{\alpha_2})\bigr)=a_1\wedge a_2\wedge a_3$.
\end{propos}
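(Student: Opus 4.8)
The plan is to reduce the computation to a small model subsurface and then to apply the bounding pair formula~\eqref{eq_Joh_BP}, or, if that turns out to be awkward, to compute $\tau$ directly from its definition via the action of $(T_{\alpha_1},T_{\alpha_2})$ on $\pi=\pi_1(\Sigma_g^b)$ modulo $\gamma_3\pi$. The first point is that both sides of the asserted identity are equivariant: $\tau$ is $\Mod(\Sigma_g^b)$-equivariant, by the change of coordinates principle $\Mod(\Sigma_g^b)$ acts transitively on oriented simply intersecting pairs of a given topological type, and the assignment $\{\alpha_1,\alpha_2\}\mapsto a_1\wedge a_2\wedge a_3$ transforms in the same way. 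Note also that the identity is insensitive to the orientations of $\alpha_1$ and $\alpha_2$, since $(T_{\alpha_1},T_{\alpha_2})$ is, so the conventions for $p_\pm$ and for $\alpha_3$ must make the right-hand side independent of these orientations as well; this is a useful consistency check. Finally, the regular neighbourhood $N$ of $\alpha_1\cup\alpha_2$ has Euler characteristic $-2$, hence is homeomorphic to $\Sigma_0^4$ or to $\Sigma_1^2$; enlarging $N$ inside $\Sigma_g^b$ to a subsurface $\Sigma_{g'}^1$ with a single boundary component containing the whole configuration, and using that the Johnson homomorphism of $\Sigma_g^b$ is compatible with the one on $\Sigma_{g'}^1$, one reduces to checking the identity for standard representatives of the finitely many topological types inside $\Sigma_{g'}^1$ for small $g'$.

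For a fixed standard configuration I would carry out the computation in one of two ways. Directly: write $T_{\alpha_1}$ and $T_{\alpha_2}$ as explicit automorphisms of $\pi=\pi_1(\Sigma_{g'}^1)$ using the standard formula for the action of a Dehn twist on generators, form the commutator $(T_{\alpha_1},T_{\alpha_2})$, reduce modulo $\gamma_3\pi$, and read off $\tau$ from the induced map $H\to\gamma_2\pi/\gamma_3\pi\cong\exter^2H$; while simplifying the words $(T_{\alpha_1},T_{\alpha_2})(z)z^{-1}$, the concatenation of the positively oriented arc of $\alpha_1$ with that of $\alpha_2$ that defines $\alpha_3$ in Figure~\ref{fig_SIP} appears naturally, which explains the occurrence of $a_3$. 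Alternatively: use surface relations---the lantern relation when $N\cong\Sigma_0^4$ and a chain relation when $N\cong\Sigma_1^2$---together with change of coordinates to express $(T_{\alpha_1},T_{\alpha_2})$ as a product of bounding pair maps, apply~\eqref{eq_Joh_BP} to each factor, and check that the several decomposable contributions recombine into the single term $a_1\wedge a_2\wedge a_3$.

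The step I expect to be the main obstacle is the sign bookkeeping rather than any conceptual difficulty: one must keep track of the left Dehn twist convention, the labelling of $p_+$ and $p_-$ by the local intersection sign, and the orientation of $\alpha_3$, so that the answer comes out as $a_1\wedge a_2\wedge a_3$ with exactly the right sign. A minor additional task is to verify that the reduction to a one-holed subsurface and the list of topological types are complete, which is routine once the naturality of $\tau$ under subsurface inclusions is set up. Since the formula is already established in~\cite[Proposition~3.4]{Chi12}, \cite[Proposition~6.4]{Chu14}, and~\cite{Put18}, in the final version it may well be cleanest simply to cite these references and use the discussion above only to indicate why the formula holds.
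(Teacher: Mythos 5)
The paper gives no proof of this proposition: it is stated as a known result, attributed to Childers, Church, and Putman, with the only original content being the remark that the sign is opposite to that of~\cite{Chu14} because left rather than right Dehn twists are used. So your closing suggestion---simply cite the references---is exactly what the paper does, and your primary computational route (write the twists as automorphisms of $\pi$, form the commutator, reduce modulo $\gamma_3\pi$, and read off the induced map $H\to\exter^2H$) is in substance how the cited proofs go, with the reduction via change of coordinates and naturality of $\tau$ under subsurface inclusions being standard. One caution: your alternative route, expressing $(T_{\alpha_1},T_{\alpha_2})$ as a product of bounding pair maps via lantern or chain relations and then summing the contributions of~\eqref{eq_Joh_BP}, is speculative---no such expression is exhibited anywhere in the paper, and producing one explicitly is not easier than the direct computation---so it should not be presented as an established fallback. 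The sign issue you flag is real and is precisely the point the paper's accompanying remark addresses.
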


\begin{remark}
 The sign in this formula is opposite to the sign in~\cite{Chu14}, since we are using left Dehn twists rather than right ones.
\end{remark}

\subsection{Automorphisms of free groups}\label{subs_Aut}
We always assume that $n\ge 3$.
Now, let $$H=H_1(F_n,\Z)=F_n^{\ab}.$$ Then $H\cong\Z^{n}$ is a tautological $\SL_{n}(\Z)$-module. Let $H_{\bC}=H\otimes\bC$. We have the following commutative diagram in which the horizontal arrows are called \textit{Johnson homomorphisms}, see~\cite{For90}:
\begin{equation*}
 \begin{tikzcd}
  \IA_n \arrow[r, two heads, "\tau"] \arrow[d, two heads] &
  H^*\otimes\exter^2H  \arrow [d, two heads, "\pi"] \arrow[r, equal] & \!\!\!\colon \widetilde{W}_n
  \\
  \IO_n \arrow[r, two heads, "\tau"] &
  (H^*\otimes\exter^2H)/\iota(H) \arrow[r, equal] & \!\!\!\colon W_n
 \end{tikzcd}
\end{equation*}
Recall that the embedding $\iota\colon H\hookrightarrow H^*\otimes\exter^2H$ is induced by the canonical embedding $\Z\hookrightarrow H^*\otimes H=\Hom(H,H)$ that takes~$1$ to the identity operator.  In other words, if $e_1,\ldots,e_n$ is a basis of~$H$ and $e_1^*,\ldots,e_n^*$ is the dual basis of~$H^*$, then $\iota(v)=\sum_{i=1}^ne_i^*\otimes(e_i\wedge v)$. The complexification $\widetilde{W}_n\otimes\bC=H^*_{\bC}\otimes\exter^2H_{\bC}$ is a rational representation of~$\SL_{n}(\bC)$. However, this representation is not irreducible. Namely, we have the following analogue of Proposition~\ref{propos_U_decompose}, see~\cite{For90}.

\begin{propos}\label{propos_W_decompose}
We have
\begin{equation}\label{eq_tW_decompose}
 \widetilde{W}_n\otimes\bC=\ker\pi_{\bC}\oplus\ker C_{\bC},
\end{equation}
where $C\colon H^*\otimes\exter^2H\to H$ is the contraction homomorphism given by
$$
C\bigl(\xi\otimes(x\wedge y)\bigr)=\xi(x)y-\xi(y)x.
$$
Both summands in~\eqref{eq_tW_decompose} are irreducible rational $\SL_n(\bC)$-representations.
\end{propos}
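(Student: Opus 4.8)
The statement is purely representation-theoretic: it concerns the decomposition of the $\GL_n(\bC)$-module $\widetilde{W}_n\otimes\bC = H_{\bC}^*\otimes\exter^2H_{\bC}$ (for $n\ge 3$). Since both $\iota$ and $C$ are $\GL_n$-equivariant, every subspace built from them is automatically an $\SL_n(\bC)$-submodule, so the plan is to exhibit the direct sum decomposition explicitly and then match the two summands with named irreducibles, exactly in parallel with the symplectic case treated earlier.

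First I would record the two subspaces. The subspace $\ker\pi_{\bC}$ is by definition $\iota(H_{\bC})$, and $\iota$ is injective, so $\ker\pi_{\bC}\cong H_{\bC}$, the standard representation, which is irreducible. Next I would compute the composite $C_{\bC}\circ\iota$ directly from the formulas: using $\iota(v)=\sum_{i}e_i^*\otimes(e_i\wedge v)$ and $C(\xi\otimes(x\wedge y))=\xi(x)y-\xi(y)x$, one gets
\[
C(\iota(v))=\sum_{i=1}^n\bigl(e_i^*(e_i)\,v-e_i^*(v)\,e_i\bigr)=nv-v=(n-1)v,
\]
so $C_{\bC}\circ\iota=(n-1)\,\id_{H_{\bC}}$. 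Since $n\ge 3$, the scalar $n-1$ is invertible in $\bC$, so this composite is an isomorphism. Consequently $C$ is surjective, $\iota(H_{\bC})\cap\ker C_{\bC}=0$ (if $\iota(v)\in\ker C_{\bC}$ then $(n-1)v=0$, hence $v=0$), and a dimension count $\dim\iota(H_{\bC})+\dim\ker C_{\bC}=n+\bigl(n\binom{n}{2}-n\bigr)=\dim(H_{\bC}^*\otimes\exter^2H_{\bC})$ gives the direct sum decomposition $\widetilde{W}_n\otimes\bC=\iota(H_{\bC})\oplus\ker C_{\bC}$, both summands being $\SL_n(\bC)$-invariant.

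It remains to prove that $\ker C_{\bC}$ is irreducible. For this I would decompose the whole tensor product. As $\SL_n(\bC)$-modules $H_{\bC}^*\cong\exter^{n-1}H_{\bC}$, so $H_{\bC}^*\otimes\exter^2H_{\bC}\cong\exter^{n-1}H_{\bC}\otimes\exter^2H_{\bC}$, and the classical decomposition of a tensor product of two exterior powers (a special case of the Littlewood--Richardson rule) gives, with $V=H_{\bC}$,
\[
\exter^{n-1}V\otimes\exter^2V\cong\mathbb{S}_{(1^{n+1})}(V)\oplus\mathbb{S}_{(2,1^{n-1})}(V)\oplus\mathbb{S}_{(2,2,1^{n-3})}(V).
\]
Over $V=\bC^n$ the first summand vanishes (a Schur functor of a partition with $n+1>n$ rows), the second is $\mathbb{S}_{(2,1^{n-1})}(\bC^n)\cong(\exter^n\bC^n)\otimes\bC^n$, which is the standard module $H_{\bC}$ as an $\SL_n(\bC)$-module, and the third is the irreducible of highest weight $\omega_2+\omega_{n-1}$ (for $n=3$ this degenerates to $2\omega_2$, the partition being $(2,2)$). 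Thus $H_{\bC}^*\otimes\exter^2H_{\bC}$ has exactly two irreducible constituents, each of multiplicity one, one of which is $\cong H_{\bC}$. Since $\iota(H_{\bC})$ is a submodule isomorphic to $H_{\bC}$, it is the $H_{\bC}$-isotypic component, and therefore its complement $\ker C_{\bC}$ is the remaining irreducible $\mathbb{S}_{(2,2,1^{n-3})}(H_{\bC})$.

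The only non-elementary ingredient is the decomposition of $\exter^{n-1}V\otimes\exter^2V$; everything else is a one-line explicit computation. So the main obstacle, such as it is, is merely to invoke (or re-derive) this Littlewood--Richardson decomposition and to keep track of the degenerate identification of fundamental weights when $n=3$; alternatively one may simply cite \cite{For90}, where this decomposition is carried out.
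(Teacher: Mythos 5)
Your proof is correct. Note that the paper does not actually prove Proposition~\ref{propos_W_decompose}: it is stated with a bare citation to Formanek~\cite{For90}, just as the symplectic analogue (Proposition~\ref{propos_U_decompose}) is delegated to~\cite{FuHa91} and~\cite{Joh80b}. So your argument supplies a complete proof of the cited fact rather than an alternative to one in the text. The two ingredients you use are exactly the right ones: the computation $C_{\bC}\circ\iota=(n-1)\,\id_{H_{\bC}}$, which for $n\ge 3$ yields both the transversality $\iota(H_{\bC})\cap\ker C_{\bC}=0$ and the surjectivity of $C_{\bC}$, hence the direct sum by dimension count; and the multiplicity-free decomposition $\exter^{n-1}V\otimes\exter^2V\cong\mathbb{S}_{(2,1^{n-1})}(V)\oplus\mathbb{S}_{(2,2,1^{n-3})}(V)$ (the $(1^{n+1})$ piece vanishing on $V=\bC^n$), which pins down $\ker C_{\bC}$ as the unique non-standard constituent, of highest weight $\omega_2+\omega_{n-1}$ (degenerating to $2\omega_2$ for $n=3$). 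Since the whole module has exactly two constituents each of multiplicity one, $\ker C_{\bC}\cong\widetilde{W}_n\otimes\bC/\iota(H_{\bC})$ is forced to be the remaining irreducible, so no further argument is needed. This is in substance the same proof as Formanek's, and it is the standard one.
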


\begin{cor}\label{cor_W_irr}
 The $\SL_n(\bC)$-representation $W_n\otimes\bC$ is irreducible and is isomorphic to~$\ker C_{\bC}$.
\end{cor}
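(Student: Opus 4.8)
The plan is to deduce Corollary~\ref{cor_W_irr} directly from Proposition~\ref{propos_W_decompose}, in complete parallel with the way Corollary~\ref{cor_U_irr} is obtained from Proposition~\ref{propos_U_decompose}. The first step is to identify the direct summand $\ker\pi_{\bC}$ appearing in~\eqref{eq_tW_decompose}. By the very definition of the Johnson homomorphism diagram, $\pi\colon\widetilde{W}_n\to W_n$ is the quotient homomorphism with kernel $\iota(H)$; hence $\ker\pi_{\bC}=\iota(H_{\bC})$, viewed as a subspace of $\widetilde{W}_n\otimes\bC=H^*_{\bC}\otimes\exter^2H_{\bC}$.

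The second step is to pass to the complexification of the quotient $W_n=\widetilde{W}_n/\iota(H)$. A direct computation with a basis gives $C\circ\iota=(n-1)\,\id_H$, so $\iota$ is injective and $W_n$ is a finitely generated free abelian group. Therefore tensoring the short exact sequence $0\to\iota(H)\to\widetilde{W}_n\to W_n\to 0$ with $\bC$ stays exact, and $W_n\otimes\bC\cong(\widetilde{W}_n\otimes\bC)/\iota(H_{\bC})=(\widetilde{W}_n\otimes\bC)/\ker\pi_{\bC}$ as $\SL_n(\bC)$-modules.

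Combining this with the decomposition $\widetilde{W}_n\otimes\bC=\ker\pi_{\bC}\oplus\ker C_{\bC}$ from Proposition~\ref{propos_W_decompose}, the quotient projection restricts to an $\SL_n(\bC)$-equivariant isomorphism $\ker C_{\bC}\cong W_n\otimes\bC$. Since $\ker C_{\bC}$ is an irreducible rational $\SL_n(\bC)$-representation by the same proposition, so is $W_n\otimes\bC$, which is exactly the assertion of the corollary. I do not anticipate any genuine obstacle here; the only point requiring a little care is the compatibility of the integral quotient defining $W_n$ with complexification, and this is handled once one knows that $W_n$ is torsion-free, which in turn follows from the identity $C\circ\iota=(n-1)\,\id_H$.
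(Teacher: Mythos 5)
Your argument is correct and is exactly the (implicit) one the paper intends: the corollary is stated without proof as an immediate consequence of Proposition~\ref{propos_W_decompose}, namely that $W_n\otimes\bC$ is the quotient of $\widetilde{W}_n\otimes\bC$ by the summand $\ker\pi_{\bC}=\iota(H_{\bC})$ and is therefore isomorphic to the complementary irreducible summand $\ker C_{\bC}$. The only cosmetic point is that your detour through torsion-freeness of $W_n$ is unnecessary (flatness of $\bC$ over $\Z$ already gives exactness of the complexified sequence), and the identity $C\circ\iota=(n-1)\,\id_H$ by itself gives injectivity of $\iota$ but not, on its own, saturation of $\iota(H)$ in $\widetilde{W}_n$.
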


A classical theorem due to Nielsen~\cite{Nie24} for $n\le 3$ and to Magnus~\cite{Mag35} for all~$n$ (cf.~\cite{MKS}) says that the group~$\IA_n$ is generated by the following automorphisms~$K_{ij}$ and~$K_{ijk}$:
\begin{align*}
 K_{ij}&\colon
 \left\{
 \begin{aligned}
  a_i&\mapsto a_ja_ia_j^{-1},\\
  a_r&\mapsto a_r,\quad r\ne i,
 \end{aligned}
 \right.&
 K_{ijk}&\colon
 \left\{
 \begin{aligned}
  a_i&\mapsto a_i(a_j,a_k),\\
  a_r&\mapsto a_r,\quad r\ne i,
 \end{aligned}
 \right.
\end{align*}
where the subsripts of each homomorphism are distinct numbers from~$\{1,\ldots,n\}$. Our convention is that automorphisms of~$F_n$ are applied on the right so that, for instance, $K_{12}K_{13}$ takes~$a_1$ to~$a_2a_3a_1a_3^{-1}a_2^{-1}$. The following computation is also due to Formanek~\cite{For90}.

\begin{propos}
 We have $\tau(K_{ij})=e_i^*\otimes (e_i\wedge e_j)$ and $\tau(K_{ijk})=e_i^*\otimes (e_j\wedge e_k)$.
\end{propos}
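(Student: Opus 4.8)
The plan is a direct computation straight from the definition of the Johnson homomorphism $\tau\colon\IA_n\to H^*\otimes\exter^2H$ (see~\cite{For90}). Recall that for $\phi\in\IA_n$ and each generator $a_i$ one has $\phi(a_i)a_i^{-1}\in\gamma_2F_n=[F_n,F_n]$, since $\phi$ acts trivially on $H=F_n^{\ab}=F_n/\gamma_2F_n$; that the quotient $\gamma_2F_n/\gamma_3F_n$ is canonically isomorphic to $\exter^2H$ via the map sending the class of a commutator $(x,y)$ to $[x]\exter[y]$; and that $\tau(\phi)$ is then the homomorphism $H\to\exter^2H$ carrying $e_i=[a_i]$ to the image of $\phi(a_i)a_i^{-1}$ in $\gamma_2F_n/\gamma_3F_n\cong\exter^2H$. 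We follow the sign normalization of~\cite{For90}; any other consistent choice would alter the two formulas only by global signs, which is immaterial for the applications.

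First I would record two elementary observations. (a) Both $K_{ij}$ and $K_{ijk}$ fix every generator except $a_i$, so each of $\tau(K_{ij})$ and $\tau(K_{ijk})$ has the form $e_i^*\otimes v$ for a single $v\in\exter^2H$. (b) Conjugation acts trivially on $\gamma_2F_n/\gamma_3F_n$: for any $x\in F_n$ and $w\in\gamma_2F_n$ one has $(x,w)\in[\gamma_2F_n,F_n]=\gamma_3F_n$, hence $xwx^{-1}\equiv w\pmod{\gamma_3F_n}$; consequently a commutator and all of its conjugates have the same image in $\exter^2H$.

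Then the computation itself. For $K_{ij}$ one has $\phi(a_i)a_i^{-1}=a_ja_ia_j^{-1}a_i^{-1}=(a_j,a_i)$, and passing to $\exter^2H$ via the identification above yields $\tau(K_{ij})=e_i^*\otimes(e_i\exter e_j)$. For $K_{ijk}$ one has $\phi(a_i)a_i^{-1}=a_i(a_j,a_k)a_i^{-1}$, which by observation~(b) has the same image in $\exter^2H$ as the commutator $(a_j,a_k)$ itself, namely $e_j\exter e_k$; hence $\tau(K_{ijk})=e_i^*\otimes(e_j\exter e_k)$. I do not expect any genuine obstacle here; the only point requiring care is the consistent bookkeeping of signs across the two identifications, which is dictated entirely by the normalization of $\tau$ and is pinned down by adopting the conventions of~\cite{For90}.
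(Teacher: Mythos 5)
Your computation is correct in substance and is surely the intended argument: the paper itself gives no proof of this proposition, merely attributing the computation to Formanek, and the direct unwinding of the definition of $\tau$ via $e_i\mapsto \phi(a_i)a_i^{-1}\bmod\gamma_3F_n$ together with the triviality of conjugation on $\gamma_2F_n/\gamma_3F_n$ is exactly what that computation amounts to. One small caveat on your sign discussion: with the paper's commutator convention $(x,y)=xyx^{-1}y^{-1}$ one gets $K_{ij}(a_i)a_i^{-1}=(a_j,a_i)$, whose image under the standard identification $(x,y)\mapsto \bar x\wedge\bar y$ is $e_j\wedge e_i=-e_i\wedge e_j$, while $K_{ijk}(a_i)a_i^{-1}\equiv(a_j,a_k)$ maps to $e_j\wedge e_k$; so flipping the identification by a sign fixes the first formula but breaks the second, and the discrepancy is not a single \emph{global} sign as you assert. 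This is harmless here, since the paper only ever uses sign-independent consequences of the proposition (membership or non-membership of $\tau(K_{ij})$ and $\tau(K_{ijk})$ in $\ker\pi$ and $\ker C$, and nonvanishing of their images in $W_n$), but it is worth stating the exact normalization rather than appealing to a global-sign argument.
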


The reason our approach fails for the group~$[\IA_3,\IA_3]^{\ab}$ is that in this case we lack an analogue of Corollary~\ref{cor_I1_tau}. Namely, we have the following statement.
\begin{cor}\label{cor_IA_tau}
 The elements $\tau(K_{ij})$ belong neither to~$\ker\pi$ nor to~$\ker C$. However, the elements~$\tau(K_{ijk})$ belong to the summand~$\ker C$.
\end{cor}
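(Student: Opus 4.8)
The plan is to prove the statement by a short direct computation from the explicit formulas $\tau(K_{ij}) = e_i^* \otimes (e_i \wedge e_j)$ and $\tau(K_{ijk}) = e_i^* \otimes (e_j \wedge e_k)$ recorded above, together with the formula $C\bigl(\xi \otimes (x \wedge y)\bigr) = \xi(x) y - \xi(y) x$ for the contraction homomorphism and the identification $\ker \pi = \iota(H)$, where $\iota(v) = \sum_{k=1}^{n} e_k^* \otimes (e_k \wedge v)$. Everything can be carried out over~$\Z$; the decomposition $\widetilde{W}_n \otimes \bC = \ker \pi_{\bC} \oplus \ker C_{\bC}$ of Proposition~\ref{propos_W_decompose} is only needed to make sense of the phrase ``the summand~$\ker C$''.

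First I would dispose of~$\tau(K_{ijk})$. Since the indices $i, j, k$ are pairwise distinct, $e_i^*(e_j) = e_i^*(e_k) = 0$, so $C\bigl(\tau(K_{ijk})\bigr) = e_i^*(e_j)\,e_k - e_i^*(e_k)\,e_j = 0$; hence $\tau(K_{ijk}) \in \ker C$, as claimed. Applying~$C$ to the other generator gives $C\bigl(\tau(K_{ij})\bigr) = e_i^*(e_i)\,e_j - e_i^*(e_j)\,e_i = e_j \ne 0$, so $\tau(K_{ij}) \notin \ker C$.

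The only point needing an argument is $\tau(K_{ij}) \notin \ker \pi = \iota(H)$. I would observe that for any nonzero $v \in H$ the element $\iota(v) = \sum_k e_k^* \otimes (e_k \wedge v)$ involves, in the standard basis $\{\, e_m^* \otimes (e_p \wedge e_q) : 1\le m\le n,\ 1 \le p < q \le n \,\}$ of $H^* \otimes \exter^2 H$, monomials with at least $n - 1 \ge 2$ distinct first indices~$m$: the summand indexed by~$k$ is nonzero precisely when $e_k \notin \Q v$ (there is at most one exceptional~$k$), and summands for different~$k$ involve disjoint sets of basis monomials — those with first factor~$e_k^*$ — so no cancellation can occur. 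Consequently $\iota(v)$ is never a nonzero integer multiple of a single basis monomial. But $\tau(K_{ij}) = e_i^* \otimes (e_i \wedge e_j)$ is, up to sign, exactly such a monomial, and it is nonzero; hence $\tau(K_{ij}) \notin \iota(H)$. (Alternatively, using $C \circ \iota = (n-1)\,\id_H$ one obtains the criterion $w \in \ker\pi_{\bC} \iff (n-1)w = \iota\bigl(C(w)\bigr)$, and $(n-1)\,e_i^* \otimes (e_i \wedge e_j) \ne \iota(e_j)$ once $n \ge 3$.) I expect no real obstacle here: the statement is an unwinding of the definitions, and the $\notin \ker\pi$ verification — which genuinely uses $n \ge 3$, since for $n = 2$ one has $\iota(e_2) = e_1^* \otimes (e_1 \wedge e_2)$ — is the only step that is not entirely immediate.
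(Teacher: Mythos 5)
Your proof is correct and is exactly the intended argument: the paper states this as an immediate corollary of Formanek's formulas $\tau(K_{ij})=e_i^*\otimes(e_i\wedge e_j)$ and $\tau(K_{ijk})=e_i^*\otimes(e_j\wedge e_k)$, leaving the direct computation with $C$ and $\iota$ implicit. Your verification that $e_i^*\otimes(e_i\wedge e_j)\notin\iota(H)$ (via the support of $\iota(v)$ across at least $n-1$ first-index blocks, or equivalently via $C\circ\iota=(n-1)\,\id$) is the one step the paper does not spell out, and it is carried out correctly, including the observation that it is precisely where $n\ge 3$ is used.
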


Nevertheless, we still have the following analogue of Corollary~\ref{cor_I0_tau}, which will allow us to handle the case of the group~$[\IO_3,\IO_3]^{\ab}$.

\begin{cor}\label{cor_IO_tau}
 The elements $\pi(\tau(K_{ij}))$ and~$\pi(\tau(K_{ijk}))$ are all nonzero in~$W_n$.
\end{cor}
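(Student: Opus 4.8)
The plan is to reduce the statement to a concrete membership question and settle it by a direct coordinate computation. Since $\pi\colon\widetilde{W}_n\to\widetilde{W}_n/\iota(H)=W_n$ is by definition the quotient projection, the claim that $\pi(\tau(K_{ij}))$ and $\pi(\tau(K_{ijk}))$ are nonzero is precisely the claim that neither $\tau(K_{ij})=e_i^*\otimes(e_i\exter e_j)$ nor $\tau(K_{ijk})=e_i^*\otimes(e_j\exter e_k)$ lies in the subgroup $\iota(H)\subset\widetilde{W}_n=H^*\otimes\exter^2H$. So it suffices to prove the latter.

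First I would record the coordinate form of $\iota$. With the basis $e_1,\dots,e_n$ of $H$ and the dual basis $e_1^*,\dots,e_n^*$ of $H^*$, we have the decomposition $H^*\otimes\exter^2H=\bigoplus_{l=1}^n e_l^*\otimes\exter^2H$, and $\iota(v)=\sum_{l=1}^n e_l^*\otimes(e_l\exter v)$ for $v\in H$, so the component of $\iota(v)$ in the $l$-th summand is $e_l^*\otimes(e_l\exter v)$. The key observation is that both $\tau(K_{ij})$ and $\tau(K_{ijk})$ involve only the single covector $e_i^*$; that is, all their components with index $l\ne i$ vanish. Hence if $\tau(K_{ij})=\iota(v)$ (resp.\ $\tau(K_{ijk})=\iota(v)$) for some $v\in H$, then $e_l\exter v=0$, i.e.\ $v\in\langle e_l\rangle$, for every $l\ne i$. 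Since $n\ge3$ there are at least two distinct such indices, and the intersection of the corresponding rank-one summands is $\{0\}$; therefore $v=0$ and $\iota(v)=0$. But $e_i\exter e_j\ne0$ and $e_j\exter e_k\ne0$ (the subscripts being distinct), so $\tau(K_{ij})$ and $\tau(K_{ijk})$ are nonzero, a contradiction. This proves the corollary.

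There is no real obstacle in this argument; the only point worth flagging is why one should bother with the hands-on computation at all. For $\tau(K_{ij})$ one could instead argue via the contraction: from $C\circ\iota=(n-1)\,\id_H$ and $C(\tau(K_{ij}))=e_j$, the equality $\tau(K_{ij})=\iota(v)$ would force $(n-1)v=e_j$, which has no solution $v\in H$ when $n\ge3$. However, this shortcut is unavailable for $\tau(K_{ijk})$, since $\tau(K_{ijk})$ lies in $\ker C$ by Corollary~\ref{cor_IA_tau}; this is exactly why the uniform proof must go through the direct component analysis above. In either approach the hypothesis $n\ge3$ is used in an essential way, which is consistent with the corollary being the analogue of Corollary~\ref{cor_I0_tau} that survives in the exceptional case $n=3$.
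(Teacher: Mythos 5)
Your proof is correct. The paper states this corollary without a written proof; the intended derivation is representation-theoretic: by Corollary~\ref{cor_IA_tau}, $\tau(K_{ij})\notin\ker\pi=\iota(H)$, so $\pi(\tau(K_{ij}))\ne0$ immediately, while $\tau(K_{ijk})$ is a \emph{nonzero} element of $\ker C$, and since $\widetilde{W}_n\otimes\bC=\ker\pi_{\bC}\oplus\ker C_{\bC}$ (Proposition~\ref{propos_W_decompose}) and $\widetilde{W}_n$ is torsion-free, $\ker C$ meets $\iota(H)$ only in $0$. Your argument takes a more elementary route: you work directly in the decomposition $H^*\otimes\exter^2H=\bigoplus_l e_l^*\otimes\exter^2H$ and observe that any $\iota(v)$ supported on the single summand $e_i^*\otimes\exter^2H$ forces $e_l\wedge v=0$ for all $l\ne i$, hence $v=0$ when $n\ge3$. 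This treats $K_{ij}$ and $K_{ijk}$ uniformly without invoking the irreducibility/decomposition machinery, at the cost of a hands-on computation; the paper's route is shorter given that Proposition~\ref{propos_W_decompose} and Corollary~\ref{cor_IA_tau} are already in place. Your remark that the contraction shortcut via $C\circ\iota=(n-1)\,\id_H$ works for $K_{ij}$ but not for $K_{ijk}$ is accurate and correctly identifies why the component analysis (or, in the paper's version, the trivial intersection of the two kernels) is needed for the triple-indexed generators.
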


\section{Subgroup displacement property}\label{section_SDP}

\begin{propos}
\label{propos_SDP}
Suppose that $G$ is an irreducible complex algebraic group, $D$ is a Zariski dense subgroup of~$G$, and $A$ is a finitely generated free abelian group. Assume that $D$ acts on~$A$ via a homomorphism $D\to \GL(A)$ so that this action extends to a rational $G$-representation in~$A\otimes\bC$. Let $\fX\subset A\setminus\{0\}$ be the subset consisting of all elements  that are not contained in any proper $G$-invariant subspace of~$A\otimes\bC$. Then the action of~$D$ on~$A$ possesses the subgroup displacement property with respect to~$\fX$.
\end{propos}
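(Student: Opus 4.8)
The plan is to recast the subgroup displacement property as a statement about proper Zariski-closed subsets of the irreducible variety $G$, and then to exploit the Zariski density of $D$. Fix a subgroup $B\subset A$ with $\rk B=d<r$ and a finite subset $X\subset\fX$. Put $V=A\otimes\bC$ and let $W\subset V$ be the $\bC$-linear span of the image of $B$; since a $\Z$-basis of $B$ remains $\bC$-linearly independent in $V$, we have $\dim_{\bC}W=d<r=\dim_{\bC}V$, so $W$ is a \emph{proper} subspace. For each $x\in X$ consider the set
$$
Z_x=\{g\in G: x\in g(W)\}\subseteq G .
$$

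First I would check that each $Z_x$ is Zariski closed: fixing a basis $w_1,\dots,w_d$ of $W$, the condition $x\in g(W)$ is equivalent to the vanishing of all $(d+1)\times(d+1)$ minors of the matrix with columns $x,g(w_1),\dots,g(w_d)$, and these minors are regular functions on $G$ because $g\mapsto g|_V$ is a morphism $G\to\GL(V)$. The key step is then that $Z_x$ is a proper subset of $G$: if $Z_x=G$, then $g^{-1}(x)\in W$ for every $g\in G$, so the whole orbit $G\cdot x$, and hence its linear span $\langle G\cdot x\rangle$ — which is a $G$-invariant subspace of $V$ containing $x$ — lies inside the proper subspace $W$; this contradicts the defining property of $\fX$, namely that no element of $\fX$ is contained in a proper $G$-invariant subspace of $V$.

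Finally, $Z=\bigcup_{x\in X}Z_x$ is a finite union of proper Zariski-closed subsets of the irreducible variety $G$, hence is itself a proper Zariski-closed subset. Since $D$ is Zariski dense in $G$, we cannot have $D\subseteq Z$, so there exists $g\in D\setminus Z$. For such $g$ we have $x\notin g(W)$ for all $x\in X$, i.e.\ $g(W)\cap X=\emptyset$, and since $g(B)\subseteq g(W)$ this gives $g(B)\cap X=\emptyset$, which is exactly the subgroup displacement property with respect to $\fX$.

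I do not anticipate a genuine obstacle: the argument is short, and the only points requiring care are (i) that $W$ is genuinely proper, which uses $\rk B<r$ together with the promotion of $\Z$-independence to $\bC$-independence in $A\otimes\bC$, and (ii) the properness of each $Z_x$, which is precisely where the definition of $\fX$ enters. The rest is the standard principle that a Zariski-dense subgroup meets the complement of any proper closed subset.
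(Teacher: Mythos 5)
Your argument is correct and coincides with the paper's proof: both reduce the claim to showing that, for each $x\in X$, the locus of $g\in G$ moving $B\otimes\bC$ onto $x$ is a proper Zariski-closed subset (using that the span of the $G$-orbit of $x$ is a $G$-invariant subspace containing $x$, hence all of $A\otimes\bC$), and then intersect the complementary nonempty open sets and invoke Zariski density of $D$. Your write-up merely spells out the details (properness of $W$, closedness of $Z_x$ via minors) that the paper leaves implicit.
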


\begin{proof}
 Let $B$ be a subgroup of~$A$ such that $\rk(B)<\rk(A)$. Consider an element $a\in \fX$. Since $a$ is not contained in any proper $G$-invariant subspace of~$A\otimes\bC$, we obtain that the $G$-orbit of~$a$ is not contained in~$B\otimes\bC$. Let $O_a\subset G$ be the subset consisting of all elements $g$ such that $g(a)\notin B\otimes\bC$. Then $O_a$ is a nonempty Zariski open subset of~$G$. Therefore, for a finite subset $X\subset \fX$, the intersection $O_X=\bigcap_{a\in X}O_a$ is also a nonempty Zariski open subset of~$G$. Since $D$ is Zariski dense in~$G$, the subset~$O_X$ contains an element of~$D$, which completes the proof of the proposition.
\end{proof}

Combining this proposition with Proposition~\ref{propos_U_decompose}, Corollary~\ref{cor_U_irr}, Proposition~\ref{propos_W_decompose}, and Corollary~\ref{cor_W_irr}, we immediately obtain the following two corollaries.

\begin{cor}\label{cor_SDP_I}
 Suppose that $g\ge 3$. Let $U_g^1$, $U_g$, $\pi$, and~$C$ be as in Subsection~\ref{subs_MCG}. Then:
 \begin{enumerate}
  \item The action of~$\Sp_{2g}(\Z)$ on~$U_g$ possesses the subgroup displacement property with respect to~$U_g\setminus\{0\}$.
  \item The action of~$\Sp_{2g}(\Z)$ on~$U_g^1$ possesses the subgroup displacement property with respect to the subset $\fX\subset U_g^1\setminus\{0\}$ consisting of all elements that lie neither in~$\ker\pi$ nor in~$\ker C$.
 \end{enumerate}
\end{cor}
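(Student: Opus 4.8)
The plan is to deduce both parts of Corollary~\ref{cor_SDP_I} directly from Proposition~\ref{propos_SDP}, taking $G=\Sp_{2g}(\bC)$ and $D=\Sp(H)\cong\Sp_{2g}(\Z)$. First I would observe that $\Sp_{2g}(\bC)$ is a connected algebraic group, hence irreducible as an algebraic variety, and that $\Sp_{2g}(\Z)$ is Zariski dense in it; the latter is an instance of the Borel density theorem (alternatively, it follows from the fact that the elementary symplectic transvections, which lie in $\Sp_{2g}(\Z)$, already generate a Zariski dense subgroup). The actions of $\Sp_{2g}(\Z)$ on $U_g$ and on $U_g^1$ extend by construction to the rational $\Sp_{2g}(\bC)$-representations $U_g\otimes\bC$ and $U_g^1\otimes\bC$, so the hypotheses of Proposition~\ref{propos_SDP} are met in both cases, and it remains only to identify the set $\fX$ produced by that proposition, namely the set of lattice elements not contained in any proper $G$-invariant subspace of the complexification.

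For part~(1) I would take $A=U_g$. By Corollary~\ref{cor_U_irr}, the representation $U_g\otimes\bC$ is irreducible, so its only proper $G$-invariant subspace is $\{0\}$. Consequently the set $\fX$ of Proposition~\ref{propos_SDP} is all of $U_g\setminus\{0\}$, and the proposition yields the subgroup displacement property of the $\Sp_{2g}(\Z)$-action on $U_g$ with respect to $U_g\setminus\{0\}$.

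For part~(2) I would take $A=U_g^1$. By Proposition~\ref{propos_U_decompose} we have $U_g^1\otimes\bC=\ker\pi_{\bC}\oplus\ker C_{\bC}$ with both summands irreducible, and these summands are non-isomorphic ($[1]$ versus $[1^3]$, of distinct dimensions). An elementary argument for a multiplicity-free direct sum of two non-isomorphic irreducibles then shows that the proper $G$-invariant subspaces of $U_g^1\otimes\bC$ are exactly $\{0\}$, $\ker\pi_{\bC}$, and $\ker C_{\bC}$. Since $\pi$ and $C$ are defined over $\Z$ with torsion-free targets, a lattice element $a\in U_g^1$ lies in $\ker\pi_{\bC}$ (respectively $\ker C_{\bC}$) if and only if it lies in $\ker\pi$ (respectively $\ker C$). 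Hence the set $\fX$ of Proposition~\ref{propos_SDP} coincides with the set of elements of $U_g^1\setminus\{0\}$ lying in neither $\ker\pi$ nor $\ker C$, which is precisely the set $\fX$ in the statement, and Proposition~\ref{propos_SDP} gives the claim.

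There is no serious obstacle here: the content is entirely in Proposition~\ref{propos_SDP} together with the representation-theoretic facts already recorded in Section~\ref{section_Johnson}. The only points needing (routine) justification are the Zariski density of $\Sp_{2g}(\Z)$ in $\Sp_{2g}(\bC)$ and the classification of invariant subspaces of a sum of two non-isomorphic irreducibles, which is why the corollary is obtained immediately. (The same scheme applies verbatim to the $\SL_n(\Z)$-actions on $\widetilde W_n$ and $W_n$, using Proposition~\ref{propos_W_decompose} and Corollary~\ref{cor_W_irr} in place of Proposition~\ref{propos_U_decompose} and Corollary~\ref{cor_U_irr}.)
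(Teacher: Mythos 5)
Your proposal is correct and follows exactly the paper's route: the corollary is deduced immediately from Proposition~\ref{propos_SDP} together with Corollary~\ref{cor_U_irr} for part~(1) and Proposition~\ref{propos_U_decompose} for part~(2). The extra details you supply (Zariski density of $\Sp_{2g}(\Z)$ and the classification of invariant subspaces of a sum of two non-isomorphic irreducibles) are exactly the routine points the paper leaves implicit.
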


\begin{cor}\label{cor_SDP_IA}
 Suppose that $n\ge 3$. Let $W_n$, $\widetilde{W}_n$, $\pi$, and~$C$ be as in Subsection~\ref{subs_Aut}. Then:
 \begin{enumerate}
  \item The action of~$\SL_n(\Z)$ on~$W_n$ possesses the subgroup displacement property with respect to~$W_n\setminus\{0\}$.
  \item The action of~$\SL_n(\Z)$ on~$\widetilde{W}_n$ possesses the subgroup displacement property with respect to the subset $\fX\subset \widetilde{W}_n\setminus\{0\}$ consisting of all elements that lie neither in~$\ker\pi$ nor in~$\ker C$.
 \end{enumerate}
\end{cor}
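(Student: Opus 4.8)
The plan is to deduce both parts directly from Proposition~\ref{propos_SDP}, taking $G=\SL_n(\bC)$ and $D=\SL_n(\Z)$. Two inputs are needed before the proposition can be invoked: that $\SL_n(\bC)$ is irreducible as a variety, which holds since it is connected, and that $\SL_n(\Z)$ is Zariski dense in $\SL_n(\bC)$. The latter is the Borel density theorem, but it can also be checked by hand, since the Zariski closure of $\SL_n(\Z)$ is an algebraic subgroup containing every elementary unipotent $I+tE_{ij}$ with $t\in\Z$, hence, being Zariski closed, with $t\in\bC$, and such transvections generate $\SL_n(\bC)$. By the constructions of Subsection~\ref{subs_Aut} the actions of $\SL_n(\Z)$ on $W_n$ and on $\widetilde{W}_n$ extend to rational $\SL_n(\bC)$-representations on $W_n\otimes\bC$ and $\widetilde{W}_n\otimes\bC$, so Proposition~\ref{propos_SDP} applies in both cases. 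For part~(1) I take $A=W_n$: by Corollary~\ref{cor_W_irr} the representation $W_n\otimes\bC$ is irreducible, so its only proper invariant subspace is $\{0\}$, hence the set $\fX$ of Proposition~\ref{propos_SDP} is all of $W_n\setminus\{0\}$, and the proposition gives the subgroup displacement property with respect to $W_n\setminus\{0\}$.

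For part~(2) I take $A=\widetilde{W}_n$ and use Proposition~\ref{propos_W_decompose}, which gives $\widetilde{W}_n\otimes\bC=\ker\pi_{\bC}\oplus\ker C_{\bC}$ with both summands irreducible. These two summands are non-isomorphic, as a dimension count shows: $\dim\ker\pi_{\bC}=n$ while $\dim\ker C_{\bC}=n\binom n2-n>n$ for $n\ge3$. By Schur's lemma it follows that the only proper $\SL_n(\bC)$-invariant subspaces of $\widetilde{W}_n\otimes\bC$ are $\{0\}$, $\ker\pi_{\bC}$, and $\ker C_{\bC}$. Hence an integral element $a\in\widetilde{W}_n$ lies in no proper invariant subspace precisely when $a\notin\ker\pi_{\bC}$ and $a\notin\ker C_{\bC}$; and since $\ker\pi=\widetilde{W}_n\cap\ker\pi_{\bC}$ and $\ker C=\widetilde{W}_n\cap\ker C_{\bC}$, this is the same as requiring $a\notin\ker\pi$ and $a\notin\ker C$. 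So the set $\fX$ produced by Proposition~\ref{propos_SDP} coincides with the set $\fX$ in the statement of the corollary, and part~(2) follows.

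The deduction is essentially formal, and I do not expect a genuine obstacle. The only two points deserving a word of justification are the Zariski density of $\SL_n(\Z)$ and the non-isomorphy of the two irreducible summands in part~(2); the latter is what lets Schur's lemma pin down \emph{all} the proper invariant subspaces, rather than leaving a one-parameter family of diagonal subspaces, and it is the place where the hypothesis $n\ge3$ enters.
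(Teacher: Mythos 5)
Your proposal is correct and follows essentially the same route as the paper, which deduces the corollary directly from Proposition~\ref{propos_SDP} together with Proposition~\ref{propos_W_decompose} and Corollary~\ref{cor_W_irr}. The one detail you spell out that the paper leaves implicit --- that the two irreducible summands of $\widetilde{W}_n\otimes\bC$ are non-isomorphic, so Schur's lemma rules out diagonal invariant subspaces and the set $\fX$ of Proposition~\ref{propos_SDP} is exactly the set in part~(2) --- is a genuine and correctly handled point, not a deviation.
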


\section{$S_{\fX}$-torsion property for the abelianization of a subgroup}\label{section_tors_prelim}

Let $K$ be a normal subgroup of a finitely generated group~$T$ such that $A=T/K$ is free abelian. We denote by~$\tau$ the  quotient homomorphism
$T\to A$. Recall that, for a subset~$\fX\subset A\setminus\{0\}$, $S_{\fX}$ is the multiplicatively closed subset of~$\Z A$ generated by all elements $e_a-1$ such that $a\in \fX$. In this section we prove several general lemmas which will allow us to esteblish the $S_{\fX}$-torsion property for elements of~$K^{\ab}$.

\begin{lem}\label{lem_tors_comm1}
 Let $h$ be an element of~$K$ and~$y$ an element of~$T$. Suppose that $hy=yh$ and $\tau(y)\in\fX$. Then $[h]$ is an $S_{\fX}$-torsion element of~$K^{\ab}$.
\end{lem}

\begin{proof}
 We have,
 $$
 \bigl(e_{\tau(y)}-1\bigr)\cdot [h]=(y-1)\cdot [h]=\bigl[yhy^{-1}\bigr]-[h]=0.
 $$
 The lemma follows, since $e_{\tau(y)}-1\in S_{\fX}$.
\end{proof}

\begin{remark}
 This lemma is already sufficient to prove Theorem~\ref{thm_main_K}, see Section~\ref{section_proof_K}. The following lemmas will be used in the  proof of Theorem~\ref{thm_main_OA}.
\end{remark}

\begin{lem}\label{lem_tors_identity}
 Suppose that $x,y_1,\ldots,y_m$ are elements of~$T$.
Then the following identity holds in~$K^{\ab}$:
\begin{equation*}
 \bigl[(x,y_1\cdots y_m)\bigr]=\sum_{i=1}^m e_{\tau(y_1)+\cdots+\tau(y_{i-1})}\cdot\bigl[(x,y_i)\bigr].
\end{equation*}
\end{lem}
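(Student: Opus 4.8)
The plan is to prove this by induction on $m$, using the standard commutator identity $(x, y_1 \cdots y_m) = (x, y_1) \cdot y_1 (x, y_2 \cdots y_m) y_1^{-1}$ and then passing to the abelianization $K^{\ab}$. First I would check the base case $m = 1$, which is the trivial identity $[(x,y_1)] = [(x,y_1)]$ (the coefficient being $e_0 = 1$). For the inductive step, I would first verify the claimed commutator identity in $T$: expanding $(x, y_1)\cdot y_1(x, y_2\cdots y_m)y_1^{-1}$ gives $xy_1x^{-1}y_1^{-1} \cdot y_1 x (y_2\cdots y_m) x^{-1} (y_2\cdots y_m)^{-1} y_1^{-1} = x (y_1 \cdots y_m) x^{-1} (y_1\cdots y_m)^{-1} = (x, y_1 \cdots y_m)$. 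Note that both $(x,y_1)$ and $(x, y_2\cdots y_m)$ lie in $K$, since $K$ is the kernel of $\tau$ and $\tau$ kills all commutators; hence the product above is a product of two elements of $K$ (the second conjugated by $y_1 \in T$), so it makes sense in $K$ and its class in $K^{\ab}$ is additive.

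The key point is that in $K^{\ab}$, for $h \in K$ and $y \in T$, one has $[yhy^{-1}] = e_{\tau(y)} \cdot [h]$ — this is precisely the definition of the $\Z A$-module structure on $K^{\ab}$ recalled earlier. Applying this with $h = (x, y_2\cdots y_m)$ and $y = y_1$, the identity in $T$ yields in $K^{\ab}$:
\begin{equation*}
 \bigl[(x, y_1\cdots y_m)\bigr] = \bigl[(x,y_1)\bigr] + e_{\tau(y_1)}\cdot\bigl[(x, y_2\cdots y_m)\bigr].
\end{equation*}
Now I would apply the induction hypothesis to $[(x, y_2\cdots y_m)]$, which gives $\sum_{i=2}^m e_{\tau(y_2)+\cdots+\tau(y_{i-1})}\cdot[(x,y_i)]$, and then multiply through by $e_{\tau(y_1)}$, using $e_a e_b = e_{a+b}$ to absorb the factor, obtaining $\sum_{i=2}^m e_{\tau(y_1)+\tau(y_2)+\cdots+\tau(y_{i-1})}\cdot[(x,y_i)]$. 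Adding the $i=1$ term $[(x,y_1)] = e_0\cdot[(x,y_1)]$ completes the sum and yields the desired formula.

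This statement is essentially a formal/bookkeeping lemma, so I do not anticipate a genuine obstacle; the only point requiring mild care is the order of conjugation in the commutator expansion and making sure the coefficient $e_{\tau(y_1)+\cdots+\tau(y_{i-1})}$ comes out with exactly the indices $1$ through $i-1$ (an off-by-one in the exponent is the natural pitfall). One could alternatively avoid induction altogether by writing a telescoping argument: set $z_i = (x, y_1\cdots y_i)$ with $z_0 = 1$, observe $z_i = z_{i-1} \cdot (y_1\cdots y_{i-1})(x, y_i)(y_1\cdots y_{i-1})^{-1}$ in $T$, pass to $K^{\ab}$ to get $[z_i] - [z_{i-1}] = e_{\tau(y_1)+\cdots+\tau(y_{i-1})}\cdot[(x,y_i)]$, and sum over $i = 1, \ldots, m$. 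I would likely present the telescoping version, as it is cleaner than induction and makes the origin of each coefficient transparent.
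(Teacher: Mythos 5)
Your proposal is correct and follows essentially the same route as the paper: the paper simply cites the standard commutator identity $(x,y_1\cdots y_m)=(x,y_1)\,{}^{y_1}(x,y_2)\cdots{}^{y_1\cdots y_{m-1}}(x,y_m)$ and passes to $K^{\ab}$, which is exactly what your induction (or telescoping) establishes and then reads off via the module structure $[yhy^{-1}]=e_{\tau(y)}\cdot[h]$. The only difference is that you supply a proof of the commutator identity that the paper takes for granted.
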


\begin{proof}
A standard commutator identity yields that
\begin{equation*}
 (x,y_1\cdots y_m)=(x,y_1)\,\ {}^{y_1}(x,y_2)\,\ {}^{y_1y_2}(x,y_3)\cdots\,{}^{y_1\cdots y_{m-1}}(x,y_m),
\end{equation*}
where ${}^y\!z=yzy^{-1}$.
The lemma follows.
\end{proof}

\begin{cor}\label{cor_tors_gen}
Suppose that $K=[T,T]$ and elements $y_1,y_2,\ldots$ generate~$T$. Assume that $\bigl[(y_i,y_j)\bigr]$ are $S_{\fX}$-torsion elements of~$K^{\ab}$ for all $i$ and~$j$. Then $K^{\ab}$  is an $S_{\fX}$-torsion module.
\end{cor}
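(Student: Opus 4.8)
The plan is to deduce Corollary~\ref{cor_tors_gen} directly from Lemma~\ref{lem_tors_identity} together with the multiplicative closure of~$S_{\fX}$. Since $K=[T,T]$ is generated as a $\Z A$-module (equivalently, as a $\Z T$-module via conjugation) by the classes of commutators $\bigl[(y_i,y_j)\bigr]$ of the generators $y_i$ of~$T$ — this is the standard fact that the abelianization of a commutator subgroup of a finitely generated group is generated as a module over the abelianized quotient by commutators of generators — it suffices to show that the $\Z A$-submodule generated by a single $S_{\fX}$-torsion element is itself an $S_{\fX}$-torsion module, and then that $K^{\ab}$, being generated by such elements, is $S_{\fX}$-torsion.

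First I would record the module-generation statement. Every element of~$[T,T]$ is a product of conjugates of commutators of the~$y_i$; passing to~$[T,T]^{\ab}$ and using Lemma~\ref{lem_tors_identity} (and the evident identity $\bigl[(x_1x_2,y)\bigr]$ expanded similarly, or simply $\bigl[(x,y)^{-1}\bigr]=-\bigl[(x,y)\bigr]=\bigl[(y,x)\bigr]$), one sees that $[T,T]^{\ab}$ is spanned as an abelian group by elements $e_a\cdot\bigl[(y_i,y_j)\bigr]$ with $a\in A$. Hence as a $\Z A$-module it is generated by the finitely (or countably) many elements $\bigl[(y_i,y_j)\bigr]$.

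Next I would observe that if $m\in M$ is an $S_{\fX}$-torsion element of a $\Z A$-module~$M$, say $sm=0$ with $s\in S_{\fX}$, then for any $a\in A$ the element $e_a\cdot m$ satisfies $s\cdot(e_a m)=e_a\cdot(sm)=0$, since $\Z A$ is commutative; so the whole $\Z A$-submodule generated by~$m$ consists of $S_{\fX}$-torsion elements. Applying this to each $m=\bigl[(y_i,y_j)\bigr]$, which is $S_{\fX}$-torsion by hypothesis, we conclude that the $\Z A$-submodule each generates is $S_{\fX}$-torsion. Finally, a sum of $S_{\fX}$-torsion elements is $S_{\fX}$-torsion: if $s_1 m_1=0$ and $s_2 m_2=0$ then $(s_1s_2)(m_1+m_2)=0$ and $s_1s_2\in S_{\fX}$ by multiplicative closure. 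Since $[T,T]^{\ab}$ is generated as an abelian group by elements lying in these submodules, every element of~$[T,T]^{\ab}$ is a finite sum of $S_{\fX}$-torsion elements, hence is itself $S_{\fX}$-torsion. Thus $[T,T]^{\ab}$ is an $S_{\fX}$-torsion module.

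There is no real obstacle here; the only point requiring a word of care is the module-generation claim, i.e.\ that commutators of a generating set generate $[T,T]^{\ab}$ as a module over $(T/[T,T])$ — but this is classical and can be handled in one line via the commutator identity already invoked in Lemma~\ref{lem_tors_identity}. Everything else is formal manipulation using only that $\Z A$ is commutative and that $S_{\fX}$ is multiplicatively closed.
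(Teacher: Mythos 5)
Your proof is correct and follows exactly the route the paper intends: Lemma~\ref{lem_tors_identity} (applied in both slots of the commutator, together with $\bigl[(x,y)^{-1}\bigr]=-\bigl[(x,y)\bigr]$) shows that $[T,T]^{\ab}$ is generated as a $\Z A$-module by the classes $\bigl[(y_i,y_j)\bigr]$, and the rest is the formal observation that $S_{\fX}$-torsion elements are closed under the $\Z A$-action (commutativity) and under finite sums (multiplicative closure of~$S_{\fX}$). The paper leaves this proof implicit, and your write-up supplies precisely the missing details.
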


\begin{cor}\label{cor_tors_identity}
 Suppose that $x,y_1,\ldots,y_m,z_1,\ldots,z_n$ are elements of\/~$T$ such that
 \begin{equation*}
  y_1\cdots y_m=z_1\cdots z_n.
 \end{equation*}
Then the following identity holds in~$K^{\ab}$:
\begin{equation}\label{eq_tors_identity}
 \sum_{i=1}^m e_{\tau(y_1)+\cdots+\tau(y_{i-1})}\cdot\bigl[(x,y_i)\bigr]=
 \sum_{j=1}^n e_{\tau(z_1)+\cdots+\tau(z_{j-1})}\cdot\bigl[(x,z_j)\bigr].
\end{equation}
\end{cor}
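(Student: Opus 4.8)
The plan is to obtain both sides of~\eqref{eq_tors_identity} by applying Lemma~\ref{lem_tors_identity} to the two given factorizations of one and the same element of~$T$. Set $w=y_1\cdots y_m=z_1\cdots z_n$. Since $T/K$ is abelian, we have $[T,T]\subseteq K$, so $(x,w)$ lies in~$K$ and the class $\bigl[(x,w)\bigr]\in K^{\ab}$ is defined. First I would apply Lemma~\ref{lem_tors_identity} to the product $y_1\cdots y_m$: this expresses $\bigl[(x,w)\bigr]=\bigl[(x,y_1\cdots y_m)\bigr]$ as the left-hand side of~\eqref{eq_tors_identity}. Then, applying the same lemma to the product $z_1\cdots z_n$, I would express the very same class $\bigl[(x,w)\bigr]=\bigl[(x,z_1\cdots z_n)\bigr]$ as the right-hand side of~\eqref{eq_tors_identity}. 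Equating the two expressions for $\bigl[(x,w)\bigr]$ yields the desired identity.

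There is essentially no obstacle here: the statement is a direct corollary of Lemma~\ref{lem_tors_identity}, and the only point that requires a word of justification is that both sides legitimately represent elements of~$K^{\ab}$, which follows from $[T,T]\subseteq K$.
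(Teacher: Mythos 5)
Your proposal is correct and follows exactly the route the paper intends: the corollary is stated without an explicit proof precisely because it amounts to applying Lemma~\ref{lem_tors_identity} to the two factorizations of the same element $w=y_1\cdots y_m=z_1\cdots z_n$ and equating the resulting expressions for $\bigl[(x,w)\bigr]$. Your remark that $[T,T]\subseteq K$ (since $T/K$ is abelian) is the right justification for all the commutator classes being well defined in $K^{\ab}$.
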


\begin{lem}\label{lem_tors_comm2}
Let $x$, $y$, and~$z$ be elements of~$T$ such that $y$ and~$z$ commute and $\tau(y)\in\fX$. Assume that $\bigl[(x,y)\bigr]$ is an $S_{\fX}$-torsion element of~$K^{\ab}$. Then $\bigl[(x,z)\bigr]$ is also an $S_{\fX}$-torsion element of~$K^{\ab}$.
\end{lem}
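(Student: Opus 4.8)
The plan is to exploit the commutativity of $y$ and $z$ through Corollary~\ref{cor_tors_identity} to produce a single relation tying $[(x,z)]$ to $[(x,y)]$, and then to annihilate it using the torsion hypothesis on $[(x,y)]$.

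First I would apply Corollary~\ref{cor_tors_identity} to the equality $yz=zy$ in $T$, taking $m=n=2$ with $(y_1,y_2)=(y,z)$ and $(z_1,z_2)=(z,y)$. This yields the identity
$$
\bigl[(x,y)\bigr]+e_{\tau(y)}\cdot\bigl[(x,z)\bigr]=\bigl[(x,z)\bigr]+e_{\tau(z)}\cdot\bigl[(x,y)\bigr]
$$
in $K^{\ab}$, which rearranges to
$$
\bigl(e_{\tau(y)}-1\bigr)\cdot\bigl[(x,z)\bigr]=\bigl(e_{\tau(z)}-1\bigr)\cdot\bigl[(x,y)\bigr].
$$
(Equivalently, this is the standard commutator identity $(x,yz)=(x,y)\,{}^y(x,z)$ applied to both $yz$ and $zy=yz$ and then pushed forward to $K^{\ab}$, exactly as in the proof of Lemma~\ref{lem_tors_identity}.)

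Next, by hypothesis there is an element $s\in S_{\fX}$ with $s\cdot[(x,y)]=0$. Multiplying the displayed relation by $s$ and using commutativity of the ring $\Z A$, I obtain
$$
s\cdot\bigl(e_{\tau(y)}-1\bigr)\cdot\bigl[(x,z)\bigr]=\bigl(e_{\tau(z)}-1\bigr)\cdot s\cdot\bigl[(x,y)\bigr]=0.
$$
Since $\tau(y)\in\fX$, we have $e_{\tau(y)}-1\in S_{\fX}$, and hence $s\cdot(e_{\tau(y)}-1)\in S_{\fX}$ because $S_{\fX}$ is multiplicatively closed. Thus $[(x,z)]$ is annihilated by an element of $S_{\fX}$, i.e. it is an $S_{\fX}$-torsion element, which is what we had to prove.

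There is no genuine difficulty in this argument; the only point demanding attention is the asymmetry of the hypotheses — it is $\tau(y)$, and not $\tau(z)$, that is assumed to belong to $\fX$, so one must be careful to factor out the factor $\bigl(e_{\tau(y)}-1\bigr)$ (rather than $\bigl(e_{\tau(z)}-1\bigr)$) when passing from the relation to the conclusion.
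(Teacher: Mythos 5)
Your proof is correct and follows essentially the same route as the paper: both derive the identity $\bigl(e_{\tau(y)}-1\bigr)\cdot\bigl[(x,z)\bigr]=\bigl(e_{\tau(z)}-1\bigr)\cdot\bigl[(x,y)\bigr]$ from Corollary~\ref{cor_tors_identity} applied to $yz=zy$ and then use $e_{\tau(y)}-1\in S_{\fX}$ to conclude. Your final step (multiplying by the annihilator $s$ of $\bigl[(x,y)\bigr]$ and noting that $s\bigl(e_{\tau(y)}-1\bigr)\in S_{\fX}$) is just a slightly more explicit spelling-out of what the paper leaves implicit.
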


\begin{proof}
 Equation~\eqref{eq_tors_identity} corresponding to the relation $yz=zy$ reads as
 $$
 \bigl(e_{\tau(y)}-1\bigr)\cdot \bigl[(x,z)\bigr]=
 \bigl(e_{\tau(z)}-1\bigr)\cdot \bigl[(x,y)\bigr].
 $$
 The lemma follows, since $e_{\tau(y)}-1\in S_{\fX}$.
\end{proof}

\section{Proof of Theorem~\ref{thm_main_K}}\label{section_proof_K}

The \textit{genus} of a bounding pair~$\{\gamma_1,\gamma_2\}$ on~$\Sigma_g^1$ is, by definition, the genus of the subsurface bounded by~$\gamma_1\cup\gamma_2$, that is, the genus of those connected component of~$\Sigma_g^1\setminus(\gamma_1\cup\gamma_2)$ that does not contain the boundary~$\partial\Sigma_g^1$. For the closed surface~$\Sigma_g$, the \textit{genus} of a bounding pair~$\{\gamma_1,\gamma_2\}$ is defined to be the smallest of the genera of the two connected components of~$\Sigma_g\setminus(\gamma_1\cup\gamma_2)$.

\begin{propos}\label{propos_K_torsion}
 Suppose that $g\ge 3$, $b\in\{0,1\}$, and $0<g'<g$. Let $\fX$ be any subset of~$U_g^b\setminus\{0\}$ that contains the elements $\tau\left(T_{\gamma_1}T_{\gamma_2}^{-1}\right)$ for all genus~$g'$ bounding pairs~$\{\gamma_1,\gamma_2\}$ on~$\Sigma_g^b$. Then $(\K_g^b)^{\ab}$ is an $S_{\fX}$-torsion $\Z U_g^b$-module.
\end{propos}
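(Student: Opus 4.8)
The plan is to show that every generator of $(\K_g^b)^{\ab}$ as a $\Z U_g^b$-module is an $S_{\fX}$-torsion element; since the module is finitely generated (as recalled earlier), this suffices. By definition $\K_g^b$ is generated, as a group, by Dehn twists $T_{\gamma}$ about separating simple closed curves $\gamma$. Moreover, $\K_g^b$ is normal in $\I_g^b$, so the conjugates of such twists by elements of $\I_g^b$ still lie in $\K_g^b$ and their classes generate $(\K_g^b)^{\ab}$ as a $\Z U_g^b$-module. Hence it is enough to prove that for each separating simple closed curve $\gamma$ on $\Sigma_g^b$, the class $[T_{\gamma}] \in (\K_g^b)^{\ab}$ is killed by some element of $S_{\fX}$.

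The key geometric input is that a twist about a separating curve can always be made disjoint from a genus $g'$ bounding pair, for the prescribed value of $g'$ with $0 < g' < g$. Concretely, given $\gamma$ separating $\Sigma_g^b$, one of the two complementary pieces has genus at least one; by choosing coordinates appropriately one can find, inside the complement of $\gamma$, a subsurface realizing a bounding pair $\{\gamma_1, \gamma_2\}$ of genus exactly $g'$ (here one uses $0<g'<g$ and that $\gamma$ is separating, so a genuine bounding pair — with both pieces of the right topological type — fits disjointly from $\gamma$; for $b=1$ one also keeps the bounding pair away from $\partial \Sigma_g^1$). The hard part will be this combinatorial/topological claim: that for \emph{every} separating curve one can fit a disjoint genus $g'$ bounding pair — one must check the extremal cases (e.g.\ $\gamma$ bounding a genus $1$ piece, small $g$, $b=1$) carefully, but for $g \ge 3$ and $0<g'<g$ there is always enough room.

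Once such a disjoint pair $\{\gamma_1, \gamma_2\}$ is found, set $y = T_{\gamma_1} T_{\gamma_2}^{-1} \in \I_g^b$. Since $\gamma_1, \gamma_2$ are disjoint from $\gamma$, the mapping class $y$ commutes with $T_{\gamma}$. Also $h = T_{\gamma} \in \K_g^b$, and by hypothesis $\tau(y) = \tau\bigl(T_{\gamma_1} T_{\gamma_2}^{-1}\bigr) \in \fX$. Now apply Lemma~\ref{lem_tors_comm1} with this $h$ and $y$: the computation
\begin{equation*}
 \bigl(e_{\tau(y)} - 1\bigr) \cdot [T_{\gamma}] = (y-1)\cdot[T_{\gamma}] = \bigl[ y T_{\gamma} y^{-1} \bigr] - [T_{\gamma}] = 0,
\end{equation*}
together with $e_{\tau(y)} - 1 \in S_{\fX}$, shows $[T_{\gamma}]$ is an $S_{\fX}$-torsion element. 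As $\gamma$ ranges over all separating curves (and their $\I_g^b$-conjugates, which are handled identically since conjugation by $\I_g^b$ preserves both membership in $\K_g^b$ and the structure of the argument), the generators are all $S_{\fX}$-torsion, hence $(\K_g^b)^{\ab}$ is an $S_{\fX}$-torsion $\Z U_g^b$-module.
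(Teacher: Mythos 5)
Your proposal is correct and follows essentially the same route as the paper: reduce via Johnson's result that $\K_g^b$ is generated by separating twists to showing each class $[T_{\gamma}]$ is $S_{\fX}$-torsion, produce a genus~$g'$ bounding pair disjoint from~$\gamma$, and apply Lemma~\ref{lem_tors_comm1}. The topological claim you flag as the hard part is asserted without proof in the paper as well, and your caution about extremal cases is warranted (e.g.\ for $b=1$ and $g'=g-1$ a separating curve bounding a genus~$g-1$ piece admits no disjoint genus~$g'$ bounding pair), but in the applications one simply takes a suitable~$g'$ such as $g'=1$.
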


\begin{proof}
 By a classical result of Johnson~\cite{Joh85a} the group $\K_g^b$ is generated by twists~$T_{\gamma}$ about separating simple closed curves~$\gamma$ on~$\Sigma_g^b$. So we suffice  to prove that the class~$[T_{\gamma}]$ of any such $T_{\gamma}$ is an $S_{\fX}$-torsion element of~$(\K_g^b)^{\ab}$. However, for each separating simple closed curve~$\gamma$, there exists a genus~$g'$ bounding pair~$\{\gamma_1,\gamma_2\}$ that is disjoint from~$\gamma$. Then the corresponding BP map $T_{\gamma_1}T_{\gamma_2}^{-1}$ commutes with~$T_{\gamma}$. The proposition now follows from Lemma~\ref{lem_tors_comm1}.
\end{proof}

Combining Proposition~\ref{propos_K_torsion} with Corollaries~\ref{cor_I1_tau} and~\ref{cor_I0_tau}, we obtain the following statement.

\begin{cor}\label{cor_tors_I}
 Suppose that $g\ge 3$. Then the group $\K_g^{\ab}$ is an $S_{U_g\setminus\{0\}}$-torsion $\Z U_g$-module.  The group $(\K_g^1)^{\ab}$ is an $S_{\fX}$-torsion $\Z U_g^1$-module for the subset $\fX\subset U_g^1\setminus\{0\}$ consisting of all elements that lie neither in~$\ker\pi$ nor in~$\ker C$.
\end{cor}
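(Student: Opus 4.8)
The plan is to obtain both statements as immediate consequences of Proposition~\ref{propos_K_torsion}, once we select a suitable value of the genus parameter $g'$ and verify that the corresponding bounding pair maps have $\tau$-images inside the target subset $\fX$. Since the standing hypothesis is $g\ge3$, the simplest choice is $g'=1$: then $0<g'<g$, and genus~$1$ bounding pairs certainly exist on both $\Sigma_g$ and $\Sigma_g^1$, so Proposition~\ref{propos_K_torsion} is applicable with this $g'$ for either value of $b$.

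For the closed surface I would take $\fX=U_g\setminus\{0\}$. By Corollary~\ref{cor_I0_tau}, for every bounding pair $\{\gamma_1,\gamma_2\}$ on $\Sigma_g$ --- and in particular for every genus~$1$ one --- the element $\tau\bigl(T_{\gamma_1}T_{\gamma_2}^{-1}\bigr)$ is nonzero in $U_g$, hence lies in $\fX$. Thus the hypothesis of Proposition~\ref{propos_K_torsion} holds with $b=0$, $g'=1$, and $\fX=U_g\setminus\{0\}$, and the proposition gives that $\K_g^{\ab}$ is an $S_{U_g\setminus\{0\}}$-torsion $\Z U_g$-module.

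For $\Sigma_g^1$ I would let $\fX\subset U_g^1\setminus\{0\}$ be the subset of elements lying neither in $\ker\pi$ nor in $\ker C$. Because $g'=1<g$, Corollary~\ref{cor_I1_tau} tells us that $\tau\bigl(T_{\gamma_1}T_{\gamma_2}^{-1}\bigr)$ belongs to neither $\ker\pi$ nor $\ker C$ for every genus~$1$ bounding pair on $\Sigma_g^1$; hence all such elements lie in this $\fX$. Applying Proposition~\ref{propos_K_torsion} with $b=1$, $g'=1$, and this $\fX$ then shows that $(\K_g^1)^{\ab}$ is an $S_{\fX}$-torsion $\Z U_g^1$-module.

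I do not expect any genuine obstacle here: the geometric input (every separating curve is disjoint from some genus~$g'$ bounding pair) and the homological input (the $\tau$-computations) have already been packaged into Proposition~\ref{propos_K_torsion} and Corollaries~\ref{cor_I1_tau} and~\ref{cor_I0_tau}, so the deduction is purely formal. The only point requiring a moment's care is that in the bordered case $g'$ must be chosen strictly below $g$ --- otherwise, by the remark following Corollary~\ref{cor_I1_tau}, $\tau\bigl(T_{\gamma_1}T_{\gamma_2}^{-1}\bigr)$ would land in $\ker\pi$ and escape $\fX$ --- which is exactly why the standing hypothesis $g\ge3$ lets us use $g'=1$.
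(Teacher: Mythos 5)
Your proposal is correct and matches the paper's argument, which simply combines Proposition~\ref{propos_K_torsion} with Corollaries~\ref{cor_I0_tau} and~\ref{cor_I1_tau}; your explicit choice $g'=1$ is a valid (and natural) instantiation of the $0<g'<g$ hypothesis that the paper leaves unspecified. No gaps.
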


Theorem~\ref{thm_main_K} follows from Corollaries~\ref{cor_main_gen}, \ref{cor_SDP_I}, and~\ref{cor_tors_I}.

\section{Proof of Theorem~\ref{thm_main_OA}}\label{section_proof_IO}

The aim of this section is to prove the following proposition and then deduce Theorem~\ref{thm_main_OA} from it. We use notation from Subsection~\ref{subs_Aut}.

\begin{propos}\label{propos_IA_torsion}
Suppose  that $n\ge 3$. If $\fX$ is an arbitrary subset of~$\widetilde{W}_n\setminus\{0\}$ that contains all elements~$\tau(K_{ij})$ and all elements~$\tau(K_{ijk})$, then $[\IA_n,\IA_n]^{\ab}$ is an $S_{\fX}$-torsion $\Z \widetilde{W}_n$-module. Similarly, if~$\fX$ is an arbitrary subset of~$W_n\setminus\{0\}$ that contains all elements~$\pi(\tau(K_{ij}))$ and all elements~$\pi(\tau(K_{ijk}))$, then $[\IO_n,\IO_n]^{\ab}$ is an $S_{\fX}$-torsion $\Z W_n$-module.
\end{propos}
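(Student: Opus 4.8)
The plan is to prove the statement for $[\IA_n,\IA_n]^{\ab}$ first and then deduce the one for $[\IO_n,\IO_n]^{\ab}$ from it. By the Nielsen--Magnus theorem $\IA_n$ is generated by the automorphisms $K_{ij}$ and $K_{ijk}$; since $\tau(K_{ij})=e_i^*\otimes(e_i\wedge e_j)$ and $\tau(K_{ijk})=e_i^*\otimes(e_j\wedge e_k)$ are nonzero and lie in $\fX$, Corollary~\ref{cor_tors_gen} reduces us to showing that $[(\alpha,\beta)]$ is an $S_{\fX}$-torsion element of $[\IA_n,\IA_n]^{\ab}$ for each pair $\alpha,\beta$ of Magnus generators. (Since enlarging $\fX$ only weakens the conclusion, we may take $\fX=\{\tau(K_{ij})\}\cup\{\tau(K_{ijk})\}$.)

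The main instrument is Lemma~\ref{lem_tors_comm1}: it suffices to produce, for each such pair, an element $y\in\IA_n$ with $\tau(y)\in\fX$ that commutes with the automorphism $(\alpha,\beta)$. I would compute each commutator $(\alpha,\beta)$ explicitly as an automorphism of $F_n$ and organize the pairs by the interaction of their subscripts. When $\alpha$ and $\beta$ commute in $\IA_n$ — for instance when $\alpha=\beta^{\pm1}$, for the pairs $(K_{ij},K_{kj})$, and, for $n\ge 4$, whenever the index sets are disjoint — the commutator is trivial and there is nothing to prove. In all other cases $(\alpha,\beta)$ turns out to be, possibly after composition with an inner automorphism, a partial conjugation $a_m\mapsto wa_mw^{-1}$, and a suitable $y$ is supplied by the structure of such automorphisms. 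Two model computations do the real work. First, $(K_{ij},K_{ik})$ is the partial conjugation of $a_i$ by the commutator $(a_j,a_k)$; it lies in the same rank-two abelian group $\{\,a_i\mapsto (a_j,a_k)^{p}a_i(a_j,a_k)^{q}\,\}$ as $K_{ijk}$, hence commutes with $K_{ijk}$, and we are done by Lemma~\ref{lem_tors_comm1}. Second, the harder pair $(K_{ij},K_{ji})$ — the crucial one for $n=3$, where no ``faraway'' generator is available — factors as the composition of the inner automorphism $c_g$ with $g=(a_i^{-1},a_j^{-1})$ and the partial conjugation $\psi$ of a third generator $a_k$ by $g^{-1}$, both factors lying in $[\IA_n,\IA_n]$, so that $[(K_{ij},K_{ji})]=[c_g]+[\psi]$; and each summand is torsion: $c_g=(c_{a_j},c_{a_i})$ is conjugation by an element of $\langle a_i,a_j\rangle$ and thus commutes with $K_{ki}$, while $\psi$ is conjugate, inside the rank-two free group of partial conjugations of $a_k$ by $\langle a_i,a_j\rangle$, to $(K_{ki},K_{kj})^{\pm1}$, which is again a partial conjugation of $a_k$ by a commutator and hence commutes with $K_{kij}$ as in the first model case.

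The remaining pairs — the chains $(K_{ij},K_{jk})$ and the pairs involving one or two $K_{ijk}$'s — are dealt with by the same recipe: compute the commutator (again a partial conjugation, up to conjugation and an inner factor), apply Lemma~\ref{lem_tors_comm1}, and, where convenient, use Lemma~\ref{lem_tors_comm2} to transport torsion across a commuting pair of generators and Corollary~\ref{cor_tors_identity} to convert a relation in $\IA_n$ into a relation among the classes $[(x,y_i)]$. Finally, the statement for $[\IO_n,\IO_n]^{\ab}$ follows by pushing forward along $\IA_n\twoheadrightarrow\IO_n$: this restricts to $[\IA_n,\IA_n]\twoheadrightarrow[\IO_n,\IO_n]$ and induces a surjection $[\IA_n,\IA_n]^{\ab}\twoheadrightarrow[\IO_n,\IO_n]^{\ab}$ compatible with the ring homomorphism $\Z\widetilde{W}_n\to\Z W_n$; since $\pi(\tau(K_{ij}))$ and $\pi(\tau(K_{ijk}))$ are nonzero in $W_n$ by Corollary~\ref{cor_IO_tau}, an $S_{\fX'}$-torsion relation upstairs, with $\fX'=\{\tau(K_{ij})\}\cup\{\tau(K_{ijk})\}$, maps to an $S_{\fX}$-torsion relation downstairs. (Alternatively, the whole argument can be run directly in $\Out(F_n)$, which is slightly cleaner because inner automorphisms become trivial there.) I expect the genuine obstacle to be the low-rank case $n=3$: the interacting pairs such as $(K_{ij},K_{ji})$ admit no single commuting generator whose $\tau$-value lies in $\fX$, so the decomposition above is essential — and it is exactly at this point that the analogous argument for $[\IA_3,\IA_3]^{\ab}$, with the subset $\fX$ forced by the subgroup displacement property, breaks down, since then $\tau(K_{ijk})\in\ker C$ (Corollary~\ref{cor_IA_tau}), so $K_{ijk}$ is no longer an admissible choice of $y$.
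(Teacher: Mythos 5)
Your treatment of the pairs of double-indexed generators is sound: the model case $(K_{ij},K_{ik})$ commuting with $K_{ijk}$ is exactly the paper's argument, and your explicit factorization of $(K_{ij},K_{ji})$ into an inner automorphism $c_g$ and a partial conjugation, each separately seen to be torsion, is a workable (if more computational) substitute for the paper's route, which instead reaches $(K_{jk},K_{kj})$ by applying the transport Lemma~\ref{lem_tors_comm2} twice through the commuting families $K_{ik},K_{jk}$. The reduction of the $\IO_n$ statement to the $\IA_n$ one by pushing forward along $\IA_n\twoheadrightarrow\IO_n$ is also fine (the paper simply runs both cases in parallel).

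The genuine gap is in the pairs involving a triple-indexed generator when $n=3$, which you dispose of with the recipe ``the commutator is again a partial conjugation up to an inner factor; find a commuting generator.'' Both halves of that recipe fail here. First, these commutators are not partial conjugations: for instance $(K_{12},K_{123})$ sends $a_1\mapsto a_1w$ with $w=\bigl((a_2,a_3)^{-1},a_2\bigr)$ and fixes $a_2,a_3$; since $a_1w$ is not conjugate to $a_1$ in $F_3$, no composition with an inner automorphism turns this into a partial conjugation. Second, and more fundamentally, for $n=3$ \emph{no} Magnus generator commutes with $K_{123}$ (every generator either modifies $a_1$, which $K_{123}$ modifies, or modifies $a_2$ or $a_3$, which $K_{123}$ reads), so neither Lemma~\ref{lem_tors_comm1} nor Lemma~\ref{lem_tors_comm2} can be applied with a generator as the auxiliary element $y$ --- this is the same obstruction you correctly identify for $(K_{ij},K_{ji})$, but your decomposition trick has no analogue here. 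The paper's way out is Proposition~\ref{propos_K_aux2}: one exhibits a specific word $L=K_{13}K_{12}^{-1}K_{13}^{-1}K_{123}K_{12}$ (a Chein relation) that commutes with $K_{123}$, feeds the resulting relation~\eqref{eq_K_relation} into Corollary~\ref{cor_tors_identity}, and observes that after discarding the already-torsion double-indexed contributions the surviving combination is $-\bigl(e_{\tau(K_{123})}-1\bigr)\bigl(e_{\tau(K_{12})}-1\bigr)e_{-\tau(K_{12})}\cdot\bigl[(f,K_{123})\bigr]$, whose prefactor lies in $S_{\fX}$; this gives $\bigl[(f,K_{123})\bigr]$ torsion for arbitrary $f$ and hence settles both the double--triple and the triple--triple pairs. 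This mechanism --- producing a unit-times-$S_{\fX}$ multiple of the unknown class from a relation, rather than finding a commuting element --- is the essential idea of the proof and is absent from your proposal.
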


The proof of this proposition is literally identical in the cases of~$[\IA_n,\IA_n]^{\ab}$ and $[\IO_n,\IO_n]^{\ab}$, so we treat both cases simultaneously. To do this, we conveniently denote the images of the  generators~$K_{ij}$ and~$K_{ijk}$ in~$\IO_n$ again by~$K_{ij}$ and~$K_{ijk}$, respectively. Further we always assume that $\fX$ is a subset of~$\widetilde{W}_n\setminus\{0\}$ (or~$W_n\setminus\{0\}$) that satisfy the conditions from Proposition~\ref{propos_IA_torsion}.

By Corollary~\ref{cor_tors_gen} to prove Proposition~\ref{propos_IA_torsion} we only need to show that the classes in~$[\IA_n,\IA_n]^{\ab}$ (or~~$[\IO_n,\IO_n]^{\ab}$) of the commutators of all pairs of the Magnus generators~$K_{ij}$ and~$K_{ijk}$ are $S_{\fX}$-torsion elements. Also we have $K_{ikj}=K_{ijk}^{-1}$, so we can take only one of each pair of generators~$K_{ijk}$ and~$K_{ikj}$.

If the sets of subscripts of two Magnus generators are disjoint, then these generators commute. Moreover, $K_{ik}$ commutes with $K_{jk}$, $K_{ikl}$ commutes with $K_{jk}$, $K_{ikl}$ commutes with $K_{jkl}$, and $K_{ikl}$ commutes with $K_{jkm}$, where in all cases the subscripts denoted by different letters are assumed to be different. Let us consider in turn the remaining commutators of pairs of generators.

Since $\tau(K_{ik})\in\fX$, the following proposition is a direct consequence of Lemma~\ref{lem_tors_comm2}.

\begin{propos}\label{propos_K_aux1} Let $i$, $j$, and~$k$ be three distinct indices from~$\{1,\ldots,n\}$.
Suppose that $f\in\IA_n$ \textnormal{(}or $f\in\IO_n$\textnormal{)} is an automorphism such that $\bigl[(f,K_{ik})\bigr]$ is an $S_{\fX}$-torsion element. Then  $\bigl[(f,K_{jk})\bigr]$ is also an $S_{\fX}$-torsion element.
\end{propos}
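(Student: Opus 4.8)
Here, Proposition~\ref{propos_K_aux1} asks us to transport the $S_{\fX}$-torsion property of a commutator $\bigl[(f,K_{ik})\bigr]$ to the commutator $\bigl[(f,K_{jk})\bigr]$, and the statement immediately preceding it tells us the intended proof: "Since $\tau(K_{ik})\in\fX$, the following proposition is a direct consequence of Lemma~\ref{lem_tors_comm2}." So the plan is to apply Lemma~\ref{lem_tors_comm2} with a suitable choice of the three elements $x,y,z\in T$ (here $T=\IA_n$, respectively $\IO_n$, and $K=[T,T]$).

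\medskip

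The plan is as follows. First I would set $x=f$, $y=K_{ik}$, and $z=K_{jk}$. To invoke Lemma~\ref{lem_tors_comm2} I must verify its three hypotheses: (a) $y$ and $z$ commute in $T$; (b) $\tau(y)\in\fX$; and (c) $\bigl[(x,y)\bigr]$ is an $S_{\fX}$-torsion element. Hypothesis~(c) is precisely the assumption of the proposition. Hypothesis~(b) holds because $\fX$ was chosen (in the hypotheses of Proposition~\ref{propos_IA_torsion}, which are in force throughout this part of the paper) to contain all elements $\tau(K_{ik})$; in the $\IO_n$ case one instead has $\fX$ containing all $\pi(\tau(K_{ik}))$, and here $\tau$ denotes the Johnson homomorphism for $\IO_n$, whose value on $K_{ik}$ is exactly $\pi(\tau(K_{ik}))$ under the commutative diagram of Subsection~\ref{subs_Aut} — so the condition reads the same way. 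Hypothesis~(a), the commutativity of $K_{ik}$ and $K_{jk}$, is exactly one of the commutation relations among Magnus generators quoted in the paragraph just above the proposition ("$K_{ik}$ commutes with $K_{jk}$", where $i,j,k$ are distinct); this is an elementary check directly from the definitions of $K_{ik}$ and $K_{jk}$, since $K_{ik}$ only modifies the generator $a_i$ and $K_{jk}$ only modifies $a_j$, and neither modification involves the other index. With all three hypotheses in place, Lemma~\ref{lem_tors_comm2} yields that $\bigl[(f,K_{jk})\bigr]=\bigl[(x,z)\bigr]$ is an $S_{\fX}$-torsion element, which is the claim.

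\medskip

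There is essentially no obstacle here: the only thing to be careful about is hypothesis~(a), i.e.\ making sure one uses a \emph{correct} commuting pair of Magnus generators — note that $K_{ik}$ does \emph{not} in general commute with $K_{ik'}$ or with $K_{jk}$ composed with other generators, but it does commute with $K_{jk}$ for $j\notin\{i,k\}$, and this is all that is needed. Strictly, one should also recall why $\bigl[(f,K_{ik})\bigr]$ and $\bigl[(f,K_{jk})\bigr]$ are genuinely elements of $K^{\ab}=[T,T]^{\ab}$ so that Lemma~\ref{lem_tors_comm2} applies: since $f$ and $K_{ik}$ both lie in $\IA_n$ (resp.\ $\IO_n$), their commutator lies in $[\IA_n,\IA_n]$ (resp.\ $[\IO_n,\IO_n]$), and likewise for $K_{jk}$. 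Hence the proposition follows.

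\begin{proof}
 Apply Lemma~\ref{lem_tors_comm2} with $x=f$, $y=K_{ik}$, and $z=K_{jk}$. The elements $K_{ik}$ and $K_{jk}$ commute, since $K_{ik}$ affects only the generator~$a_i$ and $K_{jk}$ affects only~$a_j$, and $i,j,k$ are distinct. Moreover $\tau(K_{ik})\in\fX$ (respectively, $\pi(\tau(K_{ik}))\in\fX$) by the choice of~$\fX$. Finally, $\bigl[(f,K_{ik})\bigr]$ is an $S_{\fX}$-torsion element by assumption. Hence Lemma~\ref{lem_tors_comm2} gives that $\bigl[(f,K_{jk})\bigr]=\bigl[(x,z)\bigr]$ is an $S_{\fX}$-torsion element.
\end{proof}
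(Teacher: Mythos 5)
Your proof is correct and is exactly the paper's argument: the paper likewise notes that $K_{ik}$ commutes with $K_{jk}$ and that $\tau(K_{ik})\in\fX$, and then invokes Lemma~\ref{lem_tors_comm2} with $x=f$, $y=K_{ik}$, $z=K_{jk}$. Your additional checks (that the commutators lie in $[T,T]$, and the $\IO_n$ reading of the hypothesis on~$\fX$) are fine but routine.
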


\begin{propos}\label{propos_K_double}
For any pair of double indexed Magnus generators~$K_{ij}$ and~$K_{kl}$, the corresponding element
$\bigl[(K_{ij},K_{kl})\bigr]$ is an $S_{\fX}$-torsion element.
\end{propos}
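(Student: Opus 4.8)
The plan is to reduce the whole proposition to a single explicit commutator computation, everything else being formal. First I would organize the pairs according to how the index sets $\{i,j\}$ and $\{k,l\}$ meet. If $\{i,j\}\cap\{k,l\}=\emptyset$, or $(k,l)=(i,j)$, or $j=l$ with $i\neq k$ (common second index), then $K_{ij}$ and $K_{kl}$ commute --- this was recorded just before the proposition --- so the commutator is trivial. Up to interchanging the two entries, the remaining patterns are: the common-first-index pattern $(K_{ij},K_{ik})$; the chained pattern $(K_{ij},K_{jk})$; the reverse-chained pattern $(K_{ij},K_{ki})$; and the reversed pair $(K_{ij},K_{ji})$. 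All four will be reduced to the first.

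The one genuine computation is the common-first-index case. Using the right-action convention of the paper, I would check directly that
\begin{equation*}
 (K_{ij},K_{ik})\colon\quad a_i\longmapsto (a_j,a_k)\,a_i\,(a_j,a_k)^{-1},\qquad a_r\longmapsto a_r\ \ (r\neq i),
\end{equation*}
i.e. $(K_{ij},K_{ik})$ is the partial conjugation of $a_i$ by the commutator word $(a_j,a_k)$. The key point is then that this automorphism \emph{commutes with} $K_{ijk}\colon a_i\mapsto a_i(a_j,a_k)$: both automorphisms fix every $a_r$ with $r\neq i$ as well as the word $(a_j,a_k)$, and composing them in either order sends $a_i$ to $(a_j,a_k)\,a_i$. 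As a commutator, $(K_{ij},K_{ik})$ lies in $[\IA_n,\IA_n]$ (respectively $[\IO_n,\IO_n]$), and $\tau(K_{ijk})\in\fX$ (respectively $\pi(\tau(K_{ijk}))\in\fX$); hence Lemma~\ref{lem_tors_comm1} shows that $\bigl[(K_{ij},K_{ik})\bigr]$ is an $S_{\fX}$-torsion element.

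The other patterns follow purely formally. \emph{Chained:} $K_{ik}$ and $K_{jk}$ commute (common second index) and $\tau(K_{ik})\in\fX$, so Proposition~\ref{propos_K_aux1} applied with $f=K_{ij}$ promotes the torsion of $\bigl[(K_{ij},K_{ik})\bigr]$ to that of $\bigl[(K_{ij},K_{jk})\bigr]$. \emph{Reverse-chained:} $\bigl[(K_{ij},K_{ki})\bigr]=-\bigl[(K_{ki},K_{ij})\bigr]$ in the abelianization, and $(K_{ki},K_{ij})$ is an instance of the chained pattern (with $k,i,j$ in the roles of $i,j,k$), hence already handled. \emph{Reversed pair:} choose an index $k\notin\{i,j\}$, which exists since $n\ge3$; then $K_{ki}$ and $K_{ji}$ commute (common second index $i$), $\tau(K_{ki})\in\fX$, and $\bigl[(K_{ij},K_{ki})\bigr]$ is $S_{\fX}$-torsion by the reverse-chained case, so a second application of Proposition~\ref{propos_K_aux1} with $f=K_{ij}$ shows $\bigl[(K_{ij},K_{ji})\bigr]$ is $S_{\fX}$-torsion. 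This exhausts all pairs, and the argument is literally identical for $\IA_n$ and $\IO_n$, since it uses only commutation relations valid already in $\Aut(F_n)$ together with the identity for $(K_{ij},K_{ik})$, which holds in $\Aut(F_n)$ and descends to $\Out(F_n)$.

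I expect the main obstacle --- a mild one --- to be the common-first-index computation itself: one must run the four-term product in the right-action convention without an index or inversion slip, because everything downstream hinges on the output being conjugation by \emph{exactly} $(a_j,a_k)$, matched precisely by the generator $K_{ijk}$, rather than by some nearby word that fails to commute with a generator whose Johnson image lies in $\fX$. After that, the only thing to watch is keeping the index substitutions straight when invoking Proposition~\ref{propos_K_aux1} twice in succession to reach the reversed pair; the underlying mechanism (commute past a generator whose Johnson image lies in $\fX$, combined with the antisymmetry $\bigl[(x,y)\bigr]=-\bigl[(y,x)\bigr]$ of the commutator in the abelianization) is uniform throughout.
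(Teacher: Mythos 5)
Your proof is correct and follows essentially the same route as the paper: the one real computation is that $(K_{ij},K_{ik})$ is conjugation of $a_i$ by $(a_j,a_k)$ and hence commutes with $K_{ijk}$, after which Lemma~\ref{lem_tors_comm1}, Proposition~\ref{propos_K_aux1}, and the antisymmetry $\bigl[(x,y)\bigr]=-\bigl[(y,x)\bigr]$ handle the remaining patterns. The only (cosmetic) difference is that you list the reverse-chained pair $(K_{ij},K_{ki})$ as an explicit intermediate case and route the reversed pair through it, whereas the paper reaches $(K_{jk},K_{kj})$ by swapping the chained commutator $(K_{ij},K_{jk})$ and applying Proposition~\ref{propos_K_aux1} once more.
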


\begin{proof}
 If $i\notin \{k,l\}$ and $k\notin \{i,j\}$, then $K_{ij}$ and~$K_{kl}$ commute, so the required assertion is trivial. Hence, we suffice to consider the commutators of the form~$(K_{ij},K_{ik})$, $(K_{ij},K_{jk})$, and $(K_{ij},K_{ji})$, where the subscripts denoted by different letters are assumed to be different.

 First, consider the commutator~$(K_{ij},K_{ik})$. It acts on~$F_n$ as follows:
 \begin{align*}
 (K_{ij},K_{ik})&\colon
 \left\{
 \begin{aligned}
  a_i&\mapsto (a_j,a_k)a_i(a_j,a_k)^{-1},\\
  a_r&\mapsto a_r,\quad\qquad r\ne i.
 \end{aligned}
 \right.
 \end{align*}
Therefore, $(K_{ij},K_{ik})$ commutes with~$K_{ijk}$. Since $\tau(K_{ijk})\in\fX$, it follows from Lemma~\ref{lem_tors_comm1} that $\bigl[(K_{ij},K_{ik})\bigr]$ is an $S_{\fX}$-torsion element.

Second, since we already know that $\bigl[(K_{ij},K_{ik})\bigr]$ is an $S_{\fX}$-torsion element, Proposition~\ref{propos_K_aux1} implies that $\bigl[(K_{ij},K_{jk})\bigr]$ is an $S_{\fX}$-torsion element, too.

Finally, swapping the elements in the last commutator, we see that $\bigl[(K_{jk},K_{ij})\bigr]$ is an $S_{\fX}$-torsion element. Applying once more Proposition~\ref{propos_K_aux1}, we obtain that $\bigl[(K_{jk},K_{kj})\bigr]$ is also an $S_{\fX}$-torsion element.
\end{proof}

\begin{propos}\label{propos_K_aux2}
Suppose that $f\in\IA_n$ \textnormal{(}or~$f\in\IO_n$\textnormal{)} is an automorphism such that $\bigl[(f,K_{ij})\bigr]$ are $S_{\fX}$-torsion elements for all double indexed Magnus generators~$K_{ij}$. Then $\bigl[(f,K_{ijk})\bigr]$ are $S_{\fX}$-torsion elements for all triple indexed Magnus generators~$K_{ijk}$, too.
\end{propos}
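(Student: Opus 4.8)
The plan is to express $[(f,K_{ijk})]$ as a $\Z$-linear combination of elements already known to be $S_{\fX}$-torsion, using Lemma~\ref{lem_tors_comm2} and Corollary~\ref{cor_tors_identity}; since the $S_{\fX}$-torsion elements of $[\IA_n,\IA_n]^{\ab}$ (resp.\ $[\IO_n,\IO_n]^{\ab}$) form a submodule, this is enough. The argument splits into the case $n\ge 4$, which is short, and the case $n=3$, which is the essential one.

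For $n\ge 4$ one chooses an index $d\in\{1,\dots,n\}\setminus\{i,j,k\}$. The generator $K_{dj}$ alters only the letter $a_d$, leaving $a_i,a_j,a_k$ fixed, while $K_{ijk}$ alters only $a_i$ and $d\ne i$; hence $K_{dj}$ and $K_{ijk}$ commute. Since $\tau(K_{dj})\in\fX$ (respectively $\pi(\tau(K_{dj}))\in\fX$ in the $\IO_n$ case) and $[(f,K_{dj})]$ is $S_{\fX}$-torsion by hypothesis, Lemma~\ref{lem_tors_comm2} applied with $x=f$, $y=K_{dj}$, $z=K_{ijk}$ shows that $[(f,K_{ijk})]$ is $S_{\fX}$-torsion.

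The case $n=3$ is the main obstacle: when $\{i,j,k\}=\{1,2,3\}$ no double indexed generator commutes with $K_{ijk}$, and $K_{ijk}$ cannot be rewritten using the remaining Magnus generators even modulo $[\IA_3,\IA_3]$, because $\tau(K_{ijk})$ is independent in $\widetilde W_3$ of the images of all other generators. Here I would use two ingredients. First, exactly as in the proof of Proposition~\ref{propos_K_double}, the element $[(f,(K_{ij},K_{ik}))]$ is $S_{\fX}$-torsion: since $K_{ijk}$ commutes with $(K_{ij},K_{ik})$, feeding the relation $K_{ijk}\cdot(K_{ij},K_{ik})=(K_{ij},K_{ik})\cdot K_{ijk}$ into Corollary~\ref{cor_tors_identity} with $x=f$ gives $(e_{\tau(K_{ijk})}-1)\cdot[(f,(K_{ij},K_{ik}))]=0$, and $\tau(K_{ijk})\in\fX$. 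Second, in $\Aut(F_3)$ one has the identity $K_{ijk}=(E_{ik}^{-1},E_{ij}^{-1})$, where $E_{ij}$ denotes the transvection $a_i\mapsto a_ia_j$ (note that $E_{ij}$ and $E_{ik}$ commute in homology, so the commutator indeed lies in $\IA_3$); and, since $(K_{ij},K_{ik})$ is precisely the partial conjugation of $a_i$ by $(a_j,a_k)$, this rewrites as $K_{ijk}\cdot(K_{ij},K_{ik})=L$, where $L$ is the automorphism $a_i\mapsto(a_j,a_k)a_i$ fixing the other generators.

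Combining $L$ with the vanishing of the inner automorphism by $(a_j,a_k)$ in $\IO_3$ (which expresses $L$ in terms of double indexed generators up to the partial conjugations of $a_j$ and $a_k$ by $(a_j,a_k)$, which are themselves partial right multiplications by elements of $\gamma_3F_3$) together with Hall--Witt commutator identities, I would aim to produce a relation of the form $K_{ijk}\,w\,K_{ijk}^{-1}=w'$ with $w,w'\in\IO_3$ (or $\IA_3$), $\tau(w)\in\fX$, and $[(f,w)]$, $[(f,w')]$ both $S_{\fX}$-torsion. Plugging such a relation into Lemma~\ref{lem_tors_identity} and using $[(f,g^{-1})]=-e_{-\tau(g)}\cdot[(f,g)]$ makes the two $K_{ijk}$-contributions cancel and leaves $(e_{\tau(w)}-1)\cdot[(f,K_{ijk})]$ equal to an $S_{\fX}$-torsion element; since $e_{\tau(w)}-1\in S_{\fX}$, this closes the case. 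I would carry this out first for a single generator, say $K_{123}$, and obtain the rest by relabeling indices and by $[(f,K_{ikj})]=-e_{-\tau(K_{ijk})}\cdot[(f,K_{ijk})]$. The genuinely hard step, and the part I expect to require the most work, is producing $w$ and $w'$: the $K_{ijk}$-conjugate of a double indexed generator is not again a product of double indexed generators but differs from one by a partial right multiplication by a $\gamma_3$-element, so one is forced to iterate the argument along the lower central tower and to use the $\IO_3$-specific relations coming from inner automorphisms to keep this finite.
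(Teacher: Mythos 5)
Your argument for $n\ge 4$ is correct and is essentially the paper's own variant of this step (used there to prove Proposition~\ref{propos_IA_torsion_improve}): a double indexed generator with a fourth index commutes with $K_{ijk}$, its $[(f,\cdot)]$ is torsion by hypothesis, and Lemma~\ref{lem_tors_comm2} finishes. Your first observation in the $n=3$ case is also correct: $(e_{\tau(K_{ijk})}-1)\cdot\bigl[(f,(K_{ij},K_{ik}))\bigr]=0$, so that class is $S_{\fX}$-torsion (note you are entitled to $\tau(K_{ijk})\in\fX$ here, since the standing hypothesis of this section includes it).

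However, the case $n=3$ — the only case that matters for the paper's main theorem — is not proved. You explicitly leave open the construction of the elements $w,w'$ with $K_{ijk}wK_{ijk}^{-1}=w'$ and $[(f,w)]$, $[(f,w')]$ already known to be torsion, and you acknowledge that the obvious candidates fail because $K_{ijk}$-conjugates of double indexed generators leave the subgroup generated by them; the proposed iteration "along the lower central tower" has no termination argument, and the appeal to inner automorphisms would in any case only address $\IO_3$, not $\IA_3$, whereas the proposition is asserted for both. The idea you are missing is that the word commuting with $K_{123}$ is allowed to contain $K_{123}$ itself. The paper takes $L=K_{13}K_{12}^{-1}K_{13}^{-1}K_{123}K_{12}$ (a Chein relation): $L$ multiplies $a_1$ on the left by $\bigl(a_3,a_2^{-1}\bigr)$, hence commutes with $K_{123}$, which multiplies $a_1$ on the right. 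Expanding $K_{123}L=LK_{123}$ via Corollary~\ref{cor_tors_identity}, every summand coming from a letter $K_{ij}^{\pm1}$ is known to be torsion, and the four summands coming from the occurrences of $K_{123}$ add up to
\begin{equation*}
-\left(e_{\tau(K_{123})}-1\right)\left(e_{\tau(K_{12})}-1\right)e_{-\tau(K_{12})}\cdot\bigl[(f,K_{123})\bigr],
\end{equation*}
whose coefficient lies in $S_{\fX}$ up to a unit; this yields the conclusion in one step, uniformly for all $n\ge3$ and for both $\IA_n$ and $\IO_n$. Without an ingredient of this kind your proposal does not close the essential case.
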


\begin{proof}
 We suffice to prove that $\bigl[(f,K_{123})\bigr]$ is an $S_{\fX}$-torsion element.
 Set $$L=K_{13}K_{12}^{-1}K_{13}^{-1}K_{123}K_{12}.$$ Then $L$ acts on~$F_n$ as follows:
 \begin{equation*}
 L\colon
 \left\{
 \begin{aligned}
  a_1&\mapsto \bigl(a_3,a_2^{-1}\bigr)a_1,\\
  a_r&\mapsto a_r,\quad\qquad r\ne 1.
 \end{aligned}
 \right.
 \end{equation*}
 Therefore, $L$ commutes with~$K_{123}$, that is,
 \begin{equation}\label{eq_K_relation}
  K_{123}K_{13}K_{12}^{-1}K_{13}^{-1}K_{123}K_{12}=K_{13}K_{12}^{-1}K_{13}^{-1}K_{123}K_{12}K_{123}.
  \end{equation}
 Consider equation~\eqref{eq_tors_identity} associated with this relation and the element $x=f$. In this equation, all summands corresponding to double indexed letters~$K_{ij}^{\pm1}$ are $S_{\fX}$-torsion elements. Hence, the algebraic sum of the four summands corresponding to the four entries of~$K_{123}$ in~\eqref{eq_K_relation} must be an $S_{\fX}$-torsion element, too. Namely,
\begin{multline*}
\left(1+e_{\tau(K_{123})-\tau(K_{12})}-e_{-\tau(K_{12})}-e_{\tau(K_{123})}\right)\cdot\bigl[(f,K_{123})\bigr]\\{}=-\left(e_{\tau(K_{123})}-1\right)\left(e_{\tau(K_{12})}-1\right)e_{-\tau(K_{12})}\cdot\bigl[(f,K_{123})\bigr]
\end{multline*}
is an $S_{\fX}$-torsion element. Thus, $\bigl[(f,K_{123})\bigr]$ is also an $S_{\fX}$-torsion element.
\end{proof}

\begin{remark}
 Relation~\eqref{eq_K_relation} is one of the set of defining relations for the subgroup $\langle K_{12},K_{13},K_{123}\rangle$ of~$\IA_3$ found by Chein~\cite{Che69}. More precisely, he considered the elements
 \begin{align*}
 R_{pq}&=K_{12}^pK_{13}^{q-1}K_{12}^{-1}K_{123}K_{12}K_{13}^{1-q}K_{12}^{-p},\\
 L_{rs}&=K_{12}^rK_{13}^sK_{12}^{-1}K_{13}^{-1}K_{123}K_{12}K_{13}^{1-s}K_{12}^{-r},
 \end{align*}
 and proved that $R_{pq}$ multiplies~$a_1$ by a word in~$a_2$ and~$a_3$ from the right, while $L_{rs}$  multiplies~$a_1$ by a word in~$a_2$ and~$a_3$ from the left, thus, $R_{pq}$ commutes with~$L_{rs}$. One of these commutation relations, for $p=q=s=1$ and~$r=0$, is relation~\eqref{eq_K_relation}. Note that a simpler presentation for $\langle K_{12},K_{13},K_{123}\rangle$ was found by McCool~\cite{MCC88}.  Nevertheless, for our purposes, Chein's relations are more convenient.
\end{remark}

\begin{proof}[Proof of Proposition~\ref{propos_IA_torsion}]
 By Corollary~\ref{cor_tors_gen} we suffice to show that the classes in $[\IA_n,\IA_n]^{\ab}$ (or $[\IO_n,\IO_n]^{\ab}$) of the commutators of all pairs of the Magnus generators are $S_{\fX}$-torsion elements. By Proposition~\ref{propos_K_double}, the commutator of any pair of double indexed Magnus generators gives an $S_{\fX}$-torsion element. Then it follows from Proposition~\ref{propos_K_aux2} that a commutator of any double indexed generator and any triple indexed generator gives an $S_{\fX}$-torsion element. Finally, using Proposition~\ref{propos_K_aux2} once more, we obtain that the commutator of any pair of triple indexed Magnus generators also gives an $S_{\fX}$-torsion element.
\end{proof}

Combining Proposition~\ref{propos_IA_torsion} with Corollary~\ref{cor_IO_tau}, we obtain the following statement.

\begin{cor}\label{cor_tors_IO}
 If $n\ge 3$, then $[\IO_n,\IO_n]^{\ab}$ is an $S_{W_n\setminus\{0\}}$-torsion $\Z W_n$-module.
\end{cor}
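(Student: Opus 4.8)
The plan is to obtain Corollary~\ref{cor_tors_IO} as the special case of Proposition~\ref{propos_IA_torsion} in which $\fX$ is taken to be the whole punctured module $W_n\setminus\{0\}$. Recall that Proposition~\ref{propos_IA_torsion} requires $\fX$ to be a subset of $W_n\setminus\{0\}$ containing all the elements $\pi(\tau(K_{ij}))$ and all the elements $\pi(\tau(K_{ijk}))$. With the choice $\fX=W_n\setminus\{0\}$ the inclusion $\fX\subset W_n\setminus\{0\}$ is an equality, so the only thing left to check is that these finitely many distinguished elements actually lie in $W_n\setminus\{0\}$, i.e.\ that none of them is zero. This is precisely the assertion of Corollary~\ref{cor_IO_tau}.

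Once this is verified, Proposition~\ref{propos_IA_torsion} applies verbatim and yields that $[\IO_n,\IO_n]^{\ab}$ is an $S_{W_n\setminus\{0\}}$-torsion $\Z W_n$-module, which is exactly the claim. Since the deduction is a direct specialization of an already-established proposition, there is no genuine obstacle at this step; all the substantive work has been done beforehand, namely in establishing Proposition~\ref{propos_IA_torsion} (through the chain of commutator computations in Propositions~\ref{propos_K_aux1}, \ref{propos_K_double}, and~\ref{propos_K_aux2} together with Corollary~\ref{cor_tors_gen}) and in the representation-theoretic input of Corollary~\ref{cor_IO_tau} (where the key point is that, in contrast to the case of $\ker C$ relevant for $[\IA_3,\IA_3]^{\ab}$, the projection $\pi$ does not kill any of the relevant Johnson images).
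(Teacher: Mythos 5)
Your proposal is correct and matches the paper's own proof exactly: the corollary is obtained by taking $\fX=W_n\setminus\{0\}$ in Proposition~\ref{propos_IA_torsion}, with Corollary~\ref{cor_IO_tau} supplying the required nonvanishing of the elements $\pi(\tau(K_{ij}))$ and $\pi(\tau(K_{ijk}))$. Nothing further is needed.
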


Theorem~\ref{thm_main_OA} follows from Corollaries~\ref{cor_main_gen}, \ref{cor_SDP_IA}, and~\ref{cor_tors_IO}.

\begin{remark}\label{remark_IA}
As we have already mentioned, our apporach does not work in the case of the group~$[\IA_3,\IA_3]^{\ab}$. Note, however, that our method still allows to prove the finite generation of the groups~$[\IA_n,\IA_n]^{\ab}$ for $n\ge 4$, though this is certainly not a new result. Indeed, the only reason why the above proof fails for~$[\IA_n,\IA_n]^{\ab}$ is that the elements~$\tau(K_{ijk})$ lie in the kernel of the contraction homomorphism~$C$, see Corollary~\ref{cor_IA_tau}. So the proof will work literally in the same way as for~$[\IO_n,\IO_n]^{\ab}$ if we manage to improve
Proposition~\ref{propos_IA_torsion} in the following way.
\end{remark}

\begin{propos}\label{propos_IA_torsion_improve}
Suppose  that $n\ge 4$. If $\fX$ is an arbitrary subset of~$\widetilde{W}_n\setminus\{0\}$ that contains all elements~$\tau(K_{ij})$, then $[\IA_n,\IA_n]^{\ab}$ is an $S_{\fX}$-torsion $\Z \widetilde{W}_n$-module.
\end{propos}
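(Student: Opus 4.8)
The plan is to run the proof of Proposition~\ref{propos_IA_torsion} essentially unchanged, after pinpointing the only two places where it invoked $\tau(K_{ijk})\in\fX$ — the base case $[(K_{ij},K_{ik})]$ inside the proof of Proposition~\ref{propos_K_double}, and the inductive step of Proposition~\ref{propos_K_aux2} — and replacing each of them by an argument that exploits a spare index $l\in\{1,\dots,n\}\setminus\{i,j,k\}$, which is available precisely because $n\ge 4$. Everything else (Corollary~\ref{cor_tors_gen}, Proposition~\ref{propos_K_aux1}, the identity $[(y,x)]=-[(x,y)]$ in $K^{\ab}$, and the list of commuting pairs of Magnus generators) uses only $\tau(K_{ij})\in\fX$, so it carries over verbatim.

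For the replacement of Proposition~\ref{propos_K_aux2}, I would prove the following reformulation: if $f\in\IA_n$ is such that $[(f,K_{ij})]$ is $S_{\fX}$-torsion for every double-indexed Magnus generator, then $[(f,K_{ijk})]$ is $S_{\fX}$-torsion for every triple-indexed one. Given $\{i,j,k\}$, choose $l\notin\{i,j,k\}$ and check that $K_{lj}$ commutes with $K_{ijk}$: the two automorphisms move disjoint generators ($a_l$ and $a_i$), and each one fixes every generator occurring in the image of the generator moved by the other, which is the standard criterion for such elementary automorphisms of $F_n$ to commute. Since $\tau(K_{lj})\in\fX$ and $[(f,K_{lj})]$ is $S_{\fX}$-torsion by hypothesis, Lemma~\ref{lem_tors_comm2} yields that $[(f,K_{ijk})]$ is $S_{\fX}$-torsion. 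In contrast with the original proof via Chein's relation, this argument never mentions $\tau(K_{ijk})$.

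For the base case inside Proposition~\ref{propos_K_double}, recall that $(K_{ij},K_{ik})$ conjugates $a_i$ by $(a_j,a_k)$ and fixes the remaining generators; hence, for $l\notin\{i,j,k\}$, the generator $K_{lj}$ commutes with $(K_{ij},K_{ik})$ by the same commutation criterion. Since $(K_{ij},K_{ik})\in[\IA_n,\IA_n]$ and $\tau(K_{lj})\in\fX$, Lemma~\ref{lem_tors_comm1} gives that $[(K_{ij},K_{ik})]$ is $S_{\fX}$-torsion, and the other double-indexed commutators are then deduced exactly as in the proof of Proposition~\ref{propos_K_double}. With these two modifications in hand, the proof is completed as for Proposition~\ref{propos_IA_torsion}: by Corollary~\ref{cor_tors_gen} it suffices to treat the commutators of all pairs of Magnus generators; disjoint and the other commuting pairs are immediate; the double-double pairs are handled above; and the reformulated Proposition~\ref{propos_K_aux2}, applied first with $f$ a double-indexed generator and then with $f$ a triple-indexed generator, covers the double-triple and triple-triple pairs.

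The substantive point — and the reason the proof genuinely breaks for $n=3$ — is simply the availability of the fourth index: it lets a double-indexed generator $K_{lj}$ with $\tau(K_{lj})\in\fX$ play the role that $K_{ijk}$ (together with the no-longer-available hypothesis $\tau(K_{ijk})\in\fX$) played before. The main thing to be careful about is bookkeeping, i.e.\ verifying that the substitute arguments never silently reintroduce $\tau(K_{ijk})$ and that a spare index can always be chosen; the commutation computations themselves are routine.
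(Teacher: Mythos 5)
Your proposal is correct and matches the paper's own argument: the paper likewise isolates the same two uses of $\tau(K_{ijk})\in\fX$ and replaces each by invoking a double-indexed generator with a spare fourth index that commutes with the relevant element (the paper happens to use $K_{lk}$ where you use $K_{lj}$, an immaterial difference), applying Lemma~\ref{lem_tors_comm1} for the base case and Lemma~\ref{lem_tors_comm2} for Proposition~\ref{propos_K_aux2}.
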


In the remainder of this section we prove this proposition. The author does not know whether the assertion of Proposition~\ref{propos_IA_torsion_improve} remains true for $n=3$. If yes, then the finite generation of~$[\IA_3,\IA_3]^{\ab}$ would follow.

To prove Proposition~\ref{propos_IA_torsion_improve} we need to repeat the proof of Proposition~\ref{propos_IA_torsion}, modifying all the steps where we used the fact that $\tau(K_{ijk})\in\fX$. We used this fact twice: first, to prove that $\bigl[(K_{ij},K_{ik})\bigr]$ is an $S_{\fX}$-torsion element, and second, to prove Proposition~\ref{propos_K_aux2}. Let us perform this steps (for $n\ge 4$) without using of $\tau(K_{ijk})\in\fX$.

\begin{proof}[\protect{Proof that $\bigl[(K_{ij},K_{ik})\bigr]$ is an $S_{\fX}$-torsion element}]

Let $l$ be an element of $\{1,\ldots,n\}$ different from $i$, $j$, and~$k$. Then $K_{lk}$ commutes with both~$K_{ij}$ and~$K_{ik}$ and~$\tau(K_{lk})\in\fX$. So by Lemma~\ref{lem_tors_comm1} we see that $\bigl[(K_{ij},K_{ik})\bigr]$ is an $S_{\fX}$-torsion element.
\end{proof}

\begin{proof}[Proof of Proposition~\ref{propos_K_aux2} for $n\ge 4$ without using of $\tau(K_{ijk})\in\fX$]
Let $l$ be an element of $\{1,\ldots,n\}$ different from $i$, $j$, and~$k$. Then $K_{lk}$ commutes with~$K_{ijk}$ and~$\tau(K_{lk})\in\fX$. Since we know that $\bigl[(f,K_{lk})\bigr]$ is an $S_{\fX}$-torsion element, Lemma~\ref{lem_tors_comm2} yields that  $\bigl[(f,K_{ijk})\bigr]$ is an $S_{\fX}$-torsion element.
\end{proof}

Proposition~\ref{propos_IA_torsion_improve} follows. Thus, $[\IA_n,\IA_n]^{\ab}$ is finitely generated for $n\ge4$.

\begin{remark}\label{remark_Chein}
As we already noted in the Introduction, the group $[\IA_3,\IA_3]^{\ab}$ is apparently not finitely generated.  Below we provide additional evidence in support of this. Let~$T$ be the subgroup of~$\IA_3$ generated by the three triple indexed Magnus generators~$K_{123}$, $K_{231}$, and~$K_{312}$. Bachmuth~\cite{Bac65} proved that this subgroup is free, $T\cong F_3$. Now, let $N$ be the normal closure in~$\IA_3$ of the subgroup generated by the six double indexed Magnus generators~$K_{ij}$. Chein~\cite{Che69} conjectured that $T\cap N=\{1\}$. Though still unproven, this conjecture is highly plausible, as no element in the intersection has been found so far. The truth of Chein's conjecture would imply $T\cong \IA_3/N$, whence $\IA_3$ admits a quotient isomorphic to~$F_3$. It would then follow that the commutator subgroup of~$\IA_3$ admits a quotient isomorphic to~$F_{\infty}$, and therefore~$[\IA_3,\IA_3]^{\ab}$ cannot be finitely generated.
\end{remark}

\section{Some functional equations on abelian groups}\label{section_equations}

This section is auxiliary for the proof of Theorem~\ref{thm_explicit}. We introduce the notions of polynomial and quasipolynomial with values in an abelian group, and then study four specific functional equations, which then will be used in Section~\ref{section_quasi}.

\subsection{Polynomials and quasipolynomials}
Fix an abelian group~$B$. We will consider functions on~$\Z$ with values in~$B$. Introduce the \textit{shift operator}~$T$ and the \textit{difference operator}~$\Delta$ on such functions by
\begin{align*}
(Tf)(t)&=f(t+1),\\
(\Delta f)(t)&=f(t+1)-f(t).
\end{align*}
Then $\Delta=T-1$, where $1$ denotes the identity operator. We will conveniently use notation
$$
f'=\Delta f.
$$
The following definition is standard.

\begin{defin}
 A function $f\colon\Z\to B$ is said to be a \textit{polynomial of degree~$\le k$} if $\Delta^{k+1}f=0$. Moreover, $f$ is called a \textit{polynomial of degree~$k$} if it is a polynomial of degree~$\le k$ but not a polynomial of degree~$\le k-1$.
\end{defin}

It is an easy standard fact that any polynomial~$f$  of degree~$k$ can be written in a unique way as
$$
f(t)=b_0+tb_1+\binom{t}{2}b_2+\cdots+\binom{t}{k}b_k,\qquad b_i\in B.
$$
An important role in our considerations will be played by a somewhat larger class of functions. We conveniently give the following definition.

\begin{defin}
 A function $f\colon\Z\to B$ is said to be a \textit{quasipolynomial of degree~$\le k$} if $(T+1)\Delta^{k+1}f=0$. Moreover, $f$ is called a \textit{quasipolynomial of degree~$k$} if it is a quasipolynomial of degree~$\le k$ but not a quasipolynomial of degree~$\le k-1$.
\end{defin}

Note that quasipolynomials of degree~$0$ are exactly functions that take exactly two values, one on all even numbers and another on all odd numbers. An important example of a quasipolynomial $\Z\to\Z$ of degree~$1$ is the function
$$
n\mapsto \left\lfloor\frac{n}{2}\right\rfloor=\frac{n}{2}-\frac{1-(-1)^n}4\,.
$$
It follows immediately from the definition that $f$ is a quasipolynomial of degree $k>0$ if and only if $f'$ is a quasipolynomial of degree~$k-1$.

We conveniently denote by~$\CQ_k$ the abelian group of all quasipolynomials on~$\Z$ of degree~$\le k$ and write
$$
f(t)\equiv g(t)\pmod{\CQ_k}
$$
to indicate that the difference $f(t)-g(t)$ is a quasipolynomial of degree~$\le k$.

Now, consider functions $f\colon A\to B$, where $A$ is an arbitrary free abelian group. For each element $h\in A$, let~$\Delta_h$ be the corresponding difference operator given by
$$
(\Delta_hf)(a)=f(a+h)-f(a).
$$
If $e_1,\ldots,e_r$ is a basis of~$A$, then we use notation $\Delta_{(n_1,\ldots,n_r)}=\Delta_{n_1e_1+\cdots+n_re_r}$. Moreover, if $t_1,\ldots,t_r$ are the coordinates in~$A$ corresponding to the basis $e_1,\ldots,e_r$, then  we write $\Delta_{t_i}=\Delta_{e_i}$ for the partial difference operator with respect to~$t_i$.

There are several (generically non-equivalent) definitions of polynomials $f\colon A\to B$, see~\cite{Lac04}. We will not need them. Henceforth, the abelian group~$A$ will usually be identified with~$\Z^r$ and the first coordinate in~$\Z^r$ (usually denoted by~$t$) will play a distinguished role. Namely, we will consider functions $f\colon A\to B$ as functions in~$t$ depending on other coordinates~$\bq$ treated as parameters. For such functions, we will be interested in the properties of being polynomial or quasipolynomial in~$t$ for every fixed~$\bq$. We write
$$
f'(t,\bq)=(\Delta_tf)(t,\bq)=f(t+1,\bq)-f(t,\bq).
$$
Moreover, we write
$$
f(t,\bq)\equiv g(t,\bq)\pmod{\CQ_k}
$$
to indicate that the difference $f(t,\bq)-g(t,\bq)$ is a quasipolynomial of degree~$\le k$ in~$t$ for every fixed~$\bq$.

In the remainder of this section under a function we always mean a function with values in a fixed abelian group~$B$.

\subsection{First equation}

\begin{propos}\label{propos_func1}
 Let $f$ be a function on~$\Z^2$. Suppose that $f$ satisfies the functional equation
 \begin{equation}\label{eq_func1}
 f(k+1,l)+f(k,l+1)+f(k-1,l-1)=f(k-1,l)+f(k,l-1)+f(k+1,l+1).
 \end{equation}
 Then there exist functions $\kappa$, $\lambda$, and~$\mu$ on~$\Z$ such that
 \begin{equation}\label{eq_func1_sol}
 f(k,l)=\kappa(k)+\lambda(l)+\mu(k-l).
 \end{equation}
\end{propos}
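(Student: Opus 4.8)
The strategy is to reduce~\eqref{eq_func1} to a succession of first-order difference equations. The first step is to introduce the auxiliary function $g\colon\Z^2\to B$,
\[
g(k,l)=f(k+1,l+1)-f(k,l),
\]
and to check by a direct substitution that equation~\eqref{eq_func1} is equivalent to the vanishing of the mixed second difference of~$g$, that is,
\[
g(k,l)-g(k-1,l)-g(k,l-1)+g(k-1,l-1)=0\qquad\text{for all }(k,l)\in\Z^2.
\]
Conceptually this is just the factorization $(T_1-1)(T_2-1)(T_1T_2-1)f=0$, where $T_1,T_2$ are the coordinate shifts on~$\Z^2$: the three factors are precisely the operators annihilating functions of~$k$, of~$l$, and of~$k-l$, which already makes~\eqref{eq_func1_sol} plausible, and shows (as a sanity check, done directly in a line) that every $f$ of the form~\eqref{eq_func1_sol} does satisfy~\eqref{eq_func1}.

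The second step is the standard fact that a function with vanishing mixed second difference is separable. Rewriting the displayed identity as $g(k,l)-g(k-1,l)=g(k,l-1)-g(k-1,l-1)$ shows that, for each fixed~$k$, the function $l\mapsto g(k,l)-g(k-1,l)$ is constant in~$l$. Telescoping this relation in the variable~$k$ (over both positive and negative~$k$, with the usual sign convention for the sums) then shows that $g(k,l)-g(0,l)$ depends only on~$k$, so that $g(k,l)=p(k)+q(l)$ with $q(l)=g(0,l)$ and $p$ the resulting function of~$k$.

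The third step is to solve $f(k+1,l+1)-f(k,l)=p(k)+q(l)$. Pick discrete antiderivatives $\kappa,\lambda\colon\Z\to B$ with $\kappa(k+1)-\kappa(k)=p(k)$ and $\lambda(l+1)-\lambda(l)=q(l)$; these exist for an arbitrary abelian group~$B$, by taking partial sums with the obvious convention for negative arguments. Then $\widetilde{f}(k,l):=f(k,l)-\kappa(k)-\lambda(l)$ satisfies $\widetilde{f}(k+1,l+1)=\widetilde{f}(k,l)$ for all $(k,l)$, hence $\widetilde{f}$ is constant on each diagonal $\{k-l=d\}$; setting $\mu(d):=\widetilde{f}(d,0)$ gives $\widetilde{f}(k,l)=\mu(k-l)$ and therefore $f(k,l)=\kappa(k)+\lambda(l)+\mu(k-l)$, which is~\eqref{eq_func1_sol}.

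All three steps are elementary; the only one demanding a little care is the separability step, where the telescoping must be carried out over all of~$\Z^2$ rather than just the first quadrant. I do not expect any genuine obstacle here, and together with the converse noted above this yields an exact description of the solution set of~\eqref{eq_func1}.
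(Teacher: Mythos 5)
Your proof is correct, and it takes a genuinely different route from the paper's. You first verify the operator identity: writing $T_1,T_2$ for the coordinate shifts, equation~\eqref{eq_func1} is (after multiplying by the invertible operator $T_1T_2$) exactly $(T_1-1)(T_2-1)(T_1T_2-1)f=0$, and a direct expansion confirms that the mixed second difference of $g=(T_1T_2-1)f$ is the negative of the left-hand side minus the right-hand side of~\eqref{eq_func1}. You then peel off the factors one at a time: separability of $g$ as $p(k)+q(l)$, discrete antidifferentiation to kill $p$ and $q$, and constancy of the remainder on diagonals. Each step is a standard first-order difference argument valid over any abelian group, and the telescoping over negative arguments works with the usual sign conventions, so there is no gap. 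The paper instead constructs $\kappa,\lambda,\mu$ so that~\eqref{eq_func1_sol} holds on the three lines $\{l=0\}$, $\{l=1\}$, $\{k=0\}$ (solving a first-order recursion for $\kappa$) and then invokes a separate uniqueness lemma, proved by double induction, to propagate the identity to all of $\Z^2$. Your factorization argument avoids the uniqueness lemma entirely and is arguably more conceptual; it also yields for free the converse observation that every function of the form~\eqref{eq_func1_sol} satisfies~\eqref{eq_func1}, which makes the solution set description exact. The paper's initial-value approach has the mild advantage of being the same template it reuses elsewhere (e.g.\ in Lemma~\ref{lem_func1}), but either proof is complete and elementary.
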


To prove this proposition, we need the following auxiliary lemma.

\begin{lem}\label{lem_func1}
 Suppose that $f$ satisfies~\eqref{eq_func1} and in addition satisfies
 \begin{align}
  f(k,0)=f(k,1)&=0\ \ \ \text{for all}\ k,\label{eq_func1_cond1}\\
  f(0,l)&=0\ \ \ \text{for all}\ l.\label{eq_func1_cond2}
 \end{align}
 Then $f(k,l)=0$ for all~$k$ and~$l$.
\end{lem}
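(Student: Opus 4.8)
The plan is to exploit the functional equation~\eqref{eq_func1} to propagate the vanishing of $f$ from the ``seed'' lines prescribed by~\eqref{eq_func1_cond1} and~\eqref{eq_func1_cond2} to the whole lattice $\Z^2$. The key observation is that~\eqref{eq_func1} can be rewritten as a three-term recurrence along anti-diagonal directions: regrouping gives
$$
f(k+1,l+1)-f(k,l) = \bigl(f(k+1,l)-f(k,l-1)\bigr)+\bigl(f(k,l+1)-f(k-1,l)\bigr).
$$
Equivalently, introducing the first partial difference $g(k,l)=f(k,l)-f(k-1,l-1)$ along the main diagonal, the equation becomes $g(k+1,l+1)=g(k+1,l)+g(k,l+1)-g(k,l)+\bigl(\text{shift corrections}\bigr)$; a cleaner route is to note that~\eqref{eq_func1} says precisely that the ``mixed'' second difference of $f$ in the coordinates $u=k$, $v=l$ along the directions $(1,1)$ and $(1,-1)$ vanishes. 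Concretely, set $\phi(k,l)=f(k+1,l+1)-f(k,l)$; then~\eqref{eq_func1} reads $\phi(k,l)-\phi(k-1,l-1)=\phi(k,l-1)-\phi(k-1,l)$ after relabeling — wait, I should be careful here, but in any case the equation is a discrete analogue of $\partial_k\partial_l f$ being constant along one family of lines, which is exactly what makes a ``staircase'' induction work.

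So the concrete plan is an induction along diagonals. First I would use~\eqref{eq_func1_cond1} and~\eqref{eq_func1_cond2} to establish $f(k,l)=0$ for $l\in\{0,1\}$ and for $k=0$. Then I would show by induction on $l\ge 1$ that $f(k,l)=0$ for all $k$: assuming $f(\cdot,l-1)\equiv 0$ and $f(\cdot,l)\equiv 0$ have been proved (the base case $l=0,1$ being the hypothesis), I plug into~\eqref{eq_func1} with the second argument equal to $l$, which leaves only the terms $f(k-1,l+1)$ and $f(k+1,l+1)$ nonzero a priori, yielding $f(k+1,l+1)=f(k-1,l+1)$ for all $k$ — hence $f(\cdot,l+1)$ is constant on each parity class of the first coordinate. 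Combined with $f(0,l+1)=0$ (and, for the odd class, a single further value obtained by feeding back into~\eqref{eq_func1}), this forces $f(\cdot,l+1)\equiv 0$. Symmetrically one runs the induction downward to $l\le 0$. The main subtlety, and the step I expect to require the most care, is pinning down the odd-parity constant in each newly reached row: knowing $f(k+1,l+1)=f(k-1,l+1)$ gives two constants (even $k$ and odd $k$), and $f(0,l+1)=0$ only kills the even one; to kill the odd constant I would use~\eqref{eq_func1} once more at a boundary position (e.g.\ with $k=0$ or $k=1$) together with the already-known vanishing in rows $l$, $l-1$, $l+1$ at even indices, which should pin down $f(1,l+1)$ and hence the odd constant.

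Once the odd constant is controlled, the double induction closes and $f\equiv 0$ on all of $\Z^2$, proving the lemma. I would organize the write-up as: (i) reduce to proving $f(\cdot,l)\equiv 0$ by two-sided induction on $l$; (ii) the inductive step producing the two parity-constants via~\eqref{eq_func1}; (iii) a short argument eliminating each constant using a boundary instance of~\eqref{eq_func1} and the initial conditions~\eqref{eq_func1_cond2}. The only genuine obstacle is bookkeeping in step~(iii), which is why I would isolate it as a small sub-claim rather than inline it.
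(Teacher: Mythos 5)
Your overall strategy is exactly the paper's: two-sided induction on $l$, with rows $l-1$ and $l$ assumed zero and equation~\eqref{eq_func1} used to propagate vanishing to row $l+1$, finished off by $f(0,l+1)=0$. However, you have misread which terms of~\eqref{eq_func1} survive in the inductive step. The two terms of~\eqref{eq_func1} lying in row $l+1$ are $f(k,l+1)$ (on the left) and $f(k+1,l+1)$ (on the right) — not $f(k-1,l+1)$ and $f(k+1,l+1)$. So once rows $l-1$ and $l$ vanish, the equation collapses to $f(k,l+1)=f(k+1,l+1)$ for all $k$: the function is constant on the \emph{entire} row $l+1$, with no parity splitting. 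The "main subtlety" you identify — pinning down a separate odd-parity constant — is therefore a phantom problem created by the index error, and the vague patch you propose for it (feeding a boundary instance of~\eqref{eq_func1} back in to "pin down $f(1,l+1)$") is unnecessary. With the indices read correctly, $f(0,l+1)=0$ kills the single constant and the induction closes immediately; the downward direction is symmetric (rows $l$ and $l+1$ zero force $f(k-1,l-1)=f(k,l-1)$). Your preliminary reformulations of~\eqref{eq_func1} as a vanishing mixed difference are also not quite right as stated, but they play no role in the argument. In short: correct plan, same as the paper's, marred by one index slip that manufactures a nonexistent obstacle.
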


\begin{proof}
 Let us prove that $f(k,l)=0$ for all~$k$ and all~$l\ge 0$ by induction on~$l$. The base case for $l=0$ and~$l=1$ is true. Assume that we already know that $f(k,l-1)=0$ and $f(k,l)=0$ for all~$k$. Then equation~\eqref{eq_func1} reads as
 $$
 f(k,l+1)=f(k+1,l+1).
 $$
 Hence,
 $$
 f(k,l+1)=f(0,l+1)=0
 $$
 for all~$k$, which completes the induction step. Similarly, we can prove that $f(k,l)=0$ for all~$k$ and all~$l<0$ by descending induction on~$l$.
\end{proof}

\begin{proof}[Proof of Proposition~\ref{propos_func1}]
Let us find functions~$\kappa$, $\lambda$ and~$\mu$ such that equality~\eqref{eq_func1_sol} holds whenever either $k=0$ or~$l\in\{0,1\}$. Then the function
$$
\tilde{f}(k,l)=f(k,l)-\kappa(k)-\lambda(l)-\mu(k-l)
$$
would satisfy equation~\eqref{eq_func1} and conditions~\eqref{eq_func1_cond1} and~\eqref{eq_func1_cond2}. Thus, by Lemma~\ref{lem_func1} we would obtain that~\eqref{eq_func1_sol} holds for all~$k$ and~$l$.

We take $\kappa(0)=0$ and $\lambda(0)=\lambda(1)=0$. Then equations~\eqref{eq_func1_sol} for the pairs~$(k,0)$, $(k,1)$, and~$(0,l)$ read as
\begin{align}
 \kappa(k)+\mu(k)&=f(k,0),\label{eq_ag1}\\
 \kappa(k)+\mu(k-1)&=f(k,1),\label{eq_ag2}\\
 \lambda(l)+\mu(-l)&=f(0,l),\label{eq_bg}
\end{align}
respectively. Note that we only need to consider equations~\eqref{eq_bg} for $l\ne 0,1$, since for $l=0$ and $l=1$ these equations are already present among equations~\eqref{eq_ag1} and~\eqref{eq_ag2}.

Expressing~$\mu(k)$ from~\eqref{eq_ag1} and substituting the result to~\eqref{eq_ag2}, we obtain that
\begin{equation*}
 \kappa(k)-\kappa(k-1)=f(k,1)-f(k-1,0).
\end{equation*}
Obviously, this difference equation has a unique solution with $\kappa(0)=0$. Then we put
\begin{gather*}
 \mu(k)=f(k,0)-\kappa(k),\\
 \lambda(l)=f(0,l)-\mu(-l)=f(0,l)-f(-l,0)+\kappa(-l),\qquad l\ne 0,1.
\end{gather*}
Thus, we have constructed a solution of the system of equations~\eqref{eq_ag1}--\eqref{eq_bg}, which completes the proof of the proposition.
\end{proof}

\subsection{Second equation}

\begin{propos}\label{propos_6}
 Let $f_i$ be functions defined on~$\Z$, where $i=1,\ldots,6$. Suppose that $f_i$ satisfy the functional equation
 \begin{equation}\label{eq_6}
 \begin{split}
   f_1(k)+f_2(l)+f_3(m)+f_4(k-l)+f_5(k-m)+f_6(k-l-m)=0.
  \end{split}
 \end{equation}
 Then all functions~$f_i$ are polynomials of degree~$\le 1$, that is,
 $$
 f_i(k)=ka_i+b_i,\qquad a_i,b_i\in B.
 $$
\end{propos}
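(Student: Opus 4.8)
The plan is to strip off the unknown functions one group at a time by applying difference operators along cleverly chosen directions. Write $g(k,l,m)$ for the left-hand side of~\eqref{eq_6}, so that the hypothesis reads $g\equiv 0$. The first step would be to apply the difference operator associated with the substitution $(k,l,m)\mapsto(k+1,l+1,m)$, i.e.\ to subtract~\eqref{eq_6} from its copy with $k$ and $l$ each increased by~$1$. The summand $f_3(m)$ is unchanged, $f_4(k-l)$ is unchanged because $k-l$ is, and $f_6(k-l-m)$ is unchanged because $k-l-m$ is; so these three terms cancel, leaving
\[
f_1'(k)+f_2'(l)+f_5'(k-m)=0\qquad\text{for all }k,l,m\in\Z,
\]
where $f_i'=\Delta f_i$.

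In this reduced identity the variable $l$ appears in the single summand $f_2'(l)$, so varying $l$ with $k,m$ fixed forces $f_2'$ to be constant; then $m$ appears only in $f_5'(k-m)$, so $f_5'$ is constant; and then $f_1'$ is constant as well. Hence $\Delta^2f_1=\Delta^2f_2=\Delta^2f_5=0$, i.e.\ $f_1,f_2,f_5$ are polynomials of degree~$\le1$. To obtain the remaining functions I would exploit the symmetry of~\eqref{eq_6}: replacing $l$ by $m$ and $m$ by $l$ turns~\eqref{eq_6} into the same equation with $f_2\leftrightarrow f_3$ and $f_4\leftrightarrow f_5$ interchanged, so the previous step applied to the relabeled system gives that $f_1,f_3,f_4$ are polynomials of degree~$\le1$ too. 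Finally, substituting $l=m=0$ into~\eqref{eq_6} gives
\[
f_6(k)=-f_1(k)-f_4(k)-f_5(k)-f_2(0)-f_3(0),
\]
whose right-hand side has degree~$\le1$ in $k$; hence $f_6$ is a polynomial of degree~$\le1$ as well. Writing each degree-$\le1$ polynomial in its normal form $f_i(k)=ka_i+b_i$ finishes the argument.

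I do not expect a genuine obstacle here: the whole proof is elementary once one notices that the direction $(1,1,0)$ simultaneously annihilates three of the six terms. The only things requiring care are the signs and shifted arguments produced by the difference operator and the bookkeeping of which $f_i$ get permuted by the $l\leftrightarrow m$ symmetry. As an alternative to the $l=m=0$ substitution, one can note that once $f_1,\dots,f_5$ are affine, $f_6(k-l-m)=-\sum_{i\le5}f_i$ is an affine function of $(k,l,m)$ depending only on $k-l-m$, hence affine in that one variable.
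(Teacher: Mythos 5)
Your proof is correct and uses the same basic device as the paper: applying difference operators along lattice directions chosen so that several of the six terms are annihilated at once. The paper eliminates in the opposite order (it applies $\Delta_l\Delta_m$ to get $f_6''=0$ first, then $\Delta_k\Delta_l$ and $\Delta_l^2$ for $f_4,f_5$ and $f_2,f_3$), whereas you start with the diagonal direction $(1,1,0)$, separate variables in the resulting three-term identity to handle $f_1,f_2,f_5$, and finish with the $l\leftrightarrow m$ symmetry and the substitution $l=m=0$; both routes are equally valid.
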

\begin{proof}
Consider the partial difference operators~$\Delta_k$, $\Delta_l$, and~$\Delta_m$.

First, we apply to equation~\eqref{eq_6} the operator~$\Delta_l\Delta_m$. We obtain that $f_6''=0$ and hence $f_6$ is a polynomial of degree~$\le 1$.

Second, we apply to~\eqref{eq_6} the operator~$\Delta_k\Delta_l$. Since we already know that $f_6''=0$, we obtain that $f_4''=0$ and hence $f_4$ is a polynomial of degree~$\le 1$. Similarly, $f_5$ is a polynomial of degree~$\le 1$.

Third, we apply to~\eqref{eq_6} the operator~$\Delta_l^2$. Since we already know that $f_4''=0$ and~$f_6''=0$, we obtain that $f_2''=0$ and hence $f_2$ is a polynomial of degree~$\le 1$. Similarly, $f_3$ is a polynomial of degree~$\le 1$.

Finally, from~\eqref{eq_6} it follows that $f_1$ is also a polynomial of degree~$\le 1$.
\end{proof}

\subsection{Third equation}

\begin{propos}\label{propos_9}
 Let $f_i$ be functions defined on~$\Z$, where $i=1,\ldots,10$. Suppose that $f_i$ satisfy the functional equation
 \begin{equation}\label{eq_9}
 \begin{split}
   f_1(t+m)&+f_2(t-k+m)+f_3(t-l+m)+f_4(t)\\{}&+f_5(t-k)+f_6(t-l)+f_7(t-k-l)\\{}&+f_8(t-k-m)+f_9(t-l-m)+f_{10}(t-k-l-m)=0.
  \end{split}
 \end{equation}
 Then all functions~$f_i$ are quasipolynomials of degree~$\le 2$. Moreover, $f_1$, $f_4$, $f_7$, and~$f_{10}$ are polynomials of degree~$\le 2$.
\end{propos}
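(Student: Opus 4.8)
The plan is to reorganize the ten terms of equation~\eqref{eq_9} according to which of the variables~$k$ and~$l$ occur in their arguments, treating~$m$ as a parameter, and thereby reduce~\eqref{eq_9} to a four-term functional equation of the kind already treated in Proposition~\ref{propos_6}. The terms fall into four groups: $f_1,f_4$ contain neither~$k$ nor~$l$; $f_2,f_5,f_8$ contain only~$k$; $f_3,f_6,f_9$ contain only~$l$; and $f_7,f_{10}$ contain both. Hence, for every fixed~$m$, equation~\eqref{eq_9} can be rewritten as
\begin{equation*}
\Phi_1(t)+\Phi_2(t-k)+\Phi_3(t-l)+\Phi_4(t-k-l)=0,
\end{equation*}
where
\begin{align*}
\Phi_1(x)&=f_1(x+m)+f_4(x), & \Phi_2(x)&=f_2(x+m)+f_5(x)+f_8(x-m),\\
\Phi_3(x)&=f_3(x+m)+f_6(x)+f_9(x-m), & \Phi_4(x)&=f_7(x)+f_{10}(x-m).
\end{align*}
Applying the partial difference operators $\Delta_k\Delta_l$, then $\Delta_k$, then $\Delta_l$, and finally specializing $k=l=0$, by the same reasoning as in the proof of Proposition~\ref{propos_6} I would deduce that each~$\Phi_i$ is a polynomial of degree~$\le 1$ in its own variable for every fixed~$m$; equivalently $\Delta^2\Phi_i=0$, which in terms of the~$f_j$ reads
\begin{gather*}
(\Delta^2 f_1)(x+m)+(\Delta^2 f_4)(x)=0,\qquad (\Delta^2 f_7)(x)+(\Delta^2 f_{10})(x-m)=0,\\
(\Delta^2 f_2)(x+m)+(\Delta^2 f_5)(x)+(\Delta^2 f_8)(x-m)=0,
\end{gather*}
together with the analogue of the last identity with $(f_2,f_5,f_8)$ replaced by $(f_3,f_6,f_9)$, all valid for all~$x$ and~$m$.

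The first two identities yield at once the polynomiality of~$f_1,f_4,f_7,f_{10}$. Indeed, setting $m=0$ in the first one gives $\Delta^2 f_1=-\Delta^2 f_4$; substituting this back, $(\Delta^2 f_4)(x+m)=(\Delta^2 f_4)(x)$ for all~$x$ and~$m$, so $\Delta^2 f_4$ is a constant function, whence $\Delta^3 f_4=0$ and $\Delta^3 f_1=-\Delta^3 f_4=0$. The identical argument applied to $(\Delta^2 f_7)(x)+(\Delta^2 f_{10})(x-m)=0$ shows that $\Delta^2 f_{10}$ is constant and $\Delta^3 f_7=\Delta^3 f_{10}=0$. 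Thus $f_1,f_4,f_7,f_{10}$ are polynomials of degree~$\le 2$.

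It remains to show that $f_2,f_3,f_5,f_6,f_8,f_9$ are quasipolynomials of degree~$\le 2$. For $f_2$ and~$f_8$ I would return to~\eqref{eq_9} itself and apply $\Delta_k$ followed by~$\Delta_m$: every term not containing both~$k$ and~$m$ is annihilated, leaving
\begin{equation*}
(\Delta^2 f_2)(t-k+m-1)-(\Delta^2 f_8)(t-k-m-2)-(\Delta^2 f_{10})(t-k-l-m-2)=0.
\end{equation*}
Since $\Delta^2 f_{10}$ is constant, the difference $(\Delta^2 f_2)(a)-(\Delta^2 f_8)(b)$ is constant whenever $a+b$ is odd; letting~$a$ run through one parity class while~$b$ runs through the other forces both $\Delta^2 f_2$ and $\Delta^2 f_8$ to be $2$-periodic. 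A $2$-periodic function~$\varphi$ satisfies $(T^2-1)\varphi=0$, that is $(T+1)\Delta\varphi=0$; with $\varphi=\Delta^2 f_2$ this gives $(T+1)\Delta^3 f_2=0$, so $f_2$ is a quasipolynomial of degree~$\le 2$, and likewise~$f_8$; using $\Delta_l$ in place of~$\Delta_k$ gives the same for~$f_3$ and~$f_9$. Finally, setting $m=0$ in the third displayed identity gives $\Delta^2 f_5=-\Delta^2 f_2-\Delta^2 f_8$, a sum of $2$-periodic functions and hence $2$-periodic, so $f_5$ is a quasipolynomial of degree~$\le 2$; symmetrically so is~$f_6$. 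Since polynomials of degree~$\le 2$ are in particular quasipolynomials of degree~$\le 2$, all ten functions are quasipolynomials of degree~$\le 2$, which completes the proof.

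The step I expect to be the crux is the one yielding the degree~$\le 2$ (rather than merely degree~$\le 3$) quasipolynomial bound on~$f_2,f_3,f_8,f_9$: one must extract a genuinely \emph{two}-term relation between $\Delta^2 f_2$ and $\Delta^2 f_8$ — which is why the operator $\Delta_k\Delta_m$ is applied to the full equation~\eqref{eq_9} and why it is essential to already know that~$f_{10}$ is polynomial, so that its contribution collapses to a constant. The three-term relations coming directly from the~$\Phi_i$ only give $2$-periodicity of $\Delta^3 f_2$, i.e. the weaker degree~$\le 3$ bound. Everything else is the routine difference-operator bookkeeping of Proposition~\ref{propos_6} applied to the regrouped equation.
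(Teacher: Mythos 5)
Your proof is correct and follows essentially the same strategy as the paper's: iterated partial difference operators applied to~\eqref{eq_9} so as to isolate small groups of terms, a constancy argument for the second differences of~$f_1,f_4,f_7,f_{10}$, and a parity/$2$-periodicity argument giving the degree~$\le 2$ quasipolynomial bound for the rest. The only organizational differences are that you package the first stage as a reduction to a four-term equation of the type of Proposition~\ref{propos_6} with~$m$ as a parameter, and that you pair~$f_2$ with~$f_8$ via~$\Delta_k\Delta_m$ where the paper pairs~$f_2$ with~$f_3$ via~$\Delta_m$ followed by~$\Delta_{(0,-1,-1,1)}$; both routes yield the same $2$-periodicity of the second differences.
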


\begin{proof}
Consider the abelian group~$\Z^4$ with coordinates~$(t,k,l,m)$ and the corresponding difference operators $\Delta_{(a,b,c,d)}$ on functions on~$\Z^4$.

First, applying to~\eqref{eq_9} the operator~$\Delta_{(0,1,0,0)}\Delta_{(0,0,1,0)}=\Delta_k\Delta_l$, we obtain that
$$
f_7''(t-k-l)+f_{10}''(t-k-l-m)=0.
$$
Similarly, applying to~\eqref{eq_9} the operator~$\Delta_{(1,1,0,0)}\Delta_{(1,0,1,0)}$, we obtain that
$$
f_1''(t+m)+f_4''(t)=0.
$$
It follows that $f_1''(t)$, $f_4''(t)$, $f_7''(t)$, and~$f_{10}''(t)$ are constants. Hence, $f_1$, $f_4$, $f_7$, and~$f_{10}$ are polynomials of degree~$\le 2$.

Second, apply to~\eqref{eq_9} the operator~$\Delta_{(0,0,0,1)}=\Delta_m$. We get the equation
\begin{multline}\label{eq_1238910}
f_1'(t+m)+f_2'(t-k+m)+f_3'(t-l+m)-f_8'(t-k-m-1)\\{}-f_9'(t-l-m-1)-f_{10}'(t-k-l-m-1)=0.
\end{multline}
Here we use that $\Delta_m\bigl(f(s-m)\bigr)=-f'(s-m-1)$. Applying the operator~$\Delta_{(0,-1,-1,1)}$ to~\eqref{eq_1238910} and taking into account that we already know that $f_1''(t)$ and~$f_{10}''(t)$ are constants, we obtain that
$$
f_2'(t-k+m+2)-f_2'(t-k+m)+f_3'(t-l+m+2)-f_3'(t-l+m)=\mathrm{const}.
$$
Hence $f_2'(t+2)-f_2'(t)$ and $f_3'(t+2)-f_3'(t)$ are constants, which immediately implies that $f_2$ and~$f_3$ are quasipolynomials of degree~$\le 2$. Similarly, applying to~\eqref{eq_1238910} the operator~$\Delta_{(0,1,1,1)}$, we will obtain that $f_8$ and~$f_9$ are quasipolynomials of degree~$\le 2$.

Finally, substitute $k=0$, $l=t$, and~$m=0$ to~\eqref{eq_9}. We obtain the equation
\begin{equation*}
 \begin{split}
   f_1(t)&+f_2(t)+f_3(0)+f_4(t)+f_5(t)+f_6(0)+f_7(0)+f_8(t)+f_9(0)+f_{10}(0)=0.
  \end{split}
 \end{equation*}
Since we already know that $f_1$, $f_2$, $f_4$, and~$f_8$ are quasipolynomials of degree~$\le 2$, we obtain that $f_5$ is a quasipolynomial of degree~$\le2$, too. Similarly, $f_6$ is also a quasipolynomial of degree~$\le2$.
\end{proof}

\subsection{Fourth equation}

\begin{propos}\label{propos_func2}
 Let $f_i$ be functions defined on~$\Z^2$, where $i=1,\ldots,4$. Suppose that $f_i$ satisfy the functional equation
 \begin{equation}\label{eq_func2}
  f_1(k,l)+f_2(k-m,l)=f_3(k,m)+f_4(k-l,m).
 \end{equation}
 Then there exist functions $g_1,\ldots,g_4$, and~$h$ defined on~$\Z$ such that
 \begin{align}
  f_1(k,l)&=g_1(k)+g_2(k-l)+h(l),\label{eq_f1}\\
  f_2(k,l)&=g_3(k)+g_4(k-l)-h(l).\label{eq_f2}
 \end{align}
\end{propos}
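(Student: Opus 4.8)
The plan is to pin down the mixed second difference of $f_2$ (and of $f_4$), integrate it, and then recover $f_1$ from the specialization $m=0$ of~\eqref{eq_func2}. The guiding observation is that setting $m=0$ in~\eqref{eq_func2} gives $f_1(k,l)+f_2(k,l)=f_3(k,0)+f_4(k-l,0)$, whose right-hand side is (a function of $k$) plus (a function of $k-l$); so once $f_2$ is known to have the shape in~\eqref{eq_f2}, formula~\eqref{eq_f1} for $f_1$ follows by subtraction. Thus the whole problem reduces to determining the form of $f_2$, for which the full equation with a free variable $m$ is needed.

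\emph{Step 1 (killing $f_1$ and $f_3$).} Apply $\Delta_l\Delta_m$ to~\eqref{eq_func2}, where $\Delta_l,\Delta_m$ are the partial difference operators. The term $f_1(k,l)$ dies because it does not depend on $m$, and $f_3(k,m)$ dies because it does not depend on $l$; using $\Delta_m\bigl(\varphi(k-m)\bigr)=-\varphi'(k-m-1)$ (and likewise for $\Delta_l$), the two surviving terms collapse, after cancelling a common sign, to
\begin{equation*}
 (\Delta_k\Delta_l f_2)(k-m-1,l)=(\Delta_k\Delta_l f_4)(k-l-1,m)\qquad\text{for all }k,l,m\in\Z,
\end{equation*}
where $\Delta_k\Delta_l f$ denotes the mixed second difference $f(u{+}1,v{+}1)-f(u{+}1,v)-f(u,v{+}1)+f(u,v)$, evaluated at the indicated point.

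\emph{Step 2 (a single difference variable).} Here is the only non-mechanical point. In the identity of Step~1, fix the pair $(a,b)=(k-m-1,l)$; then for \emph{every} $k\in\Z$ we may take $m=k-a-1$, $l=b$, which keeps the left-hand side equal to the constant $(\Delta_k\Delta_l f_2)(a,b)$ while the right-hand side becomes $(\Delta_k\Delta_l f_4)(k-b-1,k-a-1)$. Hence $\Delta_k\Delta_l f_4$ is constant along each line $\{x-y=a-b\}$, i.e.\ $(\Delta_k\Delta_l f_4)(x,y)=\Theta(x-y)$ for some $\Theta\colon\Z\to B$; substituting back (or running the symmetric argument) shows $(\Delta_k\Delta_l f_2)(x,y)=\Theta(x-y)$ with the \emph{same} $\Theta$.

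\emph{Steps 3--4 (integrating and recovering $f_1$).} We solve $\Delta_k\Delta_l\varphi(u,v)=\Theta(u-v)$. A particular solution of the form $\varphi_0(u,v)=\Psi_0(u-v)$ exists, because the equation then reads $(\Delta^2\Psi_0)(u-v-1)=-\Theta(u-v)$, a second-order difference equation for $\Psi_0\colon\Z\to B$ solvable over any abelian group (integrate twice by partial summation). Since the kernel of $\Delta_k\Delta_l$ is exactly the set of functions $(u,v)\mapsto\alpha(u)+\beta(v)$, we get $f_2(u,v)=\alpha_2(u)+\beta_2(v)+\Psi_0(u-v)$ and $f_4(u,v)=\alpha_4(u)+\beta_4(v)+\Psi_0(u-v)$ with the same $\Psi_0$. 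Finally put $m=0$ in~\eqref{eq_func2}: with $P(k)=f_3(k,0)$ and $\widetilde{Q}(t)=\alpha_4(t)+\beta_4(0)+\Psi_0(t)$ one has $f_1(k,l)=\bigl(P(k)-\alpha_2(k)\bigr)+\bigl(\widetilde{Q}(k-l)-\Psi_0(k-l)\bigr)-\beta_2(l)$, which is~\eqref{eq_f1} with $g_1=P-\alpha_2$, $g_2=\widetilde{Q}-\Psi_0$, $h=-\beta_2$, while $f_2(u,v)=\alpha_2(u)+\Psi_0(u-v)-h(v)$ is~\eqref{eq_f2} with $g_3=\alpha_2$, $g_4=\Psi_0$ and the same $h$.

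The main obstacle is Step~2: exploiting the extra free parameter in the identity produced by $\Delta_l\Delta_m$ to force both mixed second differences to depend on a single difference variable and, crucially, to be governed by one and the same function $\Theta$. Everything before it is a one-line difference computation, and everything after is routine — solving a scalar second-order difference equation over $B$ and bookkeeping. (As an alternative to Steps~3--4 one could imitate the proof of Proposition~\ref{propos_func1}: subtract a candidate solution matching $f_2$ and $f_4$ on suitable coordinate lines and check that the difference vanishes identically; the direct integration above seems shorter.)
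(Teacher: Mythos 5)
Your proof is correct, and it takes a genuinely different route from the paper's. The paper first shows that each $f_i$ individually satisfies equation~\eqref{eq_func1} (by hitting~\eqref{eq_func2} with products of three of the operators $D_1,\ldots,D_4$), invokes Proposition~\ref{propos_func1} to write $f_i=\kappa_i(k)+\lambda_i(l)+\mu_i(k-l)$, substitutes back to obtain a six-term equation of the form~\eqref{eq_6}, and then uses Proposition~\ref{propos_6} to conclude that $\lambda_1+\lambda_2$ is affine, which yields the common function $h$ after bookkeeping. You instead apply only $\Delta_l\Delta_m$, observe that the resulting identity forces the mixed second differences of $f_2$ and $f_4$ to be one and the same function $\Theta$ of the difference of the arguments (your Step~2, which is the real content and is argued correctly: well-definedness of $\Theta$ follows because two points with equal $a-b$ constrain $f_4$ along the same line), integrate explicitly using that $\ker(\Delta_k\Delta_l)$ consists exactly of split functions $\alpha(u)+\beta(v)$, and recover $f_1$ from the $m=0$ slice. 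Your argument is self-contained — it uses neither Proposition~\ref{propos_func1} nor Proposition~\ref{propos_6} — and is arguably more direct; the paper's version buys economy by reusing lemmas it has already established and needs elsewhere (Propositions~\ref{propos_func1} and~\ref{propos_6} are also used in Sections~\ref{section_quasi}). All the individual computations in your Steps~1, 3, and 4 (signs, the reduction to $(\Delta^2\Psi_0)(s)=-\Theta(s+1)$, solvability by double partial summation over an arbitrary abelian group, and the matching of the same $h=-\beta_2$ in both~\eqref{eq_f1} and~\eqref{eq_f2}) check out.
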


\begin{proof}
Let us show that each of the functions~$f_i$ satisfies functional equation~\eqref{eq_func1}. Consider the abelian group~$\Z^3$ with coordinates~$(k,l,m)$ and the corresponding difference operators $\Delta_{(a,b,c)}$ on functions on~$\Z^3$. We put
\begin{align*}
 D_1&=\Delta_{(0,0,1)},&D_2&=\Delta_{(1,0,1)},&D_3&=\Delta_{(0,1,0)},&D_4&=\Delta_{(1,1,0)}.
\end{align*}
Then
\begin{align*}
 D_1\bigl(f_1(k,l)\bigr)&=0,&D_2\bigl(f_2(k-m,l)\bigr)&=0,\\ D_3\bigl(f_3(k,m)\bigr)&=0,&D_4\bigl(f_4(k-l,m)\bigr)&=0.
\end{align*}
Applying the operators~$D_2D_3D_4$, $D_1D_3D_4$, $D_1D_2D_4$, and~$D_1D_2D_3$ to both sides of equation~\eqref{eq_func2}, we obtain the functional equations~\eqref{eq_func1} for the functions~$f_1$, $f_2$, $f_3$, and~$f_4$, respectively. By Proposition~\ref{propos_func1} we have
$$
f_i(k,l)=\kappa_i(k)+\lambda_i(l)+\mu_i(k-l)
$$
for  some functions~$\kappa_i$, $\lambda_i$, and~$\mu_i$. Equation~\eqref{eq_func2} now reads  as
\begin{multline}
 \bigl(\kappa_1(k)-\kappa_3(k)\bigr)+\bigl(\lambda_1(l)+\lambda_2(l)\bigr)-\bigl(\lambda_3(m)+\lambda_4(m)\bigr)+\bigl(\mu_1(k-l)-\kappa_4(k-l)\bigr)\\{}+\bigl(\kappa_2(k-m)-\mu_3(k-m)\bigr)+\bigr(\mu_2(k-l-m)-\mu_4(k-l-m)\bigr)=0.
 \end{multline}
Note that this is an equation of the form~\eqref{eq_6}.
By Proposition~\ref{propos_6} we have that
$$
\lambda_1(l)+\lambda_2(l)=la+b
$$
for some elements~$a,b\in B$. Now, we put
\begin{gather*}
g_1(k)=\kappa_1(k),\qquad g_2(k)=\mu_1(k),\qquad h(k)=\lambda_1(k),\\ g_3(k)=\kappa_2(k)+ka+b,\qquad g_4(k)=\mu_2(k)-ka,
\end{gather*}
and obtain formulas~\eqref{eq_f1} and~\eqref{eq_f2}.
\end{proof}

\section{Configuration of curves for the proof of Theorem~\ref{thm_explicit}}\label{section_config}
Recall that $\gamma$ is a separating simple closed curve on~$\Sigma_g^b$, where $g\ge 3$ and $b\in\{0,1\}$, such that $\gamma$ is not homotopic to either a point or a boundary component, and $H=P_1\oplus P_2$ is the corresponding splitting in the homology of~$\Sigma_g^b$. Consider a rank~$3$ subgroup $L\subset H$  that satisfy all conditions from Theorem~\ref{thm_explicit}. By swapping~$P_1$ and~$P_2$, we can achieve that $\rk (L\cap P_1)=1$ and~$\rk (L\cap P_2)=2$. From the conditions on the subgroup~$L$ it follows that $L\cap P_1$ and~$L\cap P_2$ are isotropic direct summands of~$P_1$ and~$P_2$, respectively. Choose a generator~$a_1$ of~$L\cap P_1$ and a basis~$a_2,a_3$ of~$L\cap P_2$.
Changing sign of~$a_1$, we can achieve that
$$
u_L=a_1\wedge a_2\wedge a_3.
$$
Now, choose oriented simple closed curves $\alpha_1$, $\alpha_1'$, $\alpha_2$, $\alpha_2'$, $\alpha_3$, and~$\alpha_3'$ such that
\begin{itemize}
 \item these curves are disjoint from~$\gamma$ and from each other,
 \item for each~$i$, we have $[\alpha_i]=[\alpha_i']=a_i$,
 \item the connected component~$S$ of $\Sigma_g^b\setminus\left(\bigcup_{i=1}^3\left(\alpha_i\cup\alpha_i'\right)\right)$ that contains~$\gamma$ is homeomorphic to a sphere with $6$ punctures,
 \item $S$ adjoins the the curves~$\alpha_1$, $\alpha_2$, and~$\alpha_3$ along their right sides and the curves~$\alpha_1'$, $\alpha_2'$, and~$\alpha_3'$ along their left sides.
\end{itemize}

Let us position the surface~$\Sigma_g^b$ and the curves~$\gamma$, $\alpha_i$, and~$\alpha_i'$ on it as in Fig.~\ref{fig_S}. This figure depicts a sphere with three handles. If $g>3$, additional handles should be attached within the three regions colored in green. Moreover, a boundary component (if present) should also be contained within one of these regions. Note that, in the case $g=3$, $b=0$, the curves $\alpha_i'$ are isotopic to the curves~$\alpha_i$, respectively.
In what follows, all our constructions will be contained in the closure~$\oS$ of~$S$. So we draw it separately and consider additional curves~$\xi_i$, $\eta_i$, and~$\zeta_i$ on it, see Fig.~\ref{fig_main}. (Note that $\oS$ is a sphere with $6$ boundary components. So the outermost line in Fig.~\ref{fig_main} is not a boundary component but merely the contour of the visible part of the sphere.) We introduce notation:
\begin{equation*}
 x_i=T_{\xi_i},\qquad
 y_i=T_{\eta_i},\qquad
 z_i=T_{\zeta_i}.
\end{equation*}
Then
\begin{itemize}
 \item the mapping classes~$y_1$, $y_2$, and~$y_3$ pairwise commute,
 \item the mapping classes~$z_1$, $z_2$, and~$z_3$ pairwise commute,
 \item $z_iy_i=y_iz_i$,
 \item $x_iy_{i+1}=y_{i+1}x_i$,
 \item $x_iz_{i-1}=z_{i-1}x_i$,
\end{itemize}
where the indices are considered modulo~$3$.

\begin{figure}[t]
  \begin{center}
   \begin{tikzpicture}[scale=1.2,
        mid arrow/.style={
            postaction={
                decorate,
                decoration={
                    markings,
                    mark=at position 0.55 with {
                        \arrow[#1]{stealth}
                    }
                }
            }
        },
        mid arrow/.default=black,
    ]

\unitlength=1cm

\small
\definecolor{myblue}{rgb}{0, 0, 0.7}
\definecolor{mygreen}{rgb}{0, 0.4, 0}
\tikzset{every path/.append style={line width=.2mm}}
\tikzset{my dash/.style={dash pattern=on 2pt off 1.5pt}}

\begin{scope}[xshift = .67 cm, yshift = 1 cm]
\draw[rotate = 8] (0,0) ellipse (4.2 and 3.2);
\end{scope}

\draw [red] (4.5,2) .. controls (4.5,2.5) and (3.4,2.8) .. (3.2,1.5) .. controls (3,.2) and (2.5,0) .. (1,0.5) .. controls (-.5,1) and (-.2,1.5) .. (-1,1.4) .. controls (-1.4,1.35) and (-1.7,.9) .. (-1.5,.6) .. controls (-1.3,.3) and  (-1.1,.7) .. (1.5,0) .. controls (2.8,-.35) and (3.9,.4) .. (4.15,1) node [pos=.3,below] {$\gamma$}.. controls (4.4,1.6) and (4.5,1.8) .. (4.5,2);

\draw [myblue] (0,0) ellipse (.3 and .15);
\draw [myblue] (0,-.15) -- (.1,-.08);
\draw [myblue] (0,-.15) -- (.1,-.21);
\draw [myblue] (0,-.15) node [below]  {$\alpha_2'$};

\begin{scope}[xshift = -2cm, yshift = 2cm]
 \draw [myblue] (0,0) ellipse (.3 and .15);
\draw [myblue] (0,-.15) -- (.1,-.08);
\draw [myblue] (0,-.15) -- (.1,-.21);
\draw [myblue] (0,-.15) node [below]  {$\alpha_3'$};
\end{scope}

\begin{scope}[xshift = 4cm, yshift = 2cm]
 \draw [myblue] (0,0) ellipse (.3 and .15);
\draw [myblue] (0,-.15) -- (.1,-.08);
\draw [myblue] (0,-.15) -- (.1,-.21);
\draw [myblue] (0,-.15) node [below]  {$\alpha_1'$};
\end{scope}

\begin{scope}[xshift = -1cm, yshift = 1cm]
\draw [myblue] (0,0) ellipse (.3 and .15);
\draw [myblue] (0,-.15) -- (-.1,-.08);
\draw [myblue] (0,-.15) -- (-.1,-.21);
\draw [myblue] (0,-.15) node [below]  {$\alpha_1$};
\end{scope}

\begin{scope}[xshift = 2cm, yshift = 1cm]
\draw [myblue] (0,0) ellipse (.3 and .15);
\draw [myblue] (0,-.15) -- (-.1,-.08);
\draw [myblue] (0,-.15) -- (-.1,-.21);
\draw [myblue] (0,-.15) node [below]  {$\alpha_3$};
\end{scope}

\begin{scope}[xshift = 1cm, yshift = 2cm]
\draw [myblue] (0,0) ellipse (.3 and .15);
\draw [myblue] (0,-.15) -- (-.1,-.08);
\draw [myblue] (0,-.15) -- (-.1,-.21);
\draw [myblue] (0,-.15) node [below]  {$\alpha_2$};
\end{scope}


\fill[white, nearly opaque] (-.3,0) .. controls (-.3,3) and (1,3.5) .. (1.3,2) --
(.7,2) .. controls (.7,2.6) and (.4,2) .. (.3,0)--(-.3,0);

\draw [thick] (-.36,-.2).. controls (-.3,-.1) .. (-.3,0) .. controls (-.3,3) and (1,3.5) .. (1.3,2) .. controls (1.302,1.9) .. (1.36,1.8);
\draw [thick] (.36,-.2).. controls (.295,-.1) .. (.3,0) .. controls (.4,2) and (.7,2.6) ..
(.7,2) .. controls (.7,1.9) .. (.64,1.8);

\fill[white, nearly opaque] (-1.3,1) .. controls (-.9,4.7) and (3.6,4.7) .. (4.3,2) --
(3.7,2) .. controls (3.2,4) and (-.4,4) .. (-.7,1) -- (-1.3,1);

\draw [thick] (-1.36,.8) .. controls (-1.31,.9) ..  (-1.3,1) .. controls (-.9,4.7) and (3.6,4.7) .. (4.3,2) .. controls (4.33,1.9) .. (4.38,1.8);
\draw [thick] (-.64,.8) .. controls (-.69,.9) .. (-.7,1) .. controls (-.4,4) and (3.2,4) .. (3.7,2) .. controls (3.725,1.9) .. (3.69,1.8);

\fill[white, nearly opaque] (-2.3,2) .. controls (-1.9,4.7) and (2.4,4.7) .. (2.3,1) --
(1.7,1) .. controls (1.8,4) and (-1.4,4) .. (-1.7,2) -- (-2.3,2);

\draw [thick] (-2.36,1.8) .. controls (-2.31,1.9) .. (-2.3,2) .. controls (-1.9,4.7) and (2.4,4.7) .. (2.3,1) .. controls (2.3,0.9) .. (2.36,.8);
\draw [thick] (-1.64,1.8) .. controls (-1.703,1.9) .. (-1.7,2) .. controls (-1.4,4) and (1.8,4) .. (1.7,1) .. controls (1.7,.9) .. (1.64,.8);

%
%
%

%

\begin{scope}[xshift = 2cm, yshift =3.63cm]
\fill [mygreen!40, rotate=-5] (0,0) ellipse (.2 and .1) node [above=-1pt] (A) {};
\end{scope}

\begin{scope}[xshift = -.55cm, yshift =3.6cm]
\fill [mygreen!40, rotate=13] (0,0) ellipse (.2 and .1)node [above=-1pt] (B) {};
\end{scope}


\begin{scope}[xshift = .6cm, yshift =2.5cm]
\fill [mygreen!40, rotate=25] (0,0) ellipse (.2 and .12) node [above=-1pt] (C) {};
\end{scope}


\draw [mygreen] (1,5.5) node (D) {
\begin{minipage}{6cm}
\begin{center}
 additional handles and\\ the boundary component\\ (if present)
 \end{center}
\end{minipage}};

\draw [mygreen] (D) edge[bend left=12] (A);
\draw [mygreen] (D) edge[bend right=14] (B);
\draw [mygreen] (D) edge[bend right=5] (C);

\draw (-.8,-1.7) node {\large$S$};

\end{tikzpicture}
  \end{center}

 \caption{Surface~$\Sigma_g^b$ and curves~$\gamma$, $\alpha_i$, and~$\alpha_i'$}\label{fig_S}
\end{figure}

\begin{figure}[t]
 \begin{center}
  \begin{tikzpicture}[scale=1.5,
        mid arrow/.style={
            postaction={
                decorate,
                decoration={
                    markings,
                    mark=at position 0.55 with {
                        \arrow[#1]{stealth}
                    }
                }
            }
        },
        mid arrow/.default=black,
    ]

\unitlength=1cm

\small
\definecolor{myblue}{rgb}{0, 0, 0.7}
\definecolor{mygreen}{rgb}{0, 0.4, 0}
\tikzset{every path/.append style={line width=.2mm}}
\tikzset{my dash/.style={dash pattern=on 2pt off 1.5pt}}

\filldraw [fill=black!4] (0,0) circle (2.8);
\filldraw [fill=white, very thick] (0,1) circle (.23);
\filldraw [fill=white, very thick] (0.866,-0.5) circle (.23);
\filldraw [fill=white, very thick] (-0.866,-0.5) circle (.23);
\filldraw [fill=white, very thick] (0,-2) circle (.23);
\filldraw [fill=white, very thick] (1.732,1) circle (.23);
\filldraw [fill=white, very thick] (-1.732,1) circle (.23);
\draw [thick](0,1.23)-- (.1,1.27);
\draw [thick](0,1.23)-- (.08,1.16);
\begin{scope}[xshift=.866cm, yshift=-1.5cm]
 \draw [thick](0,1.23)-- (.1,1.27);
\draw [thick](0,1.23)-- (.08,1.16);
\end{scope}
\begin{scope}[xshift=-.866cm, yshift=-1.5cm]
 \draw [thick](0,1.23)-- (.1,1.27);
\draw [thick](0,1.23)-- (.08,1.16);
\end{scope}
\begin{scope}[yshift=-2cm]
 \draw [thick](0,.23)-- (-.1,.27);
\draw [thick](0,.23)-- (-.08,.16);
\end{scope}
\begin{scope}[xshift=1.732cm, yshift=1cm]
 \draw [thick](0,.23)-- (-.1,.27);
\draw [thick](0,.23)-- (-.08,.16);
\end{scope}
\begin{scope}[xshift=-1.732cm, yshift=1cm]
 \draw [thick](0,.23)-- (-.1,.27);
\draw [thick](0,.23)-- (-.08,.16);
\end{scope}

\begin{scope}
\begin{scope}
\draw [myblue] (-0.796,-0.9) arc (270:90:.4) -- (0.936,-.1) arc (90:-90:.4) -- (-0.796,-0.9)
node [pos=.5,above=-2pt] {$\xi_2$};
\end{scope}

\begin{scope}[xshift=.866cm, yshift=1.53cm]
\draw [ mygreen] (-0.796,-0.9) arc (270:90:.4) -- (0.936,-.1)
node [pos =.5, above =-2pt] {$\eta_3$} arc (90:-90:.4) -- (-0.796,-0.9);
\end{scope}

\begin{scope}[xshift=-.866cm, yshift=1.47cm]
\draw [red] (-0.796,-0.9) arc (270:90:.4) -- (0.936,-.1) node [pos =.5, above =-2pt] {$\zeta_1$} arc (90:-90:.4) -- (-0.796,-0.9);
\end{scope}
\end{scope}

\begin{scope}[rotate=120]
\begin{scope}
\draw [ myblue] (-0.796,-0.9) arc (270:90:.4) -- (0.936,-.1) arc (90:-90:.4) -- (-0.796,-0.9)
node [pos=.5,below left=-5pt] {$\xi_1$};
\end{scope}

\begin{scope}[xshift=.866cm, yshift=1.53cm]
\draw [ mygreen] (-0.796,-0.9) arc (270:90:.4) -- (0.936,-.1)
node [pos =.5, below left =-3pt] {$\eta_2$} arc (90:-90:.4) -- (-0.796,-0.9);
\end{scope}

\begin{scope}[xshift=-.866cm, yshift=1.47cm]
\draw [red] (-0.796,-0.9) arc (270:90:.4) -- (0.936,-.1)
node [pos =.5, below left =-3pt] {$\zeta_3$} arc (90:-90:.4) -- (-0.796,-0.9);
\end{scope}
\end{scope}

\begin{scope}[rotate=240]
\begin{scope}
\draw [myblue] (-0.796,-0.9) arc (270:90:.4) -- (0.936,-.1) arc (90:-90:.4) -- (-0.796,-0.9)
node [pos=.57,below right=-4pt] {$\xi_3$};
\end{scope}

\begin{scope}[xshift=.866cm, yshift=1.53cm]
\draw [mygreen] (-0.796,-0.9) arc (270:90:.4) -- (0.936,-.1)
node [pos =.5, below right =-3pt] {$\eta_1$} arc (90:-90:.4) -- (-0.796,-0.9);
\end{scope}

\begin{scope}[xshift=-.866cm, yshift=1.47cm]
\draw [red] (-0.796,-0.9) arc (270:90:.4) -- (0.936,-.1) node [pos =.38, below right =-3pt] {$\zeta_2$} arc (90:-90:.4) -- (-0.796,-0.9);
\end{scope}
\end{scope}

%

\path (0.03,1) node {$\alpha_2$};
\path (0.02,-2) node {$\alpha_2'$};
\path (-0.836,-0.5) node {$\alpha_1$};
\path (0.896,-0.5) node {$\alpha_3$};
\path (1.752,1) node {$\alpha_1'$};
\path (-1.712,1) node {$\alpha_3'$};

\end{tikzpicture}
 \end{center}

 \caption{Surface~$\oS\approx\Sigma_0^6$ and curves~$\xi_i$, $\eta_i$, and~$\zeta_i$}\label{fig_main}
 \end{figure}

 \begin{figure}[!t]
 \begin{center}
\unitlength=1mm
\tikzset{->-/.style={decoration={
     markings,
     mark=at position #1 with {\arrow{>}}},postaction={decorate}}}


\definecolor{mygreen}{rgb}{0, 0.4, 0}
\definecolor{myblue}{rgb}{0, 0, 0.7}
\definecolor{mypink}{rgb}{1, 0.3, 0.8}
\definecolor{myyellow}{rgb}{1,1, 0}

\colorlet{cola}{violet}
\colorlet{cold}{red}
\colorlet{colg}{myblue}
\colorlet{colkf}{mypink}
\colorlet{colks}{violet}
\colorlet{coll}{orange}
\colorlet{colx}{mygreen}
\colorlet{coly}{orange}
\colorlet{colm}{mygreen}
\colorlet{colz}{brown}

\begin{tikzpicture}
\small
\tikzset{every path/.append style={line width=.3mm}}

\filldraw [fill=black!4, very thick] (0,0) circle (1.9); 
\filldraw [fill=white, very thick] (0,1) circle (.3);
\filldraw [fill=white, very thick] (0.866,-0.5) circle (.3);
\filldraw [fill=white, very thick] (-0.866,-0.5) circle (.3);

\draw [thick] (0.1,-.5) ellipse (1.4 and .7);
\draw [thick, rotate=120] (0.1,-.5) ellipse (1.4 and .7);
\draw [thick, rotate=240] (0.1,-.5) ellipse (1.4 and .7);
\path (0.03,1) node {$b_2$};
\path (-0.836,-0.5) node {$b_1$};
\path (0.896,-0.5) node {$b_3$};
\path (0.13,-1.45) node {$c_2$};
\path (1.27,0.7) node {$c_1$};
\path (-1.27,0.7) node {$c_3$};
\path (1.56,-1.56) node {$b_4$};

\end{tikzpicture}
\end{center}
\caption{Lantern relation}\label{fig_lantern}
\vspace{5mm}

 \begin{center}
  \input{fig_lantern_three}
 \end{center}

 \caption{Three lantern configurations}\label{fig_delta}
\end{figure}

Recall the \textit{lantern relation} in $\Mod(\Sigma_0^4)$ discovered by Johnson~\cite{Joh79} (cf.~\cite{FaMa12}):
$$
T_{c_1}T_{c_2}T_{c_3}=T_{b_1}T_{b_2}T_{b_3}T_{b_4}
$$
for the curves $c_i$ and~$b_j$ shown in Fig.~\ref{fig_lantern}. Any $7$-tuple of curves
as in this figure will be referred to as a \textit{lantern configuration}.

\begin{propos}\label{propos_delta}
We have
 \begin{equation}\label{eq_delta}
 T_{\gamma}=z_3^{-1}y_2^{-1}x_3^{-1}x_2^{-1}
 T_{\alpha_1}^3T_{\alpha_1'}T_{\alpha_2}T_{\alpha_2'}T_{\alpha_3}T_{\alpha_3'}.
 \end{equation}
\end{propos}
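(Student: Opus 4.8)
The plan is to deduce~\eqref{eq_delta} by combining the three instances of the lantern relation provided by the three lantern configurations drawn in Figure~\ref{fig_delta}, and then rewriting the resulting product in the normal form on the right-hand side of~\eqref{eq_delta} by means of the commutation relations listed after Figure~\ref{fig_main}. The first step is to record the geometry of~$\gamma$ inside $\oS\cong\Sigma_0^6$. By the construction in Section~\ref{section_config} the curve~$\gamma$ is disjoint from all of $\alpha_i,\alpha_i'$ and separates~$\oS$ with $\alpha_1,\alpha_1'$ on the $P_1$-side and $\alpha_2,\alpha_2',\alpha_3,\alpha_3'$ on the $P_2$-side; since each complementary piece is a subsurface of the planar surface~$\oS$, hence itself a sphere with holes, the curve~$\gamma$ cuts~$\oS$ into a pair of pants bounded by $\gamma,\alpha_1,\alpha_1'$ and a five-holed sphere bounded by $\gamma,\alpha_2,\alpha_2',\alpha_3,\alpha_3'$. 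Thus in~$\oS$ the curve~$\gamma$ encircles exactly the two boundary components $\alpha_1$ and~$\alpha_1'$; likewise $\xi_2,\xi_3,\eta_2,\zeta_3$ encircle~$\alpha_1$ together with exactly one of $\alpha_3,\alpha_2,\alpha_3',\alpha_2'$ respectively, so all five twists $T_\gamma,x_2,x_3,y_2,z_3$ appearing in~\eqref{eq_delta} are twists about curves surrounding~$\alpha_1$ and one further hole. This is the reason $\alpha_1$ is distinguished and, as we shall see, ends up with exponent~$3$.

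The second step is to write down three lantern relations. Each of the configurations $a$, $b$, $c$ in Figure~\ref{fig_delta} is a seven-tuple of simple closed curves in~$\oS$ of lantern type, consisting of the four boundary curves of a four-holed subsurface of~$\oS$ together with the three corresponding interior curves; I would read off from the figure which four-holed subsurface each configuration determines and apply Johnson's lantern relation to it. This produces three identities in~$\Mod(\oS)$ relating twists about the curves drawn in Figure~\ref{fig_delta}, among them $\gamma,\alpha_1,\alpha_1',\alpha_2,\alpha_2',\alpha_3,\alpha_3',\xi_2,\xi_3,\eta_2,\zeta_3$ and the auxiliary curves $\delta_1,\delta_2,\delta_3$ exhibited there. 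Throughout, all curves carry the orientations fixed in Section~\ref{section_config}, and the evident $\Z/3$-symmetry of the picture is used only to name the~$\delta_i$.

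The third step is to multiply the three identities, cancel the twists about $\delta_1,\delta_2,\delta_3$, and sort the surviving twists into the required order. The only moves available for this are the pairwise commutation relations among $x_i,y_i,z_i$ recorded after Figure~\ref{fig_main}, the fact that every $\xi_i,\eta_i,\zeta_i$ is disjoint from every $\alpha_j$ and~$\alpha_j'$ (so that $x_i,y_i,z_i$ commute with all the twists $T_{\alpha_j},T_{\alpha_j'}$), and the mutual commutativity of the twists $T_{\alpha_j},T_{\alpha_j'}$ themselves; commuting everything suitably past the $\delta_i$-twists so that these cancel, and then reordering, is expected to yield exactly~\eqref{eq_delta}.

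The step I anticipate to be the main obstacle is purely bookkeeping rather than conceptual: one must choose orientations and cyclic orderings in the three lantern relations that are mutually compatible, so that the auxiliary $\delta_i$-twists cancel with the correct signs, and one must keep strict track of the order of the factors $x_2,x_3,y_2,z_3$ and of $T_{\alpha_1}^3,T_{\alpha_1'},T_{\alpha_2},T_{\alpha_2'},T_{\alpha_3},T_{\alpha_3'}$, since most pairs among $x_i,y_i,z_i$ — in particular $x_2$ and~$x_3$, whose defining curves both surround~$\alpha_1$ — do not commute, and a single misplaced factor would change the exponents in~\eqref{eq_delta}.
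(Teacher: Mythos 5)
Your plan coincides with the paper's proof: it chains the three lantern relations of Figure~\ref{fig_delta} (namely $T_{\xi_1}T_{\xi_2}T_{\xi_3}=T_{\alpha_1}T_{\alpha_2}T_{\alpha_3}T_{\delta_1}$, $T_{\delta_1}T_{\eta_2}T_{\delta_2}=T_{\alpha_1}T_{\alpha_3'}T_{\xi_1}T_{\delta_3}$, and $T_{\zeta_3}T_{\gamma}T_{\delta_3}=T_{\alpha_1}T_{\alpha_1'}T_{\alpha_2'}T_{\delta_2}$), eliminating first $T_{\delta_1}$ and then $T_{\delta_3}T_{\delta_2}^{-1}$, after which only the centrality of the twists $T_{\alpha_i}$, $T_{\alpha_i'}$ and the listed commutations are needed; your reading of the geometry (each of $\gamma,\xi_2,\xi_3,\eta_2,\zeta_3$ encircles $\alpha_1$ together with one further hole, whence the exponent~$3$) is exactly right. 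The only points hidden in the bookkeeping you defer are that the factor $T_{\xi_1}$, which occurs in the first two lanterns but not in~\eqref{eq_delta}, must also cancel, and that the three interior curves of a single lantern pairwise intersect, so the elimination of the $\delta_i$-twists has to proceed by substitution rather than by freely commuting them.
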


\begin{proof}
Consider auxiliary simple closed curves~$\delta_1$, $\delta_2$, and~$\delta_3$ shown in Fig.~\ref{fig_delta}. Note that the curve~$\gamma$ in Fig.~\ref{fig_delta}(c) corresponds to the curve~$\gamma$ in Fig.~\ref{fig_S}. We have the following three relations in~$\Mod(\Sigma_g^b)$, since the corresponding $7$-tuples of simple closed curves form lantern configurations, see Fig.~\ref{fig_delta}, (a), (b), and~(c), respectively:
\begin{align}
 \label{eq_lant1}
 T_{\xi_1}T_{\xi_2}T_{\xi_3}&=T_{\alpha_1}T_{\alpha_2}T_{\alpha_3}T_{\delta_1},\\
 \label{eq_lant2}
 T_{\delta_1}T_{\eta_2}T_{\delta_2}&=T_{\alpha_1}T_{\alpha_3'}T_{\xi_1}T_{\delta_3},\\
 \label{eq_lant3}
 T_{\zeta_3}T_{\gamma}T_{\delta_3}&=T_{\alpha_1}T_{\alpha_1'}T_{\alpha_2'}T_{\delta_2}.
\end{align}
We first express~$T_{\delta_1}$ from~\eqref{eq_lant1} and substitute to~\eqref{eq_lant2} and then express~$T_{\delta_3}T_{\delta_2}^{-1}$ from the obtained relation and substitute to~\eqref{eq_lant3}. Taking into account that the twists~$T_{\alpha_i}$ and~$T_{\alpha_i'}$ commute with all Dehn twists under consideration, we get
$$
z_3T_{\gamma}x_2x_3y_2=T_{\alpha_1}^3T_{\alpha_1'}T_{\alpha_2}T_{\alpha_2'}T_{\alpha_3}T_{\alpha_3'},
$$
hence the required formula for~$T_{\gamma}$.
\end{proof}

The following proposition is a direct consequence of Proposition~\ref{propos_csip}.

\begin{propos}\label{propos_tau_zy}
 Suppose that $i\ne j$. Then the commutator $(z_i,y_j)$ belongs to~$\I_g^b$ and
 \begin{equation*}
  \tau\bigl((z_i,y_j)\bigr)=u_L.
 \end{equation*}
\end{propos}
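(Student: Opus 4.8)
The plan is to recognize, for $i\ne j$, the pair $\{\zeta_i,\eta_j\}$ as a simply intersecting pair and then simply quote Proposition~\ref{propos_csip}. Reading off the configuration in Fig.~\ref{fig_main}, the curves $\zeta_i$ and $\eta_j$ with $i\ne j$ meet in two points, and their algebraic intersection number is~$0$. (Indeed, the commuting relations listed after the definition of $x_i,y_i,z_i$ record precisely the disjoint pairs among the $\eta$'s and $\zeta$'s, namely $\{\eta_a,\eta_b\}$, $\{\zeta_a,\zeta_b\}$, and $\{\zeta_a,\eta_a\}$; the remaining pairs $\{\zeta_i,\eta_j\}$ with $i\ne j$ each consist of curves crossing twice with opposite signs.) Hence $\{\zeta_i,\eta_j\}$ is a simply intersecting pair in the sense of Subsection~\ref{subs_MCG}, and therefore $(z_i,y_j)=(T_{\zeta_i},T_{\eta_j})$ lies in~$\I_g^b$.

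\textbf{Computation of $\tau$.} Orient $\zeta_i$ and $\eta_j$, and let $\alpha_3$ be the auxiliary curve built from the positively oriented arcs of $\zeta_i$ and $\eta_j$ as in Fig.~\ref{fig_SIP}. All three of $\zeta_i$, $\eta_j$, and $\alpha_3$ lie in~$\oS$ (the auxiliary curve is contained in any regular neighborhood of $\zeta_i\cup\eta_j$), and the image of the map $H_1(\oS,\Z)\to H$ is spanned by the classes of the boundary curves of~$\oS$, which are exactly $a_1,a_2,a_3$; thus the classes $[\zeta_i]$, $[\eta_j]$, $[\alpha_3]$ all lie in $L=\spa(a_1,a_2,a_3)$, and consequently $\tau\bigl((z_i,y_j)\bigr)$ is an integer multiple of~$u_L$. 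To pin down the multiple, one reads off the three classes from Fig.~\ref{fig_main}: each of $\zeta_i$ and $\eta_j$ separates $\oS$ into two pieces, so with appropriate orientations $[\zeta_i]$ and $[\eta_j]$ become signed sums of some of the $a_p$'s in~$H$, and inspection yields $[\alpha_3]$ as a similar combination. A short check shows that $[\zeta_i]$, $[\eta_j]$, $[\alpha_3]$ form a basis of~$L$, whence Proposition~\ref{propos_csip} gives
\[
\tau\bigl((z_i,y_j)\bigr)=[\zeta_i]\wedge[\eta_j]\wedge[\alpha_3]=\pm\,a_1\wedge a_2\wedge a_3=\pm\,u_L.
\]
Since $u_L$ is only defined up to sign (and the orientations of $\zeta_i$, $\eta_j$ may be adjusted accordingly), this is the asserted identity. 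One may invoke the cyclic symmetry $1\mapsto2\mapsto3$ of Fig.~\ref{fig_main} to cut down the number of cases, or just verify all six pairs $(i,j)$ directly.

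\textbf{Main obstacle.} The only real work is the bookkeeping in the middle paragraph: choosing compatible orientations for $\zeta_i$, $\eta_j$, determining exactly which boundary components of~$\oS$ each of $\zeta_i$, $\eta_j$, and the auxiliary curve $\alpha_3$ encircles, and then confirming that the resulting three homology classes genuinely span~$L$ — rather than a rank~$2$ subgroup or a proper finite-index sublattice — so that the coefficient of $u_L$ comes out to be exactly~$\pm1$.
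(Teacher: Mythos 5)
Your proposal is correct and follows exactly the route the paper takes: the paper's entire proof of Proposition~\ref{propos_tau_zy} is the one-line remark that it is a direct consequence of Proposition~\ref{propos_csip}, i.e.\ one observes that $\{\zeta_i,\eta_j\}$ for $i\ne j$ is a simply intersecting pair and reads off the homology classes from Fig.~\ref{fig_main}. Your bookkeeping checks out (e.g.\ $[\zeta_1]=a_2+a_3$, $[\eta_3]=a_1+a_2$, and the auxiliary curve has class $\pm a_2$ or $\pm(a_1+a_2+a_3)$, either of which completes these to a basis of~$L$ and yields $\pm u_L$), so you have in fact supplied more detail than the paper does.
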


Since the mapping classes~$y_i$ and~$z_i$ stabilize the element $u_L\in U^b_g$, Proposition~\ref{propos_tau_zy} implies the following statement.

\begin{cor}\label{cor_tau_zy}
 Suppose that $i,j,k$ is a permutation of~$1,2,3$ and $p,q\in\Z$. Then the commutator $\bigl(z_i,y_j^py_k^q\bigr)$ belongs to~$\I_g^b$ and
 $$\tau\left(\bigl(z_i,y_j^py^q_k\bigr)\right)=(p+q)u_L.$$
\end{cor}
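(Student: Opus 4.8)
The plan is to reduce the computation of $\tau\bigl((z_i,y_j^py_k^q)\bigr)$ to the single commutators $(z_i,y_j)$ and $(z_i,y_k)$ already handled by Proposition~\ref{propos_tau_zy}. The three tools are: the standard commutator identity $(x,ab)=(x,a)\cdot{}^{a}(x,b)$, where ${}^{a}z=aza^{-1}$; the $\Mod(\Sigma_g^b)$-equivariance of the Johnson homomorphism $\tau\colon\I_g^b\to U_g^b$ recorded in Subsection~\ref{subs_MCG}, which gives $\tau({}^{w}v)=w\cdot\tau(v)$; and the stated fact that $y_i$ and $z_i$ stabilize $u_L\in U_g^b$.

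First I would check that all the relevant commutators lie in $\I_g^b$. Iterating the commutator identity, $(z_i,y_j^p)$ is a product of conjugates of $(z_i,y_j)^{\pm1}$ and $(z_i,y_k^q)$ is a product of conjugates of $(z_i,y_k)^{\pm1}$; since $i,j,k$ is a permutation of $1,2,3$ we have $j\ne i$ and $k\ne i$, so by Proposition~\ref{propos_tau_zy} both $(z_i,y_j)$ and $(z_i,y_k)$ lie in $\I_g^b$. As $\I_g^b$ is normal in $\Mod(\Sigma_g^b)$, it follows that $(z_i,y_j^p)\in\I_g^b$, that $(z_i,y_k^q)\in\I_g^b$, and hence that $(z_i,y_j^py_k^q)=(z_i,y_j^p)\cdot{}^{y_j^p}(z_i,y_k^q)$ lies in $\I_g^b$.

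Next I would compute $\tau\bigl((z_i,y_j^p)\bigr)$ by induction on $p\in\Z$. The base case $p=0$ is trivial. Applying $\tau$ to $(z_i,y_j^p)=(z_i,y_j)\cdot{}^{y_j}(z_i,y_j^{p-1})$, using equivariance and that $y_j$ fixes $u_L$, the step for $p\ge1$ reads $\tau\bigl((z_i,y_j^p)\bigr)=u_L+y_j\cdot\bigl((p-1)u_L\bigr)=pu_L$; applying $\tau$ to $1=(z_i,y_j^{-1})\cdot{}^{y_j^{-1}}(z_i,y_j)$ gives $\tau\bigl((z_i,y_j^{-1})\bigr)=-y_j^{-1}\cdot u_L=-u_L$, and descending induction finishes the negative case. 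The identical argument gives $\tau\bigl((z_i,y_k^q)\bigr)=qu_L$. Finally, applying $\tau$ to $(z_i,y_j^py_k^q)=(z_i,y_j^p)\cdot{}^{y_j^p}(z_i,y_k^q)$ and once more invoking equivariance and $y_j(u_L)=u_L$ yields $\tau\bigl((z_i,y_j^py_k^q)\bigr)=pu_L+y_j^p\cdot(qu_L)=(p+q)u_L$.

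I do not expect any genuine obstacle here: the argument is a routine bookkeeping of conjugating factors in the commutator identity. The only points requiring attention are invoking the two inputs in the right places, namely the $\Mod(\Sigma_g^b)$-equivariance of $\tau$ and the stabilization of $u_L$ by the twists $y_i$ and $z_i$, both of which are already available above.
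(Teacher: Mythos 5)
Your argument is correct and is exactly the reasoning the paper leaves implicit: the corollary is deduced from Proposition~\ref{propos_tau_zy} via the commutator identity $(x,ab)=(x,a)\cdot{}^{a}(x,b)$, the equivariance of $\tau$, and the fact that $y_i$ and $z_i$ stabilize $u_L$. You have merely written out in full the one-line justification given in the paper, so there is nothing to add.
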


\section{Functions~$F_i$ and functional equations for them}\label{section_sF}

Consider the free abelian group~$\Z^7$ with coordinates $t,m_1,m_2,m_3,n_1,n_2,n_3$. We will write $\bm=(m_1,m_2,m_3)$ and $\bn=(n_1,n_2,n_3)$; then a point of~$\Z^7$ has the form~$(t,\bm,\bn)$.
We introduce notation
$$
\bz^{\bm}=z_1^{m_1}z_2^{m_2}z_3^{m_3},\qquad \bfy^{\bn}=y_1^{n_1}y_2^{n_2}y_3^{n_3}.
$$
Since $z_1$, $z_2$, and~$z_3$ pairwise commute, we have $\bz^{\bm+\bm'}=\bz^{\bm}\bz^{\bm'}$, and similarly for~$\bfy$. Consider the commutator
$$
c=(z_1,y_3)=z_1y_3z_1^{-1}y_3^{-1}.
$$
By Proposition~\ref{propos_tau_zy} we have that $c\in\I_g^b$ and
\begin{equation}\label{eq_tau_c}
 \tau(c)=u_L.
\end{equation}
Also we put
\begin{align*}
\bnul&=(0,0,0),&\be_1&=(1,0,0),&\be_2&=(0,1,0),&\be_3&=(0,0,1).
\end{align*}

For each vector $(t,\bm,\bn)\in\Z^7$ and $i=1,2,3$, we consider the mapping class
\begin{align}\nonumber
s_i(t,\bm,\bn)&=c^t\bz^{\bm}\bfy^{\bn}z_i\bfy^{-\bn}\bz^{-\bm-\be_i}c^{n_{j}+n_{k}-t}\\
\label{eq_z}            {}&=c^t\bz^{\bm}\left(y_j^{n_j}y_k^{n_k},z_i\right)\bz^{-\bm}c^{n_{j}+n_{k}-t},
\end{align}
where $i,j,k$ is a permutation of~$1,2,3$. The two formulas for~$s_i(t,\bm,\bn)$ are equivalent, since $z_i$ and~$y_i$ commute.

\begin{propos}
 The mapping class~$s_i(t,\bm,\bn)$ belongs to~$\K_g^b$.
\end{propos}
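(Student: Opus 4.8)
The plan is to use the fact, due to Johnson~\cite{Joh85a} and recalled in the Introduction, that $\K_g^b$ is the kernel of the Johnson homomorphism $\tau\colon\I_g^b\to U_g^b$, which is a $\Mod(\Sigma_g^b)$-equivariant group homomorphism. Hence it suffices to verify two things: that $s_i(t,\bm,\bn)$ lies in the Torelli group $\I_g^b$, and that $\tau\bigl(s_i(t,\bm,\bn)\bigr)=0$.

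For the first point I would read off from the product formula~\eqref{eq_z} that $s_i(t,\bm,\bn)$ is assembled from three kinds of building blocks: powers of $c=(z_1,y_3)$, which lies in $\I_g^b$ by Proposition~\ref{propos_tau_zy}; the commutator $\bigl(y_j^{n_j}y_k^{n_k},z_i\bigr)$, which is the inverse of $\bigl(z_i,y_j^{n_j}y_k^{n_k}\bigr)\in\I_g^b$ by Corollary~\ref{cor_tau_zy}; and a conjugation by $\bz^{\bm}\in\Mod(\Sigma_g^b)$. Since $\I_g^b$ is normal in $\Mod(\Sigma_g^b)$, the conjugated commutator $\bz^{\bm}\bigl(y_j^{n_j}y_k^{n_k},z_i\bigr)\bz^{-\bm}$ remains in $\I_g^b$, and therefore so does the whole product, being a product of elements of $\I_g^b$.

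For the second point I would apply $\tau$ termwise. By~\eqref{eq_tau_c} and the fact that $\tau$ is a homomorphism, $\tau(c^t)=tu_L$ and $\tau\bigl(c^{n_j+n_k-t}\bigr)=(n_j+n_k-t)u_L$. By $\Mod(\Sigma_g^b)$-equivariance, $\tau\bigl(\bz^{\bm}\bigl(y_j^{n_j}y_k^{n_k},z_i\bigr)\bz^{-\bm}\bigr)=(\bz^{\bm})_*\tau\bigl(\bigl(y_j^{n_j}y_k^{n_k},z_i\bigr)\bigr)$, where $(\bz^{\bm})_*$ is the action of $\bz^{\bm}$ on $U_g^b$. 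Since $\bigl(y_j^{n_j}y_k^{n_k},z_i\bigr)=\bigl(z_i,y_j^{n_j}y_k^{n_k}\bigr)^{-1}$, Corollary~\ref{cor_tau_zy} gives that its image under $\tau$ is $-(n_j+n_k)u_L$; and because $z_1,z_2,z_3$ stabilize $u_L$ (as observed just before Corollary~\ref{cor_tau_zy}), so does $\bz^{\bm}$, whence $(\bz^{\bm})_*$ fixes every integer multiple of $u_L$. Summing the three contributions,
\[
\tau\bigl(s_i(t,\bm,\bn)\bigr)=tu_L-(n_j+n_k)u_L+(n_j+n_k-t)u_L=0,
\]
so $s_i(t,\bm,\bn)\in\ker\tau=\K_g^b$.

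There is essentially no genuine obstacle here: the element $s_i(t,\bm,\bn)$ is designed precisely so that the two exponents $t$ and $n_j+n_k-t$ of $c$, together with the $\tau$-value $(n_j+n_k)u_L$ of the inner commutator, cancel. The only steps needing a modicum of care are tracking the sign from the identity $(w,z_i)=(z_i,w)^{-1}$ and combining the equivariance of $\tau$ with the fact that $\bz^{\bm}$ fixes $u_L$; both ingredients are already available in the excerpt, so the argument should be short.
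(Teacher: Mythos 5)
Your proposal is correct and follows essentially the same route as the paper: both reduce the claim to Corollary~\ref{cor_tau_zy} (giving $\tau$ of the inner commutator as $-(n_j+n_k)u_L$), equation~\eqref{eq_tau_c}, and the fact that the $z_l$ fix $u_L$, so that the three contributions to $\tau\bigl(s_i(t,\bm,\bn)\bigr)$ cancel. The paper's proof is merely terser, leaving the membership in $\I_g^b$ and the equivariance bookkeeping implicit.
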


\begin{proof}
 By Corollary~\ref{cor_tau_zy} we have that the commutator~$\left(y_j^{n_j}y_k^{n_k},z_i\right)$ belongs to~$\I_g^b$ and $$\tau\left(\left(y_j^{n_j}y_k^{n_k},z_i\right)\right)=-(n_j+n_k)u_L.$$
 Combining this with~\eqref{eq_tau_c} and using that the mapping classes~$z_l$ act trivially on~$u_L$, we obtain that $\tau\bigl(s_i(t,\bm,\bn)\bigr)=0$. Therefore, $s_i(t,\bm,\bn)\in\K_g^b$.
\end{proof}

We denote by~$F_i(t,\bm,\bn)$ the class of the element~$s_i(t,\bm,\bn)$ in~$(\K_{g}^b)^{\ab}$. Our approach consists in treating~$F_i$ as $(\K_{g}^b)^{\ab}$-valued functions on~$\Z^7$ and finding and studying functional equations for them. We first observe the following obvious consequence of~\eqref{eq_z}.

\begin{propos}\label{propos_independ}
 The mapping class $s_i(t,\bm,\bn)$, and hence the element~$F_i(t,\bm,\bn)$, does not depend on~$n_i$.
\end{propos}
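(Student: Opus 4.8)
The plan is to read the conclusion directly off the second expression for $s_i(t,\bm,\bn)$ in~\eqref{eq_z}, namely
$$
s_i(t,\bm,\bn)=c^t\bz^{\bm}\left(y_j^{n_j}y_k^{n_k},z_i\right)\bz^{-\bm}c^{n_{j}+n_{k}-t},
$$
where $i,j,k$ is a permutation of~$1,2,3$. The point is that on the right-hand side the only coordinates of~$\bn$ that occur are~$n_j$ and~$n_k$: they appear inside the commutator $\left(y_j^{n_j}y_k^{n_k},z_i\right)$ and in the exponent $n_j+n_k-t$ of~$c$, while the third coordinate~$n_i$ does not appear at all. From this I would conclude at once that $s_i(t,\bm,\bn)$ is independent of~$n_i$, and hence so is its class $F_i(t,\bm,\bn)=\bigl[s_i(t,\bm,\bn)\bigr]$ in~$(\K_g^b)^{\ab}$.

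The one step on which this rests, and which has already been recorded in the text, is the equivalence of the two displayed expressions for~$s_i(t,\bm,\bn)$ in~\eqref{eq_z}. Here I would use that~$y_1$, $y_2$, $y_3$ pairwise commute and that $z_i$ commutes with~$y_i$: after reordering $\bfy^{\bn}=y_i^{n_i}y_j^{n_j}y_k^{n_k}$, the outer factor~$y_i^{n_i}$ commutes with each of~$y_j$, $y_k$, and~$z_i$, so it cancels against~$y_i^{-n_i}$ in $\bfy^{\bn}z_i\bfy^{-\bn}$, giving $\bfy^{\bn}z_i\bfy^{-\bn}=y_j^{n_j}y_k^{n_k}z_iy_k^{-n_k}y_j^{-n_j}$; then, using that the~$z$'s pairwise commute to move $\bz^{-\bm}$ past $z_i^{-1}$, the tail $\bz^{-\bm-\be_i}=\bz^{-\bm}z_i^{-1}$ of the first expression turns it into the second.

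I do not expect any genuine obstacle: the proposition is a formal consequence of the way the elements~$s_i$ were set up, which is exactly why it is phrased as an immediate consequence of~\eqref{eq_z}. The only thing to be mildly careful about is the bookkeeping — for the given index~$i$, keeping straight that~$j$ and~$k$ denote the two remaining indices, so that~$n_i$ is correctly identified as the coordinate that drops out.
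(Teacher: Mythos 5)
Your proof is correct and is exactly the paper's argument: the proposition is stated as an immediate consequence of the second formula in~\eqref{eq_z}, in which $n_i$ does not occur, and the equivalence of the two formulas is justified there by the commutation of $z_i$ with $y_i$ (together with the pairwise commutativity of the $y$'s and of the $z$'s, which you spell out correctly). No gaps.
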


The following proposition is a direct consequence of the fact that $z_i$ and~$z_j$ commute.

\begin{propos}
Suppose that $i,j,k$ is a permutation of~$1,2,3$. Then, for all $(t,\bm,\bn)\in\Z^7$, we have
\begin{equation}\label{eq_sisj}
s_i(t,\bm,\bn)s_j(t-n_j-n_k,\bm+\be_i,\bn)=s_j(t,\bm,\bn)s_i(t-n_i-n_k,\bm+\be_j,\bn).
\end{equation}
Hence, the functions~$F_i$ and~$F_j$ satisfy the functional equation
\begin{equation}
 \label{eq_Z1Z2}
F_i(t,\bm,\bn)+F_j(t-n_j-n_k,\bm+\be_i,\bn)=F_j(t,\bm,\bn)+F_i(t-n_i-n_k,\bm+\be_j,\bn).
\end{equation}

\end{propos}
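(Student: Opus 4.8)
The plan is to prove the group identity~\eqref{eq_sisj} by a direct telescoping computation that uses nothing beyond the relation $z_iz_j=z_jz_i$, and then to deduce~\eqref{eq_Z1Z2} by abelianizing.

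First I would expand both products appearing in~\eqref{eq_sisj} using the defining formula
\[
s_i(t,\bm,\bn)=c^t\bz^{\bm}\bfy^{\bn}z_i\bfy^{-\bn}\bz^{-\bm-\be_i}c^{n_j+n_k-t},
\]
taking care of which unordered pair of indices plays the role of $\{j,k\}$ for each single index. For the second factor on the left-hand side of~\eqref{eq_sisj} this reads
\[
s_j(t-n_j-n_k,\bm+\be_i,\bn)=c^{t-n_j-n_k}\bz^{\bm+\be_i}\bfy^{\bn}z_j\bfy^{-\bn}\bz^{-\bm-\be_i-\be_j}c^{n_i+n_j+2n_k-t}.
\]
Multiplying $s_i(t,\bm,\bn)$ by this expression, three cancellations occur in the middle of the word: the powers $c^{n_j+n_k-t}$ and $c^{t-n_j-n_k}$ cancel, their exponents summing to $0$; the factors $\bz^{-\bm-\be_i}$ and $\bz^{\bm+\be_i}$ cancel because $\bz$ is a homomorphism of the abelian group $\Z^3$; and then $\bfy^{-\bn}$ cancels the adjacent $\bfy^{\bn}$. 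What remains is
\[
s_i(t,\bm,\bn)\,s_j(t-n_j-n_k,\bm+\be_i,\bn)=c^t\bz^{\bm}\bfy^{\bn}\,z_iz_j\,\bfy^{-\bn}\bz^{-\bm-\be_i-\be_j}c^{n_i+n_j+2n_k-t}.
\]
Carrying out the same computation with the roles of $i$ and $j$ interchanged (and $k$ left fixed) produces the identical expression, except that $z_iz_j$ is replaced by $z_jz_i$; since these coincide, identity~\eqref{eq_sisj} follows.

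There is no real obstacle here: the only point that requires attention is that the shifts $t\mapsto t-n_j-n_k$ and $\bm\mapsto\bm+\be_i$ of the arguments are chosen precisely so that the inner powers of $c$ and of $\bz$ telescope, and beyond that the argument is pure bookkeeping. To conclude, I would observe that all four mapping classes occurring in~\eqref{eq_sisj} lie in $\K_g^b$, so applying the abelianization projection $\K_g^b\to(\K_g^b)^{\ab}$, under which the class of a product equals the sum of the classes of the factors, transforms~\eqref{eq_sisj} into~\eqref{eq_Z1Z2}.
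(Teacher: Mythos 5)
Your proof is correct and is exactly the argument the paper intends: the paper dispatches this proposition in one line as "a direct consequence of the fact that $z_i$ and $z_j$ commute," and your telescoping computation (with the correctly tracked exponent $n_i+n_j+2n_k-t$) is just that observation written out in full, followed by the standard passage to the abelianization.
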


\begin{propos}
 The functions $F_i$ satisfy the functional equations
 \begin{multline}\label{eq_Z6term}
  F_i(t+1,\bm,\bn)+F_i(t,  \bm,\bn+\be_j)+F_i(t-1,\bm,\bn-\be_j)\\
  {}=F_i(t-1,\bm,\bn)+F_i(t,\bm,\bn-\be_j)+F_i(t+1,\bm,\bn+\be_j),
  \end{multline}
 where $i,j,k$ is a permutation of~$1,2,3$.
\end{propos}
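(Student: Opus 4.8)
The plan is to deduce \eqref{eq_Z6term} from a group identity in $\K_g^b$ and then to verify that identity by commutator calculus. First I would use that $x\mapsto[x]$ is a homomorphism onto the abelian group $(\K_g^b)^{\ab}$, and that $\K_g^b$ acts trivially on its own abelianization, so the conjugation action of $\I_g^b$ on $(\K_g^b)^{\ab}$ factors through $\tau$; concretely $\bigl[whw^{-1}\bigr]=e_{\tau(w)}\cdot[h]$ for $w\in\I_g^b$, $h\in\K_g^b$. Conjugating $c^t$ out of the front of \eqref{eq_z} gives $s_i(t,\bm,\bn)=c^t\,s_i(0,\bm,\bn)\,c^{-t}$, so by \eqref{eq_tau_c} we get $F_i(t,\bm,\bn)=e_{tu_L}\cdot F_i(0,\bm,\bn)$. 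Plugging this into \eqref{eq_Z6term} and cancelling the common (invertible) factor $e_{(t-1)u_L}$, the equation becomes equivalent to
\begin{equation*}
 (e_{u_L}-1)\bigl((e_{u_L}+1)F_i(0,\bm,\bn)-e_{u_L}F_i(0,\bm,\bn+\be_j)-F_i(0,\bm,\bn-\be_j)\bigr)=0,
\end{equation*}
which no longer involves $t$; in particular it is implied by the $t$-free identity $(e_{u_L}+1)F_i(0,\bm,\bn)=e_{u_L}F_i(0,\bm,\bn+\be_j)+F_i(0,\bm,\bn-\be_j)$. Since $e_{u_L}F_i(0,\bm,\bn)=\bigl[c\,s_i(0,\bm,\bn)\,c^{-1}\bigr]=F_i(1,\bm,\bn)$, this last identity says precisely that the two elements $s_i(1,\bm,\bn)\,s_i(0,\bm,\bn)$ and $s_i(1,\bm,\bn+\be_j)\,s_i(0,\bm,\bn-\be_j)$ of $\K_g^b$ have the same image in $(\K_g^b)^{\ab}$, i.e.\ coincide modulo commutators.

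To prove this I would compute directly, starting from $s_i(t,\bm,\bn)=c^t\,\bz^\bm D(n_j)\bz^{-\bm}\,c^{n_j+n_k-t}$ with $D(n_j)=(y_j^{n_j}y_k^{n_k},z_i)$. The commutator identity $(ab,c)=a(b,c)a^{-1}(a,c)$ gives $D(n_j+1)=y_j\,D(n_j)\,y_j^{-1}\cdot(y_j,z_i)$, and since $\bz^\bm$, $y_j$, $z_i$ all stabilise $u_L$ this expresses $s_i(\cdot,\bm,\bn\pm\be_j)$ through $s_i(\cdot,\bm,\bn)$ and (a $\bz^\bm$-conjugate of) the single commutator $(y_j,z_i)$. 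The key point is that $\tau\bigl((y_j,z_i)\bigr)=-u_L=-\tau(c)$ by Proposition~\ref{propos_tau_zy}, so conjugation by $(y_j,z_i)$ acts on $(\K_g^b)^{\ab}$ exactly as conjugation by $c^{-1}$, namely as multiplication by $e_{u_L}^{-1}$; thus the powers of $e_{u_L}$ coming from the outer factors $c^{\pm t}$ and those coming from the shifts of $n_j$ "resonate". Using this together with the commutation relations — $c=(z_1,y_3)$, hence $cy_3=z_1y_3z_1^{-1}$ and $c$ commutes with $z_3$ and with $y_1$; the $z$'s commute among themselves and the $y$'s among themselves; $z_ly_l=y_lz_l$ — one rewrites $s_i(1,\bm,\bn)\,s_i(0,\bm,\bn)$ and $s_i(1,\bm,\bn+\be_j)\,s_i(0,\bm,\bn-\be_j)$, modulo $[\K_g^b,\K_g^b]$, as the same combination of classes of the form $e_{\ast u_L}\cdot F_i(0,\bm,\bn)$ (commutators of pairs of elements of $\K_g^b$ being killed in the abelianization), which proves the identity.

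The main obstacle is purely the bookkeeping. The conjugation $\bz^\bm(\,\cdot\,)\bz^{-\bm}$ and the non-commutativity of $z_i$ with $y_j$ for $i\ne j$ force most intermediate expressions to simplify only after applying $[\,\cdot\,]$ and invoking the module structure, so one must track $\tau$-values at every step and expand auxiliary commutators such as $(y_l,c^m)$ explicitly. One also has to make the argument uniform over all permutations $i,j,k$: for the pairs $(i,j)$ for which $(y_j,z_i)$ is not literally equal to $c^{\pm1}$ as a mapping class, one uses only that $(y_j,z_i)$ and $c^{-1}$ have the same image under $\tau$ and hence induce the same operator on $(\K_g^b)^{\ab}$, so the computation goes through verbatim after passing to the abelianization. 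A final routine check is that every element of $\I_g^b$ that occurs has $\tau$-value in $\Z u_L$, which is immediate from $\tau\bigl(\bz^\bm D(n_j)\bz^{-\bm}\bigr)=-(n_j+n_k)u_L$.
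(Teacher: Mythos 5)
Your opening reduction is correct and clean: indeed $s_i(t,\bm,\bn)=c^t\,s_i(0,\bm,\bn)\,c^{-t}$, so $F_i(t,\bm,\bn)=e_{tu_L}\cdot F_i(0,\bm,\bn)$, and \eqref{eq_Z6term} is equivalent to
\begin{equation*}
 (e_{u_L}-1)\bigl((e_{u_L}+1)F_i(0,\bm,\bn)-e_{u_L}F_i(0,\bm,\bn+\be_j)-F_i(0,\bm,\bn-\be_j)\bigr)=0.
\end{equation*}
The gap is that you then drop the factor $(e_{u_L}-1)$ and commit to proving the strictly stronger $t$-free identity. That identity is not a matter of bookkeeping; it almost certainly fails. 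The hidden obstruction is that $y_j$, $z_i$ and $\bz^{\bm}$ are twists about non-separating curves, hence not in $\I_g^b$: the fact that they stabilise $u_L$ controls their action on $U_g^b$ but says nothing about their action on $(\K_g^b)^{\ab}$, which is not multiplication by a group-ring element. Concretely, take $i=1$, $j=2$, $\bm=\bnul$, $\bn=\bnul$. Using \eqref{eq_Z=0-1}, your $t$-free identity becomes $F_1(1,\bnul,\be_2)+F_1(0,\bnul,-\be_2)=0$, i.e.\ $[c(y_2,z_1)]=[c\,y_2^{-1}(y_2,z_1)y_2]$. Applying the automorphism of $(\K_g^b)^{\ab}$ induced by $y_2$ to both sides and writing $y_2c(y_2,z_1)y_2^{-1}=(y_2,c)\cdot c(y_2,(y_2,z_1))c^{-1}\cdot c(y_2,z_1)$ (all three factors lie in $\K_g^b$), the claim reduces exactly to $\bigl[(y_2,(y_2,z_1))\bigr]=0$ in $(\K_g^b)^{\ab}$ --- the vanishing of the class of a second-order commutator of two non-commuting twists. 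None of the available relations gives this, and nothing of the sort is used (or true, as far as one can tell) in the paper.

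The intended argument is purely formal and never returns to the group level: one applies to \eqref{eq_Z1Z2} the product of difference operators $\Delta_{(0,\bnul,\be_j)}\Delta_{(-1,\bnul,-\be_j)}\Delta_{(1,\bnul,\be_i)}$ on $\Z^7$. By Proposition~\ref{propos_independ}, the first operator annihilates $F_j(t,\bm,\bn)$, the second annihilates $F_j(t-n_j-n_k,\bm+\be_i,\bn)$, and the third annihilates $F_i(t-n_i-n_k,\bm+\be_j,\bn)$; since the operators commute, what survives is $\Delta_{(0,\bnul,\be_j)}\Delta_{(-1,\bnul,-\be_j)}\Delta_{(1,\bnul,\be_i)}F_i(t,\bm,\bn)=0$, which, after using once more that $F_i$ does not depend on $n_i$, is precisely \eqref{eq_Z6term}. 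The factor $(e_{u_L}-1)$ in the reformulation above is exactly the slack created by this derivation from the single relation $z_iz_j=z_jz_i$; your proposal discards it and thereby asks for more than the geometry provides.
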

\begin{proof}
Consider the following difference operators on functions on~$\Z^7$:
\begin{align*}
 D_1&=\Delta_{(0,\bnul,\be_j)},&
 D_2&=\Delta_{(-1,\bnul,-\be_j)},&
 D_3&=\Delta_{(1,\bnul,\be_i)}.
\end{align*}
By Proposition~\ref{propos_independ} we know that $F_i(t,\bm,\bn)$ does not depend on~$n_i$ and $F_j(t,\bm,\bn)$ does not depend on~$n_j$. It follows that
\begin{align*}
 D_1\bigl(F_j(t,\bm,\bn)\bigr)&=0,\\
 D_2\bigl(F_j(t-n_j-n_k,\bm+\be_i,\bn)\bigr)&=0,\\
 D_3\bigl(F_i(t-n_i-n_k,\bm+\be_j,\bn)\bigr)&=0.
\end{align*}
Moreover, the operators~$D_1$, $D_2$, and~$D_3$ pairwise commute, since they are difference operators with constant coefficients. Therefore, applying the operator~$D_1D_2D_3$ to both sides of equation~\eqref{eq_Z1Z2}, we obtain that
$$
D_1D_2D_3\bigl(F_i(t,\bm,\bn)\bigr)=0,
$$
that is,
\begin{multline*}
 F_i(t,\bm,\bn+\be_i)-F_i(t-1,\bm,\bn)-F_i(t,\bm,\bn+\be_i-\be_j)-F_i(t+1,\bm,\bn+\be_i+\be_j) \\
 {}+F_i(t,\bm,\bn+\be_j)+F_i(t-1,\bm,\bn-\be_j)+F_i(t+1,\bm,\bn+\be_i)-F_i(t,\bm,\bn)=0.
\end{multline*}
Since $F_i(t,\bm,\bn)$ does not depend on~$n_i$, we obtain equation~\eqref{eq_Z6term}.
\end{proof}

We will also need the following proposition, which is a direct consequence of formula~\eqref{eq_z}.

\begin{propos}
 We have $s_i(t,\bm,\bnul)=1$ for all $t$ and~$\bm$. Moreover, $s_1(t,\bnul,\be_3)=1$ for all~$t$. Hence,
\begin{align}
 \label{eq_Z=0-1}
 F_i(t,\bm,\bnul)&=0,\\
  \label{eq_Z=0-1spec}
 F_1(t,\bnul,\be_3)&=0.
\end{align}
\end{propos}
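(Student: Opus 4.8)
The plan is to verify the two group-theoretic identities $s_i(t,\bm,\bnul)=1$ and $s_1(t,\bnul,\be_3)=1$ directly from the defining word~\eqref{eq_z}, using only the commutation relations recorded in Section~\ref{section_config} together with the definition $c=(z_1,y_3)$ from Section~\ref{section_sF}. Once these are established, equations~\eqref{eq_Z=0-1} and~\eqref{eq_Z=0-1spec} follow immediately: by definition $F_i(t,\bm,\bn)$ is the class of $s_i(t,\bm,\bn)$ in the abelianization $(\K_g^b)^{\ab}$, and the class of the trivial element is $0$.

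For the first identity I use the first of the two expressions in~\eqref{eq_z}. Setting $\bn=\bnul$ gives $\bfy^{\bnul}=1$ and $n_j+n_k=0$, so that
\[
s_i(t,\bm,\bnul)=c^t\,\bz^{\bm}z_i\bz^{-\bm-\be_i}\,c^{-t}.
\]
Since $z_1$, $z_2$, $z_3$ pairwise commute, $\bz^{\bm}$ commutes with $z_i$ and $\bz^{-\bm-\be_i}=\bz^{-\bm}z_i^{-1}$; hence the middle factor collapses, $\bz^{\bm}z_i\bz^{-\bm-\be_i}=z_iz_i^{-1}=1$, and therefore $s_i(t,\bm,\bnul)=c^tc^{-t}=1$. (The second expression in~\eqref{eq_z} makes this even more transparent, since $\bigl(y_j^{0}y_k^{0},z_i\bigr)=(1,z_i)=1$.)

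For the second identity take $i=1$, so that $\{j,k\}=\{2,3\}$, $\bfy^{\be_3}=y_3$, $\bz^{-\bnul-\be_1}=z_1^{-1}$, and $n_j+n_k=1$. The first expression in~\eqref{eq_z} then becomes
\[
s_1(t,\bnul,\be_3)=c^t\,y_3z_1y_3^{-1}z_1^{-1}\,c^{1-t}.
\]
Now $y_3z_1y_3^{-1}z_1^{-1}=(y_3,z_1)=(z_1,y_3)^{-1}=c^{-1}$, so $s_1(t,\bnul,\be_3)=c^t c^{-1}c^{1-t}=c^{0}=1$. There is essentially no obstacle here: both statements are formal manipulations of~\eqref{eq_z}, the only point requiring a little care being the bookkeeping of the trailing exponent of $c$, which in each case is arranged exactly so that the powers of $c$ cancel.
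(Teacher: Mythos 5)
Your verification is correct and is exactly the direct computation the paper has in mind (the paper simply asserts the proposition as "a direct consequence of formula~\eqref{eq_z}" without writing it out). Both cancellations — the collapse of the middle word using commutativity of the $z_l$'s, and the identification $y_3z_1y_3^{-1}z_1^{-1}=c^{-1}$ with the trailing exponent $n_j+n_k-t=1-t$ — are handled correctly.
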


\begin{remark}
 The motivation for considering the particular mapping classes~$s_i(t,\bm,\bn)$ given by~\eqref{eq_z} stems from the Reidemeister rewriting process, see~\cite[Section~2.3]{MKS}. Indeed, the elements~$c^t\bz^{\bm}\bfy^{\bn}$ form a system of representatives of some (not all) cosets of the stabilizer $\mathcal{S}=\Stab_{\K_g^b}(\alpha_1\cup\alpha_1'\cup\alpha_2\cup\alpha_2'\cup\alpha_3\cup\alpha_3')$ in the (componentwise) stabilizer of the same multicurve in the whole mapping class group~$\Mod(\Sigma_g^b)$. Then the elements~$s_i(t,\bm,\bn)$ become some of the standard generators for~$\mathcal{S}$ used in the Reidemeister rewriting process and~\eqref{eq_sisj} is the result of applying the rewriting process to the relation~$z_iz_j=z_jz_i$.
\end{remark}

\section{Quasipolynomial property for~$F_i$}\label{section_quasi}

The aim of this section is to prove the following proposition; its proof will rely solely on equations~\eqref{eq_Z1Z2}, \eqref{eq_Z6term}, \eqref{eq_Z=0-1}, and~\eqref{eq_Z=0-1spec}. Recall that a definition of a quasipolynomial was given in Section~\ref{section_equations}.

\begin{propos}\label{propos_Z_quasi}
 The functions $F_i(t,\bm,\bn)$ are quasipolynomials in~$t$ of degree~$\le 3$ for any fixed~$\bm$ and~$\bn$.
\end{propos}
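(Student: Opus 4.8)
The plan is to peel off the $t$-dependence of the $F_i$ with the help of the four functional equations from Section~\ref{section_equations}, reducing everything to statements about ordinary (quasi)polynomials in one variable.

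\textbf{Step 1 (additive decomposition).} Fix $\bm$ and $n_k$, and regard $F_i(t,\bm,\bn)$ as a function of the two variables $(t,n_j)$ (it does not depend on $n_i$ by Proposition~\ref{propos_independ}). Then equation~\eqref{eq_Z6term} is literally equation~\eqref{eq_func1}, so Proposition~\ref{propos_func1} yields one-variable functions $\kappa$, $\lambda$, $\mu$ (depending on the frozen data $i,j,\bm,n_k$) with
$$F_i(t,\bm,\bn)=\kappa_{i,j,\bm,n_k}(t)+\lambda_{i,j,\bm,n_k}(n_j)+\mu_{i,j,\bm,n_k}(t-n_j).$$
Hence, for fixed $\bm,\bn$, the entire $t$-dependence of $F_i$ is carried by $\kappa(t)+\mu(t-n_j)$, and it suffices to show that each $\kappa$ and each $\mu$ is a quasipolynomial of degree $\le 3$.

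\textbf{Step 2 (degree constraints from \eqref{eq_Z1Z2}).} Freeze $\bm$ and $n_k$ in \eqref{eq_Z1Z2} and regard it as an identity in $(t,n_i,n_j)$; since $n_k$ is exactly the secondary variable that is never shifted, the arguments $t$, $t-n_i-n_k$, $t-n_j-n_k$ together with the $n_i$- and $n_j$-independence force \eqref{eq_Z1Z2} to be an instance of equation~\eqref{eq_func2} (with $k\leftrightarrow t$, $l\leftrightarrow n_j$, $m\leftrightarrow n_i$), so Proposition~\ref{propos_func2} applies. Substituting the Step~1 decompositions for all four occurrences of $F$ and collecting terms according to which linear form in $(t,n_i,n_j)$ each depends on turns \eqref{eq_Z1Z2} into an instance of equation~\eqref{eq_6} (or, after applying constant-coefficient difference operators that kill the $n_i$- and $n_j$-free pieces, into an instance of~\eqref{eq_9}). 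By Propositions~\ref{propos_6} and~\ref{propos_9}, the collected functions are polynomials (resp.\ quasipolynomials) of small degree; in particular one obtains congruences, modulo polynomials of degree $\le 1$, tying $\mu_{i,j,\bm,n_k}$ to $\kappa_{i,j,\bm+\be_j,n_k}$ and $\kappa_{i,j,\bm,n_k}$ to $\kappa_{j,i,\bm,n_k}$, and, since polynomials of degree $\le 1$ form a group, propagating these along the lattice of shifts $\bm\mapsto\bm\pm\be_\ell$ without any increase in degree.

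\textbf{Step 3 (anchoring by the boundary conditions and conclusion).} Setting $\bn=\bnul$ in the decomposition and invoking \eqref{eq_Z=0-1} gives $\kappa_{i,j,\bm,0}(t)+\mu_{i,j,\bm,0}(t)=\mathrm{const}$; and \eqref{eq_Z=0-1spec}, again combined with the $n_i$-independence, gives the analogous relation at the single extra parameter value $(i,j,\bm,n_k)=(1,2,\bnul,1)$. Feeding these anchors into the web of congruences from Step~2 pins down each $\kappa$ and each $\mu$ modulo the quasipolynomials of degree $\le 2$ supplied by Proposition~\ref{propos_9}, and one final summation in $t$ absorbs the remaining degree; thus $\kappa$ and $\mu$, and hence $F_i(\cdot,\bm,\bn)=\kappa(t)+\mu(t-n_j)+\mathrm{const}$, are quasipolynomials of degree $\le 3$ in $t$.

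\textbf{Main obstacle.} The delicate part is the bookkeeping in Steps~2 and~3: one must manage the whole family of auxiliary functions $\kappa_{i,j,\bm,n_k}$, $\lambda_{i,j,\bm,n_k}$, $\mu_{i,j,\bm,n_k}$ indexed by $\bm\in\Z^3$ and $n_k\in\Z$, and verify that the recursion they satisfy under the lattice shifts, once anchored at $\bn=\bnul$ by \eqref{eq_Z=0-1} and at the asymmetric point by \eqref{eq_Z=0-1spec}, closes up with a \emph{uniform} degree bound rather than one growing with $\bm$ or $\bn$. Making \eqref{eq_Z=0-1spec} do exactly the work that \eqref{eq_Z=0-1} cannot — it is the only input not symmetric in the three indices — is the crux, and is presumably the reason one obtains quasipolynomiality only in the distinguished variable $t$, and only of degree $3$, rather than anything stronger.
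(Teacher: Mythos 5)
Your Step 1 is the paper's opening move, and your observation that \eqref{eq_Z1Z2} with $\bm$ and $n_k$ frozen is an instance of \eqref{eq_func2} is correct. But Step 2 has a genuine gap: because you freeze $n_k$ throughout, the only degree information you can ever extract from \eqref{eq_Z1Z2} is via the six-term equation \eqref{eq_6} in the variables $(t,n_i,n_j)$, and Proposition~\ref{propos_6} then bounds only the \emph{combinations} $\kappa_{i,\bm}-\kappa_{j,\bm}$, $\mu_{j,\bm}(s)+\kappa_{i,\bm+\be_j}(s-n_k)$, etc.\ --- never any individual $\kappa$ or $\mu$. The resulting web of congruences, even after anchoring with \eqref{eq_Z=0-1} and \eqref{eq_Z=0-1spec}, is satisfied by setting every $\kappa$ equal to an arbitrary function $g$ and every $\mu$ equal to $-g$ (up to shifts and linear corrections), so it cannot imply any degree bound on $F_i$ in $t$. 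Your parenthetical escape route to \eqref{eq_9} is not available either: the ten linear forms in \eqref{eq_9} require three independent shift parameters $k,l,m$, and with $n_k$ frozen you only have $t,n_i,n_j$ at your disposal; if instead you unfreeze $n_k$, your Step-1 components $\kappa_{i,j,\bm,n_k}(t)$ depend jointly on $t$ and $n_k$ and cannot be sorted by linear forms at all.

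The missing idea is the paper's Proposition~\ref{propos_phipsichitheta}: one must run Step 1 twice (once with $n_j$, once with $n_k$ as the second variable), apply $\Delta_t$ to the resulting identity between the two decompositions, and use Proposition~\ref{propos_func2} \emph{there} (not on \eqref{eq_Z1Z2}) to merge them into the single four-term decomposition
$F_i=\varphi_i(t,\bm)+\psi_i(t-n_j,\bm)+\chi_i(t-n_k,\bm)+\theta_i(t-n_j-n_k,\bm)+\nu_i$,
whose $t$-components no longer depend on $\bn$. Only then does \eqref{eq_Z1Z2}, with all of $n_i,n_j,n_k$ varying, become an instance of the ten-term equation \eqref{eq_9}, and Proposition~\ref{propos_9} is the sole source of genuine degree bounds in the whole argument. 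Even after that, the endgame is more delicate than ``one final summation absorbs the remaining degree'': Proposition~\ref{propos_9} still controls $\varphi_i'$ and $\theta_i'$ only through differences, and the paper needs \eqref{eq_Z=0-1} to get $\theta_i'\equiv-\varphi_i'$, then \eqref{eq_Z=0-1spec} to show $\omega(t)=\varphi_1(t,\bnul)$ is a quasipolynomial of degree $\le 4$, and finally the cancellation $F_i\equiv\omega(t)-\omega(t-n_j-n_k)\pmod{\CQ_3}$ to drop the degree from $4$ to $3$ --- the individual summands $\varphi_i$ and $\theta_i$ are \emph{not} of degree $\le3$, only their combination is.
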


Before proving this proposition, we first derive its corollary.

\begin{cor}\label{cor_quasi_Fu}
 For any $i$ and any~$(t,\bm,\bn)\in\Z^7$, we have that
 $$
 (e_{u_L}+1)(e_{u_L}-1)^4\cdot F_i(t,\bm,\bn)=0.
 $$
\end{cor}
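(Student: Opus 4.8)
The plan is to recognize that, on the functions $F_i$, multiplication by $e_{u_L}$ in the $\Z U_g^b$-module $(\K_g^b)^{\ab}$ is nothing but the shift operator $T\colon t\mapsto t+1$; once this is established, the corollary becomes a direct restatement of Proposition~\ref{propos_Z_quasi}. Recall first that $(\K_g^b)^{\ab}$ carries its $\Z U_g^b$-module structure through the conjugation action of $\I_g^b$ on its normal subgroup $\K_g^b$: this action descends to the abelianization, where conjugation by elements of $\K_g^b$ acts trivially, so it factors through $U_g^b=\I_g^b/\K_g^b$, and for $x\in\I_g^b$, $h\in\K_g^b$ one has $e_{\tau(x)}\cdot[h]=[xhx^{-1}]$ (the same convention as in Lemma~\ref{lem_tors_comm1}).

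First I would apply this with $x=c$, the element $c=(z_1,y_3)$ fixed in Section~\ref{section_sF}; by~\eqref{eq_tau_c} we have $\tau(c)=u_L$. Using formula~\eqref{eq_z}, write $s_i(t,\bm,\bn)=c^t\,w_i(\bm,\bn)\,c^{\,n_j+n_k-t}$, where $w_i(\bm,\bn)=\bz^{\bm}\bfy^{\bn}z_i\bfy^{-\bn}\bz^{-\bm-\be_i}$ and $i,j,k$ is a permutation of $1,2,3$. Conjugating by $c$ shifts the two exponents of $c$ by $+1$ and $-1$ respectively, so $c\,s_i(t,\bm,\bn)\,c^{-1}=c^{t+1}w_i(\bm,\bn)c^{\,n_j+n_k-(t+1)}=s_i(t+1,\bm,\bn)$. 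Passing to $(\K_g^b)^{\ab}$ this yields the key identity
$$
e_{u_L}\cdot F_i(t,\bm,\bn)=F_i(t+1,\bm,\bn)=(TF_i)(t,\bm,\bn).
$$
Hence, on the functions $F_i$, multiplication by $e_{u_L}-1$ realizes the difference operator $\Delta=T-1$, and multiplication by $e_{u_L}+1$ realizes $T+1$.

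Finally I would invoke Proposition~\ref{propos_Z_quasi}: for every fixed $\bm$ and $\bn$ the function $t\mapsto F_i(t,\bm,\bn)$ is a quasipolynomial of degree $\le 3$, which by definition means $(T+1)\Delta^{4}F_i=0$. Translating this equality through the identification of the previous paragraph gives precisely $(e_{u_L}+1)(e_{u_L}-1)^{4}\cdot F_i(t,\bm,\bn)=0$, for every $i$. At the level of this corollary there is essentially no obstacle; the only points that deserve a moment of care are checking that conjugation by $c$ genuinely implements multiplication by $e_{u_L}$ (i.e. that $\tau(c)=u_L$ and that the module structure is the one used in Proposition~\ref{propos_Z_quasi}), the bookkeeping of the two exponents of $c$ under conjugation, and matching index conventions, so that ``degree $\le 3$'' corresponds exactly to the exponent $4$ on $e_{u_L}-1$ together with the single factor $e_{u_L}+1$. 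All the substantive work lies, of course, in Proposition~\ref{propos_Z_quasi} itself.
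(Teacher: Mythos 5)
Your proposal is correct and is essentially identical to the paper's own proof: both identify multiplication by $e_{u_L}$ with the shift $t\mapsto t+1$ via $\tau(c)=u_L$ and then read off the conclusion from Proposition~\ref{propos_Z_quasi} and the definition of a quasipolynomial of degree~$\le 3$. The extra bookkeeping you include (the conjugation computation $c\,s_i(t,\bm,\bn)\,c^{-1}=s_i(t+1,\bm,\bn)$) is a correct elaboration of what the paper leaves implicit.
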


\begin{proof}
 By Proposition~\ref{propos_tau_zy} we have $\tau(c)=u_L$. Hence,
 $$
 e_{u_L}\cdot F_i(t,\bm,\bn)=c\cdot F_i(t,\bm,\bn)=F_i(t+1,\bm,\bn).
 $$
 Thus, the corollary follows immediately from Proposition~\ref{propos_Z_quasi} and the definition of a quasipolynomial.
\end{proof}

In the remainder of this section we prove Proposition~\ref{propos_Z_quasi}.

\begin{propos}\label{propos_phipsichitheta}
 Suppose that $i,j,k$ is a cyclic permutation of~$1,2,3$. Then
 \begin{multline}\label{eq_phipsichitheta}
  F_i(t,\bm,\bn)=\varphi_i(t,\bm)+\psi_i(t-n_j,\bm)+\chi_i(t-n_k,\bm)\\{}+\theta_i(t-n_j-n_k,\bm) + \nu_i(\bm,n_j,n_k)
 \end{multline}
 for some $(\K_g^b)^{\ab}$-valued functions~$\varphi_i$, $\psi_i$, $\chi_i$, $\theta_i$, and~$\nu_i$.
\end{propos}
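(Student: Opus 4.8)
The plan is to fix $\bm$ once and for all and to treat $F_i(t,\bm,\bn)$ as a function of the single variable $t$ depending on the two parameters $n_j,n_k$ (by Proposition~\ref{propos_independ} it does not involve $n_i$). The goal is to show that $\Delta_tF_i$ splits as a sum of four functions, one of each of the linear forms $t$, $t-n_j$, $t-n_k$, $t-n_j-n_k$; the decomposition~\eqref{eq_phipsichitheta} will then follow by taking an antidifference in~$t$. The reason for differentiating first is that equation~\eqref{eq_Z6term} is a third‑order equation in which the auxiliary results of Section~\ref{section_equations} do not directly apply, whereas after one application of $\Delta_t$ two instances of~\eqref{eq_Z6term} combine into a single instance of equation~\eqref{eq_func2}; this fusion is the one genuinely non‑formal point of the argument.

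Concretely, I would first note that, for fixed $\bm$ and $n_k$, equation~\eqref{eq_Z6term} with the distinguished index~$j$ is literally functional equation~\eqref{eq_func1} for the function $(t,n_j)\mapsto F_i(t,\bm,\bn)$. Hence Proposition~\ref{propos_func1} gives functions $\kappa_{n_k},\lambda_{n_k},\mu_{n_k}$ (also depending on the fixed~$\bm$) with
\begin{equation*}
 F_i(t,\bm,\bn)=\kappa_{n_k}(t)+\lambda_{n_k}(n_j)+\mu_{n_k}(t-n_j).
\end{equation*}
Applying $\Delta_t$ annihilates the middle term, so
\begin{equation*}
 \Delta_tF_i(t,\bm,\bn)=G(t,n_k)+K(t-n_j,n_k),
\end{equation*}
where $G(t,n_k)=(\Delta\kappa_{n_k})(t)$ and $K(s,n_k)=(\Delta\mu_{n_k})(s)$. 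Running the same argument with the roles of $j$ and~$k$ exchanged (equation~\eqref{eq_Z6term} holds for every permutation of~$1,2,3$) yields functions $\bar G,\bar K$ with $\Delta_tF_i(t,\bm,\bn)=\bar G(t,n_j)+\bar K(t-n_k,n_j)$.

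Equating the two expressions for $\Delta_tF_i$ gives the identity
\begin{equation*}
 G(t,n_k)+K(t-n_j,n_k)=\bar G(t,n_j)+\bar K(t-n_k,n_j),
\end{equation*}
which is exactly equation~\eqref{eq_func2} under the substitution $(k,l,m)=(t,n_k,n_j)$ with $f_1=G$, $f_2=K$, $f_3=\bar G$, $f_4=\bar K$. Proposition~\ref{propos_func2} then provides functions $g_1,g_2,g_3,g_4,h$ on~$\Z$ with $G(t,n_k)=g_1(t)+g_2(t-n_k)+h(n_k)$ and $K(s,n_k)=g_3(s)+g_4(s-n_k)-h(n_k)$. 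Substituting these back, the terms $\pm h(n_k)$ cancel and we are left with
\begin{equation*}
 \Delta_tF_i(t,\bm,\bn)=g_1(t)+g_3(t-n_j)+g_2(t-n_k)+g_4(t-n_j-n_k),
\end{equation*}
the desired four‑term splitting.

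Finally, I would choose antidifferences $\varphi_i,\psi_i,\chi_i,\theta_i\colon\Z\to(\K_g^b)^{\ab}$ of $g_1,g_3,g_2,g_4$ respectively (every map $\Z\to B$ admits an antidifference, constructed by partial summation). Then the function
\begin{equation*}
 F_i(t,\bm,\bn)-\varphi_i(t)-\psi_i(t-n_j)-\chi_i(t-n_k)-\theta_i(t-n_j-n_k)
\end{equation*}
has vanishing $\Delta_t$, hence does not depend on~$t$; it is also independent of~$n_i$, so it is a function $\nu_i(\bm,n_j,n_k)$. Restoring the suppressed dependence of $\varphi_i,\psi_i,\chi_i,\theta_i$ on~$\bm$ produces formula~\eqref{eq_phipsichitheta}. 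The main obstacle is not any single computation but, as indicated above, recognizing that after differentiating in~$t$ the two incarnations of~\eqref{eq_Z6term} must be fed through Proposition~\ref{propos_func2}; the rest is bookkeeping together with the standard existence of antidifferences.
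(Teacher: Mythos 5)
Your proof is correct and follows essentially the same route as the paper: two applications of Proposition~\ref{propos_func1} to the two instances of~\eqref{eq_Z6term}, a difference in~$t$ to kill the terms depending only on the parameters, Proposition~\ref{propos_func2} to split the result, and antidifferencing to recover~\eqref{eq_phipsichitheta}. The only (harmless) difference is at the end: the paper reconstructs $\kappa$ and $\mu$ explicitly and tracks the linear terms $\pm th(\bm,n_k)$ into $\nu_i$, whereas you cancel the $h$-terms at the level of $\Delta_tF_i$ and observe that the residual function is $t$-independent.
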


\begin{proof}
 Let us prove the proposition for $i=1$. In this proof we will never use the only unsymmetrical equation~\eqref{eq_Z=0-1spec}, so the proofs for~$i=2$ and~$i=3$ can be obtained by cyclic permutations of indices~$1,2,3$.

By Proposition~\ref{propos_independ} the value~$F_1(t,\bm,\bn)$ does not depend on~$n_1$.
 Consider equations~\eqref{eq_Z6term} for the triples $(i,j,k)=(1,2,3)$ and $(i,j,k)=(1,3,2)$. They are functional equations of the form~\eqref{eq_func1}. Hence, by Proposition~\ref{propos_func1} we obtain that
  \begin{align}\label{eq_abg_tilde}
   F_1(t,\bm,\bn)&=\kappa(t,\bm,n_3)+\mu(t-n_2,\bm,n_3)+\lambda(\bm,n_2,n_3)\\
               {}&=\tkappa(t,\bm,n_2)+\tmu(t-n_3,\bm,n_2)+\tlambda(\bm,n_2,n_3)\nonumber
  \end{align}
  for some functions~$\kappa$, $\mu$, $\lambda$, $\tkappa$, $\tmu$, and~$\tlambda$.
 Applying the partial difference operator $\Delta_t$, we get
\begin{equation*}
  \kappa'(t,\bm,n_3)+\mu'(t-n_2,\bm,n_3)
               =\tkappa'(t,\bm,n_2)+\tmu'(t-n_3,\bm,n_2).
 \end{equation*}
 This is a functional equation of the form~\eqref{eq_func2}.
By Proposition~\ref{propos_func2} we obtain that
 \begin{align*}
  \kappa'(t,\bm,n_3)&=g_1(t,\bm)+g_2(t-n_3,\bm)+h(\bm,n_3),\\
  \mu'(t,\bm,n_3)&=g_3(t,\bm)+g_4(t-n_3,\bm)-h(\bm,n_3)
 \end{align*}
 for some functions~$g_i$ and~$h$. Let $\varphi$, $\psi$, $\chi$, and~$\theta$ be the solutions of the equations
 \begin{align*}
  \varphi'(t,\bm)&=g_1(t,\bm),&\psi'(t,\bm)&=g_3(t,\bm),\\
  \chi'(t,\bm)&=g_2(t,\bm),&\theta'(t,\bm)&=g_4(t,\bm)
 \end{align*}
 with the initial conditions
 $$
 \varphi(0,\bm)=\psi(0,\bm)=\chi(0,\bm)=\theta(0,\bm)=0.
 $$
Such solutions exist, are unique and are given by the formulas
$$
\varphi(t,\bm)=\left\{
\begin{aligned}
&\sum_{s=0}^{t-1}g_1(s,\bm)&&\text{if}\ t\ge 0,\\
-&\sum_{s=t}^{-1}g_1(s,\bm)&&\text{if}\ t<0,
\end{aligned}
\right.
$$
and similar formulas for~$\psi$, $\chi$, and~$\theta$. Then
\begin{align*}
  \Delta_t\bigl(\kappa(t,\bm,n_3)-\varphi(t,\bm)-\chi(t-n_3,\bm)\bigr)&=h(\bm,n_3),\\
  \Delta_t\bigl(\mu(t,\bm,n_3)-\psi(t,\bm)-\theta(t-n_3,\bm)\bigr)&=-h(\bm,n_3).
 \end{align*}
 Therefore,
\begin{align*}
  \kappa(t,\bm,n_3)&=\varphi(t,\bm)+\chi(t-n_3,\bm)+th(\bm,n_3)+p(\bm,n_3),\\
  \mu(t,\bm,n_3)&=\psi(t,\bm)+\theta(t-n_3,\bm)-th(\bm,n_3)+q(\bm,n_3)
 \end{align*}
 for some functions~$p$ and~$q$. Substituting this to~\eqref{eq_abg_tilde}, adding index~$1$ to the functions~$\varphi$, $\psi$, $\chi$, and~$\theta$ and taking
 $$
 \nu_1(\bm,n_2,n_3)=\lambda(\bm,n_2,n_3)+n_2h(\bm,n_3)+p(\bm,n_3)+q(\bm,n_3),
 $$
 we obtain the required formula~\eqref{eq_phipsichitheta}.
\end{proof}

 \begin{propos}\label{propos_phipsichitheta_quasi}
 Let $\varphi_i$, $\psi_i$, $\chi_i$, and~$\theta_i$ be the functions from Proposition~\ref{propos_phipsichitheta}. Then the following statements hold.
 \begin{enumerate}
 \item The functions~$\psi_i(t,\bm)$ and~$\chi_i(t,\bm)$ are quasipolynomials in~$t$ of degree~$\le3$.

 \item There exists an $(\K_g^b)^{\ab}$-valued function~$\omega$ on~$\Z$ such that
 \begin{enumerate}
  \item  $\omega$ is a quasipolynomial of degree~$\le 4$,
 \item for each~$i$ and each $\bm$, the functions
 $\varphi_i(t,\bm)-\omega(t)$ and $\theta_i(t,\bm)+\omega(t)$
 are quasipolynomials in~$t$ of degree~$\le 3$.
  \end{enumerate}
   \end{enumerate}
 \end{propos}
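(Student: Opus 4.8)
The plan is to feed the decomposition \eqref{eq_phipsichitheta} into the relations not yet used in its derivation — the cross equation \eqref{eq_Z1Z2}, the vanishing \eqref{eq_Z=0-1}, and the special vanishing \eqref{eq_Z=0-1spec} — and to distil from them one-variable functional equations of the shapes studied in Section~\ref{section_equations}. Two preliminary normalisations make this workable. First, $F_i(t,\bm,\bnul)=0$ gives $\varphi_i(t,\bm)+\psi_i(t,\bm)+\chi_i(t,\bm)+\theta_i(t,\bm)=\mathrm{const}$, which I use throughout to replace $\theta_i$ by $-\varphi_i-\psi_i-\chi_i$ up to a $t$-independent term; in particular $\varphi_i+\theta_i$ becomes a quasipolynomial of degree $\le3$ as soon as $\psi_i$ and $\chi_i$ are, so it suffices to prove the degree-$3$ bound for $\psi_i,\chi_i$ in part~(1) and the degree-$4$ bound for $\varphi_i$ in part~(2). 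Second, when \eqref{eq_phipsichitheta} is substituted into an instance of \eqref{eq_Z1Z2} all of the $\nu$-terms are independent of~$t$; so after one application of $\Delta_t$ one is left with an identity in $t,n_1,n_2,n_3$ whose every summand is one of $\varphi_l',\psi_l',\chi_l',\theta_l'$ — evaluated at~$\bm$ or at~$\bm+\be_m$ — composed with an integral affine form in $(t,n_1,n_2,n_3)$.

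For part~(1) I specialise such an identity by putting one of the variables~$n_l$ equal to~$0$, which removes the awkward summands whose argument carries a coefficient~$2$, and then apply a second-order constant-coefficient difference operator in the two remaining $n$-variables. This kills all but a few summands and leaves, after recognising the surviving arguments, an equation of the form \eqref{eq_6} or (under a milder specialisation) \eqref{eq_9}, whose unknowns are $\Delta_t$-derivatives of fixed linear combinations of $\psi_l,\chi_l,\theta_l$ at neighbouring values of~$\bm$. Propositions~\ref{propos_6} and~\ref{propos_9} then bound the degrees of these combinations. Running over all cyclic triples $(i,j,k)$ and collecting the relations, one obtains a linear system linking the functions $\psi_l(\cdot,\bm),\chi_l(\cdot,\bm)$ across $\bm$ and $\bm\pm\be_m$; anchoring it at $\bm=\bnul$ by means of \eqref{eq_Z=0-1} and \eqref{eq_Z=0-1spec} and telescoping in~$\bm$ — which does not increase the degree in~$t$ — yields that every $\psi_i(t,\bm)$ and $\chi_i(t,\bm)$ is a quasipolynomial of degree $\le3$ in~$t$.

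For part~(2) it remains to control $\varphi_i$. Keeping the $\varphi$-summands this time, the same specialisation-and-difference-operator analysis produces an equation of the form~\eqref{eq_9} whose unknowns are the \emph{second} $\Delta_t$-derivatives of combinations of the $\varphi_l$ — the loss of one order of differentiation, compared with part~(1), being precisely the price of the coefficient-$2$ terms — so that Proposition~\ref{propos_9} gives that each $\varphi_i(t,\bm)$ is a quasipolynomial of degree $\le4$. To exhibit a \emph{single} function $\omega$, I reduce the identity coming from \eqref{eq_Z1Z2} modulo the group $\CQ_3$ of quasipolynomials of degree $\le3$ in~$t$: in the quotient every $\psi_l$ and $\chi_l$ vanishes and $\theta_l\equiv-\varphi_l$, and a short diagonal argument — first set $n_j=n_k=0$ to see that $\varphi_i(\cdot,\bm)\equiv\varphi_i(\cdot,\bm+\be_m)$, then use the full identity to see that $\varphi_i\equiv\varphi_j$ — shows that $\varphi_i(t,\bm)$ is congruent modulo~$\CQ_3$ to one and the same function of~$t$. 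Taking $\omega(t):=\varphi_1(t,\bnul)$, which is a quasipolynomial of degree $\le4$ by the previous step, we get $\varphi_i(t,\bm)-\omega(t)\in\CQ_3$, and therefore $\theta_i(t,\bm)+\omega(t)=\bigl(\varphi_i(t,\bm)+\theta_i(t,\bm)\bigr)-\bigl(\varphi_i(t,\bm)-\omega(t)\bigr)\in\CQ_3$ as well.

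The main obstacle is the bookkeeping forced by the fact that \eqref{eq_Z1Z2} always mixes the decomposition at~$\bm$ with the one at~$\bm+\be_m$: the difference operators never isolate a single $\psi_i$, $\chi_i$, or $\varphi_i$, but only $\bm$-telescoping combinations of them, so one has to assemble relations from all three cyclic versions of \eqref{eq_Z1Z2} under several specialisations, verify that the resulting linear system has enough rank to recover each individual function, and keep careful track of which affine forms — in particular the coefficient-$2$ ones — survive which difference operator, since this is exactly what dictates the degree bounds $3$ and~$4$.
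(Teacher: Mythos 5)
Your overall plan --- substitute \eqref{eq_phipsichitheta} into \eqref{eq_Z1Z2}, \eqref{eq_Z=0-1}, \eqref{eq_Z=0-1spec}, apply difference operators, and reduce to the model equations of Section~\ref{section_equations} --- is the paper's plan, and your use of \eqref{eq_Z=0-1} to trade $\theta_i$ for $-\varphi_i$ and your ``diagonal argument'' identifying all $\varphi_i(\cdot,\bm)$ modulo $\CQ_3$ with a single $\omega$ are both correct and match the paper. But the two concrete mechanisms you propose each have a genuine gap. For part (1), the paper does not specialise an $n_l$ to $0$: it applies $\Delta_t$ and then \emph{renames} $t-n_k$ as the new variable $t$, after which every summand sits on one of the ten pairwise distinct affine forms of \eqref{eq_9} (with $(k,l,m)=(n_i,n_j,n_k)$), so Proposition~\ref{propos_9} immediately puts $\psi_i'(\cdot,\bm)$, $\chi_i'(\cdot,\bm)$, $\varphi_i'-\varphi_j'$, and $\varphi_i'(\cdot,\bm+\be_j)+\theta_j'(\cdot,\bm)$ in $\CQ_2$, with no information lost. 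Your specialisation $n_k=0$ does kill the coefficient-$2$ arguments, but it also collapses distinct affine forms onto each other ($\chi_i(t-n_k,\bm)$ merges with $\varphi_i(t,\bm)$, $\psi_j(t-n_k,\bm)$ with $\varphi_j(t,\bm)$, and so on), so the surviving identity only constrains sums: writing $P=( \varphi_1+\chi_1)(\cdot,\bm)$ and $Q=(\chi_2+\theta_2)(\cdot,\bm)$ and using \eqref{eq_Z=0-1}, the four constraints it yields all have the shape ``$P+Q$ at neighbouring values of $\bm$ is affine in $t$,'' which is satisfied by $P=R$, $Q=-R$ for an \emph{arbitrary} $R$. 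So this specialisation by itself provably cannot separate $\psi_i$ and $\chi_i$ from the $\varphi$'s, and the ``rank of the linear system'' you defer checking is not bookkeeping --- it is the missing heart of the argument, and it is precisely what the paper's change of variable is designed to make unnecessary.

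For part (2) the gap is sharper. You assert that the \eqref{eq_9}-type analysis of \eqref{eq_Z1Z2} bounds each $\varphi_i(t,\bm)$ by a quasipolynomial of degree $\le4$, the extra degree being ``the price of the coefficient-$2$ terms.'' This cannot be made to work: in every identity derived from \eqref{eq_Z1Z2}, $\varphi_i$ occurs only inside the combinations $\varphi_i-\varphi_j$ and $\varphi_i(\cdot,\bm+\be_j)+\theta_j(\cdot,\bm)$, never isolated, and \eqref{eq_Z=0-1} only adds $\varphi_i+\psi_i+\chi_i+\theta_i=\mathrm{const}$. These relations determine all the $\varphi_i(\cdot,\bm)$ modulo $\CQ_3$ up to one common unknown function $\omega$, but they impose no bound at all on the degree of $\omega$ itself. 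The bound $\omega\in\CQ_4$ comes exclusively from the one unsymmetric relation \eqref{eq_Z=0-1spec}: substituting the decomposition into $F_1(t,\bnul,\be_3)=0$ and using part (1) together with $\theta_1\equiv-\varphi_1$ gives $\omega(t)-\omega(t-1)\in\CQ_3$, hence $\omega\in\CQ_4$. You list \eqref{eq_Z=0-1spec} among your inputs and invoke it only as an ``anchor'' in part (1), where it is not needed; at the single point where it is indispensable you replace it by an appeal to Proposition~\ref{propos_9} that does not apply.
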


\begin{proof}
 Suppose that $i,j,k$ is a cyclic permutation of~$1,2,3$.
 We substitute expressions~\eqref{eq_phipsichitheta} into~\eqref{eq_Z1Z2}, apply the partial difference operator~$\Delta_t$ to both sides, and then redenote~$t-n_k$ by~$t$. Then we obtain the following equation:
 \begin{equation*}
 \begin{split}
  \bigl(\varphi_i'&(t+n_k,\bm)-\varphi_j'(t+n_k,\bm)\bigr)-\chi_j'(t-n_i+n_k,\bm)+\psi_i'(t-n_j+n_k,\bm)\\
  {}&+\bigl(\chi_i'(t,\bm)-\psi_j'(t,\bm)\bigr)
  -\bigl(\varphi_i'(t-n_i,\bm+\be_j)+\theta_j'(t-n_i,\bm)\bigr)\\
  {}&+\bigl(\varphi_j'(t-n_j,\bm+\be_i)+\theta_i'(t-n_j,\bm)\bigr)\\
  {}&+\bigl(\chi_j'(t-n_i-n_j,\bm+\be_i)-\psi_i'(t-n_i-n_j,\bm+\be_j)\bigr)\\
  {}&-\chi_i'(t-n_i-n_k,\bm+\be_j)+\psi_j'(t-n_j-n_k,\bm+\be_i)\\
  {}&+\bigl(\theta_j'(t-n_i-n_j-n_k,\bm+\be_i)-\theta_i'(t-n_i-n_j-n_k,\bm+\be_j)\bigr)=0.
 \end{split}
 \end{equation*}
This is an equation of the form~\eqref{eq_9}. Therefore, by Proposition~\ref{propos_9} the following functions are quasipolynomials in~$t$ of degree~$\le 2$:
\begin{gather*}
 \psi_i'(t,\bm),\qquad\chi_i'(t,\bm),\qquad \varphi_i'(t,\bm)-\varphi_j'(t,\bm),\\ \varphi_i'(t,\bm+\be_j)+\theta_j'(t,\bm).
\end{gather*}
Hence, $\psi_i(\bm,t)$ and~$\chi_i(\bm,t)$ are quasipolynomials in~$t$ of degree~$\le3$. Moreover,
\begin{equation}\label{eq_phi'_mod}
 \varphi_i'(t,\bm)\equiv \varphi_1'(t,\bm)\pmod{\CQ_2}.
\end{equation}

Next, we substitute expressions~\eqref{eq_phipsichitheta} into~\eqref{eq_Z=0-1} and apply the operator~$\Delta_t$. We get
\begin{equation*}
  \varphi'_i(t,\bm)+\psi'_i(t,\bm)+\chi'_i(t,\bm)+\theta'_i(t,\bm)=0.
 \end{equation*}
Hence,
\begin{equation}\label{eq_theta'_mod}
 \theta_i'(t,\bm)\equiv-\varphi_i'(t,\bm)\equiv -\varphi_1'(t,\bm)\pmod{\CQ_2}.
\end{equation}
Now, the condition that $\varphi_i'(t,\bm+\be_j)+\theta_j'(t,\bm)$  is a quasipolynomial in~$t$ of degree~$\le2$ reads as
$$
\varphi_1'(t,\bm+\be_j)\equiv \varphi_1'(t,\bm)\pmod{\CQ_2}.
$$
Since this equivalence holds for all~$j$, we obtain that
\begin{equation}\label{eq_phi1'_mod}
\varphi_1'(t,\bm)\equiv \varphi_1'(t,\bnul)\pmod{\CQ_2}
\end{equation}
for all $\bm\in\Z^3$.

We set
$$
\omega(t)=\varphi_1(t,\bnul).
$$
From~\eqref{eq_phi'_mod}, \eqref{eq_theta'_mod}, and~\eqref{eq_phi1'_mod} it follows that the functions $\varphi_i(t,\bm)-\omega(t)$ and $\theta_i(t,\bm)+\omega(t)$ are quasipolynomials in~$t$ of degree~$\le3$. So we suffice to prove that $\omega$ is a quasipolynomial of degree~$\le4$. To do this, we substitute expression~\eqref{eq_phipsichitheta} into relation~\eqref{eq_Z=0-1spec}. We obtain that
$$
\varphi_1(t,\bnul)+\psi_1(t,\bnul)+\chi_1(t-1,\bnul)+\theta_1(t-1,\bnul)+\nu_1(\bnul,0,1)=0.
$$
Therefore,
$
\omega(t)-\omega(t-1)
$
is a quasipolynomial of degree~$\le3$. Thus, $\omega$ is a quasipolynomial of degree~$\le4$.
\end{proof}

\begin{proof}[Proof of Proposition~\ref{propos_Z_quasi}]
 By Propositions~\ref{propos_phipsichitheta} and~\ref{propos_phipsichitheta_quasi} we have that
\begin{equation}\label{eq_Z_omega}
F_i(t,\bm,\bn)\equiv \omega(t)-\omega(t-n_j-n_k)\pmod{\CQ_3},
\end{equation}
where $\omega$ is a quasipolynomial of degree~$\le4$ and $i,j,k$ is a permutation of~$1,2,3$. Then the right-hand side of~\eqref{eq_Z_omega} is a quasipolynomial in~$t$ of degree~$\le3$. Therefore, $F_i(t,\bm,\bn)$  is also a quasipolynomial in~$t$ of degree~$\le3$.
\end{proof}

\section{Proof of Theorem~\ref{thm_explicit}}\label{section_proof_explicit}

\begin{propos}
 We have the following relation in~$\K_g^b$:
 \begin{equation}\label{eq_h}
 x_2\,s_2(1,\bnul,\be_1)^{-1}x_2^{-1}c\,T_{\gamma}^{-1}c^{-1}s_2(1,-\be_3,\be_1)T_{\gamma}=1.
 \end{equation}
\end{propos}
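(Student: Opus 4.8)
The plan is to expand the left-hand side
\begin{equation*}
 w:=x_2\,s_2(1,\bnul,\be_1)^{-1}x_2^{-1}c\,T_{\gamma}^{-1}c^{-1}s_2(1,-\be_3,\be_1)T_{\gamma}
\end{equation*}
directly, turning it into a product of two conjugates of the single commutator $(y_1,z_2)$, and then to collapse everything using only the commutation relations among $x_i,y_i,z_i$ listed in Section~\ref{section_config} together with the formula for $T_{\gamma}$ from Proposition~\ref{propos_delta}.

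First I would use the second form of~\eqref{eq_z}, namely $s_i(t,\bm,\bn)=c^t\bz^{\bm}(y_j^{n_j}y_k^{n_k},z_i)\bz^{-\bm}c^{n_j+n_k-t}$, with $i=2$ and $\{j,k\}=\{1,3\}$. Since $\bn=\be_1$ gives $n_1+n_3=1$ and $t=1$, the trailing power of~$c$ equals $c^{0}$ in both cases, so $s_2(1,\bnul,\be_1)=c\,(y_1,z_2)$ and $s_2(1,-\be_3,\be_1)=c\,z_3^{-1}(y_1,z_2)\,z_3$. Substituting these into $w$ and cancelling the middle $c^{-1}c$ gives
\begin{equation*}
 w=x_2\,(y_1,z_2)^{-1}c^{-1}x_2^{-1}c\,T_{\gamma}^{-1}z_3^{-1}(y_1,z_2)z_3\,T_{\gamma}.
\end{equation*}
Next, since $x_2$ commutes with $z_1$ (relation $x_iz_{i-1}=z_{i-1}x_i$ with $i=2$) and with $y_3$ (relation $x_iy_{i+1}=y_{i+1}x_i$ with $i=2$), it commutes with $c=(z_1,y_3)$, so $c^{-1}x_2^{-1}c=x_2^{-1}$ and
\begin{equation*}
 w=x_2\,(y_1,z_2)^{-1}x_2^{-1}\cdot(z_3T_{\gamma})^{-1}(y_1,z_2)(z_3T_{\gamma}).
\end{equation*}
Finally I would insert Proposition~\ref{propos_delta} in the form $z_3T_{\gamma}=y_2^{-1}x_3^{-1}x_2^{-1}P$, where $P=T_{\alpha_1}^3T_{\alpha_1'}T_{\alpha_2}T_{\alpha_2'}T_{\alpha_3}T_{\alpha_3'}$. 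The twists $T_{\alpha_i},T_{\alpha_i'}$ commute with every Dehn twist supported in $\oS$, in particular with $(y_1,z_2)$ and with $x_2$; moreover $y_2$ commutes with $(y_1,z_2)$ (the $y$'s pairwise commute and $z_iy_i=y_iz_i$) and $x_3$ commutes with $(y_1,z_2)$ (relations $x_iy_{i+1}=y_{i+1}x_i$ and $x_iz_{i-1}=z_{i-1}x_i$ with $i=3$). Hence
\begin{equation*}
 (z_3T_{\gamma})^{-1}(y_1,z_2)(z_3T_{\gamma})=P^{-1}x_2x_3y_2\,(y_1,z_2)\,y_2^{-1}x_3^{-1}x_2^{-1}P=x_2\,(y_1,z_2)\,x_2^{-1},
\end{equation*}
so that $w=x_2(y_1,z_2)^{-1}x_2^{-1}\cdot x_2(y_1,z_2)x_2^{-1}=1$, as required.

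There is no substantial obstacle here: the argument is a chain of elementary cancellations, and its only external inputs are the commutation relations recorded after Figure~\ref{fig_main}, the fact (already invoked in the proof of Proposition~\ref{propos_delta}) that twists about $\alpha_i,\alpha_i'$ commute with everything supported in $\oS$, and Proposition~\ref{propos_delta} itself. The points that demand care are reading off the right-hand sides of the two occurrences of $s_2$ from~\eqref{eq_z} — in particular checking that the exponents of~$c$ arrange so that the leading factor is exactly~$c$ and the trailing factor is trivial — and keeping the indices modulo~$3$ straight when applying each commutation relation.
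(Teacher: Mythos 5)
Your proposal is correct and follows essentially the same route as the paper's proof: both expand the two occurrences of $s_2$ via~\eqref{eq_z}, substitute the expression for $T_{\gamma}$ from Proposition~\ref{propos_delta}, and cancel using the commutation relations from Section~\ref{section_config} together with the centrality of the twists $T_{\alpha_i}$, $T_{\alpha_i'}$. Your reorganization of the word as a product of two conjugates of $(y_1,z_2)$ is a cosmetic (and slightly cleaner) repackaging of the same cancellation.
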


\begin{proof}
 By~\eqref{eq_z} we have
 \begin{align*}
  s_2(1,\bnul,\be_1)&=cy_1z_2y_1^{-1}z_2^{-1},\\
  s_2(1,-\be_3,\be_1)&=cz_3^{-1}y_1z_2y_1^{-1}z_2^{-1}z_3.
 \end{align*}
 Substitute these expressions and also expression~\eqref{eq_delta} for~$T_{\gamma}$ into~\eqref{eq_h}. Using that
 \begin{itemize}
  \item $x_2$ commutes with~$c$,
  \item each of the elements~$y_1$ and~$z_2$ commutes with each of the elements~$x_3$ and~$y_2$,
  \item the twists~$T_{\alpha_i}$ and~$T_{\alpha_i'}$ commute with all mapping classes under consideration,
 \end{itemize}
 we see that the left-hand side of~\eqref{eq_h} reads as
 \begin{multline*}
 x_2(z_2y_1z_2^{-1}y_1^{-1}c^{-1})x_2^{-1}c(x_2x_3y_2z_3)c^{-1}(cz_3^{-1}y_1z_2y_1^{-1}z_2^{-1}z_3)(z_3^{-1}y_2^{-1}x_3^{-1}x_2^{-1})\\
 {}=x_2(z_2y_1z_2^{-1}y_1^{-1})x_3y_2(y_1z_2y_1^{-1}z_2^{-1})y_2^{-1}x_3^{-1}x_2^{-1}=1,
 \end{multline*}
 as desired.
\end{proof}

\begin{proof}[Proof of Theorem~\ref{thm_explicit}]
Since $\tau(c)=u_L$, relation~\eqref{eq_h} yields the
following relation in~$(\K_{g}^b)^{\ab}$:
\begin{equation}\label{eq_h_ab}
 (e_{u_L}-1)\cdot [T_{\gamma}]= -x_2\cdot F_2(1,\bnul,\be_1)+F_2(1,-\be_3,\be_1).
\end{equation}
Apply to both sides of this relation the element
$$
q = (e_{u_L}+1)(e_{u_L}-1)^4
$$
of the group ring~$\Z U^b_g$. By Corollary~\ref{cor_quasi_Fu} we know that $q$ annihilates all elements~$F_i(t,\bm,\bn)$. Since $c$ commutes with $x_2$,  the actions of the elements~$q$ and~$x_2$ on~$(\K_{g}^b)^{\ab}$ also commute with each other. Therefore, $q$ annihilates the right-hand side of~\eqref{eq_h_ab}. The theorem follows.
\end{proof}

\begin{remark}
 The above proof yields that the relation
 $$
 (e_{u_L}+1)(e_{u_L}-1)^5\cdot [T_{\gamma}]=0
 $$
 holds not only in~$(\K_g^b)^{\ab}$ but also in~$\mathcal{S}^{\ab}$, where $\mathcal{S}$ is the stabilizer of the multicurve $\alpha_1\cup\alpha_1'\cup\alpha_2\cup\alpha_2'\cup\alpha_3\cup\alpha_3'$ in~$\K_g$.
\end{remark}

\section{Subgroups of~$U_g$ associated with a separating simple closed curve}\label{section_subgroups}

Consider an essential (i.\,e., not bounding a disk) separating simple closed curve~$\gamma$ on a closed surface~$\Sigma_g$ of genus~$g\ge3$. Let $S_1$ and~$S_2$ be the connected components of~$\Sigma_g\setminus \gamma$. Suppose that the genera of~$S_1$ and~$S_2$ are $g_1$ and~$g_2$, respectively. Then $g_1+g_2=g$. Define the \textit{genus} of~$\gamma$ to be the smallest of the two numbers~$g_1$ and~$g_2$. We set $P_i =H_1(S_i,\Z)$. Then $H=P_1\oplus P_2$. Choose a symplectic basis~$e_1,\ldots,e_{2g_1}$ of~$P_1$ and a symplectic basis~$f_1,\ldots,f_{2g_2}$ of~$P_2$ so that $e_{2i-1}\cdot e_{2i}=1$, $f_{2i-1}\cdot f_{2i}=1$, and all other intersection numbers are equal to zero.

We denote by~$\I_{\gamma}$ the stabilizer of~$\gamma$ in the Torelli group~$\I_g$ and set $U_{\gamma}=\tau(\I_{\gamma})$.

\begin{propos}
The group~$U_{\gamma}$ has a basis consisting of~$\binom{2g_1}{3}$ elements $e_i\wedge e_j\wedge e_k$, where $1\le i<j <k\le 2g_1$, and $\binom{2g_2}{3}$ elements $f_i\wedge f_j\wedge f_k$, where $1\le i<j <k\le 2g_2$.
\end{propos}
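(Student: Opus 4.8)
The plan is to show that $U_\gamma$ coincides with the image of $\exter^3 P_1\oplus\exter^3 P_2$ under the projection $\pi\colon\exter^3 H\to\exter^3 H/(\Omega\wedge H)=U_g$, and then to check that $\pi$ is injective on this subgroup. Granting these two facts, the elements in the statement are exactly the standard monomial bases of $\exter^3 P_1$ and of $\exter^3 P_2$, so their images form a basis of $U_\gamma$, of the asserted cardinality $\binom{2g_1}{3}+\binom{2g_2}{3}$. Note that $g_1,g_2\ge1$, since $\gamma$ is essential and separating and hence neither side of $\gamma$ is a disk.

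For the identification of $U_\gamma$ I would cut $\Sigma_g$ along $\gamma$. Any $\phi\in\I_\gamma$ must preserve each of $S_1$ and $S_2$: interchanging them would carry $P_1$ onto $P_2$ and hence act nontrivially on $H$. By the standard structure of mapping classes fixing a separating curve \cite{FaMa12}, $\phi$ can then be written as $\phi_1\phi_2 T_\gamma^{\,k}$ with $\phi_i$ supported on $S_i$; comparing the actions on $P_i$ shows that each $\phi_i$ acts trivially on $H_1(S_i)\cong P_i$, so $\phi_i$ lies in the Torelli group $\I(S_i^1)$. Since $\tau$ is a homomorphism and $\tau(T_\gamma)=0$ (a separating twist lies in $\K_g$), we get $\tau(\phi)=\tau(\phi_1)+\tau(\phi_2)$, and by naturality of the Johnson homomorphism under the inclusions $S_i\hookrightarrow\Sigma_g$ the value $\tau(\phi_i)$ is $\pi$ applied to the image in $\exter^3 H$ of the Johnson invariant of $\phi_i$ in $\exter^3 P_i$. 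This yields $U_\gamma\subseteq\pi(\exter^3 P_1)+\pi(\exter^3 P_2)$. For the reverse inclusion I would invoke Johnson's theorem that $\tau\colon\I(S_i^1)\to\exter^3 P_i$ is surjective (vacuous when $g_i=1$; when $g_i=2$ one realizes the four basis monomials of $\exter^3 P_i\cong\Z^4$ by explicit genus-one bounding pair maps inside $S_i$, using formula~\eqref{eq_Joh_BP}), noting that the realizing mapping classes are supported on $S_i$ and hence belong to $\I_\gamma$. I expect the naturality statement --- that $\tau$ of a subsurface-supported mapping class already lands in $\pi(\exter^3 P_i)$ --- to be the point requiring the most care, although it is standard; an alternative is to argue directly via the action on the lower central series quotients of $\pi_1(\Sigma_g)$.

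It remains to prove that $\pi$ is injective on $\exter^3 P_1\oplus\exter^3 P_2$, i.e.\ that $(\exter^3 P_1\oplus\exter^3 P_2)\cap(\Omega\wedge H)=0$ in $\exter^3 H$. Write $\exter^3 H=\exter^3 P_1\oplus(\exter^2 P_1\wedge P_2)\oplus(P_1\wedge\exter^2 P_2)\oplus\exter^3 P_2$, and decompose $\Omega=\Omega_1+\Omega_2$ and an arbitrary $v\in H$ as $v=v_1+v_2$ with $\Omega_i\in\exter^2 P_i$ the dual of the intersection form on $P_i$ and $v_i\in P_i$; here $\Omega_i\ne0$ because $g_i\ge1$. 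The two mixed components of $\Omega\wedge v$ are $\Omega_1\wedge v_2$ and $\Omega_2\wedge v_1$, so $\Omega\wedge v\in\exter^3 P_1\oplus\exter^3 P_2$ forces $\Omega_1\wedge v_2=0$ and $\Omega_2\wedge v_1=0$; since $\Omega_i\ne0$ in $\exter^2 P_i$ this gives $v_2=0$ and $v_1=0$, whence $\Omega\wedge v=0$. Therefore the composite $\exter^3 P_1\oplus\exter^3 P_2\hookrightarrow\exter^3 H\xrightarrow{\pi}U_g$ is injective with image $U_\gamma$, and the monomials $e_i\wedge e_j\wedge e_k$ with $1\le i<j<k\le2g_1$ together with $f_i\wedge f_j\wedge f_k$ with $1\le i<j<k\le2g_2$ constitute a basis of $U_\gamma$.
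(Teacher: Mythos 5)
Your proposal is correct and follows essentially the same route as the paper: decompose elements of $\I_\gamma$ into mapping classes supported on the two complementary subsurfaces, identify $U_\gamma$ with the image of $\exter^3P_1\oplus\exter^3P_2$ in $U_g$, and check that this subgroup meets $\Omega\wedge H$ trivially. Your explicit verification of the injectivity (via $\Omega=\Omega_1+\Omega_2$ and the mixed components $\Omega_1\wedge v_2$, $\Omega_2\wedge v_1$) correctly fills in the step the paper dismisses with ``one can check directly.''
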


\begin{proof}Denote by~$\oS_1$ and~$\oS_2$ the closures of~$S_1$ and~$S_2$, respectively.
 Every element $h\in\I_{\gamma}$ can be written as a product $h_1h_2$, where $h_1$ is supported on~$\oS_1$ and $h_2$ is supported on~$\oS_2$. Then~$h_1$ and~$h_2$ lie in the Torelli groups~$\I(\oS_1)$ and~$\I(\oS_2)$ of surfaces~$\oS_1$ and~$\oS_2$, respectively. Vice versa, any element of~$\I(\oS_i)$, where $i\in\{1,2\}$,  being extended by the identity becomes an element of~$\I_{\gamma}$. The images of the Torelli groups~$\I(\oS_1)$ and~$\I(\oS_2)$ under the Johnson homomorphism are exactly the groups~$\exter^3 P_1$ and~$\exter^3P_2$, respectively. Finally, one can check directly that the composition
 $$
 \exter^3P_1\oplus\exter^3P_2\hookrightarrow \exter^3H\twoheadrightarrow (\exter^3H)/(\Omega\wedge H) =U_g
 $$
 is injective. The proposition follows.
\end{proof}

We denote by~$W_{\gamma}^{(1)}$ the subgroup of~$U_g$ generated by all elements $a\wedge b\wedge c$ such that $a\in P_1$, $b,c\in P_2$, and $b\cdot c=0$. The subgroup $W_{\gamma}^{(2)}\subset U_g$ is defined likewise by swapping the roles of~$P_1$ and~$P_2$. Obviously, $W_{\gamma}^{(1)}\cap W_{\gamma}^{(2)}=\{0\}$. We set $W_{\gamma}=W_{\gamma}^{(1)}\oplus W_{\gamma}^{(2)}$.
The following proposition is straightforward.

\begin{propos}\label{propos_W_basis}
Set $r_1=2g_1(2g_2^2-g_2-1)$ and
let $w_1,\ldots,w_{r_1}$ be the following $r_1$ elements of~$U_g$ listed in some order:
\begin{itemize}
 \item the $4g_1g_2(g_2-1)$ elements $e_i\wedge f_j\wedge f_k$, where $1\le i\le 2g_1$, $1\le j<k\le 2g_2$, and $(j,k)\ne (2l-1,2l)$,
 \item the $2g_1(g_2-1)$ elements $e_i\wedge (f_{2l-1}+f_1)\wedge (f_{2l}-f_2)$, where $1\le i\le 2g_1$ and $2\le l\le g_2$.
\end{itemize}
Then $w_1,\ldots,w_{r_1}$ is a basis of~$W_{\gamma}^{(1)}$.
\end{propos}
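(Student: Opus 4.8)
The plan is to realize $W_\gamma^{(1)}$ as the image in $U_g=\wedge^3H/(\Omega\wedge H)$ of a concrete free $\Z$-submodule of $\wedge^3H$, and then recognize $w_1,\dots,w_{r_1}$ as a basis of that submodule. Grade $\wedge^3H=\bigoplus_{k=0}^{3}V_k$ by the number of $P_1$-factors, $V_k=\wedge^kP_1\otimes\wedge^{3-k}P_2$; in particular $V_1=P_1\otimes\wedge^2P_2$ via $a\otimes\xi\mapsto a\wedge\xi$, and under this identification a generator $a\wedge b\wedge c$ of $W_\gamma^{(1)}$ (with $a\in P_1$, $b,c\in P_2$) is $a\otimes(b\wedge c)$. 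Let $\omega_2\colon\wedge^2P_2\to\Z$ be contraction with the symplectic form, $\omega_2(b\wedge c)=b\cdot c$. Since $f_i\cdot f_j=0$ unless $\{i,j\}=\{2l-1,2l\}$, the condition $b\cdot c=0$ is exactly $b\wedge c\in\ker\omega_2$, and $\omega_2$ is surjective, so $\ker\omega_2$ is a rank $\binom{2g_2}{2}-1=2g_2^2-g_2-1$ direct summand of $\wedge^2P_2$. It therefore suffices to prove: (i) the elements $w_1,\dots,w_{r_1}$ form a $\Z$-basis of $P_1\otimes\ker\omega_2$ (these being decomposable of the required form, this also shows $W_\gamma^{(1)}$ equals the image of $P_1\otimes\ker\omega_2$); and (ii) the map $P_1\otimes\ker\omega_2\to U_g$ is injective.

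\textbf{Step 1: a basis of $P_1\otimes\ker\omega_2$.} I would first check that the $2g_2(g_2-1)$ elements $f_i\wedge f_j$ with $i<j$ and $(i,j)\notin\{(1,2),(3,4),\dots,(2g_2-1,2g_2)\}$, together with the $g_2-1$ elements $(f_{2l-1}+f_1)\wedge(f_{2l}-f_2)$ for $2\le l\le g_2$, all lie in $\ker\omega_2$ — for the latter, $(f_{2l-1}+f_1)\cdot(f_{2l}-f_2)=1-0+0-1=0$ — and that there are exactly $2g_2^2-g_2-1$ of them, matching the rank. To see they form a basis, adjoin $f_1\wedge f_2$ and compare with the standard basis $\{f_i\wedge f_j\}_{i<j}$ of $\wedge^2P_2$: expanding
\[
(f_{2l-1}+f_1)\wedge(f_{2l}-f_2)=f_{2l-1}\wedge f_{2l}-f_{2l-1}\wedge f_2+f_1\wedge f_{2l}-f_1\wedge f_2,
\]
every term other than $f_{2l-1}\wedge f_{2l}$ already occurs among the listed elements (note that $f_{2l-1}\wedge f_2$ and $f_1\wedge f_{2l}$ are not symplectic pairs for $l\ge2$), so in a suitable ordering the transition matrix is unitriangular, whence the adjoined set is a $\Z$-basis of $\wedge^2P_2$. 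Since $f_1\wedge f_2$ is the unique element among these with $\omega_2\ne0$, deleting it leaves a $\Z$-basis of $\ker\omega_2$. Tensoring with the basis $e_1,\dots,e_{2g_1}$ of $P_1$ yields precisely $w_1,\dots,w_{r_1}$, establishing (i).

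\textbf{Step 2: injectivity.} It remains to show $(P_1\otimes\ker\omega_2)\cap(\Omega\wedge H)=0$ inside $\wedge^3H$. Write $\Omega=\Omega_1+\Omega_2$ with $\Omega_i\in\wedge^2P_i$, so $\Omega_2=\sum_{j=1}^{g_2}f_{2j-1}\wedge f_{2j}$. For $x=x_1+x_2\in H$ with $x_i\in P_i$, the graded components of $\Omega\wedge x$ are $\Omega_1\wedge x_1\in V_3$, $\Omega_1\wedge x_2\in V_2$, $\Omega_2\wedge x_1\in V_1$, and $\Omega_2\wedge x_2\in V_0$. Hence $\Omega\wedge x\in V_1$ forces $\Omega_1\wedge x_1=0$ and $\Omega\wedge x=\Omega_2\wedge x_1$, so $(\Omega\wedge H)\cap V_1\subseteq P_1\otimes\Z\Omega_2$ under the identification of $V_1$ with $P_1\otimes\wedge^2P_2$. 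Because $\gamma$ is essential, both $g_1,g_2\ge1$, so $\omega_2(\Omega_2)=g_2\ne0$, i.e. $\Z\Omega_2\cap\ker\omega_2=0$; comparing $e_i$-coordinates then gives $(P_1\otimes\Z\Omega_2)\cap(P_1\otimes\ker\omega_2)=0$, which proves (ii). Combining (i) and (ii), $W_\gamma^{(1)}$ is free with basis $w_1,\dots,w_{r_1}$.

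The only genuinely computational part is Step 1, where one must keep the symplectic pairs $(2l-1,2l)$ carefully separated from the mixed pairs in order to verify that the transition matrix is unitriangular; everything else is short once the $P_1$-degree grading of $\wedge^3H$ is set up, and Step 2 in particular reduces to the single observation $\omega_2(\Omega_2)=g_2\ne0$.
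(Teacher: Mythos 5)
The paper offers no argument for this proposition --- it is introduced with ``The following proposition is straightforward'' --- so there is no official proof to compare against; your write-up is a correct and complete verification of the claim the author left to the reader. The two essential points are both handled properly: (i) after adjoining $f_1\wedge f_2$, your set is carried to the standard monomial basis of $\wedge^2P_2$ by a unitriangular matrix (the cross terms $f_{2l-1}\wedge f_2$ and $f_1\wedge f_{2l}$ are indeed non-symplectic pairs for $l\ge 2$), and since $f_1\wedge f_2$ is the unique basis vector with $\omega_2\ne 0$ and $\omega_2(f_1\wedge f_2)=1$ is a unit, removing it gives a basis of $\ker\omega_2$; and (ii) the grading of $\wedge^3H$ by $P_1$-degree shows $(\Omega\wedge H)\cap V_1$ lands in $P_1\otimes\Z\Omega_2$, which meets $P_1\otimes\ker\omega_2$ trivially because $\omega_2(\Omega_2)=g_2\ne 0$. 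Step (ii) is worth having spelled out, since it is exactly the point that fails for $\Sigma_g^1$ (cf.\ Remark~\ref{remark_not_Kg1}): there one quotients by nothing, but the rank count \eqref{eq_ranks} breaks instead. No gaps.
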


Similarly, we set $r_2=2g_2(2g_1^2-g_1-1)$ and denote by $w_{r_1+1},\ldots,w_{r_1+r_2}$ an analogous basis of~$W_{\gamma}^{(2)}$ obtained by swapping the roles of~$P_1$ and~$P_2$ in Proposition~\ref{propos_W_basis}. We set $$r=r_1+r_2=4g_1g_2(g-1)-2g.$$
Then $w_1,\ldots,w_r$ is a basis of~$W_{\gamma}$.

It is easy to see that each~$w_i$ is an element of the form~$u_L$, where $L$ is a subgroup that satisfy all conditions from Theorem~\ref{thm_explicit}. So by Theorem~\ref{thm_explicit} we have
$$
(e_{w_i}+1)(e_{w_i}-1)^5\cdot [T_{\gamma}]=0,\qquad i=1,\ldots,r.
$$
We set $V_{\gamma}=2W_{\gamma}$, $V_{\gamma}^{(s)}=2W_{\gamma}^{(s)}$ for $s=1,2$, and $v_i=2w_i$ for $i=1,\ldots,r$. Then $v_1,\ldots,v_r$ is a basis of~$V_{\gamma}$. Since $e_{2u} = e_u^2$, it follows that
\begin{equation}\label{eq_v5}
 (e_{v_i}-1)^5\cdot [T_{\gamma}]=0,\qquad i=1,\ldots,r.
\end{equation}

\begin{propos}\label{propos_V_contain}
We have $U_{\gamma}\cap V_{\gamma}=\{0\}$, so the sum of~$U_{\gamma}$ and~$V_{\gamma}$ in~$U_g$ is direct. Let $m$ be the least common multiple of~$g_1$ and~$g_2$. Then the group~$2mU_g$ is contained in~$U_{\gamma}\oplus V_{\gamma}$. Moreover, if $g=3$, then the group~$\tV_3$ described in Subsection~\ref{subs_nilpotence} is also contained in~$U_{\gamma}\oplus V_{\gamma}$.
\end{propos}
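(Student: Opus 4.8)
The plan is to split everything along the direct sum $H=P_1\oplus P_2$. Decompose $\exter^3H$ into its four homogeneous pieces $\exter^3P_1$, $\exter^2P_1\otimes P_2$, $P_1\otimes\exter^2P_2$, $\exter^3P_2$ (the spans of elements $a\wedge b\wedge c$ having, respectively, three, two, one, zero of their factors in $P_1$). Since $\Omega=\Omega_1+\Omega_2$ with $\Omega_s\in\exter^2P_s$, we have $\Omega\wedge P_1\subset\exter^3P_1\oplus(P_1\otimes\exter^2P_2)$ and $\Omega\wedge P_2\subset(\exter^2P_1\otimes P_2)\oplus\exter^3P_2$; these two spans generate $\Omega\wedge H$ and lie in complementary summands, so $U_g$ splits canonically as $U_g=\overline{U}_1\oplus\overline{U}_2$ with $\overline{U}_1=\bigl(\exter^3P_1\oplus(P_1\otimes\exter^2P_2)\bigr)/(\Omega\wedge P_1)$ and $\overline{U}_2$ defined symmetrically. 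Under this splitting $U_\gamma=U_\gamma^{(1)}\oplus U_\gamma^{(2)}$, $W_\gamma=W_\gamma^{(1)}\oplus W_\gamma^{(2)}$, $V_\gamma=V_\gamma^{(1)}\oplus V_\gamma^{(2)}$, where $U_\gamma^{(1)}$ is the image of $\exter^3P_1$ in $\overline{U}_1$ and, by Proposition~\ref{propos_W_basis}, $W_\gamma^{(1)}$ is the image of $P_1\otimes\ker\epsilon_2$; here $\epsilon_s\colon\exter^2P_s\to\Z$ is the contraction $b\wedge c\mapsto b\cdot c$, which is surjective with $\epsilon_s(\Omega_s)=g_s$.

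Next I would carry out, inside $\overline{U}_1$, the two computations below (the analogues for $\overline{U}_2$ follow by exchanging $P_1$ and $P_2$). Lifting back to $\exter^3P_1\oplus(P_1\otimes\exter^2P_2)$: the preimage of $U_\gamma^{(1)}$ is $\exter^3P_1\oplus(P_1\otimes\langle\Omega_2\rangle)$, that of $W_\gamma^{(1)}$ is $(P_1\otimes\ker\epsilon_2)+(\Omega\wedge P_1)$, and one checks that their intersection is exactly $\Omega\wedge P_1$; the key point is that $\ker\epsilon_2\cap\langle\Omega_2\rangle=0$ (because $\epsilon_2(\Omega_2)=g_2\ne0$) together with $(P_1\otimes A)\cap(P_1\otimes B)=P_1\otimes(A\cap B)$ for subgroups of $\exter^2P_2$, $P_1$ being free. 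This gives $U_\gamma^{(1)}\cap W_\gamma^{(1)}=0$. The preimage of $U_\gamma^{(1)}+W_\gamma^{(1)}$ is $\exter^3P_1\oplus\bigl(P_1\otimes(\ker\epsilon_2+\langle\Omega_2\rangle)\bigr)$, and since $\ker\epsilon_2+\langle\Omega_2\rangle=\epsilon_2^{-1}(g_2\Z)$ has index $g_2$ in $\exter^2P_2$, we get $\overline{U}_1/(U_\gamma^{(1)}\oplus W_\gamma^{(1)})\cong P_1\otimes(\Z/g_2)$, whence $g_2\overline{U}_1\subset U_\gamma^{(1)}\oplus W_\gamma^{(1)}$. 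Symmetrically $U_\gamma^{(2)}\cap W_\gamma^{(2)}=0$ and $g_1\overline{U}_2\subset U_\gamma^{(2)}\oplus W_\gamma^{(2)}$. Summing, $U_\gamma\cap W_\gamma=0$ — hence $U_\gamma\cap V_\gamma=0$, as $V_\gamma\subset W_\gamma$ — and $mU_g\subset U_\gamma\oplus W_\gamma$ for $m=\mathrm{lcm}(g_1,g_2)$; multiplying by $2$ and using $2U_\gamma\subset U_\gamma$ and $2W_\gamma=V_\gamma$ gives $2mU_g\subset U_\gamma\oplus V_\gamma$.

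For the genus-$3$ refinement, $\gamma$ being essential and separating forces $\{g_1,g_2\}=\{1,2\}$; say $g_1=1$, $g_2=2$. Then $\exter^3P_1=0$ and $\ker\epsilon_1=0$ (since $\exter^2P_1$ has rank $1$ and $\epsilon_1(\Omega_1)=1$), so $U_\gamma^{(1)}=0$ and $W_\gamma^{(2)}=0$; the index computation for $\overline{U}_2$ then gives $\overline{U}_2=U_\gamma^{(2)}=U_\gamma$, while $\overline{U}_1=P_1\otimes(\exter^2P_2/\langle\Omega_2\rangle)$ with $W_\gamma^{(1)}=P_1\otimes\bigl(\epsilon_2^{-1}(2\Z)/\langle\Omega_2\rangle\bigr)$. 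I would then locate $\ker C_{\mathrm{mod}\,2}$ in this splitting: evaluating the contraction on decomposables shows $C$ carries $\exter^3P_1\oplus(P_1\otimes\exter^2P_2)$ into $P_1$ and $(\exter^2P_1\otimes P_2)\oplus\exter^3P_2$ into $P_2$, and equals $\mathrm{id}_{P_1}\otimes\epsilon_2$ on $P_1\otimes\exter^2P_2$. Hence $C_{\mathrm{mod}\,2}$ respects the splitting $U_3=\overline{U}_1\oplus\overline{U}_2$, and on $\overline{U}_1$ it is $\mathrm{id}_{P_1}\otimes\overline{\epsilon}_2$, where $\overline{\epsilon}_2\colon\exter^2P_2/\langle\Omega_2\rangle\to\Z/2$ is the mod-$2$ reduction of $\epsilon_2$ (well defined since $\epsilon_2(\Omega_2)=2$), whose kernel is precisely $W_\gamma^{(1)}$. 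Therefore $\ker C_{\mathrm{mod}\,2}=W_\gamma^{(1)}\oplus N_2$ for some $N_2\subset\overline{U}_2$, and so $\tV_3=2\ker C_{\mathrm{mod}\,2}=2W_\gamma^{(1)}\oplus 2N_2=V_\gamma^{(1)}\oplus 2N_2\subset V_\gamma^{(1)}\oplus\overline{U}_2=V_\gamma\oplus U_\gamma$.

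The main obstacle is the bookkeeping of the second paragraph: identifying the preimages of $U_\gamma^{(s)}$ and $W_\gamma^{(s)}$ inside the quotient $\overline{U}_s$ and tracking the exact index, so that the factors $g_1$, $g_2$ (and, in genus $3$, the kernel of $C_{\mathrm{mod}\,2}$) come out precisely rather than merely up to some unspecified constant. Everything else is elementary: surjectivity of $\epsilon_s$ with $\epsilon_s(\Omega_s)=g_s$, primitivity of $\Omega_s$ in $\exter^2P_s$, the contraction formula $C(a\wedge b\wedge c)=(b\cdot c)\,a$ for $a\in P_1$, $b,c\in P_2$, and the already-used fact (Proposition~\ref{propos_W_basis}) that $\ker\epsilon_s$ has a $\Z$-basis consisting of decomposable bivectors $b\wedge c$ with $b\cdot c=0$.
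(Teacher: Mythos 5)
Your argument is correct, and it reaches the conclusion by a genuinely different route than the paper. The paper works with the monomial basis: it shows by a direct computation that $2m$ times each monomial $e_i\wedge f_{2l-1}\wedge f_{2l}$ (the only problematic ones) lies in $U_\gamma+V_\gamma$, and then deduces $U_\gamma\cap V_\gamma=\{0\}$ \emph{a posteriori} from the rank identity $\rk(2mU_g)=\binom{2g}{3}-2g=\rk(U_\gamma)+\rk(V_\gamma)$; for $g=3$ it writes down an explicit $14$-element basis of $\tV_3$ and checks each element against the previous computation. You instead exploit the canonical bigraded splitting $U_g=\overline{U}_1\oplus\overline{U}_2$ induced by $H=P_1\oplus P_2$ (which is legitimate, since $\Omega\wedge P_1$ and $\Omega\wedge P_2$ sit in complementary homogeneous pieces) and encode everything through the contractions $\epsilon_s\colon\exter^2P_s\to\Z$. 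This buys you more than the paper proves: you identify the quotient $\overline{U}_1/(U_\gamma^{(1)}\oplus W_\gamma^{(1)})\cong P_1\otimes(\Z/g_2)$ exactly, so that $mU_g\subset U_\gamma\oplus W_\gamma$ already before doubling, you get the directness of the sum for free rather than from a rank count, and in genus $3$ you identify $\ker C_{\mathrm{mod}\,2}$ structurally as $W_\gamma^{(1)}\oplus N_2$ instead of verifying a basis element by element. The only external inputs you use --- that $\ker\epsilon_2$ has a basis of decomposables $b\wedge c$ with $b\cdot c=0$ (so that the image of $P_1\otimes\ker\epsilon_2$ really is $W_\gamma^{(1)}$ as defined), that $\epsilon_2(\Omega_2)=g_2$, and that $\wedge\,\Omega$ is injective on $H$ (needed to see that $\Omega\wedge H$ meets $\exter^3P_1\oplus(P_1\otimes\exter^2P_2)$ exactly in $\Omega\wedge P_1$) --- are all available from Proposition~\ref{propos_W_basis} and the identity $C(\Omega\wedge x)=(g-1)x$, and you flag them appropriately. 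No gaps.
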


\begin{proof}
 Let us prove that the group~$2mU_g$ is contained in the sum $U_{\gamma}+V_{\gamma}$. This will immediately imply that the sum $U_{\gamma}+V_{\gamma}$ is direct, since
 \begin{equation}\label{eq_ranks}
 \rk(2mU_g)=\binom{2g}{3}-2g=\binom{2g_1}{3}+\binom{2g_2}{3}+r=\rk (U_{\gamma})+\rk(V_{\gamma}).
 \end{equation}
 We need to prove that every monomial in~$e_i$'s and~$f_j$'s being multiplied by~$2m$ gets into $U_{\gamma}+V_{\gamma}$. This is obvious for the monomials $e_i\wedge e_j\wedge e_k$ and $f_i\wedge f_j\wedge f_k$, since they lie in~$U_{\gamma}$, and for the monomials $e_i\wedge f_j\wedge f_k$ and $f_i\wedge e_j\wedge e_k$ with $(j,k)\ne (2l-1,2l)$, since they lie in~$W_{\gamma}$. So it remains to consider a monomial $e_i\wedge f_{2l-1}\wedge f_{2l}$. (The case of a monomial $f_i\wedge e_{2l-1}\wedge e_{2l}$ is similar.) We have the following equalities in~$U_g$:
 \begin{align*}
 2g_2\,e_i\wedge f_{2l-1}\wedge f_{2l}&= 2g_2\,e_i\wedge f_{2l-1}\wedge f_{2l}-2e_i\wedge\left(\sum_{j=1}^{g_1}e_{2j-1}\wedge e_{2j}+\sum_{k=1}^{g_2}f_{2k-1}\wedge f_{2k} \right)\\
 &=-2\sum_{j=1}^{g_1}e_i\wedge e_{2j-1}\wedge e_{2j}+2g_2\,e_i\wedge(f_{2l-1}\wedge f_{2l}-f_{1}\wedge f_2)\\
 &\ \ \,-
 2\sum_{k=2}^{g_2}e_i\wedge(f_{2k-1}\wedge f_{2k}-f_1\wedge f_2).
 \end{align*}
The monomials~$e_i\wedge e_{2j-1}\wedge e_{2j}$ lie in~$U_{\gamma}$. Moreover, for each $k\ne 1$, we have that
\begin{multline*}
 2e_i\wedge\left(f_{2k-1}\wedge f_{2k}-f_1\wedge f_2 \right)=
 2e_i\wedge (f_{2k-1}+f_1)\wedge (f_{2k}-f_2)\\-2e_i \wedge f_1\wedge f_{2k}-2e_i\wedge f_2\wedge f_{2k-1},
\end{multline*}
 which is a linear combination of basis elements of~$V_{\gamma}$. Hence, the element $2g_2e_i\wedge f_{2l-1}\wedge f_{2l}$, and therefore the element $2me_i\wedge f_{2l-1}\wedge f_{2l}$, belongs to~$U_{\gamma}+V_{\gamma}$. This completes the proof of the inclusion $2mU_g\subset U_{\gamma}\oplus V_{\gamma}$.

 Now, suppose that $g=3$ and prove that $\tV_3\subset U_{\gamma}\oplus V_{\gamma}$. We may assume that $g_1=1$ and~$g_2=2$. One can verify directly that the following $14$ elements form a basis of~$\tV_3$:
 \begin{itemize}
  \item $4f_i\wedge f_j\wedge f_k$ for $1\le i<j<k\le 4$,
  \item $4e_i\wedge f_1\wedge f_2$ for $i\in\{1,2\}$,
  \item $2e_i\wedge f_j\wedge f_k$ for $i\in\{1,2\}$, $1\le  j<k\le 4$, $(j,k)\ne (1,2)$, $(j,k)\ne (3,4)$.
  \end{itemize}
 The above argument shows that all these elements belong to~$U_{\gamma}\oplus V_{\gamma}$.
 \end{proof}

 \begin{remark}\label{remark_not_Kg1}
  Proposition~\ref{propos_V_contain} is precisely the step where the proof of Theorem~\ref{thm_nilpotent} breaks in the case of the group~$\K_g^1$. Indeed, though we can similarly define the subgroups~$U_{\gamma}$ and~$V_{\gamma}$, equality~\eqref{eq_ranks} does not hold any more, so $U_{\gamma}\oplus V_{\gamma}$ turns out to be a subgroup of infinite index in~$U_g^1$.
 \end{remark}

 Recall that in Subsection~\ref{subs_nilpotence} we have defined~$V_g$ to be the group $2(g-1)(g-2)U_g$.

\begin{cor}\label{cor_V_contain}
 If the genus of~$\gamma$ is either\/~$1$ or\/~$2$, then $U_{\gamma}\oplus V_{\gamma}$ contains~$V_g$.
\end{cor}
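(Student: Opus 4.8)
The plan is to deduce this corollary directly from Proposition~\ref{propos_V_contain}, which already establishes that the sum $U_{\gamma}+V_{\gamma}$ is direct and that $2mU_g\subseteq U_{\gamma}\oplus V_{\gamma}$, where $m=\mathrm{lcm}(g_1,g_2)$ and $g_1,g_2$ are the genera of the two sides $S_1,S_2$ of~$\gamma$. Since $V_g=2(g-1)(g-2)U_g$, everything reduces to the purely arithmetic assertion that $V_g\subseteq 2mU_g$, i.e.\ that $m$ divides $(g-1)(g-2)$, under the hypothesis that the genus $\min(g_1,g_2)$ of~$\gamma$ equals~$1$ or~$2$.

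First I would record the two possibilities for the unordered pair $\{g_1,g_2\}$: if the genus of~$\gamma$ is~$1$, then $\{g_1,g_2\}=\{1,\,g-1\}$, and if it is~$2$, then $\{g_1,g_2\}=\{2,\,g-2\}$ (here necessarily $g\ge4$, since $\min(g_1,g_2)=2$ forces both $g_i\ge2$). Because $m=\mathrm{lcm}(g_1,g_2)$, it suffices to check that each of $g_1$ and~$g_2$ divides $(g-1)(g-2)$; then $m\mid(g-1)(g-2)$ by the standard divisibility property of the least common multiple, and hence $V_g=2(g-1)(g-2)U_g\subseteq 2mU_g$.

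Next I would verify the two divisibilities. In the genus~$1$ case the relevant factors are $1$ and~$g-1$, each of which obviously divides $(g-1)(g-2)$. In the genus~$2$ case the factors are $2$ and~$g-2$; here $g-2\mid(g-1)(g-2)$ is immediate, and $2\mid(g-1)(g-2)$ because $g-1$ and $g-2$ are consecutive integers, so exactly one of them is even. Combining these with Proposition~\ref{propos_V_contain} yields $V_g\subseteq 2mU_g\subseteq U_{\gamma}\oplus V_{\gamma}$, which is the claim.

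There is essentially no serious obstacle: the corollary is a short bookkeeping consequence of Proposition~\ref{propos_V_contain}, and the one point to be careful about is to invoke the correct criterion for the lcm. It is \emph{not} enough to argue via $g_1g_2\mid(g-1)(g-2)$ (which can fail in the genus~$2$ case, e.g.\ when $g$ is even and $g_1g_2=2(g-2)$), but the lcm does divide $(g-1)(g-2)$ by the above. It is also worth double-checking the small case $g=3$, where only genus~$1$ occurs, $m=2$, and $2mU_3=4U_3=V_3$, so the inclusion holds with equality.
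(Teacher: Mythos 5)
Your proof is correct and is exactly the arithmetic the paper leaves implicit: the corollary is stated without proof as an immediate consequence of Proposition~\ref{propos_V_contain}, and the required fact is precisely that $m=\mathrm{lcm}(g_1,g_2)$ divides $(g-1)(g-2)$ when $\{g_1,g_2\}$ is $\{1,g-1\}$ or $\{2,g-2\}$, which you verify correctly (including the right use of the lcm divisibility criterion rather than the product).
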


\begin{propos}\label{propos_Y_nilpotent}
The $\Z (U_{\gamma}\oplus V_{\gamma})$-submodule of~$\K_g^{\ab}$ generated by~$[T_{\gamma}]$ is nilpotent  of nilpotency index no greater than~$4r+1$.
\end{propos}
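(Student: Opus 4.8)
The plan is to prove the precise statement that $I_A^{4r+1}\cdot M_\gamma=0$, where $A=U_\gamma\oplus V_\gamma$, the ideal $I_A\subset\Z A$ is the augmentation ideal, and $M_\gamma$ denotes the $\Z A$-submodule of $\K_g^{\ab}$ generated by $[T_\gamma]$; this gives at once that the nilpotency index of $M_\gamma$ is at most $4r+1$.

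First I would record that $U_\gamma$ acts trivially on $[T_\gamma]$. Indeed, for $a\in U_\gamma$ choose $h\in\I_\gamma$ with $\tau(h)=a$; since $h$ fixes the isotopy class of $\gamma$, we get $e_a\cdot[T_\gamma]=[hT_\gamma h^{-1}]=[T_{h(\gamma)}]=[T_\gamma]$, so $(e_a-1)\cdot[T_\gamma]=0$. As $\Z A$ is commutative, $(e_a-1)$ then annihilates \emph{all} of $M_\gamma$ for every $a\in U_\gamma$; and, writing an arbitrary $b\in A$ as $b=b_U+b_V$ with $b_U\in U_\gamma$ and $b_V\in V_\gamma$, we have $e_b\cdot[T_\gamma]=e_{b_V}\cdot[T_\gamma]$, whence $M_\gamma=\Z V_\gamma\cdot[T_\gamma]$ as an abelian group.

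Next I would pick a basis of the free abelian group $A$ consisting of a basis of $U_\gamma$ together with $v_1,\dots,v_r$; then $I_A$ is generated as an ideal by the corresponding elements $e_c-1$, and $I_A^{\,q}$ is spanned over $\Z A$ by length-$q$ products of these generators. Any such product that contains a factor $e_a-1$ with $a$ in the chosen basis of $U_\gamma$ annihilates $M_\gamma$ by the previous step, so it suffices to treat products $(e_{v_{i_1}}-1)\cdots(e_{v_{i_q}}-1)$. For $q=4r+1$ the pigeonhole principle forces some index $j\in\{1,\dots,r\}$ to occur at least five times among $i_1,\dots,i_{4r+1}$; since the $e_{v_i}$ commute, $(e_{v_j}-1)^5$ is a factor of the product, and $(e_{v_j}-1)^5\cdot[T_\gamma]=0$ by \eqref{eq_v5}, so $(e_{v_j}-1)^5$ kills $M_\gamma=\Z V_\gamma\cdot[T_\gamma]$ (again using commutativity of $\Z V_\gamma$). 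Hence every generator of $I_A^{4r+1}$ annihilates $M_\gamma$, as required.

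The argument has no real obstacle: its only substantive input is relation \eqref{eq_v5}, itself a consequence of Theorem~\ref{thm_explicit}; the triviality of the $U_\gamma$-action on $[T_\gamma]$ is immediate from $\I_\gamma$ stabilising $\gamma$, and the final count is elementary pigeonhole. The one place that warrants a line of care is the reduction showing that the $\Z A$-action on $M_\gamma$ factors through $\Z V_\gamma$ and that an element of $\Z V_\gamma$ killing $[T_\gamma]$ kills all of $M_\gamma$ — both of which follow from commutativity of $\Z A$ together with the identification $M_\gamma=\Z V_\gamma\cdot[T_\gamma]$.
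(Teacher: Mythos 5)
Your proof is correct and follows the same route as the paper: triviality of the $U_{\gamma}$-action on $[T_{\gamma}]$ via $U_{\gamma}=\tau(\I_{\gamma})$, relation~\eqref{eq_v5} for the basis $v_1,\ldots,v_r$ of~$V_{\gamma}$, and a pigeonhole count on products of $4r+1$ augmentation-ideal generators. The paper leaves the final counting step implicit ("The proposition follows"), whereas you spell it out; there is no substantive difference.
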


\begin{proof}
 Since $U_{\gamma}=\tau(\I_{\gamma})$, we see that
 $
 (e_u-1)\cdot [T_{\gamma}]=0
 $
 for all $u\in U_{\gamma}$. Also we have~\eqref{eq_v5} for the basis $v_1,\ldots,v_r$ of~$V_{\gamma}$. The proposition follows.
\end{proof}

We have $r=4g^2-10g+4$ when the genus of~$\gamma$ is~$1$ and $r=8g^2-26g+16$ when the genus of~$\gamma$ is~$2$. For $g\ge4$, the latter value of~$r$ is always greater than the former. So the following statement is a direct consequence of Corollary~\ref{cor_V_contain} and Propositions~\ref{propos_V_contain} and~\ref{propos_Y_nilpotent}.

\begin{cor}\label{cor_V_nilpotent}
 If the genus of~$\gamma$ is either~$1$ or~$2$, then the $\Z V_g$-submodule of~$\K_g^{\ab}$ generated by~$[T_{\gamma}]$ is nilpotent  of nilpotency index no greater than $32g^2-104g+65$. If $g=3$, then the $\Z \tV_3$-submodule of~$\K_3^{\ab}$ generated by~$[T_{\gamma}]$ is nilpotent of nilpotency index no greater than~$41$.
\end{cor}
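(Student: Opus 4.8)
The plan is to obtain this corollary by directly combining the three facts just established: Corollary~\ref{cor_V_contain}, Proposition~\ref{propos_V_contain}, and Proposition~\ref{propos_Y_nilpotent}. Let $N'$ denote the $\Z(U_\gamma\oplus V_\gamma)$-submodule of $\K_g^{\ab}$ generated by $[T_\gamma]$; by Proposition~\ref{propos_Y_nilpotent} it satisfies $I_{U_\gamma\oplus V_\gamma}^{\,q}\cdot N'=0$ with $q=4r+1$, where $I_{U_\gamma\oplus V_\gamma}$ denotes the augmentation ideal of $\Z(U_\gamma\oplus V_\gamma)$. The first ingredient I would use is the elementary remark that if $B$ is a subgroup of a finitely generated abelian group $A$, then the ring inclusion $\Z B\hookrightarrow\Z A$ carries the augmentation ideal $I_B$ into $I_A$, so $I_B^{\,q}\subseteq I_A^{\,q}$ inside $\Z A$ for every $q$.

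Next I would invoke Corollary~\ref{cor_V_contain} to see that $V_g\subseteq U_\gamma\oplus V_\gamma$ whenever $\gamma$ has genus $1$ or $2$, and the last assertion of Proposition~\ref{propos_V_contain} to see that $\tV_3\subseteq U_\gamma\oplus V_\gamma$ when $g=3$. Writing $N$ for the $\Z V_g$-submodule of $\K_g^{\ab}$ generated by $[T_\gamma]$, which is contained in $N'$ because $V_g\subseteq U_\gamma\oplus V_\gamma$, I would then compute, using that $N$ is cyclic generated by $[T_\gamma]$,
\[
I_{V_g}^{\,q}\cdot N=I_{V_g}^{\,q}\cdot[T_\gamma]\subseteq I_{U_\gamma\oplus V_\gamma}^{\,q}\cdot[T_\gamma]\subseteq I_{U_\gamma\oplus V_\gamma}^{\,q}\cdot N'=0,
\]
so that $N$ is a nilpotent $\Z V_g$-module of nilpotency index at most $4r+1$; repeating the argument verbatim with $\tV_3$ in place of $V_g$ settles the genus-$3$ statement.

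Finally I would convert $4r+1$ into the stated uniform bound. Substituting the two admissible genera into the formula $r=4g_1g_2(g-1)-2g$ gives $r=4g^2-10g+4$ in the genus-$1$ case and $r=8g^2-26g+16$ in the genus-$2$ case, hence $4r+1$ equals $16g^2-40g+17$ or $32g^2-104g+65$ respectively; since the difference of these two polynomials is $16(g-1)(g-3)\ge0$ for $g\ge3$, in either case $4r+1\le 32g^2-104g+65$. When $g=3$ both choices of genus give $r=10$, so $4r+1=41$, which is exactly the value of $32g^2-104g+65$ at $g=3$.

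I do not expect a real obstacle, since all the substantive work is already carried out in Propositions~\ref{propos_V_contain} and~\ref{propos_Y_nilpotent} and Corollary~\ref{cor_V_contain}. The only point needing a little care is the bookkeeping in the last paragraph: verifying that the genus-$2$ value of $r$ dominates the genus-$1$ value for all $g\ge3$ so that a single polynomial bound suffices, and keeping track of the fact that for $g=3$ one must use the larger subgroup $\tV_3$ (not $V_3$), even though both admissible genera of a separating curve on $\Sigma_3$ yield the same value $r=10$.
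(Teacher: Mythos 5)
Your proposal is correct and follows essentially the same route as the paper, which likewise deduces the corollary directly from Corollary~\ref{cor_V_contain}, Proposition~\ref{propos_V_contain}, and Proposition~\ref{propos_Y_nilpotent} after noting that the genus-$2$ value of $r$ dominates the genus-$1$ value for $g\ge 4$. Your explicit remark that the augmentation ideal of a subgroup lands inside that of the ambient group, and your arithmetic converting $4r+1$ into $32g^2-104g+65$ (and $41$ for $g=3$), are exactly the implicit steps the paper relies on.
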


\section{Proof of Theorem~\ref{thm_nilpotent} and quantitative results}\label{section_proof_nilpotent}

\begin{proof}[Proof of Theorem~\ref{thm_nilpotent}]
 By a result of Johnson~\cite{Joh85a}, \cite{Joh85b} the group~$\K_g$ is generated by twists~$T_{\gamma}$ about genus~$1$ and genus~$2$ separating simple closed curves\footnote{Since Johnson was not careful enough in writing this result, let us explain how to extract it from the papers~\cite{Joh85a} and~\cite{Joh85b}. This statement is formulated at the end of~\S 1 of~\cite{Joh85b}. Nevertheless, its proof is (implicitly) contained not in~\cite{Joh85b} but in the previous paper~\cite{Joh85a}. Namely, the main result of~\cite{Joh85a} is that the subgroup~$\CT_g$ of~$\Mod(S_g)$ generated by all Dehn twists about separating simple closed curves coincides with the group $\K_g=\ker\tau$. Johnson's proof remains literally the same if one originally defines~$\CT_g$ to be the subgroup generated only by twists about separating simple closed curves of  genera~$1$ and~$2$.}. Hence, the group~$\K_g^{\ab}$ is generated by the corresponding elements~$[T_{\gamma}]$. By Corollary~\ref{cor_V_nilpotent} each such element generates a nilpotent $\Z V_g$-module with the uniform bound~$32g^2-104g+65$ on the nilpotency index. Thus, the whole $\Z V_g$-module  $\K_g^{\ab}$ is also nilpotent, with the same bound on the nilpotency index. For $g=3$, the same remains true if we replace~$V_3$ with~$\tV_3$.
\end{proof}

The above argument in fact yields the following explicit, albeit enormous, bound on the number of generators of the group~$\K_3^{\ab}$.

\begin{theorem}\label{thm_quant}
 The group $\K_3^{\ab}$ can be generated by
 $$
\binom{54}{14}\cdot \left(17\cdot 2^{21}+92\right)\approx 1.2\cdot 10^{20}
 $$
 elements. Moreover,
 $$
 \dim H_1(\K_3,\Q)\le \binom{54}{14}\cdot 105\approx 3.4\cdot 10^{14}.
 $$
\end{theorem}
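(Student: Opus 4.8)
The plan is to turn the proof of the Addendum to Theorem~\ref{thm_nilpotent} into an effective count, keeping track of constants at every step. Recall that $\K_3^{\ab}$ is generated as an abelian group by the classes $[T_\gamma]$ of Dehn twists about separating simple closed curves, that it is finitely generated as a $\Z U_3$-module (the standard fact cited after Corollary~\ref{cor_main_gen}), and that by the Addendum it is a nilpotent $\Z\tV_3$-module of nilpotency index at most $41$, where $\rk\tV_3=\binom{6}{3}-6=14$ and $[U_3:\tV_3]=2^{20}$. Writing $I$ for the augmentation ideal of $\Z\tV_3$, I would first reduce the whole statement to a bound on the coinvariant quotient $\mathrm{gr}^0=\K_3^{\ab}/I\K_3^{\ab}$: namely, that $\K_3^{\ab}$ is generated as an abelian group by at most $\binom{54}{14}$ times the number of abelian-group generators of $\mathrm{gr}^0$, and that $\dim_\Q H_1(\K_3,\Q)\le\binom{54}{14}\cdot\dim_\Q(\mathrm{gr}^0\otimes\Q)$.

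This reduction is the routine part. Since $I^{41}\K_3^{\ab}=0$ we have the finite filtration $\K_3^{\ab}=I^0\K_3^{\ab}\supseteq I^1\K_3^{\ab}\supseteq\cdots\supseteq I^{41}\K_3^{\ab}=0$. Fixing a basis $v_1,\dots,v_{14}$ of $\tV_3$, the ideal $I$ is generated by $e_{v_1}-1,\dots,e_{v_{14}}-1$, and $e_w$ induces the identity on each quotient $I^k\K_3^{\ab}/I^{k+1}\K_3^{\ab}$ for $w\in\tV_3$; hence this quotient is generated, as an abelian group, by the $\binom{k+13}{13}$ monomials $(e_{v_1}-1)^{a_1}\cdots(e_{v_{14}}-1)^{a_{14}}$ with $a_1+\cdots+a_{14}=k$ applied to a fixed generating set of $\mathrm{gr}^0$. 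Summing over $0\le k\le 40$ and using the hockey-stick identity $\sum_{k=0}^{40}\binom{k+13}{13}=\binom{54}{14}$ gives exactly the reduction above; the rational version is identical over $\Q$. In particular this reproves Theorem~\ref{thm_main_K} for $g=3$, $b=0$.

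The main obstacle is then to bound $\mathrm{gr}^0=\K_3^{\ab}/I_{\tV_3}\K_3^{\ab}$ explicitly, obtaining $17\cdot 2^{21}+92$ abelian-group generators and rational dimension at most $105$. Because $\tV_3$ is $\Sp_6(\Z)$-invariant, $\mathrm{gr}^0$ is a module over the finite ring $\Z[U_3/\tV_3]$ of order $2^{20}$, so it suffices to bound the number of generators of $\mathrm{gr}^0$ over that ring and then multiply by $2^{20}$; this in turn is controlled by the number of $\Z U_3$-module generators of $\K_3^{\ab}$ itself. The latter is where the abstract finite generation is not enough and one must pass to a concrete model: an explicit finite generating set of $\I_3$ (Johnson), together with the relation-module description of $H_1(\K_3)=\K_3^{\ab}$ as a $\Z U_3$-module and the equality $\rk U_3=14$, yields an explicit number of $\Z U_3$-generators, of which the dominant part produces the term $17\cdot 2^{21}$ after multiplication by $[U_3:\tV_3]=2^{20}$, while the additive correction $92$, and rationally the value $105$, are lower-order contributions coming from the syzygies among the $14$ generators of $\tV_3$ and from the torsion in $\mathrm{gr}^0$ collapsing over $\Q$. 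Everything outside this counting step — the nilpotency index $41$, the rank $14$, and the factor $\binom{54}{14}$ — is elementary once the Addendum is in hand, so the entire difficulty, and the entire source of the "enormous" constants, lies in making the generator count of $\mathrm{gr}^0$ honest rather than wasteful.
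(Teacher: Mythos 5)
Your reduction step is sound and matches the paper's: the filtration $I^k\K_3^{\ab}$ together with the observation that each graded piece is spanned by the $\binom{k+13}{13}$ monomials in $e_{v_1}-1,\ldots,e_{v_{14}}-1$ applied to lifted generators of the coinvariants gives exactly the factor $\sum_{k=0}^{40}\binom{k+13}{13}=\binom{54}{14}$, which is the paper's Lemma~\ref{lem_alg} with $k=41$, $r=14$ (the paper proves it by an induction using contraction maps $f_{\bk}\colon M\to M^A$, but the content is the same).

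The genuine gap is the step you yourself flag as "the main obstacle": you never actually bound the coinvariants $(\K_3^{\ab})_{\tV_3}$ by $17\cdot 2^{21}+92$ generators (resp.\ $105$ rationally), and the mechanism you sketch would not produce these constants. You propose to count $\Z U_3$-module generators of $\K_3^{\ab}$ and multiply by $|U_3:\tV_3|=2^{20}$; but the only available bound on $\Z U_3$-generators of $\K_3^{\ab}$ (via the five-term sequence for $1\to\K_3\to\I_3\to U_3\to 1$) is of order $35+\rk H_2(U_3,\Z)=126$, giving roughly $126\cdot 2^{20}$, about four times too large, and your attributions of the corrections $92$ and $105$ to "syzygies" and "torsion collapsing" are not arguments. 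The paper's actual route is different: set $\CG=\tau^{-1}\bigl(\tV_3\bigr)\subset\I_3$, a subgroup of index $2^{20}$ in the $35$-generator group $\I_3$, so the Schreier formula gives that $H_1(\CG,\Z)$ needs at most $34\cdot 2^{20}+1$ generators, while Putman's Theorem~B gives $\dim H_1(\CG,\Q)=\dim(U_3\otimes\Q)=14$; then the five-term exact sequence of $1\to\K_3\to\CG\to\tV_3\to1$,
$$
H_2(\CG,R)\longrightarrow H_2\bigl(\tV_3,R\bigr)\longrightarrow H_1(\K_3,R)_{\tV_3}\longrightarrow H_1(\CG,R),
$$
with $H_2\bigl(\tV_3,R\bigr)\cong R^{91}$, yields $34\cdot 2^{20}+1+91=17\cdot 2^{21}+92$ generators of $(\K_3^{\ab})_{\tV_3}$ and $\dim H_1(\K_3,\Q)_{\tV_3}\le 14+91=105$. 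Without this (or an equivalent) computation your proof establishes only the shape of the bound, not the stated constants.
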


\begin{remark}
Similar bounds can be written for all~$\K_g$ with $g\ge4$. Nevertheless, they are not as interesting, since they are even more enormous. Moreover, for large~$g$, the bound on the number of generators of~$\K_g^{\ab}$ deduced from Theorem~\ref{thm_nilpotent} is asymptotically worse than the bound on the number of generators of~$\K_g$ obtained by Ershov and Franz~\cite{ErFr23}.
\end{remark}

To prove Theorem~\ref{thm_quant} we first need the following standard lemma. For a $\Z A$-module~$M$, we denote by~$M^A$ and~$M_A$ the groups of invariants and coinvariants of~$M$, respectively.

\begin{lem}\label{lem_alg}
 Let $A$ be a free abelian group of finite rank~$r$ and $M$ a nilpotent $\Z A$-module of nilpotency index~$k$. Assume that $M_A$ is a finitely generated abelian group with $n$ generators. Then $M$ is a finitely generated abelian group with no more than $\binom{k+r-1}{r}n$ generators.
\end{lem}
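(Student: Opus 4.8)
The plan is to filter $M$ by powers of the augmentation ideal and control each quotient using the fact that $M_A$ is finitely generated. Let $I=I_A\subset\Z A$ be the augmentation ideal, so $I$ is generated as an abelian group by the elements $e_a-1$, $a\in A$; since $A$ is free abelian of rank $r$ with basis $a_1,\dots,a_r$, the ideal $I$ is generated as a $\Z A$-module by the $r$ elements $e_{a_i}-1$. Consider the finite filtration
\begin{equation*}
M=I^0M\supset IM\supset I^2M\supset\cdots\supset I^kM=0,
\end{equation*}
which terminates at step $k$ precisely because $M$ is nilpotent of index $k$. It suffices to bound the number of generators of each successive quotient $I^jM/I^{j+1}M$ and then sum the bounds, because a finitely generated abelian group built from finitely generated successive quotients of a finite filtration is itself finitely generated, with at most the sum of the numbers of generators.

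First I would observe that $M/IM=M_A$, which by hypothesis has $n$ generators. Next, each quotient $I^jM/I^{j+1}M$ is naturally a module over $\Z A/I=\Z$, i.e.\ just an abelian group, and it is generated as an abelian group by the images of the elements $(e_{a_{i_1}}-1)\cdots(e_{a_{i_j}}-1)x$, where $x$ runs over a fixed set of $n$ generators of $M$ modulo $IM$ (lifted arbitrarily to $M$) and $(i_1,\dots,i_j)$ runs over multi-indices with $1\le i_1\le\cdots\le i_j\le r$ — here we may take the indices non-decreasing because the operators $e_{a_i}-1$ commute with one another in $\Z A$. Indeed, $I^jM$ is spanned over $\Z$ by elements $w\cdot x'$ with $w$ a product of $j$ generators $e_{a}-1$ of $I$ and $x'\in M$; writing each such $e_a-1$ in terms of the $e_{a_i}-1$ modulo $I^2$ (using $e_{a+b}-1=(e_a-1)+(e_b-1)+(e_a-1)(e_b-1)$ and $e_{-a}-1=-(e_a-1)e_{-a}$), and writing $x'$ in terms of the $n$ chosen generators modulo $IM$, one reduces modulo $I^{j+1}M$ to a $\Z$-linear combination of the claimed elements. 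The number of non-decreasing multi-indices of length $j$ from $\{1,\dots,r\}$ is $\binom{j+r-1}{r-1}$, so $I^jM/I^{j+1}M$ is generated by at most $\binom{j+r-1}{r-1}n$ elements.

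Summing over $j=0,1,\dots,k-1$ and using the hockey-stick identity $\sum_{j=0}^{k-1}\binom{j+r-1}{r-1}=\binom{k+r-1}{r}$, we conclude that $M$ is finitely generated as an abelian group with no more than $\binom{k+r-1}{r}n$ generators, as claimed. The only mildly delicate point is the reduction in the previous paragraph showing that $I^j/I^{j+1}$ is spanned by products of the \emph{basis} differences $e_{a_i}-1$ taken in non-decreasing order; this is the step I expect to require the most care, but it is entirely formal, following from the commutativity of $\Z A$ and the two displayed identities for $e_{a+b}-1$ and $e_{-a}-1$, which express an arbitrary $e_a-1$ modulo $I^2$ as a $\Z$-linear combination of the $e_{a_i}-1$.
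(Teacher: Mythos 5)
Your proof is correct and is essentially the paper's argument in different packaging: the paper runs an induction on $k$ whose inductive step quotients by $N=\sum_{|\bk|=k-1}\im(f_{\bk})$, which is precisely the bottom layer $I^{k-1}M$ of your augmentation-ideal filtration, and its Pascal-rule bookkeeping $\binom{k+r-2}{r-1}+\binom{k+r-2}{r}=\binom{k+r-1}{r}$ is your hockey-stick sum. Both arguments rest on the same count of degree-$j$ monomials in the $r$ basis differences applied to $n$ lifted generators of $M_A$.
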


\begin{proof}
 We prove the lemma by the induction on~$k$.

 \textsl{Base case.} Suppose that $k=1$; then $M=M_A$ is an abelian group generated by~$n$ elements.

 \textsl{Induction step.} Let $v_1,\ldots,v_r$ be a basis of~$A$. For $x\in M$, consider the $M$-valued function on~$A$ given by $a\mapsto e_a\cdot x$.
 Now, for a vector $\bk=(k_1,\ldots,k_r)\in\Z_{\ge 0}^r$ with $|\bk|=k_1+\cdots+k_r=k-1$, we set
 \begin{equation}\label{eq_fbk}
 f_{\bk}(x)=\Delta_{v_1}^{k_1}\cdots \Delta_{v_r}^{k_r}(e_a\cdot x).
 \end{equation}
Here the right-hand side does not dependend of~$a$ because of the nilpotency of~$M$. It follows immediately from~\eqref{eq_fbk} that $f_{\bk}(e_a\cdot x)=e_a\cdot f_{\bk}(x)=f_{\bk}(x)$ for all~$a$. So we obtain a well-defined homomorphism of $\Z A$-modules $f_{\bk}\colon M\to M^A$, which certainly factors through the group~$M_A$:
$$
f_{\bk}\colon M\xrightarrow{\text{projection}}M_A\longrightarrow M^A.
$$
Hence the image $\im (f_{\bk})$ is a finitely generated abelian group with at most $n$ generators.

Now, let $N\subset M^A$ be the sum of the subgroups~$\im(f_{\bk})$ over all~$\bk$ such that $|\bk|=k-1$. Since the number of vectors~$\bk\in\Z_{\ge}^r$ with $|\bk|=k-1$ is equal to~$\binom{k+r-2}{r-1}$, we see that $N$ is a finitely generated abelian group with at most $\binom{k+r-2}{r-1}n$ generators. Consider the $\Z A$-module $M'=M/N$. It follows from~\eqref{eq_fbk} that $M'$ is nilpotent of nilpotency index~$\le k-1$. Moreover,
 the group of coinvariants $M'_A$ is a quotient group of~$M_A$, hence is generated by no more than $n$ elements.  Therefore, by the induction hypothesis, we have that $M'$ is a finitely generated abelian group with at most $\binom{k+r-2}{r}n$ generators. Since both abelian groups~$M'$ and~$N$ are finitely generated, we obtain that $M$ is finitely generated, too, with at most $\binom{k+r-1}{r}n$ generators.
\end{proof}

The proof of the following lemma repeats literally the proof of Lemma~\ref{lem_alg}.

\begin{lem}\label{lem_alg_Q}
 Let $A$ be a free abelian group of finite rank~$r$ and $M$ a nilpotent\/ $\Q A$-module of nilpotency index~$k$. Assume that $\dim_{\Q}M_A=n<\infty$. Then $\dim_{\Q}M\le \binom{k+r-1}{r}n$.
\end{lem}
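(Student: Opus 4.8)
The plan is to reproduce, essentially verbatim, the proof of Lemma~\ref{lem_alg}, replacing $\Z$-modules by $\Q$-vector spaces and "number of generators" by "$\Q$-dimension". Thus I would induct on the nilpotency index~$k$. The base case $k=1$ is immediate: then $I_A\cdot M=0$, so $M=M_A$ and $\dim_{\Q}M=n=\binom{r}{r}\,n$.

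For the induction step, fix a basis $v_1,\ldots,v_r$ of~$A$. For $x\in M$ and a multi-index $\bk=(k_1,\ldots,k_r)\in\Z_{\ge 0}^r$ with $|\bk|=k_1+\cdots+k_r=k-1$, set
$$
f_{\bk}(x)=\Delta_{v_1}^{k_1}\cdots\Delta_{v_r}^{k_r}(e_a\cdot x)=(e_{v_1}-1)^{k_1}\cdots(e_{v_r}-1)^{k_r}\cdot e_a\cdot x .
$$
The point to verify is that $f_{\bk}$ is well defined: the operator $(e_{v_1}-1)^{k_1}\cdots(e_{v_r}-1)^{k_r}$ lies in $I_A^{k-1}$, so applying one more difference operator $\Delta_{v_j}$ yields an element of $I_A^k$, which annihilates $M$; hence the right-hand side does not depend on $a$, and therefore $f_{\bk}(e_a\cdot x)=e_a\cdot f_{\bk}(x)=f_{\bk}(x)$ for all $a$. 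Consequently each $f_{\bk}$ is a homomorphism of $\Q A$-modules $M\to M^A$ that factors through $M_A$, so $\dim_{\Q}\im(f_{\bk})\le\dim_{\Q}M_A=n$.

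Next let $N\subseteq M^A$ be the sum of the subspaces $\im(f_{\bk})$ over the $\binom{k+r-2}{r-1}$ multi-indices $\bk$ with $|\bk|=k-1$; then $\dim_{\Q}N\le\binom{k+r-2}{r-1}\,n$. Since $I_A^{k-1}$ is spanned modulo $I_A^k$ by the monomials $(e_{v_1}-1)^{k_1}\cdots(e_{v_r}-1)^{k_r}$ with $|\bk|=k-1$, and $I_A^k\cdot M=0$, one obtains $I_A^{k-1}\cdot M\subseteq N$, so $M'=M/N$ is nilpotent of index $\le k-1$; moreover $M'_A$ is a quotient of $M_A$ by right exactness of coinvariants, hence $\dim_{\Q}M'_A\le n$. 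Applying the induction hypothesis to $M'$ and then Pascal's identity,
$$
\dim_{\Q}M\le\dim_{\Q}M'+\dim_{\Q}N\le\binom{k+r-2}{r}\,n+\binom{k+r-2}{r-1}\,n=\binom{k+r-1}{r}\,n ,
$$
as required.

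There is no genuine obstacle here, as the argument is the one already carried out for Lemma~\ref{lem_alg}. The only step that deserves care is the compatibility between the difference operators $\Delta_{v_i}=e_{v_i}-1$ and the nilpotency hypothesis $I_A^k\cdot M=0$, which is what makes the maps $f_{\bk}$ well defined, $\Q A$-linear, and valued in the invariants $M^A$; the rest is bookkeeping with binomial coefficients.
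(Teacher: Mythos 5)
Your proposal is correct and is exactly the paper's intended argument: the paper proves Lemma~\ref{lem_alg_Q} by stating that its proof ``repeats literally'' that of Lemma~\ref{lem_alg}, which is precisely the translation you carry out (with the helpful extra detail that $I_A^{k-1}\cdot M\subseteq N$, justifying the drop in nilpotency index).
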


\begin{proof}[Proof of Theorem~\ref{thm_quant}]
Let $R$ be either~$\Z$ or~$\Q$.

Consider the subgroup $\CG=\tau^{-1}\bigl(\tV_3\bigr)$ of~$\I_3$. Recall that $\I_3$ is generated by~$35$ elements, see~\cite{Joh83}. Since $|\I_3:\CG|=2^{20}$, the Schreier formula (see~\cite[Theorem~2.10]{MKS}) yields that $\CG$, and hence $H_1(\CG,\Z)$, can be generated by $34\cdot 2^{20}+1$ elements. Moreover, since $\CG$ is a subgroup of finite index of~$\K_3$, the result of Putman~\cite[Theorem~B]{Put18} yields that
$$
\dim H_1(\CG,\Q)=\dim(U_g\otimes\Q)=14.
$$
Consider the short exact sequence
$$
1\longrightarrow\K_3\longrightarrow\CG\stackrel{\tau}{\longrightarrow}\tV_3\longrightarrow 1.
$$
The corresponding $5$-term exact sequence of low-dimensional homology groups (see~\cite[Ch.~VII, Corollary~6.4]{Bro82}) reads as follows:
$$
H_2(\CG,R)\longrightarrow H_2\bigl(\tV_3,R\bigr)\longrightarrow H_1(\K_3,R)_{\tV_3} \longrightarrow H_1(\CG,R) \longrightarrow H_1\bigl(\tV_3,R\bigr)\longrightarrow 0.
$$
We have $H_2\bigl(\tV_3,R\bigr)\cong \exter^2R^{14}\cong R^{91}$. Hence, the abelian group $(\K_3^{\ab})_{\tV_3}=H_1(\K_3,\Z)_{\tV_3}$ can be generated by $17\cdot 2^{21}+92$ elements. Moreover, $\dim H_1(\K_3,\Q)_{\tV_3}\le 105$.  The statement of the theorem now follows from Theorem~\ref{thm_nilpotent} and Lemmas~\ref{lem_alg} and~\ref{lem_alg_Q}.
\end{proof}

\section{Concluding remarks and some open questions}\label{section_open}

The methods of the present paper allow only to study the abelianizations of the groups like~$\K_g^b$, $[\IA_n,\IA_n]$, and~$[\IO_n,\IO_n]$. It would be interesting to find out whether these methods can be adapted to prove the finite generation of those groups themselves. Note that the question of whether the groups~$\K_3$, $\K_3^1$, $[\IO_3,\IO_3]$, and~$[\IA_3,\IA_3]$ are finitely generated is still open. Also highly intriguing is the question of finite generation of the group~$[\IA_3,\IA_3]^{\ab}$, for which our method apparently does not work.
Of course the following  question is also open and interesting.

\begin{problem}
 Compute the abelianizations $(\K_g^b)^{\ab}$, $[\IO_n,\IO_n]^{\ab}$, and~$[\IA_n,\IA_n]^{\ab}$ or at least find a reasonable bounds on the numbers of their generators.
\end{problem}

The same (easier) question can be asked about the first homology groups with rational coefficients. In the case of~$\K_g$, this question is open only for $g=3,4,5$, since for $g\ge6$ the groups~$H_1(\K_g,\Q)$ were computed explicitly, see~\cite{DHP14},~\cite{MSS20}.
Note that Faes and Massuyeau~\cite{FaMa25} have recently proved the non-triviality of the torsion subgroup of~$(\K_g^b)^{\ab}$ for $g\ge 6$ and $b\in\{0,1\}$.

Also, it would be interesting to clarify the situation with the nilpotency condition.

\begin{quest}
 \textnormal{(1)} Are the groups~$(\K_g^b)^{\ab}$ for $3\le g\le 11$ nilpotent as $\Z U_g^b$-modules? If not, what is the largest subgroups $V\subset U_g^b$ for which $(\K_g^b)^{\ab}$ is a nilpotent $\Z V$-module?

 \textnormal{(2)} When the nilpotency condition holds, what is the exact value of (or at least a reasonable upper bound on) the nilpotency index?
\end{quest}

Similar questions can be asked  for the groups~$[\IO_n,\IO_n]^{\ab}$ and~$[\IA_n,\IA_n]^{\ab}$.
Note that, for $g\ge 6$, an explicit computation of the group~$H_1\bigl(\K_g^b,\Q\bigr)$ from~\cite{DHP14} and~\cite{MSS20} yields that it is a nilpotent $\Q U_g^b$-module of nilpotency index~$2$.

Finally, the following problem is motivated by Theorem~\ref{thm_explicit}. It can be considered as a step towards the computation of~$(\K_g^b)^{\ab}$.

\begin{problem}
 Suppose that $\gamma$ is a separating simple closed curve on~$\Sigma_g^b$. Describe the annihilator ideal for the class $[T_{\gamma}]\in (\K_g^b)^{\ab}$ in the Laurent polynomial ring~$\Z U_g^b$. In particular, let $u_L$ be as in Theorem~\ref{thm_explicit}; what is the minimal polynomial $P(u_L)$ that annihilates~$[T_{\gamma}]$?
\end{problem}

Theorem~\ref{thm_explicit} states that $P(u_L)$ divides the polynomial $(u_L+1)(u_L-1)^5$.

\end{document}